\documentclass[]{amsart}

\captionindent=0pt

\makeatletter
\@addtoreset{equation}{section}

\makeatother

\newtheorem{theorem}{Theorem}[section]
\newtheorem{proposition}[theorem]{Proposition}
\newtheorem{corollary}[theorem]{Corollary}
\newtheorem{lemma}[theorem]{Lemma}
\theoremstyle{definition}
\newtheorem{definition}[theorem]{Definition}

\theoremstyle{remark}
\newtheorem{remark}[theorem]{Remark}
\newtheorem{example}[theorem]{Example}

\DeclareMathOperator{\sech}{sech}
\DeclareMathOperator{\cn}{cn}
\DeclareMathOperator{\dn}{dn}
\DeclareMathOperator{\sn}{sn}
\DeclareMathOperator{\am}{am}


\usepackage{braket} 
\usepackage{amssymb} 
\usepackage{color}
\usepackage[pdftex]{graphicx}
\usepackage{mathrsfs} 
\usepackage{url}

\newcommand{\lm}{\lambda}
\newcommand{\R}{\mathbf{R}}
\newcommand{\Z}{\mathbf{Z}}

\newcommand{\N}{\mathbf{N}}
\newcommand{\T}{\mathbf{T}}

\newcommand{\vp}{\varphi}

\newcommand{\vc}{\gamma}

\newcommand{\B}{\mathcal{B}}
\newcommand{\K}{\mathrm{K}}
\newcommand{\E}{\mathrm{E}}

\newcommand{\F}{\mathrm{F}}  

\newcommand{\amcn}{\am_{1,p}}
\newcommand{\amdn}{\am_{2,p}}
\newcommand{\Ecn}{\mathrm{E}_{1,p}}
\newcommand{\Fcn}{\mathrm{F}_{1,p}}
\newcommand{\Kcn}{\mathrm{K}_{1,p}}


\begin{document}

\title{Pinned planar $p$-elasticae}

\author{Tatsuya Miura}
\address[T.~Miura]{Department of Mathematics, Tokyo Institute of Technology, 2-12-1 Ookayama, Meguro-ku, Tokyo 152-8551, Japan}
\email{miura@math.titech.ac.jp}

\author{Kensuke Yoshizawa}
\address[K.~Yoshizawa]{Institute of Mathematics for Industry, Kyushu University, 744 Motooka, Nishi-ku, Fukuoka 819-0395, Japan}
\email{k-yoshizawa@imi.kyushu-u.ac.jp}

\keywords{$p$-Elastica, pinned boundary condition, Li--Yau inequality, network.}
\subjclass[2020]{49Q10, 53A04, and 33E05.}

\date{\today}

\begin{abstract}
\if0
Building on our previous work, we classify all planar $p$-elasticae under the pinned boundary condition.
We then obtain uniqueness and geometric properties of global minimizers.
Applications are given to a Li--Yau type inequality for the $p$-bending energy and also to minimal $p$-elastic networks.
Among other results, we discover a new unique exponent $p \simeq 1.5728$ such that our Li--Yau type inequality is optimal for every multiplicity.
\fi

Building on our previous work, we classify all planar $p$-elasticae under the pinned boundary condition, and then obtain uniqueness and geometric properties of global minimizers.
As an application we establish a Li--Yau type inequality for the $p$-bending energy, and in particular discover a unique exponent $p \simeq 1.5728$ for full optimality.
We also prove existence of minimal $p$-elastic networks, extending a recent result of Dall'Acqua--Novaga--Pluda.
\end{abstract}

\maketitle

\setcounter{tocdepth}{1}
\tableofcontents

\section{Introduction}

This paper is a continuation of our study of $p$-elasticae \cite{MYarXiv2203}, wherein we have classified all planar $p$-elasticae and obtained their explicit parameterizations as well as optimal regularity.
In this paper, we turn to a boundary value problem and classify all planar $p$-elasticae subject to the pinned boundary condition, with some applications to a Li--Yau type inequality and minimal networks.

\subsection{Classification of pinned planar $p$-elasticae}

For $p\in(1,\infty)$ and immersed curves $\gamma$ in the Euclidean plane $\R^2$, the \emph{$p$-bending energy} is defined by 
\[
\mathcal{B}_p[\gamma]:=\int_{\gamma}|k|^p\,ds,
\]
where $k$ denotes the signed curvature and $s$ denotes the arclength parameter of $\gamma$, respectively.
A critical point of the $p$-bending energy under the fixed-length constraint is called $p$-elastica; in other words, its signed curvature is a (weak) solution to the Euler--Lagrange equation formally given by
\begin{align}\notag
p(p-1)|k|^{p-2} \partial_s^2 k +p(p-1)(p-2) |k|^{p-4}k (\partial_s k)^2 +(p-1)|k|^pk - \lambda k =0,
\end{align}
where $\partial_s$ denotes the arclength derivative and $\lambda\in \R$ denotes a multiplier due to the fixed-length constraint.
Classification of planar elasticae ($p=2$) is classic and given by Euler in the 18th century.
Planar $p$-elasticae are just recently classified by the authors \cite{MYarXiv2203}, with optimal regularity and closed formulae in terms of newly introduced $p$-elliptic functions.
In particular, in the degenerate case $p>2$ the obtained family includes Watanabe's flat-core solutions \cite{nabe14} (see also \cite{SW20}), which are of qualitatively novel type compared to $p=2$.
See \cite{MYarXiv2203} and references therein for details.

Boundary value problems for ($p$-)elasticae are in general more advanced because we need to detect the precise values of the multiplier or other geometric parameters for compatibility with the given boundary data, and thus need to solve the corresponding system of transcendental equations.

As a first step we focus on the so-called pinned boundary condition, which prescribes the endpoints up to zeroth order.
More precisely, for given $P_0,P_1\in\R^2$ and $L>0$ such that $|P_0-P_1|<L$, we define the admissible space by
\begin{equation}\label{eq:admissiblespace}
    \mathcal{A}_{P_0,P_1,L}:=\Set{
\gamma \in W^{2,p}_{\rm imm}(0,1; \R^2) | \gamma(0)=P_0, \ \ \gamma(1)=P_1, \ \ \mathcal{L}[\gamma]=L },
\end{equation}
where $\mathcal{L}$ denotes the length functional $\mathcal{L}[\gamma]:=\int_\gamma\,ds$, 
and $W^{2,p}_{\rm imm}(0,1;\R^2)$ denotes the set of immersed $W^{2,p}$-curves: 
\[
W^{2,p}_{\rm imm}(0,1;\R^2):=
\Set{
\gamma \in W^{2,p}(0,1; \R^2) |\  |\gamma'(t)|\neq0 \text{ for all } t\in[0,1]}.
\]
We call a critical point of $\mathcal{B}_p$ in $\mathcal{A}_{P_0,P_1,L}$ \emph{pinned $p$-elastica} (cf.\ Definition \ref{critical_point}).

In the case of elastica ($p=2$) the pinned boundary value problem has been completely solved on the level of critical points and global minimizers \cite{Ydcds, Miura_LiYau} (see also \cite{Love, Lin98, AGP20}).
Roughly speaking, if $P_0=P_1$, then all pinned elasticae are part of ``figure-eight'', while if $P_0\neq P_1$, then the curves are ``arcs'' and ``loops'' (like Figure \ref{fig:arc}).
In any case, for given boundary data there are countably many critical points but the global minimizer is uniquely given by a ``single arc'' up to invariances.

Our primary result here extends the known classification of pinned planar elasticae for $p=2$ to all $p\in(1,\infty)$.
In particular, in the degenerate case $p>2$, we find a critical distance of the endpoints at which the number of pinned $p$-elasticae changes from countable to uncountable.
Following terminology in Definition~\ref{def:arcloop}, our result is summarized as follows (see also Figures~\ref{fig:arc} and \ref{fig:flat}).

\begin{theorem}[Classification of pinned $p$-elasticae]\label{thm:classify-pinned}
Let $p\in(1,\infty)$.
Let $P_0,P_1\in\R^2$ and $L>0$ such that
\begin{equation}\nonumber
    r:=\frac{|P_0-P_1|}{L}\in[0,1).
\end{equation}
Suppose that $\gamma \in \mathcal{A}_{P_0,P_1,L}$ is a critical point of $\mathcal{B}_p$ in $\mathcal{A}_{P_0,P_1,L}$.
Then the following assertions hold true.
\begin{itemize}
    \item[(i)] If $r=0$, then $\gamma$ is an $\frac{n}{2}$-fold figure-eight $p$-elastica for some $n\in \N$.
    \item[(ii)] If $r\in(0,\frac{1}{p-1})$, then $\gamma$ is either a $(p,r,n)$-arc for some $n\in \N$, or a $(p,r,n)$-loop for some $n\in \N$.
    \item[(iii)] If $r\in[\frac{1}{p-1},1)$, then $\gamma$ is either a $(p,r,n)$-arc for some $n\in \N$, or a $(p, r, n, \boldsymbol{\sigma}, \boldsymbol{L})$-flat-core for some $n\in \N$, $\boldsymbol{\sigma}=(\sigma_1, \ldots, \sigma_n) \in \{+,-\}^n$, and $\boldsymbol{L}=(L_1, \ldots, L_{n+1}) \in [0,\infty)^{n+1}$ subject to relation \eqref{eq:sum-flatparts}.
\end{itemize}
\end{theorem}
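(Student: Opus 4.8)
The plan is to reduce this boundary value problem to the complete classification of planar $p$-elasticae in \cite{MYarXiv2203}: from the pinned constraint I extract the Euler--Lagrange equation together with the correct natural boundary conditions, use these to decide which of the known profiles can occur, and finally match the free geometric parameters to the data $(p,r)$ by solving the resulting length--displacement relations. First I would compute the first variation of $\mathcal{B}_p$ on $\mathcal{A}_{P_0,P_1,L}$. Admissible variations $V$ vanish at the endpoints, $V(0)=V(1)=0$, but leave $V'(0),V'(1)$ free; stationarity with a Lagrange multiplier $\lm$ for the length constraint then yields, in the interior, the Euler--Lagrange equation of the introduction, and at $t=0,1$ the \emph{natural boundary condition} $|k|^{p-2}k=0$, that is $k(0)=k(1)=0$. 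For $p>2$ the equation is degenerate and must be read weakly in $W^{2,p}$, which permits $k$ to vanish on whole subintervals; this is precisely the mechanism behind flat parts. Throughout I may use the translation and rotation invariance of $\mathcal{B}_p$ to normalize the position and initial frame of $\gamma$.

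Next I would record the conservation laws. Writing $k=\theta'$ for the tangent angle and multiplying the Euler--Lagrange equation by the integrating factor $|k|^{p-2}k'$ gives the first integral
\begin{equation}\nonumber
\frac{p^2}{2}|k|^{2p-4}(k')^2+\frac12|k|^{2p}-\lm|k|^p=E ,
\end{equation}
while translation invariance gives a constant internal force $\mathbf{F}\in\R^2$ determined by $\mathbf{F}\cdot\mathbf{t}=(p-1)(|k|^p-\lm)$ and $\mathbf{F}\cdot\mathbf{n}=p(p-1)|k|^{p-2}k'$, so that $|\mathbf{F}|=(p-1)\sqrt{\lm^2+2E}$. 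The condition $k(0)=k(1)=0$ selects exactly those profiles from \cite{MYarXiv2203} that vanish at the endpoints of their arclength interval: this excludes the circular, orbit-like, and soliton curves (the purely straight one being ruled out by $r<1$) and leaves the \emph{wavelike} $p$-elasticae for every $p$, together with the degenerate \emph{flat-core} curves when $p>2$. The solution is then a concatenation of $n$ half-period ``bumps'' between consecutive zeros of $k$, with $n\in\N$, each bump fixed up to the modulus $m$ of the corresponding $p$-elliptic function.

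The core of the argument is a displacement computation. From the force law, $\mathbf{F}\cdot(\gamma(1)-\gamma(0))=\int_0^L\mathbf{F}\cdot\mathbf{t}\,ds=(p-1)\bigl(\mathcal{B}_p[\gamma]-\lm L\bigr)$, and evaluating $\mathcal{B}_p[\gamma]$, the length, and the chord length through the $p$-elliptic integrals $\Kcn,\Ecn$ (respectively $\Kdn,\Edn$) expresses $r=|P_0-P_1|/L$ as an explicit function $r=r(p,n,m)$. The classification is then a statement about the range of this function. I expect that for each $n$ the wavelike family yields a branch of $(p,r,n)$-arcs sweeping out $r\in(0,1)$ and a branch of $(p,r,n)$-loops sweeping out $r\in(0,\tfrac{1}{p-1})$, with $r=0$ occurring only at the distinguished figure-eight modulus and producing the $\tfrac n2$-fold figure-eight of case (i). As $m\to1$, which is admissible only for $p>2$ because the relevant arclength integral diverges otherwise, each bump degenerates into a loop of total turning $2\pi$; a direct evaluation of its length and signed displacement as Beta integrals gives
\begin{equation}\nonumber
\frac{|d_{\mathrm{bump}}|}{\ell_{\mathrm{bump}}}=\frac{1}{p-1},
\end{equation}
which is exactly the threshold between (ii) and (iii). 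Superposing $n$ such loops, with signs $\boldsymbol{\sigma}$ and straight segments of lengths $\boldsymbol{L}$ subject to \eqref{eq:sum-flatparts}, then produces the $(p,r,n,\boldsymbol{\sigma},\boldsymbol{L})$-flat-core and covers $r\in[\tfrac{1}{p-1},1)$.

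The principal obstacle is the degenerate regime $p>2$, on two counts. First, one must justify the weak natural boundary condition inside $W^{2,p}_{\mathrm{imm}}$ and confirm that the flat parts, where $k\equiv0$, join the bumps compatibly so that the resulting flat-core is genuinely a critical point. Second, and harder, is the sharp range analysis: proving the strict monotonicity of $m\mapsto r(p,n,m)$ that cleanly separates the arc branch from the loop branch, pinning the exact endpoint $\tfrac{1}{p-1}$, and showing that the uncountable flat-core family exhausts precisely $[\tfrac{1}{p-1},1)$ and nothing below it. These reduce to monotonicity and limit identities for the $p$-elliptic integrals $\Kcn,\Ecn,\Kdn,\Edn$, for which I would rely on the fine asymptotics as $m\to1$ developed in \cite{MYarXiv2203}.
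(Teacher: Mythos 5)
Your proposal follows essentially the same route as the paper: derive the natural boundary condition $k(0)=k(L)=0$ from variations with free endpoint derivatives, reduce via the classification of planar $p$-elasticae in \cite{MYarXiv2203} to wavelike and (for $p>2$) flat-core profiles, and then match $r$ to the modulus through the ratio $2\E_{1,p}(q)/\K_{1,p}(q)-1$, whose strict monotonicity and limit $-1/(p-1)$ produce exactly the threshold between cases (ii) and (iii), with the flat-core chord-to-length computation yielding relation \eqref{eq:sum-flatparts}. The only cosmetic difference is that you propose to obtain the chord length from the conserved force vector, whereas the paper reads it off directly from the explicit parameterization $\gamma_w$ and the quasiperiodicity of $\E_{1,p}$.
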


\begin{figure}[htbp]
      \includegraphics[scale=0.21]{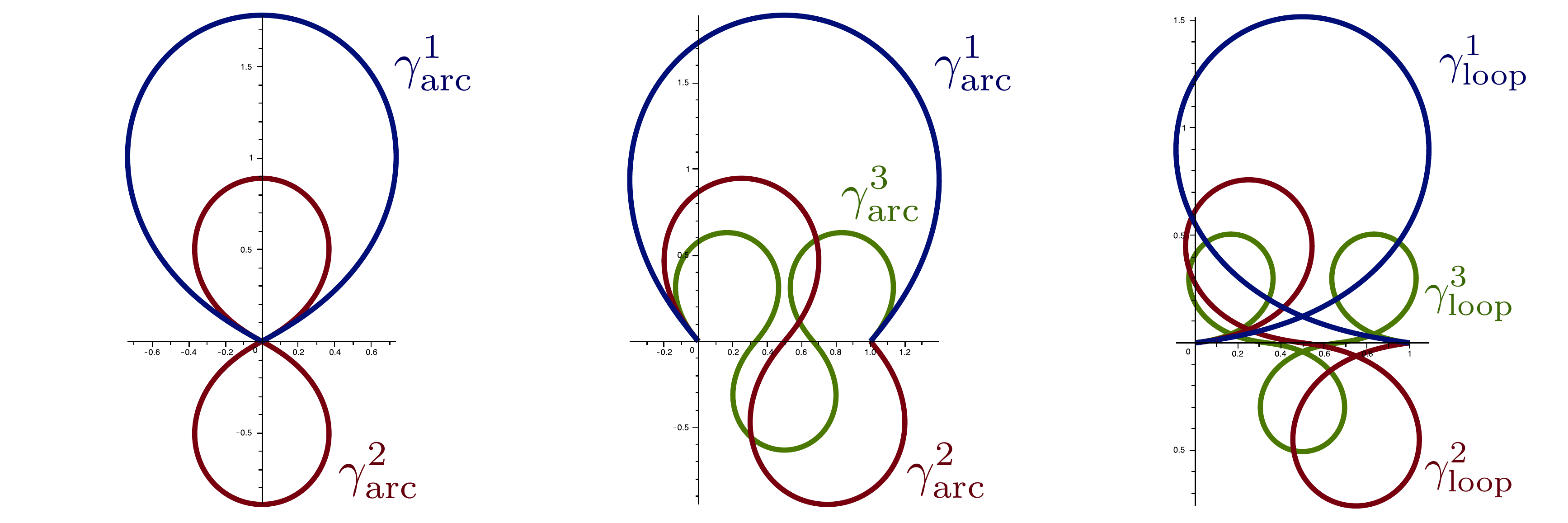}
  \caption{The left $\gamma^n_{\rm arc}$ represents an $\frac{n}{2}$-fold figure-eight $p$-elastica, the middle $\gamma^n_{\rm arc}$ a $(p,r,n)$-arc, and the right
  $\gamma^n_{\rm loop}$ a $(p,r,n)$-loop, where $p=4$ and $r=\tfrac{1}{5}$.}
  \label{fig:arc}
\end{figure}
  
\begin{figure}[htbp]
      \includegraphics[scale=0.175]{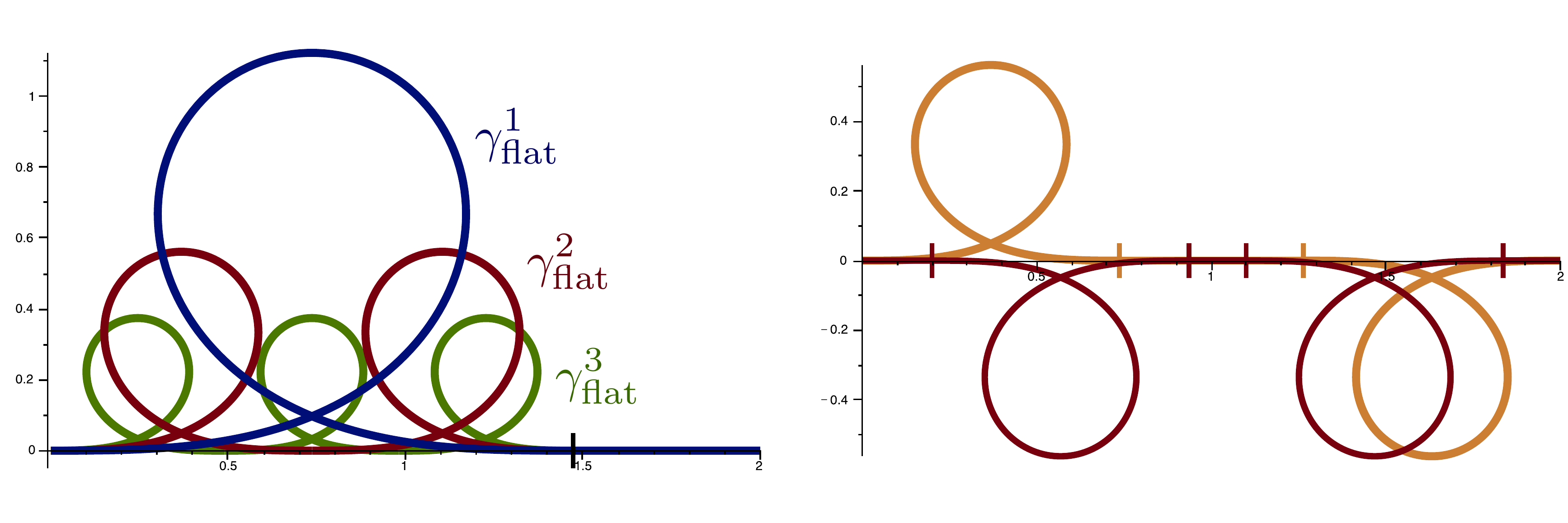}
  \caption{
  The left $\gamma^n_{\rm flat}$ represents a $(p,r,n, \boldsymbol{\sigma}, \boldsymbol{L})$-flat-core with $p=4$ and $ r=\frac{2}{5}$.
  The right two curves are also $(4,\frac{2}{5},2, \boldsymbol{\sigma}, \boldsymbol{L})$-flat-cores, with different choices of the direction of the loops $\boldsymbol{\sigma}$ and the length of the connecting segments $\boldsymbol{L}$.}
  \label{fig:flat}
\end{figure}

\begin{remark}[Countability-uncountability transition]
If $p\leq2$, then $\frac{1}{p-1}\geq1$ so the only possible cases are (i) and (ii), and hence the classification is qualitatively parallel to the classical case $p=2$.
Case (iii) is of novel type and occurs if and only if $p>2$ and the endpoints are sufficiently distant.
In particular, if $p>2$ and $r>\frac{1}{p-1}$, then there are uncountably many critical points, and otherwise countably many (up to isometry and reparameterization).
The uncountability is due to the freedom of the length $\boldsymbol{L}$ of the flat parts as in Figure \ref{fig:flat}.
\end{remark}

\begin{remark}[Loss of regularity]
Critical points in Theorem \ref{thm:classify-pinned} are always of class $C^2$ but may not $C^\infty$ in general, particularly at the points where the curvature vanishes.
More precisely, if $\frac{1}{p-1}$ is an odd integer (i.e., $p\in\{2,\frac{4}{3},\frac{6}{5},\frac{8}{7},\dots\}$), then any arclength-parameterized critical point is always of class $C^\infty$, but otherwise may not. 
For example, if $\frac{1}{p-1}$ is not an integer, then any arclength-parameterized $(p,r,n)$-arc and $(p,r,n)$-loop are not of class $C^{m_p+2}$, where $m_p:=\lceil \frac{1}{p-1} \rceil$.
We emphasize that this loss of regularity occurs even for $n=1$, since at the endpoints the curvature vanishes.
In addition, if $p>2$ and if $r>\frac{1}{p-1}$, then any arclength-parameterized $(p,r,n, \boldsymbol{\sigma}, \boldsymbol{L})$-flat-core is not of class $C^{M_p+2}$, where $M_p:=\lceil \frac{2}{p-2} \rceil$.
For more details as well as optimal regularity in terms of Sobolev class, see \cite{MYarXiv2203}.
\end{remark}

Now we focus on global minimizers.
Existence follows by a standard direct method.
In addition, comparing the $p$-bending energy of all the above critical points, we obtain uniqueness of global minimizers.

\begin{theorem}[Unique existence of global minimizers]\label{thm:uniqueness}
Let $p\in(1,\infty)$, $P_0, P_1 \in \R^2$, and $L>0$ such that $|P_0-P_1|<L$.
Then there exists a global minimizer of $\mathcal{B}_p$ in $\mathcal{A}_{P_0, P_1,L}$, and in addition any global minimizer is uniquely given by a $(p,r,1)$-arc (up to isometry and reparameterization).
\end{theorem}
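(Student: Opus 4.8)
The plan is to establish existence by the direct method and then pin down uniqueness by exhausting the critical points classified in Theorem~\ref{thm:classify-pinned} and comparing their $p$-bending energies.

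For existence, I would take a minimizing sequence $\{\gamma_j\}$ for $\mathcal{B}_p$ in $\mathcal{A}_{P_0,P_1,L}$. Since both $\mathcal{B}_p$ and the constraints are invariant under reparametrization, I may assume each $\gamma_j$ has constant speed $|\gamma_j'|\equiv L$. For such curves $\gamma_j''\perp\gamma_j'$ and $\mathcal{B}_p[\gamma_j]=L^{1-2p}\int_0^1|\gamma_j''|^p\,dt$, so an energy bound yields a uniform bound on $\|\gamma_j''\|_{L^p}$; together with the fixed speed and the endpoint condition $\gamma_j(0)=P_0$ this gives a uniform $W^{2,p}$-bound. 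Extracting a subsequence, $\gamma_j\rightharpoonup\gamma$ weakly in $W^{2,p}$ and, by the compact embedding $W^{2,p}(0,1)\hookrightarrow C^1([0,1])$, strongly in $C^1$. The $C^1$-convergence preserves the endpoint conditions and the identity $|\gamma'|\equiv L$, so $\gamma$ is an admissible immersion of the prescribed length; since $\gamma\mapsto\int_0^1|\gamma''|^p\,dt$ is convex, hence weakly lower semicontinuous for $p>1$, the limit $\gamma$ attains the infimum and is a global minimizer.

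For uniqueness, I would first note that any global minimizer is in particular a critical point, hence must be one of the curves listed in Theorem~\ref{thm:classify-pinned}. The strategy is then an exhaustive energy comparison: using the explicit parametrizations underlying the classification (and those of \cite{MYarXiv2203}), I would express $\mathcal{B}_p$ of each family as an explicit function of the fold number $n$ (and, for flat-cores, of $\boldsymbol{\sigma}$ and $\boldsymbol{L}$) in terms of complete and incomplete $p$-elliptic integrals, and show that the $(p,r,1)$-arc is strictly cheapest. Concretely: (a) the energy of a $(p,r,n)$-arc is strictly increasing in $n$, so among arcs $n=1$ is optimal; (b) for each $n$ a $(p,r,n)$-loop carries an additional full loop and so has strictly larger energy than the corresponding arc; (c) in case (iii) each flat part contributes zero curvature energy but forces the curved parts into complete loops, so the infimum of the flat-core energy over all admissible $\boldsymbol{\sigma},\boldsymbol{L}$ still strictly exceeds the single-arc energy; and (d) in the boundary case $r=0$ the admissible arcs degenerate to figure-eight pieces, the $\tfrac{1}{2}$-fold figure-eight being the $(p,0,1)$-arc, whose energy is again increasing in $n$. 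Combining these comparisons identifies the $(p,r,1)$-arc as the unique minimizer up to isometry and reparametrization, the residual reflection symmetry of the arc being absorbed by these invariances.

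The main obstacle is the strict energy ordering in steps (a)--(c). The monotonicity in $n$ and the arc-versus-loop comparison should follow from the additive ``per-unit'' structure of the energy along each fundamental piece, so the difficulty concentrates in case (iii), genuinely new for $p>2$: the flat-cores form an uncountable family containing zero-energy segments, so one must show that no configuration of loops and flat parts can undercut the single arc. This requires a sharp lower bound on the energy of one loop/arc unit in terms of the normalized endpoint distance $r$, which in turn rests on monotonicity of the relevant $p$-elliptic integral expressions in the modulus parameter. I expect this flat-core lower bound, rather than the existence argument or the elementary arc-versus-loop estimate, to be where the real work lies.
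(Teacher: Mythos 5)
Your proposal follows essentially the same route as the paper: existence by the direct method with constant-speed reparametrization and weak lower semicontinuity, then uniqueness by running through the classification of Theorem~\ref{thm:classify-pinned} and comparing the normalized energies, which are computed explicitly in terms of complete $p$-elliptic integrals and ordered by strict monotonicity in the modulus. The one mis-calibration is where you locate the difficulty: the arc-versus-loop comparison is not an elementary ``extra loop'' count but requires precisely the monotonicity of $q\mapsto q^p\bigl(q^{-2}\E_{1,p}(q)+(1-q^{-2})\K_{1,p}(q)\bigr)$ together with that of $\K_{1,p}$, whereas the flat-core case is easier than you anticipate because constraint \eqref{eq:sum-flatparts} makes the flat-core energy an explicit closed form independent of $\boldsymbol{\sigma}$ and $\boldsymbol{L}$, so it reduces to the same monotonicity statement evaluated at $q=1$.
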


A particularly useful case is where the endpoints agree $P_0=P_1$.
In this case, by Theorems~\ref{thm:classify-pinned} and \ref{thm:uniqueness} we deduce that a half-fold (i.e., $\frac{1}{2}$-fold) figure-eight $p$-elastica is a unique global minimizer, extending \cite[Proposition 2.6]{Miura_LiYau} from $p=2$ to $p\in(1,\infty)$ in the planar case.
Here we apply this fact to obtain a Li--Yau type inequality and existence of minimal networks in the same spirit as \cite{Miura_LiYau}.
In view of these applications it would be informative if we could know the precise geometric properties of the figure-eight $p$-elastica, in particular how the crossing angle depends on $p$.
Fortunately we succeed in ensuring the following key monotonicity, cf.\ Figure~\ref{fig:phi^*}: 

\begin{theorem}[Monotonicity of the crossing angle]\label{thm:phi*-decrease}
For $p\in(1,\infty)$, let $2\phi^*(p)$ be the angle between the tangent vectors at the two endpoints of a half-fold figure-eight $p$-elastica, cf.\ \eqref{eq:phi*}. 
Then the function $p\mapsto\phi^*(p)$ is continuous and strictly decreasing from $\pi/2$ to $0$.
\end{theorem}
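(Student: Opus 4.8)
The plan is to first extract from \eqref{eq:phi*} a workable closed form for $\phi^*(p)$, and then prove the two assertions (strict decrease and the two boundary values) by a quantitative analysis of the underlying $p$-elliptic integrals. Following the classification of the half-fold figure-eight $p$-elastica, its curvature is a wavelike profile governed by the first integral of the Euler--Lagrange equation; after the substitution $w=|k|^{p-2}k$ the equation becomes autonomous of second order and hence admits the conserved quantity
\[
\tfrac{p(p-1)}{2}|k|^{2p-4}(\partial_s k)^2+\tfrac{p-1}{2p}|k|^{2p}-\tfrac{(p-1)\lambda}{p}|k|^{p}=C.
\]
Among wavelike solutions the figure-eight is singled out by a closure condition forcing the endpoints of the loop to coincide (equivalently, the net displacement to vanish); this is a transcendental relation $G(p,q)=0$ between the complete $p$-elliptic integrals $\Kcn,\Ecn$ (the $p$-analogue of the classical identity $2\E=\K$), and it determines the modulus $q=q(p)\in(0,1)$. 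Since $\theta'=k$, integrating the tangent angle up to the self-intersection writes the crossing half-angle as $\phi^*(p)=\Phi(p,q(p))$, where $\Phi$ is the explicit combination of incomplete $p$-elliptic integrals recorded in \eqref{eq:phi*}. The whole argument thus reduces to understanding the two functions $q(p)$ and $\Phi(p,q)$.

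Continuity of $q(p)$ follows from the implicit function theorem once $\partial_q G\neq0$ is checked, which is precisely the nondegeneracy of the closure constraint; continuity of $\phi^*$ is then inherited from the continuous dependence of $\Phi$ on its parameters. For strict monotonicity I would compute
\[
\frac{d\phi^*}{dp}=\partial_p\Phi(p,q(p))+\partial_q\Phi(p,q(p))\,\frac{dq}{dp},\qquad \frac{dq}{dp}=-\frac{\partial_p G(p,q)}{\partial_q G(p,q)},
\]
and show the right-hand side is negative. The natural route is to first prove that $q(p)$ is strictly increasing: as in the case $p=2$, where $\phi^*=\pi-2\arcsin q$ with $q$ the root of $2\E=\K$, a larger modulus forces a smaller crossing angle (here $\partial_q\Phi<0$), so it remains only to control the explicit $p$-dependence carried by $\Phi$.

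The crux, and the part I expect to be the main obstacle, is signing the partial derivatives in $p$ of the complete and incomplete $p$-elliptic integrals entering $G$ and $\Phi$. Unlike the modulus derivatives these enjoy no classical monotonicity, and the integrals are defined only implicitly through the $p$-elliptic functions, so a purely variational monotonicity argument seems out of reach. To handle this I would change variables so that each integral is taken over a fixed, $p$-independent interval (normalizing the phase to $[0,\pi/2]$), after which differentiation under the integral sign reduces the signs of $\partial_pG$ and $\partial_p\Phi$ to pointwise inequalities for the integrands. These pointwise inequalities, combined with an auxiliary comparison between the two families $\Kcn,\Ecn$ and $\Kdn,\Edn$ (a $p$-analogue of $\E<\K$ and of the Legendre relation), are what I expect to pin down the required signs; this is the most delicate and computation-heavy step.

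Finally I would determine the boundary values by asymptotic analysis of the closure condition $G(p,q)=0$. As $p\to1^+$ the $p$-bending energy degenerates toward the total curvature and $q(p)$ tends to the value for which $\arcsin q=\pi/4$, giving $\phi^*\to\pi/2$; as $p\to\infty$ the flat-core mechanism of Theorem~\ref{thm:classify-pinned}(iii) forces $q(p)\to1$, so that the loops pinch and $\phi^*\to0$. Together with the continuity and strict monotonicity from the previous steps, these limits show that $p\mapsto\phi^*(p)$ is a continuous, strictly decreasing bijection from $(1,\infty)$ onto $(0,\pi/2)$, which is the assertion of the theorem.
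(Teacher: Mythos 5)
Your overall architecture matches the paper's: reduce $\phi^*(p)=\pi-2\arcsin q^*(p)$ to the modulus $q^*(p)$ determined by the closure condition $2\E_{1,p}(q)=\K_{1,p}(q)$, get continuity from the implicit function theorem, identify the limits $q^*\to 1/\sqrt2$ as $p\downarrow1$ and $q^*\to1$ as $p\to\infty$, and derive strict decrease of $\phi^*$ from strict increase of $q^*(p)$. However, the central step---proving that $q^*(p)$ is strictly increasing---is exactly the part you defer to ``pointwise inequalities for the integrands'' after differentiating under the integral sign, and that technique does not work here. Writing $\widetilde{Q}_p(q)=2\E_{1,p}(q)-\K_{1,p}(q)=\int_0^1\frac{1-2q^2z^2}{\sqrt{1-q^2z^2}}(1-z^2)^{-1/p}\,dz$, the $p$-derivative of the integrand carries the factor $(1-2q^2z^2)\log(1-z^2)$, which changes sign on $(0,1)$ precisely when $q>1/\sqrt2$---and that is exactly the range where $q^*(p)$ lives (one checks $Q_p(1/\sqrt2)>0$). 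Indeed the un-normalized family $\{\widetilde{Q}_p\}_p$ is \emph{not} ordered in $p$ (cf.\ the right panel of Figure~\ref{fig:Qp}), so no pointwise comparison of integrands of the closure function $G$ can yield $\partial_pG$ of a definite sign along the curve $q=q^*(p)$.

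The paper's resolution is genuinely different at this point: it proves (Proposition~\ref{prop:Qp-monotone}) that the \emph{normalized} family $Q_p(q)=2\E_{1,p}(q)/\K_{1,p}(q)-1$ is globally ordered, $Q_{p_1}<Q_{p_2}$ on $(0,1)$ for $p_1<p_2$. This is not obtained by signing an integrand; it combines (a) a local second-order expansion at $q=0$ using Beta/Gamma identities to get $Q_p''(0)=4p/(2-3p)$, hence $g:=Q_{p_1}-Q_{p_2}<0$ near $0$, with (b) a first-order differential identity expressing $g'$ in terms of $g$ plus a term of definite sign, and a contradiction argument at the first zero of $g$. Monotonicity of $q^*$ then follows from $0=Q_{p_1}(q^*(p_1))<Q_{p_2}(q^*(p_1))$ and the $q$-monotonicity of $Q_{p_2}$. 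Your proposal contains no substitute for this mechanism, so as written it has a gap at its decisive step. (Secondarily, your boundary-value claims are asserted heuristically; since $\K_{1,p}(q)$ and $\E_{1,p}(q)$ both diverge as $p\downarrow1$ at $q$ near the relevant values, the limits require the quantitative concavity estimates for $\widetilde{Q}_p$ that the paper supplies, not just the statement that the energy ``degenerates toward total curvature.'')
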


\begin{figure}[htbp]
    \centering
    \includegraphics[scale=0.12]{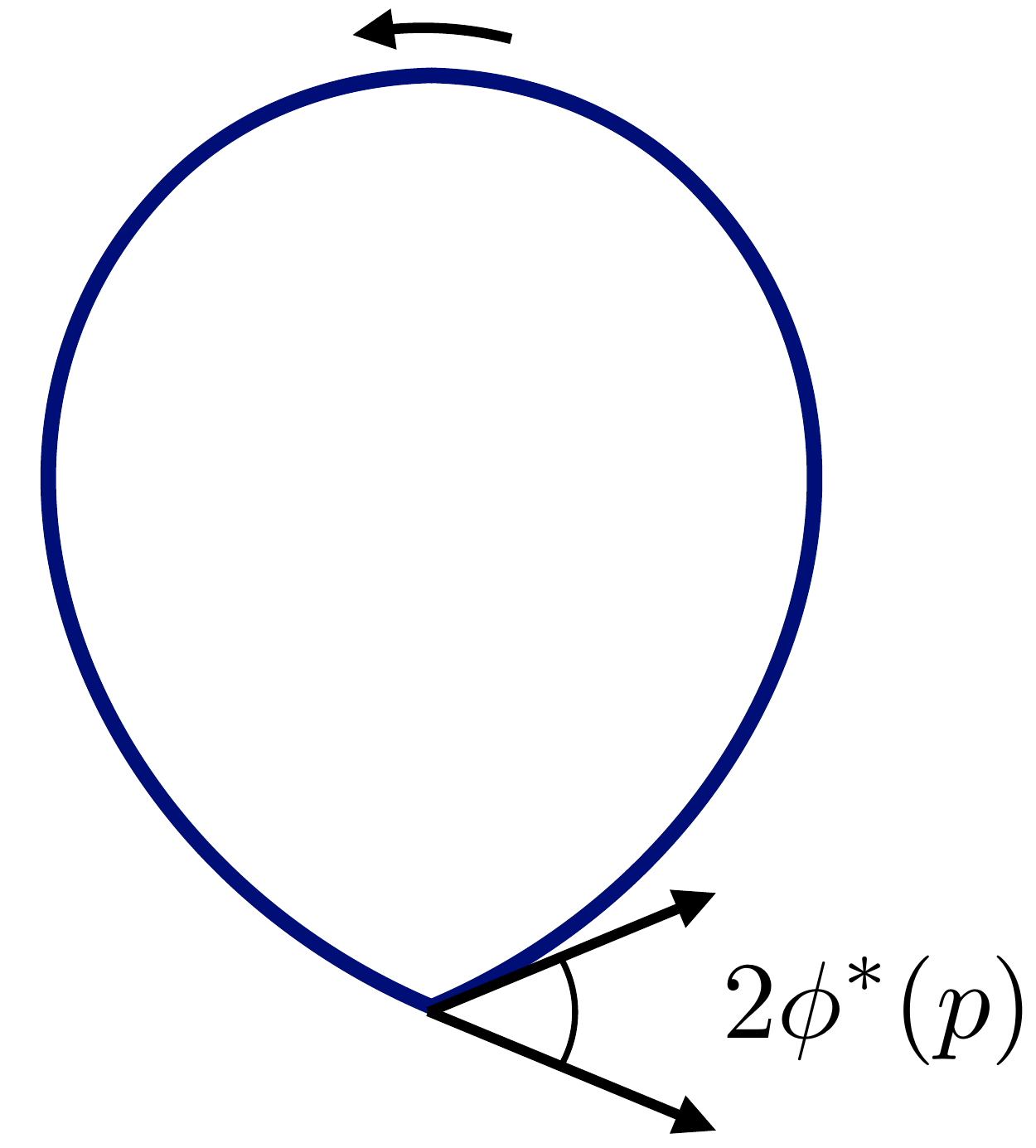}
    \hspace{20pt}
    \includegraphics[scale=0.155]{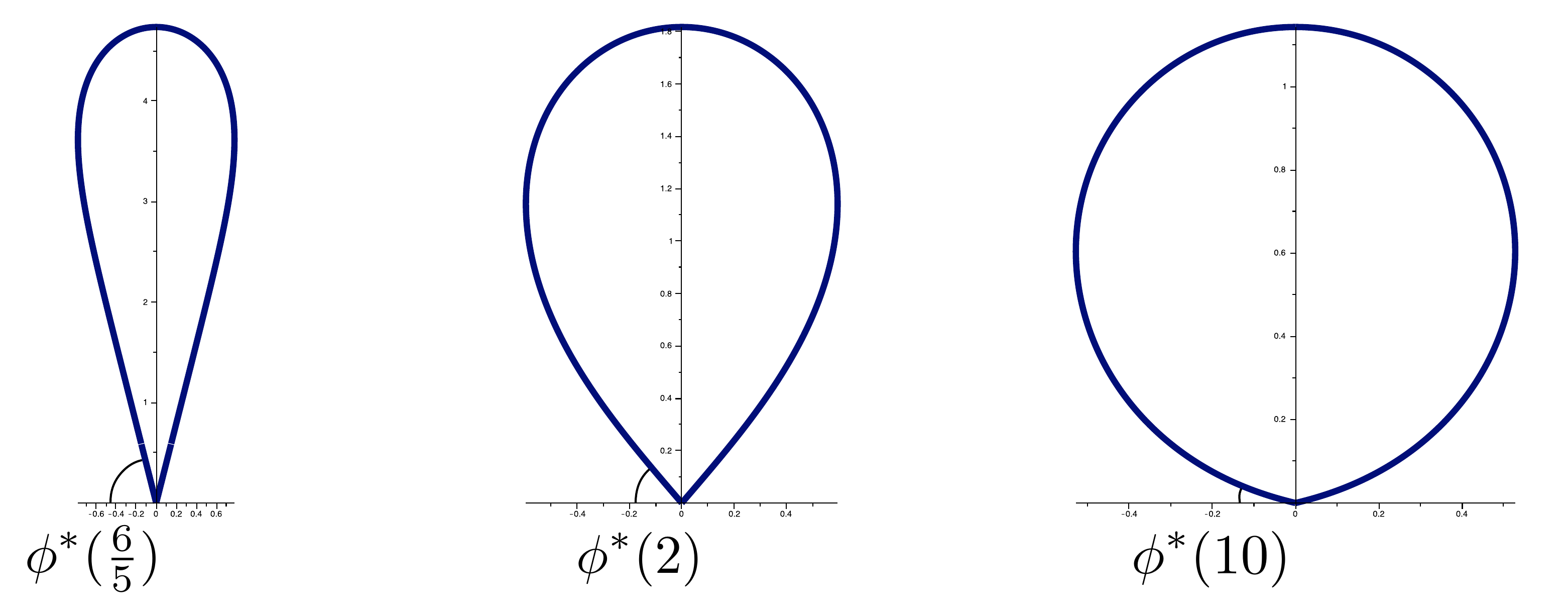}
    \caption{The angle $2\phi^*(p)$, and three examples of the half-fold figure-eight $p$-elasticae with $p=\frac{6}{5}$, $2$, $10$ (from left to right).}
    \label{fig:phi^*}
\end{figure}

This monotonicity is easy to infer from the numerical pictures but its analytic verification seems not straightforward.
Our proof involves an indirect argument.

In what follows we discuss the aforementioned applications in detail.
We also expect that our classification would be useful in other contexts, e.g.\ to detect the limit curves of gradient flows for the $p$-bending energy under the natural boundary condition.
For recent developments of such flows, see e.g.\ \cite{NP20, OPW20, BHV, BVH, OW23}.

\subsection{Li--Yau type inequality}
We first apply our result to deduce a Li--Yau type inequality involving the \emph{normalized $p$-bending energy} $\overline{\mathcal{B}}_p$, defined as the $p$-bending energy normalized by the length to be scale-invariant:
\begin{align}\label{eq:def-normalizedBp}
    \overline{\mathcal{B}}_p[\gamma]:=\mathcal{L}[\gamma]^{p-1}\mathcal{B}_p[\gamma].
\end{align}
In their celebrated study, Li and Yau obtained the sharp inequality that the Willmore energy of a closed surface is bounded below by $4\pi$ times multiplicity \cite{LiYau}.
A one-dimensional counterpart has also been studied by several authors \cite{PolPhD, vdM98, Glen13, MR23}, and recently (almost) optimized in \cite{Miura_LiYau}: If an immersed closed $W^{2,2}$-curve $\gamma$ in $\R^n$ ($n\geq2$) has a point of multiplicity $m\geq2$, then
\begin{align}\label{eq:Li-YauB2}
    \overline{\mathcal{B}}_2[\gamma] \geq \varpi^* m^2.
\end{align}
Here $\varpi^*>0$ denotes the normalized bending energy $\overline{\mathcal{B}}_2$ of a half-fold figure-eight elastica, and we say that a curve $\gamma$ has a point $P\in \R^n$ of \emph{multiplicity} $m$ if the preimage $\gamma^{-1}(P)$ contains at least $m$ distinct points.

A typical characteristic of the 1D Li--Yau type inequality (compared to 2D) is the presence of a new algebraic obstruction for optimality of the inequality.
In fact, it is proven in \cite{Miura_LiYau} that equality in \eqref{eq:Li-YauB2} is attained if and only if $n\geq3$ or $m$ is even, and also $\gamma$ is an $m$-leafed elastica.
In particular, for $n=2$ and any odd multiplicity $m\geq3$, the inequality is not optimal because of the irrationality of $\phi^*(2)/\pi$.

In this paper we first extend inequality \eqref{eq:Li-YauB2} to all $p\in(1,\infty)$ in the plane, and then reveal some new phenomena on its optimality arising from the generality of $p$.
Let $\mathbf{T}^1:=\R/\Z$ and $W^{2,p}_{\rm imm}(\mathbf{T}^1;\R^2)$ denote the set of immersed closed $W^{2,p}$-curves.
Let $\varpi^*_p>0$ denote the normalized $p$-bending energy $\overline{\mathcal{B}}_p$ of a half-fold figure-eight $p$-elastica, which will be given by formula \eqref{eq:def-varpi_p}; in particular, $\varpi^*_2=\varpi^*$.
Finally, we say that $\gamma$ is an $m$-leafed $p$-elastica if the curve $\gamma$ consists of $m$ half-fold figure-eight $p$-elasticae of same length 
(see Definition~\ref{def:m-leaf} for details).
Then, applying Theorem \ref{thm:uniqueness}, we establish the following

\begin{theorem}[Li--Yau type inequality and rigidity]\label{thm:LiYau-Bp}
Let $p\in(1,\infty)$, and $m\geq2$ an integer.
If a closed curve $\gamma\in W^{2,p}_{\rm imm}(\mathbf{T}^1;\R^2)$ has a point of multiplicity $m$, then
\begin{align}\label{eq:LiYau-Bp}
    \overline{\mathcal{B}}_p[\gamma] \geq \varpi^*_p m^p.
\end{align}
Equality in \eqref{eq:LiYau-Bp} is attained if and only if $\gamma$ is a closed $m$-leafed $p$-elastica.
\end{theorem}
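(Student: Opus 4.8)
The plan is to follow the strategy of the case $p=2$ in \cite{Miura_LiYau}: reduce the global inequality for a closed curve to the pinned minimization problem with coincident endpoints, for which Theorem~\ref{thm:uniqueness} identifies the half-fold figure-eight $p$-elastica as the unique minimizer, and then close the estimate by a convexity argument in the lengths of the pieces.

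\emph{Decomposition.} Let $P\in\R^2$ be a point of multiplicity $m$ and choose $m$ distinct parameters $t_1<\cdots<t_m$ in $\mathbf{T}^1$ with $\gamma(t_i)=P$. They cut $\gamma$ into $m$ consecutive sub-arcs $\gamma_1,\dots,\gamma_m$, each an immersed $W^{2,p}$-curve with both endpoints equal to $P$. Setting $L_i:=\mathcal{L}[\gamma_i]>0$ we have $\sum_{i=1}^m L_i=L:=\mathcal{L}[\gamma]$, and after reparametrizing onto $(0,1)$ each $\gamma_i$ belongs to $\mathcal{A}_{P,P,L_i}$. Since $\mathcal{B}_p$ is invariant under reparametrization and additive over the pieces, $\mathcal{B}_p[\gamma]=\sum_{i=1}^m\mathcal{B}_p[\gamma_i]$.

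\emph{Pinned lower bound and convexity.} For coincident endpoints (i.e.\ $r=0$), Theorems~\ref{thm:classify-pinned} and \ref{thm:uniqueness} guarantee that the unique minimizer of $\mathcal{B}_p$ in $\mathcal{A}_{P,P,\ell}$ is a half-fold figure-eight $p$-elastica of length $\ell$; by scale invariance of $\overline{\mathcal{B}}_p$ its minimal value equals $\varpi^*_p\,\ell^{1-p}$. Hence $\mathcal{B}_p[\gamma_i]\ge \varpi^*_p L_i^{1-p}$, with equality precisely when $\gamma_i$ is such a figure-eight (up to isometry and reparametrization). Summing and invoking the strict convexity of $t\mapsto t^{1-p}$ for $p>1$, Jensen's inequality under the constraint $\sum_i L_i=L$ gives
\[
\sum_{i=1}^m L_i^{1-p}\ \ge\ m\left(\tfrac{L}{m}\right)^{1-p}=m^p L^{1-p},
\]
so that $\mathcal{B}_p[\gamma]\ge \varpi^*_p m^p L^{1-p}$; multiplying by $L^{p-1}$ yields \eqref{eq:LiYau-Bp}.

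\emph{Rigidity.} Equality forces equality in both estimates. Strict convexity gives $L_1=\cdots=L_m=L/m$, and the minimality characterization forces each $\gamma_i$ to be a half-fold figure-eight $p$-elastica. Thus $\gamma$ consists of $m$ half-fold figure-eight $p$-elasticae of common length, i.e.\ a closed $m$-leafed $p$-elastica in the sense of Definition~\ref{def:m-leaf}; the converse holds by construction, since such a curve has $P$ as a point of multiplicity $m$ and saturates every inequality above. The analytic substance is carried by Theorem~\ref{thm:uniqueness}, which we take as given; the remaining care lies in the rigidity direction, where one must use uniqueness of the pinned minimizer only up to isometry and reparametrization and match the resulting configuration to Definition~\ref{def:m-leaf}.
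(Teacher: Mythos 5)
Your proposal is correct, and it reaches the conclusion by a slightly different (and arguably cleaner) route than the paper. The analytic core is the same: both arguments rest on the pinned minimization result (Theorem~\ref{thm:uniqueness}, packaged in the paper as Corollary~\ref{cor:pinned-Case1}), which gives $\mathcal{B}_p[\gamma_i]\ge \varpi^*_p\,\mathcal{L}[\gamma_i]^{1-p}$ for each sub-arc with coincident endpoints, with equality exactly for half-fold figure-eights. Where you differ is in the combinatorial step and the organization. You cut the closed curve directly into $m$ arcs at the multiplicity point and prove the key inequality $\bigl(\sum_i L_i\bigr)^{p-1}\sum_i L_i^{1-p}\ge m^p$ in one stroke by Jensen applied to the strictly convex function $t\mapsto t^{1-p}$; the paper instead first establishes an open-curve version of the inequality (Theorem~\ref{thm:LiYau-Bp-open}), proves the same algebraic estimate via H\"older's inequality combined with the HM--AM inequality, split into the cases $p>2$ and $1<p\le 2$, and then deduces the closed case by cutting the closed curve open into a curve of multiplicity $m+1$. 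Your Jensen argument is case-free and yields the rigidity (equal lengths) directly from strict convexity, whereas the paper must trace equality through both H\"older and HM--AM. What the paper's detour buys is the open-curve multiplicity inequality and its rigidity (Theorems~\ref{thm:LiYau-Bp-open} and \ref{thm:Open-optimality}) as stand-alone results, which are reused later (e.g.\ in Lemma~\ref{lem:nise-LiYau} for the network application); your argument proves only the closed-curve statement, but does so correctly and more economically.
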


In addition, as in the case of $p=2$ \cite{Miura_LiYau}, it is easy to see that if the multiplicity $m$ is even, then our inequality is always optimal.

\begin{theorem}[Optimality for even multiplicity]\label{thm:LiYau-optimal-even}
Let $p\in(1,\infty)$.
If $m\geq2$ is an even integer, then there exists a closed curve $\gamma \in W^{2,p}_{\rm imm}(\mathbf{T}^1;\R^2)$ with a point of multiplicity $m$ such that
\begin{align}\notag
    \overline{\mathcal{B}}_p[\gamma] = \varpi^*_p m^p.
\end{align}
In particular, we can take $\gamma$ to be an $\frac{m}{2}$-fold figure-eight $p$-elastica.
\end{theorem}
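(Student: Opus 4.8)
The plan is to produce the equality curve explicitly by concatenating half-fold figure-eight $p$-elasticae, exploiting that for \emph{even} $m$ the constituent leaves pair off into full figure-eights and thus close up without any angle-matching obstruction. Concretely, I would set $k:=m/2\in\N$ and take $\gamma$ to be the $\frac{m}{2}$-fold figure-eight $p$-elastica, realized as the $k$-fold cover of the closed figure-eight $p$-elastica $\gamma_0$ furnished by the classification in \cite{MYarXiv2203}. The first point to record is that $\gamma$ is a bona fide element of $W^{2,p}_{\rm imm}(\mathbf{T}^1;\R^2)$: since $\gamma_0$ is a genuine closed $p$-elastica, precomposing its constant-speed parameterization on $\mathbf{T}^1=\R/\Z$ with $t\mapsto kt$ again yields a closed immersion of class $W^{2,p}$, with the same image and the same curvature traversed $k$ times.

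Next I would verify the multiplicity. Writing $P$ for the central crossing of $\gamma_0$, one has $\#\gamma_0^{-1}(P)=2$, and after passing to the $k$-fold cover $\#\gamma^{-1}(P)=2k=m$; hence $\gamma$ has a point of multiplicity $m$, as required.

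It then remains to compute the normalized energy. Here I would use that $\gamma$, being the $\frac{m}{2}$-fold figure-eight, is precisely a closed $m$-leafed $p$-elastica in the sense of Definition~\ref{def:m-leaf}: it consists of $m$ half-fold figure-eight leaves $\gamma_{\rm hf}$ of one common length $\ell_0$. Since $\overline{\mathcal{B}}_p[\gamma_{\rm hf}]=\varpi^*_p$ by the definition recorded in \eqref{eq:def-varpi_p}, and since length and $\int_\gamma|k|^p\,ds$ are additive over the $m$ leaves, we obtain $\mathcal{L}[\gamma]=m\ell_0$ and $\mathcal{B}_p[\gamma]=mb_0$ (with $\ell_0,b_0$ the length and $p$-bending energy of one leaf), whence by the scale-invariant definition \eqref{eq:def-normalizedBp},
\[
\overline{\mathcal{B}}_p[\gamma]=(m\ell_0)^{p-1}(mb_0)=m^{p}\ell_0^{p-1}b_0=m^{p}\,\overline{\mathcal{B}}_p[\gamma_{\rm hf}]=\varpi^*_p m^p,
\]
as desired.

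I do not expect a serious analytic obstacle in this argument: all the genuine content of optimality lies in the \emph{contrast} with the odd case, where closing up $m$ leaves into a single immersed curve forces the leaf angle $2\phi^*(p)$ to be commensurable with $\pi$ and generically fails (cf.\ Theorem~\ref{thm:phi*-decrease} and the irrationality of $\phi^*(p)/\pi$). For even $m$ this obstruction simply disappears, since the leaves combine into full figure-eights, each already closing smoothly on its own. The only point that requires care is the (routine) verification, drawn from the classification in \cite{MYarXiv2203}, that the figure-eight $p$-elastica is $C^1$---indeed $W^{2,p}$---across its central crossing, so that its $k$-fold cover is a closed curve rather than one with a corner; once this is in hand the construction and the energy identity above are immediate.
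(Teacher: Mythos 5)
Your proposal is correct and follows essentially the same route as the paper: take the $\frac{m}{2}$-fold figure-eight $p$-elastica, observe it is a closed $m$-leafed $p$-elastica with a point of multiplicity $m$, and compute $\overline{\mathcal{B}}_p=\varpi^*_p m^p$ by additivity of length and bending energy over the $m$ leaves (the paper packages this computation as Proposition~\ref{prop:leaf-energy} and the closedness as Lemma~\ref{lem:even-leafed} together with Remark~\ref{rem:n-fold_8}, which you reprove inline).
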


As for odd multiplicity, the optimality is much more delicate (and does not hold for $p=2$ as discussed).
However, for $p\neq2$ we can apply Theorem \ref{thm:phi*-decrease} to find many new phenomena that recover optimality, even for closed planar curves with odd multiplicity.
For example, there is a dense set $S\subset(1,\infty)$ such that each exponent $p\in S$ retrieves the optimality for all but a finite number of odd multiplicities (Theorem~\ref{thm:LiYau-optimal-odd}).
In particular, as a very peculiar example, we discover a unique exponent for which inequality \eqref{eq:LiYau-Bp} becomes fully optimal.

\begin{theorem}[Unique exponent for full optimality]\label{thm:LiYau-full-optimal}
There exists a unique exponent $p\in(1,\infty)$ with the following property: For every integer $m\geq2$ there exists a closed curve $\gamma\in W^{2,p}_{\rm imm}(\mathbf{T}^1;\R^2)$ with a point of multiplicity $m$ such that 
\begin{align*}
    \overline{\mathcal{B}}_{p}[\gamma] = \varpi^*_p m^{p}.
\end{align*}
The unique exponent is given by $p=p_3$ defined as
\begin{align}\label{eq:def-p3}
p_3:=(\phi^*)^{-1}(\tfrac{\pi}{3})\simeq 1.5728.
\end{align}
\end{theorem}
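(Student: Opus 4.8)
The plan is to reduce the statement to an elementary Diophantine condition on the single scalar $\phi^*(p)$, and then to pin the exponent down by testing the smallest odd multiplicity $m=3$. First I would invoke the rigidity half of Theorem~\ref{thm:LiYau-Bp}: for a fixed $p$, inequality~\eqref{eq:LiYau-Bp} is attained at multiplicity $m$ if and only if there is a closed $m$-leafed $p$-elastica. Consequently, the exponents with the full-optimality property are exactly those $p$ for which a closed $m$-leafed $p$-elastica exists for \emph{every} integer $m\ge2$, and the whole problem becomes a question of existence of these assembled curves.

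Next I would record the assembly condition that governs such existence, which is the geometric heart of the construction behind Theorems~\ref{thm:LiYau-optimal-even} and \ref{thm:LiYau-optimal-odd}. A closed $m$-leafed $p$-elastica is built by joining $m$ congruent half-fold figure-eight $p$-elasticae at one common endpoint $P$, and since all leaves share their endpoints the positional constraint is automatic; the only requirement is $C^1$-matching of unit tangents at each of the $m$ visits to $P$ (forced because $W^{2,p}\hookrightarrow C^1$ for $p>1$). As the two endpoint tangents of a single leaf enclose the angle $2\phi^*(p)$ by \eqref{eq:phi*}, each leaf advances the tangent direction by $\pm2\phi^*(p)$ according to its orientation, and closing the curve up amounts to
\[
 d\,\frac{\phi^*(p)}{\pi}\in\Z \qquad\text{for some } d\in\Z \text{ with } |d|\le m \text{ and } d\equiv m \ (\mathrm{mod}\ 2),
\]
where $d$ counts the signed excess of the two orientations. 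For even $m$ the choice $d=0$ is always available, which recovers Theorem~\ref{thm:LiYau-optimal-even}; for odd $m$ the integer $d$ must itself be odd.

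The uniqueness is then forced by the case $m=3$, where $d\in\{\pm1,\pm3\}$. By Theorem~\ref{thm:phi*-decrease} we have $\phi^*(p)/\pi\in(0,\tfrac12)$, so $d=\pm1$ is impossible, while $d=\pm3$ forces $\phi^*(p)/\pi=\tfrac13$. Since $\phi^*$ is a continuous strictly decreasing bijection from $(1,\infty)$ onto $(0,\tfrac\pi2)$, again by Theorem~\ref{thm:phi*-decrease}, this determines $p$ uniquely as $p=(\phi^*)^{-1}(\tfrac\pi3)=p_3$. Conversely, at $p=p_3$ one has $\phi^*(p_3)/\pi=\tfrac13$, so for every odd $m\ge3$ the choice $d=3$ satisfies $|3|\le m$ and $3\equiv m\ (\mathrm{mod}\ 2)$, while every even $m$ is handled by $d=0$; hence a closed $m$-leafed $p_3$-elastica exists for all $m\ge2$, and the rigidity statement upgrades this to full optimality. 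Thus $p_3$ is the unique such exponent.

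The main obstacle is the assembly condition in the second step: one must verify rigorously that tangent-matching at $P$ is the \emph{only} constraint, that both orientations of each leaf are genuinely admissible, and that the resulting curve is a legitimate immersed $W^{2,p}$ closed curve of multiplicity exactly $m$. This is precisely the bookkeeping — the per-leaf advance $\pm2\phi^*(p)$ together with the parity constraint $d\equiv m\ (\mathrm{mod}\ 2)$ — that is developed for the even and odd optimality results; once it is in place, the remaining number-theoretic argument isolating $\phi^*(p)/\pi=\tfrac13$ is immediate.
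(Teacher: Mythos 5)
Your proposal is correct and follows essentially the same route as the paper: reduce via the rigidity part of Theorem~\ref{thm:LiYau-Bp} to the existence of closed $m$-leafed $p$-elasticae for all $m$, characterize that existence by the cyclic tangent-matching condition $\sum_i\sigma_i\,2\phi^*(p)\in2\pi\Z$ (this is exactly Lemma~\ref{lem:character}, whose content you flag as the ``main obstacle''), and then pin down $p$ by testing $m=3$, which forces $\phi^*(p)=\pi/3$ and hence $p=p_3$ by Theorem~\ref{thm:phi*-decrease} (the paper packages this as Proposition~\ref{prop:3-leafed}, and your $d=0$/$d=3$ constructions for even/odd $m$ are the paper's Lemma~\ref{lem:even-leafed} and Proposition~\ref{prop:odd-leafed}). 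Your explicit signed-excess parameter $d$ with $|d|\le m$ and $d\equiv m\pmod 2$ is just a cleaner bookkeeping of the same argument.
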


Moreover, for an odd $m\geq3$, if we define $p_m:=(\phi^*)^{-1}(\tfrac{\pi}{m})$,
then the corresponding closed planar $m$-leafed $p_m$-elastica is unique (up to invariances) and has $m$-fold rotational symmetry, cf.\ Figure \ref{fig:35-leaf}.
These exponents can be used to ensure a ``thresholding'' phenomenon; for any given odd integer $2\ell+1\geq3$ there is an exponent $p=p_{2\ell+1}$ for which optimality holds if and only if the multiplicity $m$ is none of $3,5,\dots,2\ell-1$ (see Theorem~\ref{thm:thresholding}).

\begin{center}
    \begin{figure}[htbp]
      \includegraphics[scale=0.2]{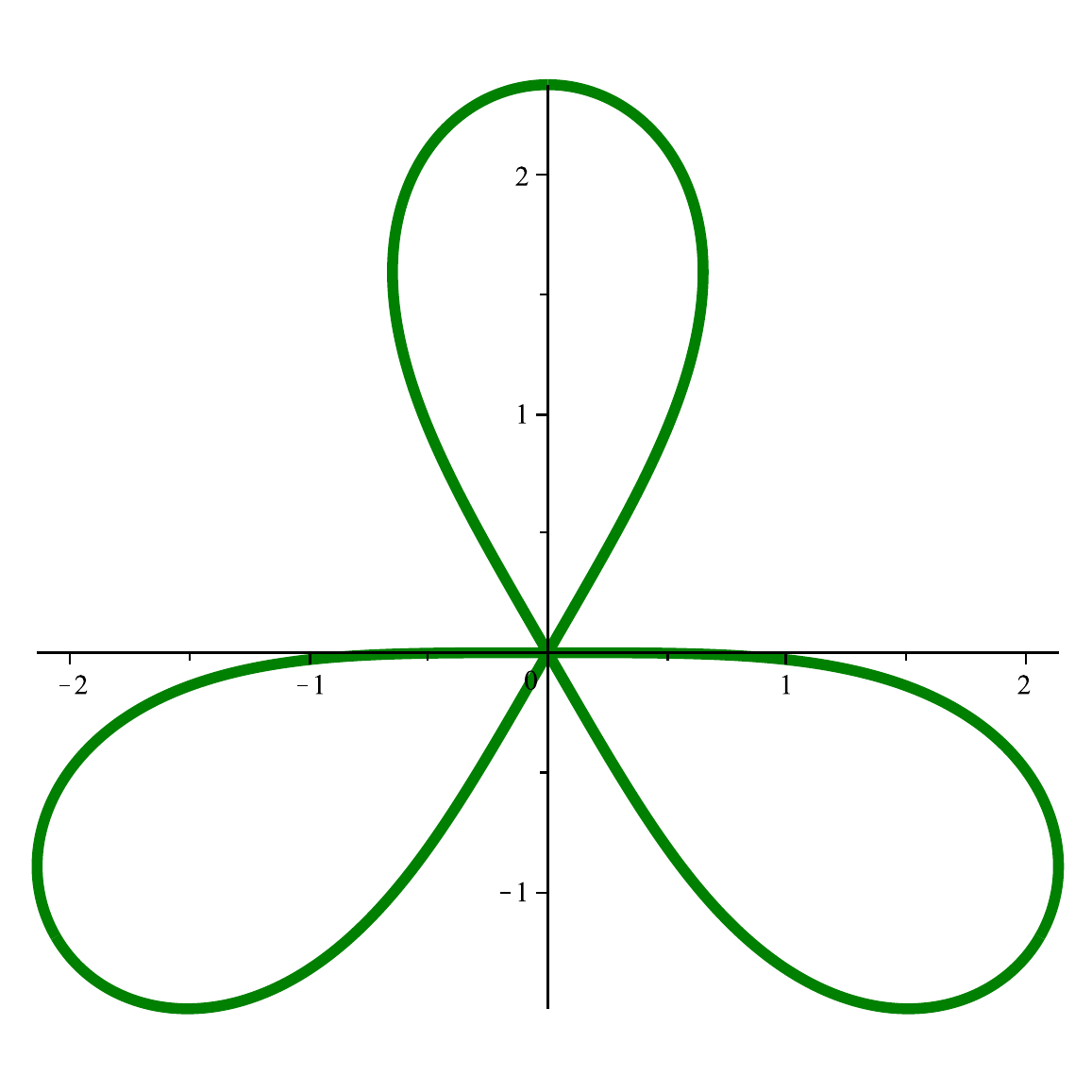}
  \hspace{30pt}
      \includegraphics[scale=0.2]{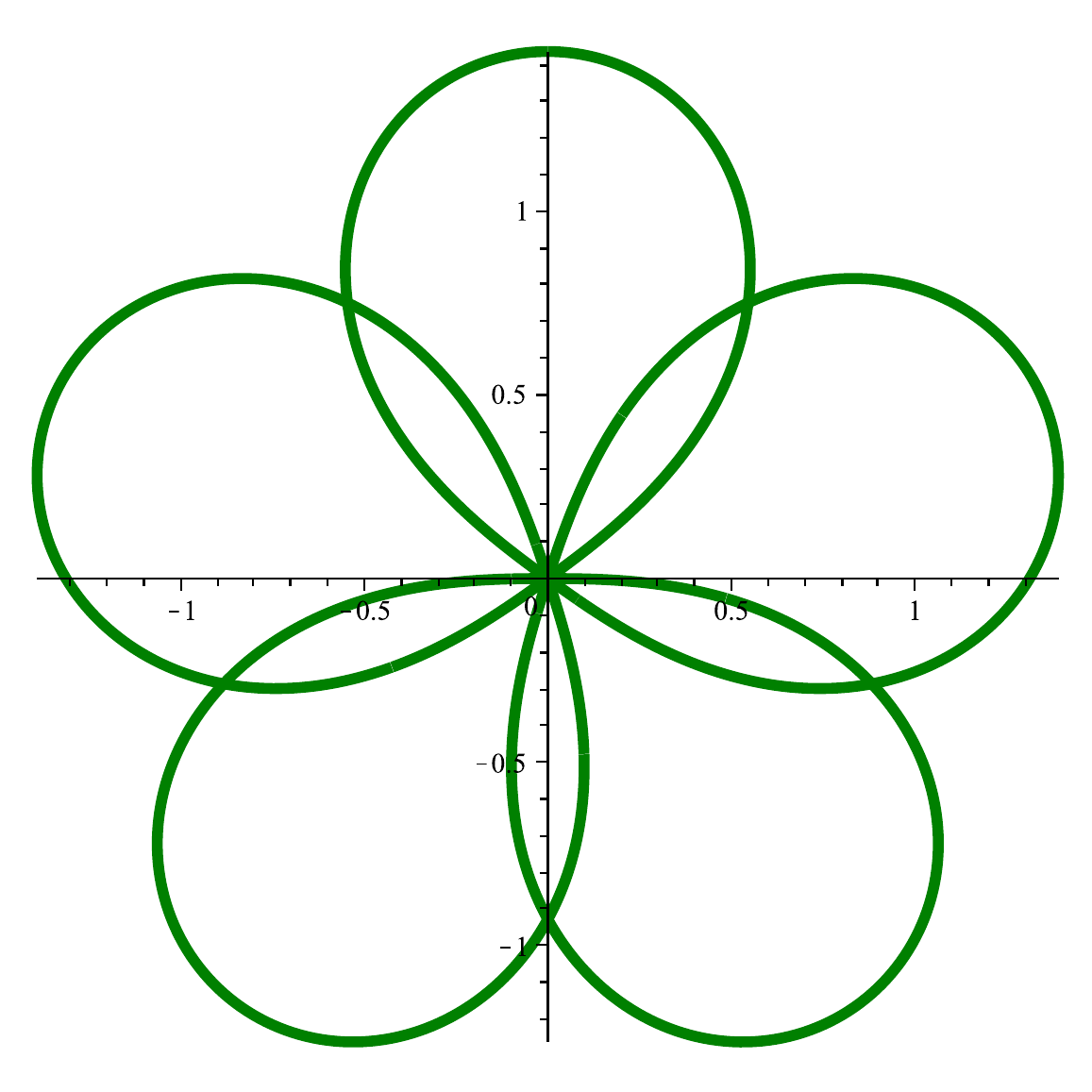}
  \caption{The $3$-leafed $p_3$-elastica and $5$-leafed $p_5$-elastica.}
  \label{fig:35-leaf}
  \end{figure}
\end{center}

\subsection{Minimal $p$-elastic networks}
Another application is about existence of planar networks composed by three curves minimizing the normalized $p$-bending energy.
Such a network structure is of recent interest particularly in geometric variational problems or flows as a model case with possible structural changes.
See \cite{BGN12, DP17, DLP19, GMP19, DNP20, GMP20, NP20, Miura_LiYau} for some instances of static and dynamical studies of elastic networks.

In this paper, for given $\alpha\in(0,\pi)$, a triplet of immersed curves $(\gamma_1, \gamma_2, \gamma_3) \in W^{2,p}_{\rm imm}(0,1; \R^2)^3$ is called \emph{$\Theta$-network with angles $(\alpha, \alpha, 2\pi-2\alpha)$} if $\gamma_1(0)=\gamma_2(0)=\gamma_3(0)$ and $\gamma_1(1)=\gamma_2(1)=\gamma_3(1)$, and if in addition the curves satisfy the angle condition
\begin{align*}
\left\langle \frac{\gamma_j'(0)}{|\gamma_j'(0)|}, \frac{\gamma_{j+1}'(0)}{|\gamma_{j+1}'(0)|} \right\rangle = \left\langle \frac{\gamma_j'(1)}{|\gamma_j'(1)|}, \frac{\gamma_{j+1}'(1)}{|\gamma_{j+1}'(1)|} \right\rangle 
=
\begin{cases}
\cos{\alpha} \quad &\text{if} \ \ j=1,2,  \\
\cos{(2\pi-2\alpha)} &\text{if} \ \ j=3, 
\end{cases}
\end{align*}
where we interpret $\gamma_4:=\gamma_1$.
Let $\Theta(p,\alpha)$ denote the class of all $\Theta$-networks with angles $(\alpha, \alpha, 2\pi-2\alpha)$, 
and for $\Gamma=(\gamma_1, \gamma_2, \gamma_3)\in \Theta(p,\alpha)$ we define
\[
\overline{\mathcal{B}}_p[\Gamma]:= \mathcal{L}[\Gamma]^{p-1}\mathcal{B}_p[\Gamma], 
\] 
where $\mathcal{L}[\Gamma]:=\sum_{i=1}^3 \mathcal{L}[\gamma_i]$ and $\mathcal{B}_p[\Gamma]:=\sum_{i=1}^3 \mathcal{B}_p[\gamma_i]$.
Applying Theorems~\ref{thm:uniqueness} and \ref{thm:phi*-decrease}, we can obtain the existence of minimal $\Theta$-networks in a certain range of the angle $\alpha$: 
\begin{theorem}[Existence of minimal $p$-elastic $\Theta$-networks]\label{thm:inf-network-alpha}
Let $p\in(1,\infty)$ and $\alpha\in(0,\pi)$. Let $\phi^*(p)\in(0,\pi/2)$ be the angle defined in \eqref{eq:phi*}.
Suppose that
\begin{align}\label{eq:angle-network}
0 < \alpha < \pi -\phi^*(p).
\end{align}
Then there exists a network $\bar{\Gamma} \in \Theta(p,\alpha)$ such that 
\begin{align}\label{eq:inf-network}
\overline{\mathcal{B}}_p[\bar{\Gamma}] = \inf_{\Gamma\in\Theta(p,\alpha) }\overline{\mathcal{B}}_p[\Gamma].
\end{align}
\end{theorem}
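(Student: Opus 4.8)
\section*{Proof proposal}

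The plan is to run the direct method, where the only real difficulty is to prevent degeneration of a minimizing sequence, namely the collapse of the two triple junctions into a single point; the hypothesis \eqref{eq:angle-network} will enter precisely to rule this out via a strict energy comparison. First I would exploit the scale invariance of $\overline{\mathcal{B}}_p$ to normalize every competitor so that $\mathcal{L}[\Gamma]=1$, and fix one junction at the origin by translation invariance. Reparametrizing each $\gamma_i$ by constant speed on $[0,1]$, an upper bound on $\overline{\mathcal{B}}_p[\Gamma]$ together with the length normalization yields a uniform $W^{2,p}$ bound on each curve. A minimizing sequence $\Gamma_k=(\gamma_1^k,\gamma_2^k,\gamma_3^k)$ then has a subsequence converging weakly in $W^{2,p}$ and strongly in $C^1$ to some $\Gamma=(\gamma_1,\gamma_2,\gamma_3)$. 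By the $C^1$-convergence the concurrence of endpoints and the angle conditions defining $\Theta(p,\alpha)$ survive in the limit, and weak lower semicontinuity of $\int|k|^p\,ds$ gives $\overline{\mathcal{B}}_p[\Gamma]\le\liminf_k\overline{\mathcal{B}}_p[\Gamma_k]$. The sole obstruction to $\Gamma\in\Theta(p,\alpha)$ is that some length fraction $\mathcal{L}[\gamma_i^k]$ may tend to $0$; since each arc is at least as long as the distance between the two junctions, such a collapse forces both junctions to coalesce to a single point $O$.

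The heart of the argument is to show that, under \eqref{eq:angle-network}, this collapse is incompatible with minimality. For a lower bound in the collapsed regime I would use Theorem~\ref{thm:uniqueness}: once the junctions coincide, each nondegenerate arc has coincident endpoints ($r=0$), so by the pinned uniqueness result its normalized energy is at least $\varpi^*_p$, with equality only for a half-fold figure-eight whose endpoint tangents subtend the fixed angle $2\phi^*(p)$ by Theorem~\ref{thm:phi*-decrease}. Combining this with the angle bookkeeping at $O$ — where the prescribed junction angles $(\alpha,\alpha,2\pi-2\alpha)$ must be matched against the rigid crossing angle $2\phi^*(p)$ of the optimal loops — yields a threshold value $\Lambda_p(\alpha)$, determined by $\varpi^*_p$ and $\phi^*(p)$, that bounds $\liminf_k\overline{\mathcal{B}}_p[\Gamma_k]$ from below along any collapsing sequence. (Alternatively one may bound the collapsed energy from below by concatenating two arcs into a closed curve with a point of multiplicity $2$ and invoking Theorem~\ref{thm:LiYau-Bp}, after accounting for the corner contributions at $O$.) For the matching upper bound I would construct an explicit competitor $\Gamma_\ast\in\Theta(p,\alpha)$ by splicing half-fold figure-eight pieces to a short, nearly straight third arc so as to realize the angles $(\alpha,\alpha,2\pi-2\alpha)$; the angle bookkeeping shows this splicing is geometrically feasible exactly when $\pi-\alpha>\phi^*(p)$, i.e.\ under \eqref{eq:angle-network}, and in that regime $\overline{\mathcal{B}}_p[\Gamma_\ast]<\Lambda_p(\alpha)$.

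Putting these together, for any degenerating minimizing sequence one would obtain
\[
\inf_{\Theta(p,\alpha)}\overline{\mathcal{B}}_p\le\overline{\mathcal{B}}_p[\Gamma_\ast]<\Lambda_p(\alpha)\le\liminf_k\overline{\mathcal{B}}_p[\Gamma_k],
\]
a contradiction. Hence the length fractions stay bounded below, the limit $\Gamma$ is immersed and lies in $\Theta(p,\alpha)$, and by lower semicontinuity it attains the infimum \eqref{eq:inf-network}. I expect the main obstacle to be the sharp energy comparison of the second paragraph: correctly quantifying the geometry at the collapsing junction — the interplay between the prescribed angle $\alpha$ and the rigid crossing angle $2\phi^*(p)$ of the optimal loops (and, if one uses the Li--Yau route, the corner contributions) — so as to produce a threshold $\Lambda_p(\alpha)$ that the figure-eight competitor strictly undercuts for every $\alpha<\pi-\phi^*(p)$. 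This is exactly the point where the continuity and strict monotonicity of $\phi^*$ from Theorem~\ref{thm:phi*-decrease} and the explicit value of the crossing angle are indispensable.
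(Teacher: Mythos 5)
Your overall architecture matches the paper's: direct method after normalization, a dichotomy in which the only failure mode is the collapse of one component and the coalescence of the two junctions, a lower bound for the collapsed configuration via the Li--Yau machinery, and an explicit competitor that strictly undercuts that bound. Two remarks on the lower bound: the sharp threshold is simply $2^p\varpi_p^*$, obtained (as in the paper's Lemma~\ref{lem:nise-LiYau}) by rotating one surviving loop so the tangents match at one junction and applying the \emph{open-curve} inequality of Theorem~\ref{thm:LiYau-Bp-open} with multiplicity $3$; no angle bookkeeping at $O$ enters, and your alternative via the closed-curve Theorem~\ref{thm:LiYau-Bp} does not directly apply because the concatenation has corners and so is not in $W^{2,p}_{\rm imm}(\mathbf{T}^1;\R^2)$. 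Also note that one must separately rule out two components degenerating simultaneously; the paper does this with Fenchel's theorem (Lemma~\ref{lem:M_LY-Lem5.2}) plus H\"older, which forces the bending energy to blow up if both curves of a pair shrink.

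The genuine gap is in your competitor. Taking $\gamma_1,\gamma_3$ to be literal half-fold figure-eight $p$-elasticae joined by a short segment cannot work: a half-fold figure-eight has coincident endpoints, so it cannot connect two distinct junctions, and its endpoint tangents subtend the rigid angle $2\phi^*(p)$, not $\alpha$; moreover, even formally, appending a segment of positive length $L_2$ to two figure-eights gives $\overline{\mathcal{B}}_p=(L_1+L_2+L_3)^{p-1}(\mathcal{B}_1+\mathcal{B}_3)>2^p\varpi_p^*$, i.e.\ the energy lies strictly \emph{above} the threshold, approaching it only as $L_2\downarrow0$. The paper instead takes $\gamma_1$ and $\gamma_3$ to be half-periods of a \emph{wavelike} $p$-elastica with modulus $q_\alpha$ determined by $2\arcsin q_\alpha=\alpha$, joined by the segment of length $2(2\E_{1,p}(q_\alpha)-\K_{1,p}(q_\alpha))>0$; hypothesis \eqref{eq:angle-network} is exactly what guarantees $q_\alpha<q^*(p)$, and the strict inequality $\overline{\mathcal{B}}_p[\Gamma^{p,q_\alpha}_{\rm wave}]<2^p\varpi_p^*$ then follows from a nontrivial monotonicity lemma: the map $q\mapsto(2\E_{1,p}(q)+\K_{1,p}(q))^{p-1}b(q)$ is strictly increasing, proved via the derivative formulae \eqref{eq:diff-ellipint} for the $p$-elliptic integrals. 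This quantitative step is the analytic heart of the theorem, and your proposal asserts the strict inequality without supplying any mechanism for it.
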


In particular, focusing on the special ``homogeneous'' angle condition $\alpha={2\pi}/{3}$, we may rephrase Theorem \ref{thm:inf-network-alpha} in terms of $p_3$ defined by \eqref{eq:def-p3}.
\begin{corollary}\label{cor:inf-network-sym}
If $p>p_3$,
then there exists a network $\bar{\Gamma} \in \Theta(p,\frac{2\pi}{3})$ such that 
$$
\overline{\mathcal{B}}_p[\bar{\Gamma}] = \inf_{\Gamma\in\Theta(p,\frac{2\pi}{3}) }\overline{\mathcal{B}}_p[\Gamma].
$$
\end{corollary}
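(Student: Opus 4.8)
The plan is to read this corollary as the specialization $\alpha=\tfrac{2\pi}{3}$ of Theorem~\ref{thm:inf-network-alpha}, so that the entire analytic content---existence of a minimizer via a direct method together with the mechanism (built on the Li--Yau type lower bound) that prevents loss of compactness through shrinking or degenerating branches---is already supplied by that theorem. Note that $\alpha=\tfrac{2\pi}{3}$ produces the homogeneous angle triple $(\tfrac{2\pi}{3},\tfrac{2\pi}{3},2\pi-\tfrac{4\pi}{3})=(\tfrac{2\pi}{3},\tfrac{2\pi}{3},\tfrac{2\pi}{3})$, which is admissible since $\tfrac{2\pi}{3}\in(0,\pi)$. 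What remains is purely to verify the angle hypothesis \eqref{eq:angle-network} and to rewrite it in the stated form $p>p_3$.

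First I would substitute $\alpha=\tfrac{2\pi}{3}$ into the admissibility condition \eqref{eq:angle-network}. The requirement $\tfrac{2\pi}{3}<\pi-\phi^*(p)$ is equivalent to
\[
\phi^*(p)<\pi-\tfrac{2\pi}{3}=\tfrac{\pi}{3}.
\]
Next I would invoke Theorem~\ref{thm:phi*-decrease}, which asserts that $p\mapsto\phi^*(p)$ is continuous and strictly decreasing from $\pi/2$ to $0$; in particular it is a strictly decreasing bijection onto $(0,\pi/2)$, so its inverse $(\phi^*)^{-1}$ is well defined and order-reversing. Since $\tfrac{\pi}{3}\in(0,\pi/2)$ lies in the range, the value $p_3=(\phi^*)^{-1}(\tfrac{\pi}{3})$ from \eqref{eq:def-p3} satisfies $\phi^*(p_3)=\tfrac{\pi}{3}$, and strict monotonicity yields the equivalence
\[
\phi^*(p)<\tfrac{\pi}{3}\iff p>p_3.
\]
Hence the angle hypothesis \eqref{eq:angle-network} with $\alpha=\tfrac{2\pi}{3}$ holds exactly when $p>p_3$.

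Finally, under the assumption $p>p_3$ the pair $(p,\tfrac{2\pi}{3})$ satisfies all the hypotheses of Theorem~\ref{thm:inf-network-alpha}, and applying that theorem directly produces a network $\bar{\Gamma}\in\Theta(p,\tfrac{2\pi}{3})$ attaining the infimum of $\overline{\mathcal{B}}_p$, as claimed. I do not anticipate any genuine obstacle here: the corollary is a clean reformulation, and the only point demanding (minor) care is that the translation of the threshold relies on the \emph{strictness} of the monotonicity in Theorem~\ref{thm:phi*-decrease}---it is exactly this strictness that makes $\phi^*$ invertible and upgrades the inequality $\phi^*(p)<\tfrac{\pi}{3}$ to the sharp statement $p>p_3$ with the correct direction.
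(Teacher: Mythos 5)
Your proposal is correct and coincides with the paper's (implicit) argument: the corollary is obtained by specializing Theorem~\ref{thm:inf-network-alpha} to $\alpha=\tfrac{2\pi}{3}$ and translating the hypothesis \eqref{eq:angle-network} into $\phi^*(p)<\tfrac{\pi}{3}$, which by the strict monotonicity of $\phi^*$ in Theorem~\ref{thm:phi*-decrease} and the definition \eqref{eq:def-p3} is exactly $p>p_3$. No gaps.
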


Corollary~\ref{cor:inf-network-sym} directly extends Dall'Acqua--Novaga--Pluda's existence result for $p=2$ \cite{DNP20} (see also \cite{Miura_LiYau}) to $p>p_3$.
Recall that numerically $p_3\simeq 1.5728$.
Analytically we can ensure at least $p_3<2$ thanks to Theorem~\ref{thm:phi*-decrease} with the fact that $\phi^*(2) <{\pi}/{3}=\phi^*(p_3)$ (cf.\ \cite[Lemma 3.11]{Miura_LiYau}).

In general, the existence of minimal elastic networks does not follow from a standard direct method due to the lack of compactness, namely the possibility that a component-curve degenerates into a point (see Section~\ref{sect:network} for details).
Our proof of Theorem \ref{thm:inf-network-alpha} follows the general strategy of \cite{Miura_LiYau} strongly inspired by \cite{DNP20}.
Although the proof in \cite{DNP20} contains a numerical part, the one in \cite{Miura_LiYau} gives a different analytic proof.
Our proof here is also fully analytic.
Along the way we establish and use new monotonicity results involving $p$-elliptic integrals.

We close the introduction by mentioning a few of many open problems. 
Concerning uniqueness of global minimizers, here we addressed the pinned boundary condition (Theorem~\ref{thm:uniqueness}), but as for the clamped boundary condition, only a few results are available even for $p=2$ (see e.g.\ \cite{Miura20} for the straightened case and \cite{MMR23} for the cuspidal case) and in particular it is widely open for $p\neq2$ (except for the obvious closed case).
Concerning the Li--Yau type inequality, the shape of a minimizer of $\overline{\mathcal{B}}_p$ among $W^{2,p}$-immersed closed planar curves with an odd multiplicity $m\geq3$ is totally open unless equality in \eqref{eq:LiYau-Bp} is attained; in particular, if $p=2$, it is open for all odd $m\geq3$.
Finally, concerning minimal $p$-elastic networks, it is not clear whether the assumption $p>p_3$ in Corollary~\ref{cor:inf-network-sym} (or \eqref{eq:angle-network} in Theorem~\ref{thm:inf-network-alpha}) is optimal.
We expect that the same existence result would not hold at least for $p$ sufficiently close to $1$, but even this is remained open.

\subsection{Organization of the paper}
This paper is organized as follows: 
In Section~\ref{sect:preliminary} we prepare notation and known results.
In Section~\ref{sect:classification} we prove Theorem~\ref{thm:classify-pinned}. 
In Section~\ref{sect:min-unique} we complete the proof of Theorems~\ref{thm:uniqueness} and \ref{thm:phi*-decrease}.
Sections~\ref{sect:Li-Yau} and \ref{sect:network} are about applications to Li--Yau type inequalities and networks, respectively.

\subsection*{Acknowledgments}
The first author is supported by JSPS KAKENHI Grant Numbers 18H03670, 20K14341, and 21H00990, and by Grant for Basic Science Research Projects from The Sumitomo Foundation.
The second author is supported by JSPS KAKENHI Grant Number 22K20339.

\section{Preliminary}\label{sect:preliminary}

In this section we first recall the definitions and fundamental properties of $p$-elliptic integrals and functions introduced in \cite{MYarXiv2203}, which we use throughout this paper.
Then we rigorously define pinned $p$-elasticae, and recall some known facts for $p$-elasticae.

Hereafter, we always let $p\in(1,\infty)$ denote an arbitrary exponent unless an additional condition is specified.

\subsection{$p$-Elliptic integrals}\label{sect:p-Jacobi}

\begin{definition}[$p$-Elliptic integrals of the first kind] \label{def:K_p}
The incomplete $p$-elliptic integrals of the first kind $\mathrm{F}_{1,p}(x,q)$ and $\mathrm{F}_{2,p}(x,q)$ of modulus $q\in[0,1)$, where $x\in\R$, are defined by 
\begin{align*}
&\mathrm{F}_{1,p}(x, q):=\int_0^{x} \frac{|\cos\theta|^{1-\frac{2}{p}} }{ \sqrt[]{1-q^2\sin^2\theta} }\,d\theta, \quad
 \mathrm{F}_{2,p}(x,q):=\int_0^{x} \frac{1}{ \sqrt[p]{1-q^2\sin^2\theta} }\,d\theta,
\end{align*}
and also the corresponding complete $p$-elliptic integrals $\K_{1,p}(q)$ and $\K_{2,p}(q)$ by 
\begin{align*}
    \K_{1,p}(q):=\mathrm{F}_{1,p}(\pi/2, q), \quad \K_{2,p}(q):=\mathrm{F}_{2,p}(\pi/2, q).
\end{align*}
In addition, for $q=1$,
\[
\mathrm{F}_{1,p}(x,1)=\mathrm{F}_{2,p}(x,1):=\displaystyle \int_0^{x} \frac{d\theta}{(\cos \theta)^{\frac{2}{p} } }, \quad \text{where}\ 
\begin{cases}
x\in(-\frac{\pi}{2},\frac{\pi}{2}) \quad &\text{if} \ \ 1< p \leq  2,  \\
x\in\R &\text{if} \ \ p>2,
\end{cases}
\]
and
\begin{align} \label{eq:K_p}
\K_{1,p}(1)=\K_{2,p}(1)=\K_{p} (1) := 
\begin{cases}
\infty \quad &\text{if} \ \ 1< p \leq  2,  \\
\displaystyle \int_0^{\frac{\pi}{2}} \frac{d\theta}{(\cos \theta)^{\frac{2}{p} } } < \infty &\text{if} \ \ p>2.
\end{cases}
\end{align}
\end{definition}

\begin{definition}[$p$-Elliptic integrals of the second kind]\label{def:E_p}
The incomplete $p$-elliptic integrals of the second kind $\E_{1,p}(x,q)$ and $\E_{2,p}(x,q)$ 
of modulus $q\in[0,1]$, where $x\in\R$, are defined by 
\[
\E_{1,p}(x,q):=\int_0^{x} \sqrt{1-q^2 \sin^2 \theta}\, |\cos \theta|^{1-\frac{2}{p}}\,d\theta, 
\quad
\E_{2,p}(x,q):=\int_0^{x} \sqrt[p]{1-q^2 \sin^2 \theta} \,d\theta,
\]
and also the corresponding complete $p$-elliptic integrals $\E_{1,p}(q)$ and $\E_{2,p}(q)$ by 
\begin{align*}
    \E_{1,p}(q):=\E_{1,p}(\pi/2, q), \quad \E_{2,p}(q):=\E_{2,p}(\pi/2, q).
\end{align*}
\end{definition}

\begin{remark}
Since $1-\frac{2}{p} >-1$, both $\Fcn(x,q)$ and $\Ecn(x,q)$ are well defined for each $x\in\R$ and $q\in[0,1)$.
By definition and periodicity of the integrand we deduce the following quasiperiodicity:
\begin{align}\label{eq:period_Ecn}
\begin{split}
\E_{i,p}(x+ n\pi , q) &=  \E_{i,p}(x , q) + 2n\E_{i,p}(q), \\
\F_{i,p}(x+ n\pi , q) &=  \K_{i,p}(x , q) + 2n\K_{i,p}(q),
\end{split}
\end{align}
for any $x\in \R$, $q\in[0,1)$, $n \in \Z$, and $i=1,2$.
\end{remark}

\begin{remark}
In particular we also have the following alternative representations:
\begin{align}\label{eq:betsu-K1p}
\K_{1,p}(q)=\int_0^{\frac{\pi}{2}} \frac{|\cos\theta|^{1-\frac{2}{p}} }{ \sqrt[]{1-q^2\sin^2\theta} }\,d\theta
=\int_0^1\frac{1}{\sqrt{1-q^2z^2}}(1-z^2)^{-\frac{1}{p}}\,dz,
\end{align}
\begin{align}\label{eq:betsu-E1p}
    \E_{1,p}(q)=\int_0^{\frac{\pi}{2}}\sqrt{1-q^2 \sin^2 \theta}\, |\cos \theta|^{1-\frac{2}{p}}\,d\theta
=\int_0^1\sqrt{1-q^2z^2}(1-z^2)^{-\frac{1}{p}}\,dz.
\end{align}
\end{remark}

Now we discuss some basic properties.
As in the classical case $p=2$, the complete $p$-elliptic integrals $\E_{1,p}(q)$ and $\K_{1,p}(q)$ are not only monotone with respect to modulus $q\in[0,1)$ but also related by explicit derivative formulae:

\begin{proposition}\label{prop:EK-monotone}
The function $\K_{1,p}(q)$ is strictly increasing with respect to $q\in[0,1)$ (including $q=1$ if $p>2$), and the function $\E_{1,p}(q)$ is strictly decreasing with respect to $q\in[0,1)$. 
Moreover, for $q\in (0,1)$,
\begin{align} \label{eq:diff-ellipint}
\begin{split}
    & \frac{d}{dq}\mathrm{K}_{1,p}(q) =
    \frac{(2-\frac{2}{p})\mathrm{E}_{1,p}(q)-(2-\frac{2}{p}-q^2)\mathrm{K}_{1,p}(q)}{q(1-q^2)}, \\
    & \frac{d}{dq}\mathrm{E}_{1,p}(q) = \frac{\mathrm{E}_{1,p}(q)-\mathrm{K}_{1,p}(q)}{q}.
\end{split}
\end{align}
\end{proposition}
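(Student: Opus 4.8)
The plan is to separate the two claims---monotonicity and the two derivative formulae---and to treat the formula for $\E_{1,p}$ by a direct manipulation while the formula for $\K_{1,p}$ requires an integration by parts. Throughout I write $C(\theta):=|\cos\theta|^{1-\frac2p}$ and $D(\theta,q):=1-q^2\sin^2\theta$, so that $\K_{1,p}(q)=\int_0^{\pi/2}C\,D^{-1/2}\,d\theta$ and $\E_{1,p}(q)=\int_0^{\pi/2}C\,D^{1/2}\,d\theta$. For the monotonicity I would differentiate under the integral sign, which is justified on any compact subinterval of $[0,1)$ because there $D\ge 1-q^2>0$ and the hypothesis $1-\frac2p>-1$ makes $C$ integrable near $\pi/2$, so the integrands and their $q$-derivatives are dominated uniformly. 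Since $\partial_q D^{-1/2}=q\sin^2\theta\,D^{-3/2}>0$ and $\partial_q D^{1/2}=-q\sin^2\theta\,D^{-1/2}<0$ pointwise on $(0,\pi/2)$, the functions $\K_{1,p}$ and $\E_{1,p}$ are strictly increasing and strictly decreasing on $[0,1)$, respectively. To include $q=1$ when $p>2$ I would argue directly that the integrand defining $\K_{1,p}(1)$ strictly dominates that of $\K_{1,p}(q)$ pointwise for each $q\in[0,1)$, whence $\K_{1,p}(q)<\K_{1,p}(1)$.

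The formula for $\E_{1,p}$ is immediate. Differentiating under the integral gives $\frac{d}{dq}\E_{1,p}(q)=-q\int_0^{\pi/2}\sin^2\theta\,C\,D^{-1/2}\,d\theta$, while the elementary identity $D^{1/2}-D^{-1/2}=-q^2\sin^2\theta\,D^{-1/2}$ shows that $\E_{1,p}(q)-\K_{1,p}(q)=-q^2\int_0^{\pi/2}\sin^2\theta\,C\,D^{-1/2}\,d\theta$. Comparing the two expressions yields $\frac{d}{dq}\E_{1,p}(q)=(\E_{1,p}(q)-\K_{1,p}(q))/q$ at once.

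The formula for $\K_{1,p}$ is the heart of the matter. Differentiation gives $\frac{d}{dq}\K_{1,p}(q)=q\int_0^{\pi/2}\sin^2\theta\,C\,D^{-3/2}\,d\theta$, and the substitution $q^2\sin^2\theta=1-D$ rewrites this as $\frac{d}{dq}\K_{1,p}(q)=(J-\K_{1,p}(q))/q$, where $J:=\int_0^{\pi/2}C\,D^{-3/2}\,d\theta$ is a new integral of order $-\frac32$ not directly expressible through $\K_{1,p}$ and $\E_{1,p}$. To eliminate $J$ I would integrate by parts the auxiliary function $h(\theta):=(\cos\theta)^{2-\frac2p}\sin\theta\,D^{-1/2}$, whose boundary values at $0$ and $\pi/2$ both vanish precisely because $2-\frac2p>0$ for $p>1$. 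Computing $h'$ produces the powers $(\cos\theta)^{1-\frac2p}$ and $(\cos\theta)^{3-\frac2p}$; re-expressing the latter via $\cos^2\theta=(D-(1-q^2))/q^2$ and using $q^2\sin^2\theta=1-D$, the relation $\int_0^{\pi/2}h'\,d\theta=0$ collapses to the linear identity $(1-q^2)J=(2-\tfrac2p)\E_{1,p}(q)-(1-\tfrac2p)\K_{1,p}(q)$. Solving for $J$ and inserting it into $\frac{d}{dq}\K_{1,p}(q)=(J-\K_{1,p}(q))/q$ gives exactly the stated formula.

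The main obstacle is the choice of the auxiliary function $h$ and the bookkeeping in $h'$: one must pick the exponent $2-\frac2p$ so that, after the two substitutions reduce every power of $\cos\theta$ back to the base $(\cos\theta)^{1-\frac2p}$, exactly the three quantities $\K_{1,p}$, $\E_{1,p}$ and $J$ survive with no irreducible integral left over. Once this reduction is set up, together with the routine justification of differentiation under the integral sign and the vanishing of the boundary terms, the remaining algebra is mechanical.
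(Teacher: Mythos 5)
Your argument is correct, and it diverges from the paper's proof precisely where the real work lies. For the monotonicity the two proofs coincide: both differentiate under the integral sign and read off the sign of $\partial_q(1-q^2\sin^2\theta)^{\mp 1/2}$ (your explicit treatment of the endpoint $q=1$ for $p>2$ by pointwise domination is a small bonus the paper leaves implicit). For the derivative formulae \eqref{eq:diff-ellipint}, however, the paper does not compute anything: it rewrites $\K_{1,p}$ and $\E_{1,p}$ via the substitution $z=\sin\theta$ as the generalized complete elliptic integrals $\mathsf{K}_{p,2,2}$ and $\mathsf{E}_{p,2,2}$ of Takeuchi and then quotes his derivative identities \eqref{eq:T16prop2} with $(a,b,c)=(p,2,2)$. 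You instead give a self-contained derivation: the $\E_{1,p}$ formula by the elementary identity $D^{1/2}-D^{-1/2}=-q^2\sin^2\theta\,D^{-1/2}$, and the $\K_{1,p}$ formula by integrating by parts the auxiliary function $h(\theta)=(\cos\theta)^{2-\frac2p}\sin\theta\,D^{-1/2}$ to eliminate the order-$(-3/2)$ integral $J$; I have checked that $h$ vanishes at both endpoints, that $q^2h'=C\bigl[(\tfrac2p-1)D^{-1/2}+(2-\tfrac2p)D^{1/2}-(1-q^2)D^{-3/2}\bigr]$, and that this yields $(1-q^2)J=(2-\tfrac2p)\E_{1,p}(q)-(1-\tfrac2p)\K_{1,p}(q)$ and hence the stated formula. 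Your route buys independence from the external reference (and in effect reproves the $(a,b,c)=(p,2,2)$ case of Takeuchi's proposition) at the cost of a more delicate computation; the paper's route is shorter and situates the identities inside a known general family.
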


In fact, the above derivative formulae are special cases of those for more general elliptic integrals obtained by Takeuchi \cite{Takeuchi16}: For $a,b,c\in(1,\infty)$ and $q\in[0,1)$, let
\[
\mathsf{K}_{a,b,c}(q):= \int_0^1 \frac{1}{ \sqrt[a]{1-z^b} \sqrt[c]{1-q^bz^b} }\,dz, \quad 
\mathsf{E}_{a,b,c}(q):= \int_0^1 \frac{ \sqrt[c]{1-q^bz^b} }{ \sqrt[a]{1-z^b} }\,dz.
\]
By \cite[Proposition 2]{Takeuchi16}, letting $c':={c}/{(c-1)}$ and $\xi:=1+ {b}/{c} - {b}/{a}$, we have
\begin{align}\label{eq:T16prop2}
\begin{split}
  &\frac{d}{dq}\mathsf{K}_{a,b,c'}(q) = \frac{\xi\mathsf{E}_{a,b,c}(q)-(\xi-q^b)\mathsf{K}_{a,b,c'}(q)}{q(1-q^b)}, \\
  &\frac{d}{dq}\mathsf{E}_{a,b,c}(q) = \frac{b(\mathsf{E}_{a,b,c}(q)-\mathsf{K}_{a,b,c'}(q))}{cq}.
\end{split}
\end{align}

\begin{proof}[Proof of Proposition \ref{prop:EK-monotone}]
Monotonicity follows by differentiating in $q\in(0,1)$:
\begin{align} \label{eq:EK-bibun-q}
\begin{split}
    \frac{d}{dq}\Kcn(q) &= \int_0^{\frac{\pi}{2}}\frac{q\sin^2\theta|\cos\theta|^{1-\frac{2}{p}} }{ (1-q^2\sin^2\theta)^{\frac{3}{2}} }\,d\theta>0, \\
    \frac{d}{dq}\E_{1,p}(q)&= -\int_0^{\frac{\pi}{2}}\frac{q\sin^2\theta |\cos\theta|^{1-\frac{2}{p}}}{ \sqrt{1-q^2\sin^2\theta } }\,d\theta <0.
\end{split}
\end{align}
In addition, since $\E_{1,p}(q)=\mathsf{E}_{p,2,2}(q)$ and $\K_{1,p}(q)=\mathsf{K}_{p,2,2}(q)$ hold (cf.\ \eqref{eq:betsu-K1p} and \eqref{eq:betsu-E1p}), by using \eqref{eq:T16prop2}, we obtain \eqref{eq:diff-ellipint}.
\end{proof}

We will also consider the ratio of the first and second complete $p$-elliptic integrals.

\begin{lemma}[{\cite[Lemma 2]{nabe14}}]\label{lem:nabe-lem2}
Let $Q_p:[0,1)\to \R$ be defined by
\begin{align} \label{eq:1116-4}
Q_p(q):= 2 \frac{\E_{1,p}(q)}{\K_{1,p}(q)}-1 , \quad q\in[0,1).
\end{align}
Then $Q_p$ is strictly decreasing on $[0,1)$. 
Moreover, $Q_p$ satisfies $Q_p(0)=1$ and
\begin{align}\label{eq:1117-1}
\lim_{q\uparrow1}Q_p(q)=
\begin{cases}
-1 \quad & \text{if} \ \ 1<p \leq 2, \\
-\dfrac{1}{p-1} & \text{if} \ \ p>2.
\end{cases}
\end{align}
\end{lemma}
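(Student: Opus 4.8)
The plan is to establish the three assertions—strict monotonicity, the value $Q_p(0)=1$, and the limit \eqref{eq:1117-1}—separately, since each has a distinct character. The value at $q=0$ is immediate: when $q=0$ both integrals reduce to $\int_0^{\pi/2}|\cos\theta|^{1-\frac{2}{p}}\,d\theta$, so $\E_{1,p}(0)=\K_{1,p}(0)$ and hence $Q_p(0)=2\cdot 1-1=1$.

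For strict monotonicity of $Q_p$, the natural approach is to differentiate the ratio $R_p(q):=\E_{1,p}(q)/\K_{1,p}(q)$ and show $R_p'<0$ on $(0,1)$, which immediately gives $Q_p'=2R_p'<0$. Here I would use the explicit derivative formulae \eqref{eq:diff-ellipint} from Proposition \ref{prop:EK-monotone} rather than working from the integral definitions directly. Writing $\K=\K_{1,p}(q)$ and $\E=\E_{1,p}(q)$ for brevity, the quotient rule gives
\begin{align}\notag
R_p'(q)=\frac{\E'\K-\E\K'}{\K^2}.
\end{align}
Substituting $\E'=(\E-\K)/q$ and the formula for $\K'$, the numerator becomes a rational expression in $\E$, $\K$, $q$, and $p$; after clearing the common factor $1/(q(1-q^2))$ one expects the sign of $R_p'$ to be controlled by a quadratic-type combination of $\E$ and $\K$. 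The key inequality needed is $\E<\K$ for $q\in(0,1)$, which follows directly from the monotonicity already proved in Proposition \ref{prop:EK-monotone} (or from $\sqrt{1-q^2\sin^2\theta}\le 1/\sqrt{1-q^2\sin^2\theta}$ pointwise). I would simplify the numerator until it manifestly has a definite sign using only $\E<\K$ and $0<q<1$; I expect the algebra to collapse to something like a negative multiple of $(\K-\E)$ plus lower-order terms, confirming $R_p'<0$.

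The limit as $q\uparrow 1$ is the main obstacle and requires a careful case distinction matching \eqref{eq:1117-1}. For $p>2$, both $\K_{1,p}(1)$ and $\E_{1,p}(1)$ are finite by \eqref{eq:K_p}, so I would argue continuity up to $q=1$ (dominated convergence, using that the integrands are dominated uniformly near $q=1$ since $1-\frac{2}{p}>-1$ keeps the $|\cos\theta|$ factor integrable) and then compute $Q_p(1)=2\E_{1,p}(1)/\K_{1,p}(1)-1$ directly, verifying it equals $-1/(p-1)$; this last identity should reduce to a Beta-function or integration-by-parts relation between $\int_0^1\sqrt{1-z^2}(1-z^2)^{-1/p}dz$ and $\int_0^1(1-z^2)^{-1/p}dz$ via the representations \eqref{eq:betsu-K1p}--\eqref{eq:betsu-E1p}. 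For $1<p\le 2$, by contrast, $\K_{1,p}(1)=\infty$ while $\E_{1,p}(1)$ stays finite (the factor $\sqrt{1-q^2\sin^2\theta}$ suppresses the singularity at $\theta=\pi/2$), so $R_p(q)=\E/\K\to 0$ and thus $Q_p\to -1$. The delicate point is the asymptotic blow-up rate of $\K_{1,p}(q)$ as $q\uparrow 1$ for $p\le 2$: I would isolate the divergence by splitting the integral near $\theta=\pi/2$, substituting $z=\sin\theta$ in \eqref{eq:betsu-K1p}, and extracting the leading logarithmic or power-type singularity to confirm $\K_{1,p}(q)\to\infty$ while $\E_{1,p}(q)$ converges; since this lemma is cited from \cite[Lemma 5.1]{MYarXiv2203}, I would rely on the sharp asymptotics established there to close the boundary behavior cleanly.
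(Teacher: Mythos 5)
The paper itself does not prove this lemma: it is imported verbatim from \cite[Lemma 5.1]{MYarXiv2203}, so there is no in-paper argument to compare against and your proposal must stand on its own. The easy parts of your plan are fine. $Q_p(0)=1$ is immediate as you say. For the limit, your case distinction is correct: for $p>2$ both complete integrals are finite at $q=1$ and the Beta-function identity you indicate does give $\E_{1,p}(1)/\K_{1,p}(1)=\tfrac{p-2}{2(p-1)}$, hence $Q_p\to-\tfrac{1}{p-1}$; for $1<p\le2$ monotone convergence gives $\K_{1,p}(q)\to\infty$ while $\E_{1,p}(q)$ stays bounded away from $0$ and $\infty$, so $Q_p\to-1$ — no sharp blow-up asymptotics are needed, and the appeal to the cited paper for them is superfluous.

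The gap is in the monotonicity step. Substituting \eqref{eq:diff-ellipint} into the quotient rule, the numerator $\E_{1,p}'\K_{1,p}-\E_{1,p}\K_{1,p}'$ equals $\tfrac{1}{q(1-q^2)}$ times the quadratic form
\begin{equation*}
-\xi\,\E_{1,p}^2+(1+\xi-2q^2)\,\E_{1,p}\K_{1,p}-(1-q^2)\,\K_{1,p}^2,\qquad \xi:=2-\tfrac{2}{p},
\end{equation*}
whose discriminant in $t:=\E_{1,p}/\K_{1,p}$ is $(\xi-1)^2-4q^2(1-q^2)$. For $p>2$ this is positive for $q$ near $0$ or $1$, and the resulting positivity interval of the quadratic lies inside $(0,1)$; for instance, as $q\uparrow1$ its upper root tends to exactly $\tfrac{p-2}{2(p-1)}$, which is precisely $\lim_{q\uparrow1}\E_{1,p}/\K_{1,p}$. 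So the form genuinely changes sign on the cone $\{0<\E_{1,p}<\K_{1,p}\}$, and its negativity at the actual values is equivalent to a sharp lower bound on $\E_{1,p}/\K_{1,p}$ — essentially the statement you are trying to prove. The expected ``collapse using only $\E<\K$'' therefore does not happen (it does for $p=2$, where one can complete the square, and in fact for all $1<p\le 2$, but not for $p>2$). The fix is much simpler than your route: Proposition \ref{prop:EK-monotone} already gives $\E_{1,p}'<0<\K_{1,p}'$ with both functions positive, so $\E_{1,p}'\K_{1,p}-\E_{1,p}\K_{1,p}'<0$ term by term; a positive strictly decreasing function divided by a positive strictly increasing one is strictly decreasing. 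Replacing the substitution computation by this one-line observation closes the proof.
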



By this monotonicity of $Q_p$ we can define the unique modulus corresponding to ``figure-eight'', which we will use frequently:

\begin{definition}[Modulus of figure-eight]\label{def:q*}
Let $q^*=q^*(p) \in (0,1)$ denote a unique solution to the equation $Q_p(q)=0$.
\end{definition}

\subsection{$p$-Elliptic functions}\label{sect:p-Jacobi2}

Next we recall the $p$-elliptic functions. 

\begin{definition}[$p$-Elliptic functions] \label{def:cndn}
Let $ q \in[0, 1]$.
The amplitude functions $\amcn(x,q)$ and $\amdn(x,q)$ with modulus $q$ are defined by the inverse functions of $\mathrm{F}_{1,p}(x, q)$ and $\mathrm{F}_{2,p}(x, q)$, respectively, i.e., for $x\in\R$, 
\begin{align} \nonumber
x= \int_0^{\amcn(x,q)} \frac{|\cos\theta|^{1-\frac{2}{p}} }{ \sqrt[]{1-q^2\sin^2\theta} }\,d\theta, \quad x= \int_0^{\amdn(x,q)} \frac{1}{ \sqrt[p]{1-q^2\sin^2\theta} }\,d\theta.
\end{align}
The \emph{$p$-elliptic sine} $\sn_p(x,q)$ and \emph{$p$-elliptic cosine} $\cn_p(x,q)$ with modulus $q$ are defined by, for $x\in \R$, 
\begin{align}\notag
\sn_p (x, q)&:= \sin \amcn(x,q), 
\\
\cn_p (x, q)&:= |\cos \amcn(x,q)|^{\frac{2}{p}-1} \cos \amcn(x,q). \label{eq:cn_p}
\end{align}
The \emph{$p$-delta amplitude} $\dn_p(x,q)$ with modulus $q$ is defined by 
\begin{align}\notag
\dn_p (x, q):= \sqrt[p]{1-q^2 \sin^2 \big(\amdn(x, q)\big)}, \quad x\in \R.
\end{align}
\end{definition}


In addition, a typical part of $\cn_p(\cdot, 1)$ and $\dn_p(\cdot, 1)$ is interpreted as the $p$-hyperbolic secant function.

\begin{definition}[$p$-Hyperbolic functions] \label{def:sech}
The \emph{$p$-hyperbolic secant} $\sech_p x$ is defined by
\begin{align}\label{eq:sech_p} 
 \sech_p x:= 
\begin{cases}
 \cn_p(x,1)=\dn_p(x,1), \quad & x\in (-\K_p(1), \K_p(1)), \\
 0, &x\in \R \setminus (-\K_p(1), \K_p(1)).
\end{cases}
\end{align}
If $1<p \leq 2$, then we regard $(-\K_p(1), \K_p(1))$ as $\R$.
Moreover, the \emph{$p$-hyperbolic tangent} $\tanh_p x$ is defined by
\[\tanh_p x:=\int_0^x (\sech_p t)^p dt, \quad x\in \R. \]
\end{definition}

In general our $p$-elliptic functions satisfy fundamental properties such as periodicity and monotonicity similar to the classical Jacobian elliptic functions ($p=2$).
In particular we recall some of those about
$\cn_p$ and $\sech_p$ for later use.

\begin{proposition}[{\cite[Proposition 3.10]{MYarXiv2203}}]\label{prop:property-cndn}
Let $\cn_p$ and $\sech_p$ be given by \eqref{eq:cn_p} and \eqref{eq:sech_p}, respectively. 
\begin{itemize}
\item [(i)] For $q\in[0,1)$, $\cn_p(\cdot, q)$ is an even $2\K_{1,p}(q)$-antiperiodic function on $\R$ and, in $[0, 2\K_{1,p}(q)]$, strictly decreasing from $1$ to $-1$. 
\item [(ii)] If $1< p \leq 2$, then $\sech_p $ is an even positive function on $\R$, and strictly decreasing in $[0, \infty)$.
Moreover, $\sech_p 0=1$ and $\sech_p x \to 0$ as $x \uparrow \infty$.
\item [(ii')] 
If $p> 2$, then $\sech_p $ is an even nonnegative function on $\R$, and strictly decreasing in $[0, \K_{p} (1))$. 
Moreover, $\sech_p 0=1$ and 
$
\sech_p x \to 0
$
as $x \uparrow \K_{p} (1)$.
In particular, $\sech_p $ is continuous on $\R$.
\end{itemize}
\end{proposition}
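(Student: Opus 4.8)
The plan is to reduce every assertion to the elementary behaviour of two building blocks: the \emph{angle profile}
\[
h(\phi):=|\cos\phi|^{\frac{2}{p}-1}\cos\phi=\sgn(\cos\phi)\,|\cos\phi|^{\frac{2}{p}},
\]
which by \eqref{eq:cn_p} satisfies $\cn_p(x,q)=h(\amcn(x,q))$, and the amplitude $\amcn(\cdot,q)$. First I would record the properties of $h$: it is continuous and even, it is $\pi$-antiperiodic (since $\cos(\phi+\pi)=-\cos\phi$), and on $[0,\pi]$ it is \emph{strictly} decreasing from $h(0)=1$ through $h(\pi/2)=0$ to $h(\pi)=-1$ (on $[0,\pi/2]$ it equals $(\cos\phi)^{2/p}$, on $[\pi/2,\pi]$ it equals $-|\cos\phi|^{2/p}$). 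Next I would analyse $\amcn(\cdot,q)$ as the inverse of $\Fcn(\cdot,q)$. The integrand $g(\theta):=|\cos\theta|^{1-\frac{2}{p}}/\sqrt{1-q^2\sin^2\theta}$ is positive a.e., even, $\pi$-periodic, and integrable over a period because $1-\frac{2}{p}>-1$; hence $\Fcn(\cdot,q)$ is continuous, odd, and strictly increasing, with $\Fcn(x+n\pi,q)=\Fcn(x,q)+2n\Kcn(q)$ by \eqref{eq:period_Ecn}. For $q\in[0,1)$ this forces $\Fcn(\cdot,q)\colon\R\to\R$ to be a bijection, so $\amcn(\cdot,q)$ is continuous, odd, strictly increasing, with $\amcn(0,q)=0$, $\amcn(\Kcn(q),q)=\pi/2$, and $\amcn(2\Kcn(q),q)=\pi$.

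Assertion (i) then follows by composition. Evenness of $\cn_p(\cdot,q)=h\circ\amcn(\cdot,q)$ comes from $\amcn$ being odd and $h$ even; the $2\Kcn(q)$-antiperiodicity comes from $\amcn(x+2\Kcn(q),q)=\amcn(x,q)+\pi$ together with $h(\phi+\pi)=-h(\phi)$; and strict monotonicity on $[0,2\Kcn(q)]$ comes from $\amcn$ increasing from $0$ to $\pi$ there while $h$ strictly decreases on $[0,\pi]$ from $1$ to $-1$.

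For (ii) and (ii') I would set $q=1$, where $g(\theta)=|\cos\theta|^{-2/p}$, $\cn_p(x,1)=h(\amcn(x,1))$, and $\sech_p$ is obtained from $\cn_p(\cdot,1)$ via \eqref{eq:sech_p}. The whole case split is governed by integrability of $|\cos\theta|^{-2/p}$ at $\theta=\pi/2$, i.e.\ by whether $\K_p(1)<\infty$, which by \eqref{eq:K_p} happens exactly for $p>2$. If $1<p\le2$ then $\K_p(1)=\infty$ and $\Fcn(\cdot,1)$ is a strictly increasing bijection of $(-\pi/2,\pi/2)$ onto $\R$, so $\amcn(\cdot,1)\colon\R\to(-\pi/2,\pi/2)$; there $\cos\amcn(\cdot,1)>0$, whence $\sech_p x=(\cos\amcn(x,1))^{2/p}>0$, and as $x$ runs over $[0,\infty)$ the amplitude increases from $0$ to $\pi/2$, so $\sech_p$ decreases strictly from $1$ to $0$, giving (ii). If $p>2$ then $\Fcn(\cdot,1)$ is a bijection of $\R$ with $\amcn(\K_p(1),1)=\pi/2$; on $[0,\K_p(1))$ the amplitude increases from $0$ to $\pi/2$ and $\cos\amcn>0$, so the same computation gives strict decrease from $1$ to $0$ and $\sech_p x\to0$ as $x\uparrow\K_p(1)$. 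Nonnegativity and evenness are immediate, and continuity on all of $\R$ follows because $\sech_p$ is defined to vanish outside $(-\K_p(1),\K_p(1))$ and tends to $0$ at the endpoints, matching the truncation in \eqref{eq:sech_p}.

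The main obstacle, and the only genuinely delicate point, is the behaviour at the angle $\phi=\pi/2$ where $\cos\phi=0$: for $1<p<2$ the integrand $g$ blows up, while for $p>2$ it vanishes, and correspondingly one must verify integrability and identify the exact range of $\amcn(\cdot,q)$. This is what makes $\K_p(1)$ finite precisely when $p>2$ and thus dictates both the domain of the amplitude at $q=1$ and the need to truncate $\sech_p$; all remaining parity, periodicity and monotonicity claims are then formal consequences of the composition $\cn_p=h\circ\amcn$.
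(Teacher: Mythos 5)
Your argument is correct. Note, however, that the paper itself offers no proof of this proposition to compare against: it is imported verbatim from the companion work \cite[Proposition 3.10]{MYarXiv2203}, so there is no ``paper's route'' here. Your self-contained derivation via the factorization $\cn_p(\cdot,q)=h\circ\amcn(\cdot,q)$ with $h(\phi)=\sgn(\cos\phi)|\cos\phi|^{2/p}$ is sound: the parity, $2\K_{1,p}(q)$-antiperiodicity and strict monotonicity in (i) do follow formally from the oddness and quasi-periodicity of $\amcn(\cdot,q)$ (inherited from $\F_{1,p}(x+n\pi,q)=\F_{1,p}(x,q)+2n\K_{1,p}(q)$) combined with the strict decrease of $h$ on $[0,\pi]$, and you correctly identify the integrability of $|\cos\theta|^{-2/p}$ at $\theta=\pi/2$ --- equivalently the finiteness of $\K_p(1)$ exactly for $p>2$ --- as the sole source of the dichotomy between (ii) and (ii'), including the continuity of the truncated $\sech_p$ across $\pm\K_p(1)$.
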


\subsection{The Euler--Lagrange equation and pinned $p$-elastica}

Now we rigorously define pinned (planar) $p$-elasticae, and recall their known properties.

\begin{definition}[Pinned $p$-elastica] \label{critical_point}
Let $P_0,P_1\in\R^2$ and $L>0$ such that $|P_0-P_1|<L$.
Let $\mathcal{A}_{P_0, P_1, L}$ be the admissible space defined in \eqref{eq:admissiblespace}.
\begin{itemize}
    \item For $\gamma\in \mathcal{A}_{P_0, P_1, L}$, we call a one-parameter family $\varepsilon\mapsto \gamma_\varepsilon \in \mathcal{A}_{P_0, P_1, L}$ \emph{admissible perturbation} of $\gamma$ in $\mathcal{A}_{P_0, P_1, L}$ if $\gamma_0=\gamma$ and if the derivative $\frac{d}{d\varepsilon}\gamma_\varepsilon\big|_{\varepsilon=0}$ exists.
    \item We say that $\gamma\in \mathcal{A}_{P_0, P_1, L}$ is a \emph{critical point} of $\mathcal{B}_p$ in the admissible space $\mathcal{A}_{P_0, P_1, L}$ if
    for any admissible perturbation $(\varepsilon\mapsto\gamma_\varepsilon)$ of $\gamma$ in $\mathcal{A}_{P_0, P_1, L}$ the first variation of $\mathcal{B}_p$ vanishes:
\begin{align}\notag
\frac{d}{d\varepsilon}\mathcal{B}_p[\gamma_\varepsilon]\Big|_{\varepsilon=0}=0.
\end{align}
\end{itemize}
We also call such a critical point \emph{pinned $p$-elastica} in general.
\end{definition}

Let $\gamma$ be a pinned $p$-elastica.
Then, by the Lagrange multiplier method (cf.\  Proposition~\ref{prop:Lmultiplier}),
there is a multiplier $\lambda\in \R$ such that 
\begin{align}\label{eq:1st-derivative-lam}
\big\langle D\mathcal{B}_p[\gamma] + \lambda D\mathcal{L}[\gamma], h \big\rangle=0
\end{align}
for $h\in W^{2,p}(0,1;\R^2)$ with $h(0)=h(1)=(0,0)$, 
where $D\mathcal{B}_p[\gamma]$ and $ D\mathcal{L}[\gamma]$ are the Fr\'echet derivatives of $\mathcal{B}_p$ and $\mathcal{L}$ at $\gamma$, respectively.
Using the arclength parameterization $\tilde{\gamma}$ of $\gamma$, we can rewrite \eqref{eq:1st-derivative-lam} as 
\begin{align}\label{eq:0525-1}
\int_0^L\Big( (1-2p) |\tilde{\vc}''|^p (\tilde{\vc}', \eta') +p|\tilde{\vc}''|^{p-2}(\tilde{\vc}'', \eta'') +\lm (\tilde{\vc}', \eta')\Big) ds = 0 
\end{align}
for $\eta \in W^{2,p}(0,L;\R^2)$ with $\eta(0)=\eta(L)=(0,0)$
(cf.\ Proposition~\ref{prop:arclengthLM-pin}).
Moreover, we can translate \eqref{eq:0525-1} in terms of the signed
curvature; indeed, according to \cite[Proposition 2.1]{MYarXiv2203}, if $ \gamma \in W^{2,p}_{\rm imm}(0,1;\R^2)$ satisfies \eqref{eq:0525-1}, then its signed curvature $k:[0,L]\to\R$ (parameterized by the arclength) belongs to $L^{\infty}(0,L)$, and further $k$ satisfies 
\begin{align}\label{eq:EL}\tag{EL}
\int_0^L \Big( p |k|^{p-2}k \vp'' +(p-1) |k|^pk\vp -\lambda k \vp \Big) ds = 0
\end{align}
for all $\varphi \in W^{2,p}(0,L)$ with $\varphi(0)=\varphi(L)=0$  
(although the class of test functions is different from \cite{MYarXiv2203}, the derivation is parallel).

In particular, we find that any pinned $p$-elastica in the sense of Definition \ref{critical_point} is always a \emph{$p$-elastica} in the sense of \cite[Definition 1.1]{MYarXiv2203} by restricting test functions to $\eta\in  C^{\infty}_{\rm c}(0,L;\R^2)$ in \eqref{eq:0525-1}.
The complete classification of $p$-elasticae is already known. 
In order to state the classification, we prepare the notation on a concatenation of curves.
For $\gamma_j:[a_j,b_j]\to\R^2$ with $L_j:=b_j-a_j\geq0$, we define $\gamma_1\oplus\gamma_2:[0,L_1+L_2]\to\R^2$ by
\begin{align*}
  (\gamma_1\oplus\gamma_2)(s) :=
  \begin{cases}
    \gamma_1(s+a_1), \quad & s \in[0, L_1], \\
    \gamma_2(s+a_2-L_1) +\gamma_1(b_1)-\gamma_2(a_2),  & s \in[L_1,L_1+L_2],
  \end{cases}
\end{align*}
 and inductively define $\gamma_1\oplus\dots\oplus\gamma_N := (\gamma_1\oplus\dots\oplus\gamma_{N-1})\oplus\gamma_N$.
 We also write
\begin{align}\notag
     \bigoplus_{j=1}^N\gamma_j:=\gamma_1\oplus\dots\oplus\gamma_N.
\end{align}

We now recall the classification of $p$-elasticae:

\begin{proposition}[{\cite[Theorems 1.2 and 1.3]{MYarXiv2203}}]\label{prop:MY2203-thm1.1}
  Let $\gamma\in W^{2,p}_{\rm imm}(0,1;\R^2)$ be a $p$-elastica. 
  Then up to similarity (i.e., translation, rotation, reflection, and dilation) and reparameterization, the curve $\gamma$ is represented by $\gamma(s)=\gamma_*(s+s_0)$ with some $s_0\in\R$, where $\gamma_*:\R\to\R^2$ is one of the following arclength parameterizations:
   \begin{itemize}
     \item \textup{(Case I --- Linear $p$-elastica)}
     $\gamma_\ell(s)=(s,0)$, where $k_\ell\equiv0$.
     \item \textup{(Case II --- Wavelike $p$-elastica)}
     For some $q \in (0,1)$,
     \begin{equation}\label{eq:EP2}
       \gamma_w(s,q) =
       \begin{pmatrix}
       2 \E_{1,p}(\am_{1,p}(s,q),q )-  s  \\
       -q\frac{p}{p-1}|\cn_p(s,q)|^{p-2}\cn_p(s,q)
       \end{pmatrix}.
     \end{equation}
     In this case, $\theta_w(s)=2\arcsin(q\sn_p(s,q))$ and $k_w(s) = 2q\cn_p(s,q).$
     \item \textup{(Case III --- Borderline $p$-elastica, $1< p \leq 2$)}
     \begin{equation}\notag
       \gamma_b(s) =
       \begin{pmatrix}
       2 \tanh_p{s} - s  \\
       - \frac{p}{p-1}(\sech_p{s})^{p-1}
       \end{pmatrix}.
     \end{equation}
     In this case, $\theta_b(s)=2\am_{1,p}(s,1)=2\am_{2,p}(s,1)$ and $k_b(s) = 2\sech_p{s}.$
       \item \textup{(Case III' --- Flat-core $p$-elastica, $p>2$)}
    For some integer $N\geq1$, signs $\sigma_1,\dots,\sigma_N\in\{+,-\}$, and nonnegative numbers $L_1,\dots,L_N\geq0$,
    \begin{equation}\label{eq:EP3-2}
      \gamma_f = \bigoplus_{j=1}^N (\gamma_\ell^{L_j}\oplus \gamma_b^{\sigma_j}),
    \end{equation}
    where $\gamma_b^\pm:[-\K_p(1),\K_p(1)]\to\R^2$ and $\gamma_\ell^{L_j}:[0,L_j]\to\R^2$ are defined by
    \begin{align}\label{eq:pm-border}
      \gamma_b^{\pm}(s) =
      \begin{pmatrix}
      2 \tanh_p{s} - s  \\
      \mp \frac{p}{p-1}(\sech_p{s})^{p-1}
      \end{pmatrix},
      \quad
      \gamma_\ell^{L_j}(s) =
      \begin{pmatrix}
      -s  \\
      0
      \end{pmatrix}.
    \end{align}
    The curves $\gamma_b^{\pm}(s)$ have $\theta_b^\pm(s)=\pm2\am_{1,p}(s,1)=\pm2\am_{2,p}(s,1)$ and $k_b^\pm(s) = \pm2\sech_p{s}$ for $s\in [-\K_p(1),\K_p(1)]$.
    In particular, 
         \begin{align}\label{eq:flat-type}
                k_f(s)=\sum_{i=1}^N\sigma_i2\sech_p(s-s_i), \quad \text{where}\quad  s_i=(2i-1)\K_p(1) + \sum_{j=1}^i L_j.     
         \end{align}
     \item \textup{(Case IV --- Orbitlike $p$-elastica)}
     For some $q \in (0,1)$,
         \begin{equation}\notag
       \gamma_o(s,q) = \frac{1}{q^2}
       \begin{pmatrix}
       2 \E_{2,\frac{p}{p-1}}(\am_{2,p}(s,q),q)  + (q^2-2)s \\
       - \frac{p}{p-1}\dn_p(s,q)^{p-1}
       \end{pmatrix}.
     \end{equation}
     In this case, $\theta_o(s)=2\am_{2,p}(s,q)$ and $k_o(s) = 2 \dn_p(s,q).$
       \item \textup{(Case V --- Circular $p$-elastica)}
     $\gamma_c(s)=(\cos{s},\sin{s})$, where $k_c\equiv1$.
   \end{itemize}
   Here $\theta_*$ denotes the tangential angle $\partial_s\gamma_*=(\cos\theta_*,\sin\theta_*)$, and $k_*$ the (counterclockwise) signed curvature $k_*=\partial_s\theta_*$.
\end{proposition}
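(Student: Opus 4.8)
The plan is to deduce the entire classification from the weak Euler--Lagrange equation \eqref{eq:EL} by reducing it to an autonomous first-order ODE for the signed curvature and then carrying out a phase-plane analysis. Throughout I work with the arclength parameterization, the tangential angle $\vs$ with $\vc'=(\cos\vs,\sin\vs)$, and $k=\vs'$. The first step is a regularity bootstrap: setting $u:=p|k|^{p-2}k$, equation \eqref{eq:EL} says exactly that $u''=-\big((p-1)|k|^pk-\lm k\big)$ in the distributional sense on $(0,L)$. Since $k\in L^\infty$, the right-hand side is bounded, so $u\in W^{2,\infty}(0,L)$, i.e.\ $u\in C^{1,1}$; inverting the continuous strictly increasing map $u\mapsto k=\sgn(u)|u/p|^{1/(p-1)}$ then yields $k\in C^0$ and shows that $u$, hence $k$, solves the equation classically on every open interval where $k\neq0$.

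The second step is to extract a first integral. Multiplying $u''=-\big((p-1)|k|^pk-\lm k\big)$ by $u'$ and integrating produces a conserved Hamiltonian which, after substituting $u'=p(p-1)|k|^{p-2}k'$, becomes the autonomous relation
\begin{align}\notag
C_p\,|k|^{2p-4}(k')^2 = E+\tfrac{\lm(p-1)}{p}|k|^p-\tfrac{(p-1)^2}{2p}|k|^{2p}
\end{align}
for a positive constant $C_p$ and an integration constant $E$. This is a separable first-order ODE whose right-hand side is a polynomial in $|k|^p$, so the qualitative behaviour of $k$ is dictated by the roots and sign of this effective potential together with the sign pattern of $k$.

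A standard phase-plane discussion then produces the list. The equilibria $k\equiv0$ and $k\equiv\mathrm{const}\neq0$ give the linear and circular $p$-elasticae; a potential well straddling $k=0$ across which $k$ changes sign gives the wavelike solutions ($k_w=2q\cn_p$, sign-changing); a well confined to one sign of $k$ gives the orbitlike solutions ($k_o=2\dn_p>0$); and the critical energy level, on which the orbit is homoclinic to $k=0$, gives the borderline profile $k_b=2\sech_p$. In each case I solve the ODE by quadrature—so that the $p$-elliptic functions of Definitions~\ref{def:cndn} and \ref{def:sech} arise precisely as inverses of the quadrature integrals—and then recover $\vc$ by integrating $\vs'=k$ and $\vc'=(\cos\vs,\sin\vs)$, matching the explicit parameterizations. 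A final bookkeeping step checks that no rigid invariance is double-counted, giving uniqueness up to similarity and reparameterization.

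The main obstacle, and the genuinely new feature for $p>2$, is the degeneracy of the equation at $k=0$. When $p>2$ the homoclinic orbit reaches $k=0$ at \emph{finite} arclength $s=\K_p(1)$, and there $u'=p(p-1)|k|^{p-2}k'\to0$ (as one reads off from the first integral, where $|k|^{p-2}|k'|\sim|k|^{p/2}\to0$). Hence a borderline arc can be glued to a straight segment, on which $k\equiv0$ and $u\equiv0$ solve the equation trivially, while preserving the $C^{1,1}$ regularity of $u$; this failure of unique continuation at the degenerate level is what produces the flat-core family and its continuum of parameters $(\sigma_j,L_j)$. The hard part is thus to show rigorously that \emph{every} admissible solution with interior zeros of $k$ is such a concatenation, and conversely that each concatenation is a $p$-elastica—in sharp contrast to $p\le2$, where $\sech_p>0$ everywhere, the homoclinic orbit only limits to $k=0$ as $s\to\pm\infty$, and the single borderline curve is the only solution of this type. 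Establishing the precise matching with the stated formulae, together with the optimal regularity, then reduces to computations with the $p$-elliptic integrals of Section~\ref{sect:p-Jacobi}.
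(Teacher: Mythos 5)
This proposition is not proved anywhere in the present paper: it is imported verbatim from the authors' earlier work \cite{MYarXiv2203} (Theorems 1.2 and 1.3 there), so there is no internal proof to compare your argument against. Judged on its own terms, your outline follows the natural (and, in the cited paper, essentially the actual) route: pass to $w=|k|^{p-2}k$, upgrade regularity so that $w$ solves \eqref{eq:0610-1} classically, extract the first integral, and classify orbits by energy level. Your first integral is correct (with $C_p=\tfrac{p(p-1)^2}{2}$ after rescaling), and you correctly identify the mechanism behind Case III$'$: for $p>2$ the term $|w|^{\frac{2-p}{p-1}}w$ is only $\tfrac{1}{p-1}$-H\"older at $w=0$, so uniqueness for the ODE fails precisely at the degenerate rest point $w=w'=0$ on the critical energy level, which is what permits gluing borderline arcs to segments; for $1<p\leq2$ the right-hand side is locally Lipschitz and no such gluing exists.

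The gap is that the decisive steps are announced rather than carried out. First, the quadratures converting the separable ODE into the explicit formulae \eqref{eq:EP2}--\eqref{eq:flat-type} --- including the dilation normalization that reduces the two constants $(\lambda,E)$ to the single modulus $q$, and the reconstruction of $\gamma$ from $\theta'=k$, $\gamma'=(\cos\theta,\sin\theta)$ --- are omitted, and these are exactly where the $p$-elliptic functions and the specific coefficients (e.g.\ $-q\tfrac{p}{p-1}|\cn_p|^{p-2}\cn_p$ in the second component of $\gamma_w$) get verified. Second, and more seriously, the completeness of Case III$'$ is the statement you explicitly defer: one must show that on the level $E=0$ every solution with $k\not\equiv0$ is a locally finite concatenation of segments and translates of $\pm2\sech_p$, with only finitely many homoclinic arcs fitting in $[0,L]$. (The complementary assertion, that no gluing with segments occurs for $E\neq0$, does follow in one line from the first integral, since at any zero of $w$ one has $\tfrac{p}{2}(w')^2=E>0$, so $k$ cannot vanish on an interval there.) Since the entire content of the flat-core case lies in that structure theorem, and the explicit parameterizations are the substance of the remaining cases, the proposal as written is a correct proof plan rather than a proof.
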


The optimal regularity of $p$-elasticae is known to depend on $p$.
Here we recall a weak general regularity (independent of $p$), which will be sufficient for our purpose.
\begin{proposition}[{\cite[Theorem 1.7]{MYarXiv2203}}] \label{prop:Soft_regularity}
  Let $\gamma$ be a $p$-elastica.
  Then $\gamma$ has continuous (arclength parameterized) signed curvature. 
\end{proposition}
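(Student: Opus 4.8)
The plan is to extract continuity directly from the weak Euler--Lagrange equation \eqref{eq:EL}, rather than appealing to the explicit formulae of Proposition~\ref{prop:MY2203-thm1.1}. Recall that the signed curvature $k$ of a $p$-elastica (parameterized by arclength on $[0,L]$) already belongs to $L^{\infty}(0,L)$ and satisfies \eqref{eq:EL} for all admissible test functions, in particular for all $\varphi\in C^{\infty}_{\rm c}(0,L)$. The key observation is that the second-order term enters \eqref{eq:EL} only through the single nonlinear quantity $u:=|k|^{p-2}k$, so the natural strategy is to prove regularity for $u$ first and then transfer it back to $k$.

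First I would set $u:=|k|^{p-2}k$ and read \eqref{eq:EL} with $\varphi\in C^{\infty}_{\rm c}(0,L)$ as the distributional identity
\[
p\,u'' = \lambda k - (p-1)|k|^{p}k \qquad \text{in } \mathcal{D}'(0,L).
\]
Since $k\in L^{\infty}(0,L)$, both $|k|^{p}k$ and $k$ lie in $L^{\infty}(0,L)$, whence the right-hand side is in $L^{\infty}(0,L)$ and therefore $u''\in L^{\infty}(0,L)$ in the weak sense. Because $|u|=|k|^{p-1}$ is bounded as well, we conclude $u\in W^{2,\infty}(0,L)$, and the one-dimensional Sobolev embedding $W^{2,\infty}(0,L)\hookrightarrow C^{1}([0,L])$ gives in particular $u\in C^{0}([0,L])$.

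It then remains to recover $k$ from $u$. The map $\Phi(t):=|t|^{p-2}t=\sgn(t)|t|^{p-1}$ is a strictly increasing homeomorphism of $\R$ onto $\R$, with inverse $\Phi^{-1}(u)=|u|^{\frac{1}{p-1}}\sgn(u)$; since $\tfrac{1}{p-1}>0$ for $p>1$, this inverse is continuous on all of $\R$, including at $u=0$ (i.e.\ where $k=0$). Hence $k=\Phi^{-1}\circ u$ is continuous as a composition of continuous maps, which is the assertion. As an alternative route one could instead invoke the classification in Proposition~\ref{prop:MY2203-thm1.1} and verify case by case that each curvature $k_{*}$ is continuous, the only nontrivial instance being the flat-core curvature $k_{f}$ of \eqref{eq:flat-type}, whose continuity across the transition points follows from the continuity of $\sech_p$ recorded in Proposition~\ref{prop:property-cndn}(ii'). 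The main (and essentially only) subtlety is the passage back through $\Phi^{-1}$: since $\Phi^{-1}$ is merely H\"older, not Lipschitz, when $p\neq2$, this soft bootstrap cannot be pushed to yield Lipschitz or higher regularity of $k$, which is precisely why the present statement asserts only continuity; the sharp $p$-dependent regularity requires the finer analysis of \cite{MYarXiv2203}.
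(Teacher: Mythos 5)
Your argument is correct, and it is worth noting that the paper itself offers no proof of this proposition: it is quoted verbatim from \cite[Theorem 1.7]{MYarXiv2203}, where it arises as a weak corollary of the optimal ($p$-dependent) regularity theorem, whose proof rests on the finer ODE analysis and the explicit classification of Proposition~\ref{prop:MY2203-thm1.1}. What you do differently is give a short, self-contained ``soft'' derivation using only the two facts the present paper explicitly recalls, namely $k\in L^\infty(0,L)$ and the weak Euler--Lagrange identity \eqref{eq:EL} tested against $\varphi\in C^\infty_{\rm c}(0,L)$: the distributional identity $p\,u''=\lambda k-(p-1)|k|^{p}k$ for $u=|k|^{p-2}k$ has an $L^\infty$ right-hand side, which together with $u\in L^\infty$ yields $u\in W^{2,\infty}(0,L)\subset C^1([0,L])$ (this is exactly the classical solvability of \eqref{eq:0610-1} that the paper itself invokes via \cite[Lemma 4.3]{MYarXiv2203} in the proof of Lemma~\ref{lem:0th-bc}), and continuity of $k=\Phi^{-1}(u)$ then follows from continuity of $t\mapsto|t|^{\frac{1}{p-1}}\sgn(t)$. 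Your closing remark correctly identifies the limitation of this route: since $\Phi^{-1}$ is only H\"older for $p\neq2$, the bootstrap cannot reach the sharp regularity of \cite{MYarXiv2203}, but for the continuity asserted here it is entirely sufficient and arguably more economical than passing through the classification.
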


\section{Classification of pinned $p$-elasticae} \label{sect:classification}

In this section we give the complete classification of pinned $p$-elasticae, thus proving Theorem~\ref{thm:classify-pinned}.

As a key fact for reducing the possible candidates, we first deduce that any pinned $p$-elastica satisfies an additional (natural) boundary condition that the curvature vanishes at the endpoints.

\begin{lemma} \label{lem:0th-bc}
If $\gamma$ is a pinned $p$-elastica, then the arclength parameterized signed curvature $k \in C([0,L])$ of $\gamma$ satisfies
\[
k(0) = k(L)=0.
\]
\end{lemma}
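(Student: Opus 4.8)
The plan is to derive the natural boundary condition $k(0)=k(L)=0$ from the weak Euler--Lagrange formulation \eqref{eq:EL} by exploiting the freedom in choosing test functions. The key observation is that the admissible perturbations in Definition \ref{critical_point} only require $h(0)=h(1)=(0,0)$ — that is, the endpoints are pinned but the \emph{tangent directions} at the endpoints are free. This missing constraint on $\gamma'(0)$ and $\gamma'(1)$ is precisely what forces the natural (curvature-free) condition at the boundary.

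First I would work from the arclength-parameterized first variation \eqref{eq:0525-1}, which holds for all $\eta\in W^{2,p}(0,L;\R^2)$ with $\eta(0)=\eta(L)=(0,0)$. The crucial point is that $\eta''(0)$ and $\eta''(L)$ are \emph{not} constrained. I would integrate by parts in \eqref{eq:0525-1} to move derivatives off $\eta$ and onto the curvature-dependent coefficients. Since we already know from \cite[Proposition 2.1]{MYarXiv2203} that $k\in L^\infty(0,L)$ and from Proposition \ref{prop:Soft_regularity} that $k\in C([0,L])$, the resulting boundary terms are well-defined. Collecting the boundary contributions from the integration by parts of the $p|\tilde\gamma''|^{p-2}(\tilde\gamma'',\eta'')$ term, one obtains boundary terms of the schematic form $\big[p|k|^{p-2}k\,(\text{normal component of }\eta')\big]_0^L$. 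Since the Euler--Lagrange equation \eqref{eq:EL} already guarantees the vanishing of all interior (bulk) contributions, what remains after subtracting the interior part is exactly these boundary terms, which must therefore vanish for all admissible $\eta$.

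The heart of the argument is then a \emph{localization at the endpoints}: I would choose test functions $\eta$ supported near $s=0$ (respectively $s=L$) whose first derivative $\eta'(0)$ has a prescribed nonzero normal component while still satisfying $\eta(0)=(0,0)$ and the interior equation's consequences. Because $\eta'(0)$ can be chosen with arbitrary normal component, the vanishing of the boundary term $p|k(0)|^{p-2}k(0)\cdot(\text{arbitrary})=0$ forces $|k(0)|^{p-2}k(0)=0$, and since $t\mapsto|t|^{p-2}t$ is a strictly monotone bijection on $\R$ for $p>1$, this yields $k(0)=0$. The identical argument at $s=L$ gives $k(L)=0$. The continuity of $k$ from Proposition \ref{prop:Soft_regularity} ensures these pointwise boundary values are meaningful.

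\textbf{The main obstacle} I anticipate is the low regularity in the integration by parts. The coefficient $|k|^{p-2}k$ need not be differentiable (indeed for $p<2$ the map $t\mapsto|t|^{p-2}t$ is only Hölder at $t=0$), so one cannot naively integrate by parts twice to isolate a clean pointwise boundary term. The careful route is to pass to the weak form \eqref{eq:EL} for the scalar curvature and integrate by parts there: rewriting $\int_0^L p|k|^{p-2}k\,\varphi''\,ds$ and extracting the boundary behavior requires identifying that the distributional object $p|k|^{p-2}k$ has a well-defined trace at the endpoints, which follows from $k\in C([0,L])$. Concretely, I would test \eqref{eq:EL} with $\varphi\in W^{2,p}(0,L)$ satisfying only $\varphi(0)=\varphi(L)=0$ (again leaving $\varphi'(0),\varphi'(L)$ free), integrate the first term by parts once to transfer one derivative, and read off that the surviving boundary term $\big[p|k|^{p-2}k\,\varphi'\big]_0^L$ must vanish for arbitrary $\varphi'(0),\varphi'(L)$, concluding $k(0)=k(L)=0$ as above. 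Verifying that these tangential/normal perturbations are genuinely realizable as admissible perturbations of the immersed curve $\gamma$ (respecting the length constraint to first order) is the technical bookkeeping that must be handled with care, but it is standard given the Lagrange-multiplier reduction already established in \eqref{eq:1st-derivative-lam}.
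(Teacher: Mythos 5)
Your proposal is correct and follows essentially the same route as the paper: test the weak Euler--Lagrange identity \eqref{eq:EL} with $\varphi$ vanishing at the endpoints but with $\varphi'(0)$, $\varphi'(L)$ free, integrate by parts, and read off the boundary term $p\,|k|^{p-2}k\,\varphi'$ to conclude $|k(0)|^{p-2}k(0)=|k(L)|^{p-2}k(L)=0$. The regularity obstacle you flag is resolved in the paper exactly as you anticipate, by working with $w=|k|^{p-2}k$ and invoking the known fact (\cite[Lemma 4.3]{MYarXiv2203}) that $w$ satisfies the Euler--Lagrange ODE classically, which makes the interior terms cancel after integration by parts.
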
 
\begin{proof}
Recall that $k\in C([0,L])$ by Proposition~\ref{prop:Soft_regularity} so that $k$ is defined pointwise.
From the fact that $k$ satisfies \eqref{eq:EL} we deduce that the function
\[ w(s) := |k(s)|^{p-2}k(s), \quad s\in [0,L] \] 
satisfies 
\begin{align} \label{eq:0510-1}
\int_0^L \! \Big(p  w \varphi'' + (p-1)|w|^{\frac{2}{p-1}}w \varphi - \lm |w|^{\frac{2-p}{p-1}}w \varphi \Big) \,ds = 0
\end{align}
for any $\varphi \in W^{2,p}(0,L) \cap W^{1,p}_0(0,L)$.
Moreover, it is shown in \cite[Lemma 4.3]{MYarXiv2203} that $w$ also satisfies \eqref{eq:0510-1} in the classical sense, i.e., 
\begin{align}  \label{eq:0610-1}
p w''(s) + (p-1)|w(s)|^{\frac{2}{p-1}}w(s) - \lambda |w(s)|^{\frac{2-p}{p-1}}w(s) =0 \quad \text{in} \ [0,L].
\end{align}
By choosing $\varphi\in W^{2,p}(0,L)\cap W^{1,p}_0(0,L)$ with $\varphi'(0)=-1$ and $\varphi'(L)=0$ in \eqref{eq:0510-1}, and integrating by parts, we obtain
\begin{align*}
0 &= \int_0^L \Big(p  w \vp'' + (p-1)|w|^{\frac{2}{p-1}}w \vp - \lm |w|^{\frac{2-p}{p-1}}w \vp \Big) \,ds \\
&= \big[ p w(s) \varphi'(s) \big]_{s=0}^{s=L} +\int_0^L \Big(pw'' + (p-1)|w|^{\frac{2}{p-1}}w - \lm |w|^{\frac{2-p}{p-1}}w  \Big)\vp \,ds 
= p w(0), 
\end{align*}
where we used $\varphi(0)=\varphi(L)=0$, $\varphi'(0)=-1$, $\varphi'(L)=0$, and \eqref{eq:0610-1}. 
Therefore, $p w(0)= p |k(0)|^{p-2}k(0)=0$, so that $k(0) = 0$ holds.
In the same way we obtain $k(L) = 0$.
\end{proof}

By Proposition~\ref{prop:MY2203-thm1.1} and Lemma~\ref{lem:0th-bc}, with the obvious fact that linear $p$-elasticae are ruled out by $|P_0-P_1|<L$, if $\gamma$ is a pinned $p$-elastica, then $\gamma$ must be
\begin{align*}
\begin{cases}
\text{a wavelike $p$-elastica} \quad & \text{if} \ \ 1<p\leq 2, \\
\text{a wavelike $p$-elastica} \ \ \text{or} \ \ \text{a flat-core $p$-elastica}  &\text{if} \ \ p>2.
\end{cases}
\end{align*}
This fact drastically reduces the candidates of pinned $p$-elasticae.
Motivated by this reduction, we prepare terminology for the following special $p$-elasticae --- we will later show that those curves are indeed the only possibilities.

\begin{definition}[Arc, loop, and flat-core]\label{def:arcloop}
Let $p\in(1,\infty)$, $r\in[0,1)$, and $n\in \N$.
\begin{itemize}
   \item A curve $\gamma$ is called \emph{$(p,r,n)$-arc} if, up to similarity and reparameterization, $\gamma$ is given by
    \[\gamma(s)=\gamma_w(s-\K_{1,p}(q),q) \quad s\in[0,2n\K_{1,p}(q)], \]
    where $\gamma_w$ is defined by \eqref{eq:EP2} and $q = q(r) \in(0,1)$ is a unique solution of
    \begin{align}\label{eq:wave-arc-r}
      2\frac{\E_{1,p}(q)}{\K_{1,p}(q)} -1 = r.  
    \end{align}
   \item A curve $\gamma$ is called \emph{$(p,r,n)$-loop} if $r<\frac{1}{p-1}$ and, up to similarity and reparameterization, $\gamma$ is given by
    \[\gamma(s)=\gamma_w(s-\K_{1,p}(q),q) \quad s\in[0,2n\K_{1,p}(q)], \]
    where $\gamma_w$ is defined by \eqref{eq:EP2} and $q = q(r) \in(0,1)$ is a unique solution of 
    \begin{align}\label{eq:wave-loop-r}
      2\frac{\E_{1,p}(q)}{\K_{1,p}(q)} -1 = -r.  
    \end{align}
   \item In particular, we call a $(p,0,n)$-arc, or equivalently a $(p,0,n)$-loop, \emph{$\frac{n}{2}$-fold figure-eight $p$-elastica}.
   A $\frac{1}{2}$-fold figure-eight $p$-elastica is also called \emph{half-fold figure-eight $p$-elastica}.
   \item A curve $\gamma$ is called \emph{$(p, r, n, \boldsymbol{\sigma}, \boldsymbol{L})$-flat-core} if $p>2$, $r\geq\frac{1}{p-1}$, and there are  $\boldsymbol{\sigma}=(\sigma_1,\dots,\sigma_n) \in\{+,-\}^n$ and
   $\boldsymbol{L}=(L_1,\dots,L_{n+1}) \in [0,\infty)^{n+1}$ such that, up to similarity and reparameterization, $\gamma$ is given by
    \[\gamma=\bigg( \bigoplus_{j=1}^n \big( \gamma_{\ell}^{L_j} \oplus \gamma_{b}^{\sigma_j} \big) \bigg) \oplus \gamma_{\ell}^{L_{n+1}}, \]
    where $\gamma_\ell^{L_j}$ and $\gamma_b^\pm$ are given by \eqref{eq:pm-border}, and in addition $p, r, n$ and $\boldsymbol{L}$ satisfy  
    \begin{align} \label{eq:sum-flatparts}
    \sum_{j=1}^{n+1}L_j = 2n\frac{r-\frac{1}{p-1}}{1-r} \K_{p}(1).
    \end{align}
\end{itemize}
\end{definition}

\begin{remark}
The monotonicity in Lemma \ref{lem:nabe-lem2} implies uniqueness of solutions to
equations \eqref{eq:wave-arc-r} and \eqref{eq:wave-loop-r}, respectively.
By $Q_p(0)=1$ and \eqref{eq:1117-1}, equation \eqref{eq:wave-arc-r} always admits a solution, while
\eqref{eq:wave-loop-r} admits a solution if and only if $r< |Q_p(1)|$, or equivalently $r<\frac{1}{p-1}$.
In order to define the flat-core case, the condition $p>2$ is obviously required since flat-core elasticae only appear for $p>2$; the condition $r\geq\frac{1}{p-1}$ is also necessary and sufficient since this is equivalent to the nonnegativity of the right-hand side of
\eqref{eq:sum-flatparts}.
\end{remark}

\begin{remark}
\label{rem:n-fold_8}
For $n=2N$ with some $N\in\N$, an $\frac{n}{2}$-fold figure-eight $p$-elastica in the sense of Definition~\ref{def:arcloop} is same as an $N$-fold figure-eight $p$-elastica introduced in \cite[Definition 5.3]{MYarXiv2203}.
It is shown in \cite[Proposition 5.5]{MYarXiv2203} that an $N$-fold figure-eight $p$-elastica indeed defines a closed curve.
\end{remark}


One can observe from the above definition that, loosely speaking, the given parameter $r:=\frac{|P_0-P_1|}{L}\in[0,1)$ characterizes the modulus $q\in[0,1)$ in Proposition~\ref{prop:MY2203-thm1.1}.


In what follows we verify that any pinned $p$-elastica is either a $(p,r,n)$-arc, a $(p,r,n)$-loop, or a $(p, r, n, \boldsymbol{\sigma}, \boldsymbol{L})$-flat-core.
We first obtain necessary conditions for pinned wavelike $p$-elasticae:

\begin{lemma}[Pinned wavelike $p$-elasticae]\label{lem:wavelike-pinned}
Let $P_0,P_1\in\R^2$ and $L>0$ such that $|P_0-P_1|<L$.
Let $\gamma$ be a critical point of $\mathcal{B}_p$ in $\mathcal{A}_{P_0,P_1,L}$.
Suppose that $\gamma$ is a wavelike $p$-elastica.
Then, up to similarity and reparameterization, $\gamma$ is given by 
\begin{equation}\label{eq:wavelike_pinned}
    \gamma(s)=\gamma_w(s-\K_{1,p}(q), q), \quad s\in [0, 2n\K_{1,p}(q)]
\end{equation}
for some $n\in \N$, where $q$ is a solution to either \eqref{eq:wave-arc-r}
or \eqref{eq:wave-loop-r}. 
\end{lemma}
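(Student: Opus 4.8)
The plan is to combine the explicit form of wavelike $p$-elasticae in Proposition~\ref{prop:MY2203-thm1.1} with the natural boundary condition $k(0)=k(L)=0$ from Lemma~\ref{lem:0th-bc}, and then to convert the geometric datum $r=|P_0-P_1|/L$ into the transcendental equation determining the modulus $q$.

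First I would normalize the parameterization. By Proposition~\ref{prop:MY2203-thm1.1} (Case II), since $\gamma$ is wavelike, up to similarity and reparameterization its arclength curvature has the form $k(s)=2q\cn_p(s-s_0,q)$ for some $q\in(0,1)$ and some shift $s_0\in\R$, with $\gamma(s)=\gamma_w(s-s_0,q)$. By Proposition~\ref{prop:property-cndn}(i) the function $\cn_p(\cdot,q)$ is even, $2\K_{1,p}(q)$-antiperiodic, and strictly decreasing from $1$ to $-1$ on $[0,2\K_{1,p}(q)]$; hence its zero set is exactly $\{(2j+1)\K_{1,p}(q):j\in\Z\}$. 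The condition $k(0)=0$ from Lemma~\ref{lem:0th-bc} then forces $s_0$ to be an odd multiple of $\K_{1,p}(q)$. Since changing $s_0$ by $2\K_{1,p}(q)$ only flips the sign of $k$ (by antiperiodicity), which amounts to a reflection, up to similarity and reparameterization I may take $s_0=\K_{1,p}(q)$, giving $\gamma(s)=\gamma_w(s-\K_{1,p}(q),q)$.

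Next I would use the second endpoint to fix the length. The remaining condition $k(L)=0$ reads $\cn_p(L-\K_{1,p}(q),q)=0$, so $L-\K_{1,p}(q)$ is an odd multiple of $\K_{1,p}(q)$; together with $L>0$ this forces $L=2n\K_{1,p}(q)$ for some $n\in\N$, which is precisely the asserted domain $[0,2n\K_{1,p}(q)]$. Finally I would extract $q$ from $r$. Because both endpoints correspond to zeros of $\cn_p$, the second component of $\gamma_w$ vanishes there, so $\gamma(L)-\gamma(0)$ is horizontal and equals the increment of the first component $X(u)=2\E_{1,p}(\am_{1,p}(u,q),q)-u$ of $\gamma_w$ over $u$ running from $-\K_{1,p}(q)$ to $(2n-1)\K_{1,p}(q)$, i.e.\ over $n$ full periods of length $2\K_{1,p}(q)$. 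Using the quasiperiodicity \eqref{eq:period_Ecn} together with $\am_{1,p}(u+2\K_{1,p}(q),q)=\am_{1,p}(u,q)+\pi$, each period contributes $4\E_{1,p}(q)-2\K_{1,p}(q)$ to $X$, whence $|P_0-P_1|=2n\,|2\E_{1,p}(q)-\K_{1,p}(q)|$. Dividing by $L=2n\K_{1,p}(q)$ gives
\[
r=\frac{|P_0-P_1|}{L}=\left|2\frac{\E_{1,p}(q)}{\K_{1,p}(q)}-1\right|=|Q_p(q)|,
\]
with $Q_p$ as in Lemma~\ref{lem:nabe-lem2}. Thus $q$ solves \eqref{eq:wave-arc-r} when $Q_p(q)\ge 0$ and \eqref{eq:wave-loop-r} when $Q_p(q)<0$, as claimed.

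I expect the main obstacle to be the displacement computation in the last step: carefully tracking the quasiperiodic increments of $\am_{1,p}$ and $\E_{1,p}$ across $n$ periods and confirming that the vertical displacement vanishes, so that $r$ collapses to the single clean expression $|Q_p(q)|$. The normalization of the shift $s_0$ and the pinning of $L$ via the zeros of $\cn_p$ are then essentially bookkeeping once Lemma~\ref{lem:0th-bc} and Proposition~\ref{prop:property-cndn}(i) are in hand.
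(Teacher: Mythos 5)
Your proposal is correct and follows essentially the same route as the paper: use Lemma~\ref{lem:0th-bc} to place the zeros of $\cn_p$ at the endpoints (normalizing $s_0=\K_{1,p}(q)$ up to reflection and fixing $L=2n\K_{1,p}(q)$), then compute the horizontal endpoint displacement via the quasiperiodicity \eqref{eq:period_Ecn} to arrive at $r=|2\E_{1,p}(q)/\K_{1,p}(q)-1|$. The only cosmetic difference is that you sum the per-period increments of $X_w$ instead of evaluating $\am_{1,p}$ and $\E_{1,p}$ directly at the endpoints, which yields the same identity $X(\hat L)-X(0)=2n(2\E_{1,p}(q)-\K_{1,p}(q))$.
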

\begin{proof}
By Proposition~\ref{prop:MY2203-thm1.1}, up to similarity and reparameterization, the curve $\gamma$ is represented by 
\begin{align} \label{eq:pinned-gamma_w}
 \hat\gamma(s)=\gamma_w(s+s_0,q) 
\end{align}
for some $q\in(0,1)$ and $s_0\in\R$, where $\gamma_w$ is defined by \eqref{eq:EP2}.
Let $\hat{L}$ (resp.\ $\hat{l}$) denote the length (resp.\ the distance between the endpoints) of $\hat{\gamma}$.
Note that $\hat{l}/\hat{L}=|P_0-P_1|/L=r$.
We see that the signed curvature of $\hat{\gamma}$ is 
\[
k_w(s+s_0)=2q\cn_p(s+s_0,q), \quad s\in [0, \hat{L}].
\]
By Lemma~\ref{lem:0th-bc}, we have $k_w(s_0)=0$. 
Moreover, in view of the fact that 
\begin{align} \label{eq:cn-zero}
    \cn_p(s,q)=0 \ \iff \  s=(2m-1)\K_{1,p}(q) \ \ \text{for some} \ \  m\in \Z 
\end{align}
and the fact that $\cn_p(\cdot, q)$ is a $2\K_{1,p}(q)$-antiperiodic function (cf.\ Proposition~\ref{prop:property-cndn}), without loss of generality, we can choose $s_0=-\K_{1,p}(q)$ in \eqref{eq:pinned-gamma_w}. 
Furthermore, since $k_w(\hat{L}+s_0)=0$ also follows from Lemma~\ref{lem:0th-bc}, we see that $\hat{L} = 2n \K_{1,p}(q)$ for some $n\in \N$.
To compute $\hat{l}$, set
\[(X(s), Y(s))^\top:=\hat{\gamma}(s)=\gamma_w(s-\K_{1,p}(q),q), \quad s\in [0,\hat{L}]. \]
It follows from \eqref{eq:EP2} that 
\begin{align*}
    X(s) &= 2 \E_{1,p}\big(\am_{1,p}(s-\K_{1,p}(q),q),q \big)  - (s-\K_{1,p}(q)), \\
    Y(s) &= -q\tfrac{p}{p-1}|\cn_p(s-\K_{1,p}(q),q)|^{p-2}\cn_p(s-\K_{1,p}(q),q).
\end{align*}
Combining this with \eqref{eq:cn-zero} and $\hat{L} = 2n \K_{1,p}(q)$, we see that $Y(0)=Y(\hat{L})=0$.
Hence $\hat{l}= |X(\hat{L}) - X(0)|$ holds.
Since $\hat{l}/\hat{L}=r$, we find that $q\in(0,1)$ in \eqref{eq:pinned-gamma_w} has to satisfy 
\begin{align}\label{eq:1116-2}
\frac{X(\hat{L})-X(0)}{2n \K_{1,p}(q)}
=r
\qquad
\text{or}
\qquad
\frac{X(\hat{L})-X(0)}{2n \K_{1,p}(q)}
=-r.
\end{align}
Now we compute the numerator.
By \eqref{eq:period_Ecn} we have
$
(2n-1)\K_{1,p}(q) = \F_{1,p}(n\pi-\frac{\pi}{2}),
$
and hence by definition of $\am_{1,p}$,
\[
\am_{1,p}((2n-1)\K_{1,p}(q),q) = n\pi-\frac{\pi}{2}.
\]
By this fact and \eqref{eq:period_Ecn} we deduce
\begin{align*}
\E_{1,p}(\am_{1,p}((2n-1)\K_{1,p}(q),q),q ) = \E_{1,p}(n\pi-\tfrac{\pi}{2}, q ) 
= (2n-1)\E_{1,p}(q).
\end{align*}
Consequently, we obtain
\begin{align*}
    X(\hat{L})-X(0) = 2n \big(2\E_{1,p}(q) - \K_{1,p}(q) \big).
\end{align*}
Therefore, the left-hand side of both equalities in \eqref{eq:1116-2} are reduced to 
\[
\frac{2n \big(2\E_{1,p}(q) - \K_{1,p}(q) \big)}{2n \K_{1,p}(q)} = 
2\frac{\E_{1,p}(q)}{\K_{1,p}(q)} -1, 
\]
which implies that $q$ in \eqref{eq:pinned-gamma_w} has to satisfy \eqref{eq:wave-arc-r} or \eqref{eq:wave-loop-r}.
The proof is complete.
\end{proof}

Next we obtain necessary conditions for pinned flat-core $p$-elasticae.

\begin{lemma}[Pinned flat-core $p$-elasticae]\label{lem:flat-core-pinned}
Let $P_0,P_1\in\R^2$ and $L>0$ such that $r:=|P_0-P_1|/L\in[0,1)$.
Let $\gamma$ be a critical point of $\mathcal{B}_p$ in $\mathcal{A}_{P_0,P_1,L}$.
Suppose that $\gamma$ is a flat-core $p$-elastica.
Then it is necessary that
\begin{align} \label{eq:1117-3}
r \geq \frac{1}{p-1}.
\end{align}
Moreover, up to similarity and reparameterization, $\gamma$ is given by 
\begin{align}\label{eq:line-sand}
\gamma =\bigg( \bigoplus_{j=1}^n \big( \gamma_{\ell}^{L_j} \oplus \gamma_{b}^{\sigma_j} \big) \bigg) \oplus \gamma_{\ell}^{L_{n+1}} 
\end{align}
for some $n\in \N$, $\{\sigma_j\}_{j=1}^n\subset\{+,-\}$, and  $L_1, \ldots, L_{n+1}\geq0$ such that
 \eqref{eq:sum-flatparts} holds.
\end{lemma}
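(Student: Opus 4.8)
The plan is to follow the template of Lemma~\ref{lem:wavelike-pinned}, replacing the periodic wavelike profile by the compactly supported $\sech_p$-bumps that assemble a flat-core. By Proposition~\ref{prop:MY2203-thm1.1} (Case III') and \eqref{eq:flat-type}, up to similarity and reparameterization the arclength curvature of $\gamma$ on $[0,L]$ is a superposition $k(s)=\sum_i\sigma_i 2\sech_p(s-c_i)$ of translated bumps whose centers satisfy $c_{i+1}-c_i\ge 2\K_p(1)$. The decisive structural input is Proposition~\ref{prop:property-cndn}(ii'): for $p>2$ the function $\sech_p$ is positive on $(-\K_p(1),\K_p(1))$ and vanishes outside, so distinct bumps have interior-disjoint supports of width $2\K_p(1)$, and $k(s_*)=0$ forces $s_*$ to lie in a flat region or exactly at a bump boundary, never interior to a bump.

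First I would combine this dichotomy with the boundary condition $k(0)=k(L)=0$ supplied by Lemma~\ref{lem:0th-bc}. Since nothing precedes $s=0$ and nothing follows $s=L$, neither endpoint can be cut through a bump interior, so $\gamma$ must begin and end with a (possibly degenerate) flat segment and contain only complete bumps. Letting $n\ge1$ be the number of bumps --- at least one, for otherwise $k\equiv0$ makes $\gamma$ linear and forces $r=1$, against $r<1$ --- this is precisely the sandwich form \eqref{eq:line-sand}, with $n+1$ flat lengths $L_1,\dots,L_{n+1}\ge0$ and signs $\sigma_1,\dots,\sigma_n$.

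Next I would compute the endpoint displacement. Each flat piece $\gamma_\ell^{L_j}$ in \eqref{eq:pm-border} contributes $(-L_j,0)$, while a single bump $\gamma_b^{\pm}$ starts and ends at $y=0$ (as $\sech_p(\pm\K_p(1))=0$) with tangential angle $\equiv\pi\pmod{2\pi}$ at both ends, i.e.\ common tangent $(-1,0)$; hence all pieces sit in one horizontal frame, the total $y$-displacement vanishes, and the signs $\sigma_j$ are irrelevant to the displacement. The net $x$-shift of one bump is $4\tanh_p\K_p(1)-2\K_p(1)$, and the key evaluation is the identity $\tanh_p\K_p(1)=\E_{1,p}(1)$, obtained via the substitution $\phi=\am_{1,p}(t,1)$ in $\int_0^{\K_p(1)}(\sech_p t)^p\,dt=\int_0^{\pi/2}(\cos\phi)^{2-\frac{2}{p}}\,d\phi=\E_{1,p}(1)$; together with $2\E_{1,p}(1)-\K_{1,p}(1)=Q_p(1)\K_p(1)=-\tfrac{1}{p-1}\K_p(1)$ from Lemma~\ref{lem:nabe-lem2} and \eqref{eq:1117-1}, each bump contributes $-\tfrac{2}{p-1}\K_p(1)$ in $x$. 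Summing over all pieces yields
\[
|P_0-P_1|=\sum_{j=1}^{n+1}L_j+\frac{2n}{p-1}\K_p(1),\qquad L=\sum_{j=1}^{n+1}L_j+2n\K_p(1).
\]

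Finally, dividing these two identities and solving for $\sum_j L_j$ gives both conclusions simultaneously. Since $\tfrac{1}{p-1}<1$ for $p>2$, the ratio $r=|P_0-P_1|/L$ is increasing in $\sum_j L_j\ge0$, equals $\tfrac{1}{p-1}$ exactly when $\sum_j L_j=0$, and stays strictly below $1$; hence necessarily $r\ge\tfrac{1}{p-1}$, which is \eqref{eq:1117-3}. Rearranging $r\bigl(\sum_j L_j+2n\K_p(1)\bigr)=\sum_j L_j+\tfrac{2n}{p-1}\K_p(1)$ then produces exactly the constraint \eqref{eq:sum-flatparts}. I expect the genuine obstacle to be the structural bookkeeping --- rigorously excluding partial bumps at the two endpoints and confirming that the standard indexing of Proposition~\ref{prop:MY2203-thm1.1}, together with the shift $s_0$, can always be recast as the symmetric sandwich \eqref{eq:line-sand} --- rather than the displacement step, which collapses to the clean evaluation $\tanh_p\K_p(1)=\E_{1,p}(1)$.
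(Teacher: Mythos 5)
Your proof is correct and follows essentially the same route as the paper: reduce to the sandwich form \eqref{eq:line-sand} via Lemma~\ref{lem:0th-bc}, compute the endpoint displacement of each piece, and read off \eqref{eq:sum-flatparts} and \eqref{eq:1117-3} from the ratio $\hat l/\hat L=r$. The only difference is that you derive the per-bump displacement $\tfrac{2}{p-1}\K_p(1)$ self-containedly from $\tanh_p\K_p(1)=\E_{1,p}(1)$ together with Lemma~\ref{lem:nabe-lem2} and \eqref{eq:1117-1}, whereas the paper imports this fact from \cite{MYarXiv2203}; your computation is valid.
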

\begin{proof}
By Proposition~\ref{prop:MY2203-thm1.1}, clearly $p>2$, and in addition, up to similarity and reparameterization, the curve $\gamma$ is represented by $\hat{\gamma}(s)=\gamma_f(s+s_0)$ for $s\in [0,\hat{L}]$ with some $s_0 \in \R$, where $\gamma_f$ is given by \eqref{eq:EP3-2} and $\hat{L}$ is the length of the curve after dilation.
Let $k:[0,\hat{L}] \to \R$ be the signed curvature of $\hat{\gamma}$.
Since $k(0)=k(\hat{L})=0$ follows from Lemma~\ref{lem:0th-bc}, there are $n\in \N$, $\{\sigma_j\}_{j=1}^n\subset\{+, -\}$, $L_1, \ldots, L_{n+1}\geq0$ such that for $s\in [0,\hat{L}]$,
\begin{align} \label{eq:border-pinned}
    \hat{\gamma}(s)=\left(\bigg(\bigoplus_{j=1}^n \big( \gamma_{\ell}^{L_j} \oplus \gamma_{b}^{\sigma_j} \big) \bigg)\oplus \gamma_{\ell}^{L_{n+1}} \right)(s) , 
\end{align}
where $\gamma_b^\pm:[-\K_p(1), \K_p(1)]\to\R^2$ and $\gamma_{\ell}^{L_j}:[0,L_j]\to\R^2$ are defined by \eqref{eq:pm-border}. 
Given the form of the right-hand side of \eqref{eq:border-pinned}, we notice that
\begin{align} \label{eq:hatL-typeIII'}
    \hat{L}=L_\Sigma + 2n\K_p(1), \quad L_\Sigma:=\sum_{j=1}^{n+1} L_j.
\end{align}
Set $\hat{l}:= | \hat{\gamma}(0) - \hat{\gamma}(\hat{L})|$.
Similar to the proof of Lemma~\ref{lem:wavelike-pinned}, we consider the condition to satisfy $\hat{l}/\hat{L} = |P_0-P_1|/L=r$.
We infer from \cite[Lemma 5.7]{MYarXiv2203} that the distance between the endpoints of $\gamma_b^{\pm}$ is
$2\K_p(1)/(p-1)$.
Therefore it follows that 
\[
\hat{l}=n\frac{2\K_p(1)}{p-1}+ L_{\Sigma}. 
\]
This together with \eqref{eq:hatL-typeIII'} yields
\begin{align*}
\frac{L_{\Sigma} +  \frac{2n}{p-1}\K_{p}(1) }{L_{\Sigma} + 2n\K_p(1) } =\frac{\hat{l}}{\hat{L}} =r, 
\end{align*}
which is equivalent to \eqref{eq:sum-flatparts}. 
Thus we find that \eqref{eq:1117-3} is also necessary since the right-hand side of \eqref{eq:sum-flatparts} must be nonnegative.
\end{proof}

Now we complete the proof of Theorem~\ref{thm:classify-pinned}.

\begin{proof}[Proof of Theorem~\ref{thm:classify-pinned}]
Let $\gamma$ be a pinned $p$-elastica. 
By Lemma~\ref{lem:0th-bc}, $\gamma$ is either a wavelike $p$-elastica or a flat-core $p$-elastica.

We first consider the case $r=0$.
In view of \eqref{eq:1117-3}, flat-core $p$-elasticae are ruled out in this case, so it suffices to consider $\gamma$ being a wavelike $p$-elastica.
Then we infer from Lemma~\ref{lem:wavelike-pinned} that, up to similarity and reparameterization, $\gamma$ is given by \eqref{eq:wavelike_pinned} for some $n\in \N$, where $q$ is a solution to \eqref{eq:wave-arc-r} with $r=0$, that is, $q=q^*(p)$ in Definition \ref{def:q*}.
This implies that $\gamma$ is an $\frac{n}{2}$-fold figure-eight $p$-elastica.

Next we assume $r\in(0,\frac{1}{p-1})$.
In this case, again by \eqref{eq:1117-3}, $\gamma$ is a wavelike $p$-elastica. 
We infer from Lemma~\ref{lem:nabe-lem2} that \eqref{eq:wave-arc-r} (resp.\ \eqref{eq:wave-loop-r}) has a unique solution $q\in (0, q^*(p))$ (resp.\ $q\in(q^*(p),1)$).
Hence, by Lemma~\ref{lem:wavelike-pinned}, $\gamma$ is either a $(p,r,n)$-arc or a $(p,r,n)$-loop, for some $n\in \N$. 

The remaining case is $r\in[\frac{1}{p-1},1)$.
First we consider the case where $\gamma$ is a wavelike $p$-elastica. 
From Lemma~\ref{lem:nabe-lem2} we infer that \eqref{eq:wave-arc-r} has a unique solution $q\in (0, q^*(p))$ but \eqref{eq:wave-loop-r} has no solution.
Therefore, in this case $\gamma$ is a $(p,r,n)$-arc for some $n\in \N$. 
If $\gamma$ is a flat-core $p$-elastica, then Lemma~\ref{lem:flat-core-pinned} implies the desired assertion.
\end{proof}

\begin{remark}[Sufficiency]
Conversely, all the candidates in Theorem \ref{thm:classify-pinned} are indeed pinned $p$-elasticae as long as the curves are admissible.
\end{remark}

Below we put down some geometric properties of arcs and loops, which follow by explicit formulae.
Although we focus on the one-fold case, the general case is obtained by its antiperiodic extension (cf.\ Figure \ref{fig:arc}) and hence the symmetry is inherited in a certain sense.

\begin{lemma}
Let $p\in(1,\infty)$ and $r\in[0,1)$.
Let $\gamma$ be either a $(p,r,1)$-arc 
or a $(p,r,1)$-loop. 
Then $\gamma$ is reflectionally symmetric in the sense that, up to similarity and reparameterization, the curve has an arclength parameterization $\gamma=(X,Y):[0,L]\to\R^2$ satisfying that
\begin{align} \notag
X(s) + X(L-s) = 2X(\tfrac{L}{2}), \quad Y(s) = Y(L-s), \quad \text{for}\ s\in[0,L].    
\end{align}
In addition, if $r>0$ and $\gamma$ is a $(p,r,1)$-loop, then $\gamma$ has a self-intersection, i.e., there is $\sigma \in (0,\tfrac{L}{2})$ such that $\gamma(\sigma) = \gamma(L-\sigma)$.
\end{lemma}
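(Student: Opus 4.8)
The plan is to read off both claims directly from the explicit parameterization in Proposition~\ref{prop:MY2203-thm1.1}, Case~II, using the parity properties of the $p$-elliptic functions recorded in Proposition~\ref{prop:property-cndn}. By Definition~\ref{def:arcloop}, after a similarity transformation and reparameterization, a $(p,r,1)$-arc or $(p,r,1)$-loop has the arclength parameterization $\gamma(s)=\gamma_w(s-\K_{1,p}(q),q)$ on $[0,L]$ with $L=2\K_{1,p}(q)$, where $q$ solves \eqref{eq:wave-arc-r} or \eqref{eq:wave-loop-r}. Writing $\gamma=(X,Y)$ and recentering by the substitution $u:=s-\K_{1,p}(q)$, the claimed reflection symmetry about the midpoint $s=\tfrac{L}{2}$ becomes a statement about $u\mapsto-u$: I want to show that the first component of $\gamma_w(\cdot,q)$ is odd up to an additive constant and the second component is even.

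First I would verify the parity of the curvature-driven second component $Y$. From \eqref{eq:EP2} we have $Y(u)=-q\frac{p}{p-1}|\cn_p(u,q)|^{p-2}\cn_p(u,q)$, and since $\cn_p(\cdot,q)$ is an even function by Proposition~\ref{prop:property-cndn}(i), the map $z\mapsto|z|^{p-2}z$ being odd forces $Y$ to be even in $u$. Translating back to $s$, this gives $Y(s)=Y(L-s)$. Next, for the first component I would use the representation $X(u)=2\E_{1,p}(\am_{1,p}(u,q),q)-u$ and check that $X$ is odd in $u$. The term $-u$ is manifestly odd, so it remains to see that $u\mapsto\E_{1,p}(\am_{1,p}(u,q),q)$ is odd. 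This follows because $\am_{1,p}(\cdot,q)$ is odd (it inverts the odd function $\F_{1,p}(\cdot,q)$, whose integrand is even), and $\E_{1,p}(\cdot,q)$ is likewise odd in its first argument (its integrand is even), so the composition is odd. Consequently $X(u)+X(-u)=0$, which upon recentering yields $X(s)+X(L-s)=2X(\tfrac{L}{2})$, since the midpoint corresponds to $u=0$ where $X(0)=0$.

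For the self-intersection claim when $r>0$ and $\gamma$ is a $(p,r,1)$-loop, the strategy is to locate two distinct parameters $s=\sigma$ and $s=L-\sigma$ that map to the same point, exploiting the symmetry already established. By the reflection symmetry, $\gamma(\sigma)=\gamma(L-\sigma)$ holds precisely when $X(\sigma)=X(L-\sigma)$, i.e.\ (after recentering to $u=\sigma-\K_{1,p}(q)$) when $X(u)=X(-u)$; combined with oddness this means $X(u)=0$, so I seek $u\in(-\K_{1,p}(q),0)$ with $X(u)=0$, equivalently $2\E_{1,p}(\am_{1,p}(u,q),q)=u$. I would analyze the sign of $X$ near the endpoints and at the midpoint. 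For a loop, \eqref{eq:wave-loop-r} gives $2\E_{1,p}(q)/\K_{1,p}(q)-1=-r<0$, which controls the value $X(\K_{1,p}(q))=2\E_{1,p}(q)-\K_{1,p}(q)<0$ at the right end (and by oddness $X(-\K_{1,p}(q))>0$), while $X(0)=0$. Because $Y$ vanishes only at $u=0$ (since $\cn_p(u,q)$ vanishes only at odd multiples of $\K_{1,p}(q)$ by \eqref{eq:cn-zero}) on the interior, the two candidate intersection parameters $\sigma,L-\sigma$ are genuinely distinct and correspond to a crossing rather than a cusp.

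The main obstacle will be the self-intersection part: I must produce an \emph{interior} pair $\sigma\in(0,\tfrac{L}{2})$ rather than the trivial coincidence at the endpoints, and confirm the two parameters are distinct with matching $Y$-values. The symmetry reduces this to solving $X(u)=0$ for some $u\neq0$ in the recentered picture, but $u=0$ is already a root; so the real work is showing a \emph{second} root exists in $(0,\K_{1,p}(q))$. For this I would examine the derivative $X'(u)=2\frac{d}{du}\E_{1,p}(\am_{1,p}(u,q),q)-1$; by the chain rule and the definition of $\am_{1,p}$ this simplifies to an explicit expression whose sign change, together with the boundary values $X(0)=0$ and $X(\K_{1,p}(q))<0$ secured from the loop condition $-r<0$, forces $X$ to be negative throughout $(0,\K_{1,p}(q))$ unless it first increases; the delicate point is ruling out that $X$ stays one-signed. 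I expect that a clean way around this is to observe that the loop necessarily closes up geometrically (the tangential angle sweeps past a full turn, reflecting the "loop" shape in Figure~\ref{fig:arc}), which guarantees a crossing by a topological degree or winding argument; I would fall back on such an argument if the direct sign analysis of $X$ proves awkward.
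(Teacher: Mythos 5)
Your symmetry argument is correct and is essentially the paper's: after recentering at the midpoint, the first component $X_w$ of $\gamma_w(\cdot,q)$ is odd and the second component $Y_w$ is even, by the parity of $\cn_p(\cdot,q)$, of $\am_{1,p}(\cdot,q)$, and of $\E_{1,p}(\cdot,q)$ in its first argument. Nothing to add there.

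For the self-intersection you have the right reduction --- by symmetry it suffices to find a nonzero root of $X_w$ in $(0,\K_{1,p}(q))$, and you correctly extract $X_w(\K_{1,p}(q))=2\E_{1,p}(q)-\K_{1,p}(q)=-r\K_{1,p}(q)<0$ from \eqref{eq:wave-loop-r} --- but you stop short of closing the argument: you flag ``ruling out that $X$ stays one-signed'' as the delicate point and defer to an unspecified winding/degree fallback. That is a genuine gap, and it is filled by a one-line evaluation of the derivative you already wrote down. Differentiating $X_w(u)=2\E_{1,p}(\am_{1,p}(u,q),q)-u$ and using that $\am_{1,p}(\cdot,q)$ inverts $\F_{1,p}(\cdot,q)$, the factor $|\cos|^{1-\frac{2}{p}}/\sqrt{1-q^2\sin^2}$ from the inverse-function derivative cancels against the integrand of $\E_{1,p}$, giving
\begin{equation*}
X_w'(u)=2\bigl(1-q^2\sn_p(u,q)^2\bigr)-1=1-2q^2\sn_p(u,q)^2=\cos\theta_w(u),
\end{equation*}
so in particular $X_w'(0)=1>0$. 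Hence $X_w$ is strictly positive just to the right of $0$, and the intermediate value theorem between such a point and $\K_{1,p}(q)$ yields a root $\sigma_0\in(0,\K_{1,p}(q))$; by oddness $-\sigma_0$ is also a root, and the $Y$-values match automatically by evenness, so your separate worry about distinguishing a crossing from a cusp is unnecessary. This is exactly how the paper concludes; no topological argument is needed.
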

\begin{proof}
Reflection symmetry follows since by Definition \ref{def:arcloop}, up to similarity and reparameterization, $\gamma$ is given by $(X_w,Y_w):=\gamma_w:[-\K_{1,p}(q),\K_{1,p}(q)]\to\R^2$, cf.\ Proposition \ref{prop:MY2203-thm1.1}, for which it is easy to check that $X_w(s)=-X_w(-s)$ and $Y_w(s)=Y_w(s)$. 

Now we check that a $(p,r,1)$-loop has a self-intersection.
By the above symmetry it suffices to find $\sigma\in(0,\K_{1,p}(q))$ such that $X_w(\sigma)=0$.
By the fact that $X_w(0)=0$ and $X_w'(0)=1>0$, and by the intermediate value theorem, it is now sufficient to show that $X_w(\K_{1,p}(q))<0$.
This follows since $X_w(\K_{1,p}(q)) = 2\E_{1,p}(q)-\K_{1,p}(q)=-r\K_{1,p}(q)$, cf.\ \eqref{eq:wave-loop-r}, where $r>0$ and $\K_{1,p}(q)>0$.
\end{proof}

A parallel argument shows that each loop of a flat-core is also a symmetric curve with a self-intersection.

\section{Unique existence and geometric properties of global minimizers} \label{sect:min-unique}

In this section we compute the normalized $p$-bending energy $\overline{\mathcal{B}}_p$ of each pinned $p$-elastica and detect a unique global minimizer, thus proving Theorem~\ref{thm:uniqueness}. 
We then investigate geometric properties of the half-fold figure-eights and prove Theorem \ref{thm:phi*-decrease}.

\subsection{Unique existence}

We first prove the existence of minimizers of $\mathcal{B}_p$ in $\mathcal{A}_{P_0, P_1, L}$ by the standard direct method.
\begin{proposition}\label{prop:direct-method}
Given $P_0, P_1 \in\R^2$ and $L>0$ such that $|P_0-P_1|<L$, there exists a solution to the following minimization problem 
\[\min_{\gamma\in\mathcal{A}_{P_0, P_1, L}} \mathcal{B}_p[\gamma].
\]
\end{proposition}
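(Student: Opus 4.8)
The plan is to run the standard direct method in the calculus of variations, so I need to verify three ingredients: that the admissible set $\mathcal{A}_{P_0,P_1,L}$ is nonempty, that a minimizing sequence admits a subsequence converging in an appropriate weak topology to an admissible limit, and that $\mathcal{B}_p$ is lower semicontinuous along this convergence. First I would observe that $\mathcal{A}_{P_0,P_1,L}$ is nonempty: since $|P_0-P_1|<L$, one can explicitly build a $W^{2,p}$ immersed curve of length exactly $L$ joining $P_0$ to $P_1$ (e.g.\ a smooth arc), so the infimum $m:=\inf_{\mathcal{A}_{P_0,P_1,L}}\mathcal{B}_p$ is a finite nonnegative number. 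Take a minimizing sequence $(\gamma_j)\subset\mathcal{A}_{P_0,P_1,L}$ with $\mathcal{B}_p[\gamma_j]\to m$.

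The key technical point is to extract a convergent subsequence, and here the natural move is to pass to a normalized parameterization so as to gain uniform $W^{2,p}$ bounds. Reparameterizing each $\gamma_j$ by a constant-speed (hence $|\gamma_j'|\equiv L$) parameterization on $[0,1]$ leaves both $\mathcal{B}_p$ and membership in $\mathcal{A}_{P_0,P_1,L}$ unchanged, so without loss of generality I assume $|\gamma_j'|\equiv L$. Then $\|\gamma_j'\|_{L^\infty}=L$ is controlled, and since the curvature satisfies $|\gamma_j''|=L^2|k_j|$ in this parameterization, the energy bound $\int_{\gamma_j}|k_j|^p\,ds=\mathcal{B}_p[\gamma_j]\le m+1$ translates into a uniform bound on $\|\gamma_j''\|_{L^p(0,1)}$. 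Combined with the fixed endpoints and the $L^\infty$ bound on $\gamma_j'$, this yields a uniform bound on $\|\gamma_j\|_{W^{2,p}(0,1;\R^2)}$. By reflexivity of $W^{2,p}$ for $p\in(1,\infty)$ and Rellich--Kondrachov, a subsequence (not relabeled) converges weakly in $W^{2,p}$ and strongly in $C^1$ to some $\gamma_\infty\in W^{2,p}(0,1;\R^2)$.

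It then remains to check that the limit is admissible and that the energy does not increase in the limit. The endpoint conditions $\gamma_\infty(0)=P_0$, $\gamma_\infty(1)=P_1$ pass to the limit by $C^1$ (indeed $C^0$) convergence. The immersion condition is the delicate one: $C^1$ convergence gives $|\gamma_\infty'|\equiv L>0$, so $\gamma_\infty$ is immersed and moreover $\mathcal{L}[\gamma_\infty]=L$, hence $\gamma_\infty\in\mathcal{A}_{P_0,P_1,L}$. Finally, lower semicontinuity of $\mathcal{B}_p$ along the weak $W^{2,p}$ convergence follows from the convexity of the integrand in the second derivative together with the strong $C^1$ convergence of $\gamma_j'$; concretely, writing the curvature through $k=\langle\gamma'',(\gamma')^\perp\rangle/|\gamma'|^3$ in the constant-speed gauge reduces $\mathcal{B}_p$ to $L^{1-2p}\int_0^1|\langle\gamma'',(\gamma')^\perp\rangle|^p\,dt$ (up to constants), and weak $L^p$ lower semicontinuity of the convex functional $v\mapsto\int|v|^p$ combined with the uniform $C^1$ convergence of the coefficient $(\gamma_j')^\perp$ gives $\mathcal{B}_p[\gamma_\infty]\le\liminf_j\mathcal{B}_p[\gamma_j]=m$. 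Therefore $\gamma_\infty$ is a minimizer. The main obstacle I expect is precisely ensuring that the immersion (non-degeneracy) constraint survives in the limit; the constant-speed reparameterization is what makes this automatic, and it is worth stating this normalization explicitly rather than appealing to a general lower semicontinuity theorem that might lose the immersion property.
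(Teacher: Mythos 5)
Your proposal is correct and follows essentially the same route as the paper: constant-speed reparameterization to convert the energy bound into a uniform $W^{2,p}$ bound (the paper's identity $\Vert\gamma_j''\Vert_{L^p}^p=L^{2p-1}\mathcal{B}_p[\gamma_j]$), extraction of a subsequence converging weakly in $W^{2,p}$ and strongly in $C^1$, admissibility of the limit from $|\gamma_\infty'|\equiv L$, and weak $L^p$ lower semicontinuity applied to $\gamma''$. The only cosmetic difference is that the paper applies lower semicontinuity directly to $\Vert\gamma''\Vert_{L^p}$ in the constant-speed gauge rather than routing through the expression $k=\langle\gamma'',(\gamma')^\perp\rangle/|\gamma'|^3$, but these coincide there.
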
 
\begin{proof}
Let $\{\gamma_j\}_{j\in \N}\subset \mathcal{A}_{P_0, P_1, L}$ be a minimizing sequence of $\mathcal{B}_p$ in $\mathcal{A}_{P_0, P_1, L}$, i.e.,
\begin{align} \label{eq:min_seq}
 \lim_{j\to\infty}\mathcal{B}_p[\gamma_j]=\inf_{\gamma\in \mathcal{A}_{P_0, P_1, L}}\mathcal{B}_p[\gamma]. 
\end{align}
Up to reparameterization, we may suppose that $\gamma_j$ is of constant speed so that $|\gamma'_j| \equiv \mathcal{L}[\gamma_j]=L$.
By \eqref{eq:min_seq} there is $C>0$ such that $\mathcal{B}_p[\gamma_j]\leq C$ for any $j\in \N$, and hence the assumption of constant-speed implies that 
\begin{align} \label{eq:normalized-B_p}
    \Vert \gamma_j'' \Vert_{L^p}^p = \int_0^L \big(L^2|\tilde{\gamma}_j''(s)| \big)^p\,\frac{ds}{L} = L^{2p-1}\mathcal{B}_p[\gamma_j], 
\end{align} 
where $\tilde{\gamma}_j$ stands for the arclength parameterization of $\gamma_j$.
This yields the uniform estimate of $\Vert {\gamma}_j'' \Vert_{L^p}$.
Using $|\gamma_j'|\equiv L$ and the boundary condition, we also obtain the bounds on the $W^{1,p}$-norm. 
Therefore, $\{\gamma_j\}_{j\in\N}$ is uniformly bounded in $W^{2,p}(0,1;\R^2)$ so that there is a subsequence (without relabeling) that converges in the senses of $W^{2,p}$-weak and $C^1$ topology.
Thus the limit curve $\gamma_\infty$ satisfies $\gamma_\infty\in W^{2,p}(0,1;\R^2)$, $|\gamma_\infty'|\equiv L$, $\gamma_\infty(0)=P_0$, and $\gamma_\infty(1)=P_1$, which implies that $\gamma_\infty\in \mathcal{A}_{P_0, P_1, L}$.
Moreover, similar to \eqref{eq:normalized-B_p}, we infer from $|\gamma_\infty'|\equiv L$ that
\begin{align*}
    \mathcal{B}_p[\gamma_\infty] &= 
    L^{1-2p}\|\gamma_\infty'' \|_{L^p}^p.
\end{align*}
This together with the weak lower semicontinuity for $\|\cdot\|_{L^p}$ ensures that 
\begin{align} \notag
    \mathcal{B}_p[\gamma_\infty] & \leq \liminf_{j\to\infty}L^{1-2p} \|\gamma_j'' \|_{L^p}^p
    =\liminf_{j\to\infty} \mathcal{B}_p[\gamma_j].
\end{align}
This with \eqref{eq:min_seq} implies that  $\gamma_\infty$ is a minimizer of $\mathcal{B}_p$ in $\mathcal{A}_{P_0, P_1, L}$. 
\end{proof}

Note that any minimizer of $\mathcal{B}_p$ in $\mathcal{A}_{P_0, P_1, L}$ is a pinned $p$-elastica.
Thanks to the classification in the previous section, for detecting a minimizer it is sufficient to compute the $p$-bending energy of each pinned $p$-elastica.
To this end it is sufficient to compute the {normalized $p$-bending energy}
\[
\overline{\mathcal{B}}_p:=\mathcal{L}^{p-1}\mathcal{B}_p,
\]
cf.\ \eqref{eq:def-normalizedBp}, since we compare the energy of curves of same length.
The energy $\overline{\mathcal{B}}_p$ has the advantage of being scale-invariant in the sense that for any curve $\gamma$ and $\Lambda>0$, if we define $\gamma_\Lambda(t):=\Lambda\gamma(t)$, then $\overline{\mathcal{B}}_p[\gamma]=\overline{\mathcal{B}}_p[\gamma_\Lambda]$.
Thus we need not adjust the length of pinned $p$-elasticae but can use their natural length involving complete $p$-elliptic integrals.

First, we address the wavelike case.
To this end, we prepare the following

\begin{lemma} \label{lem:int-cn_p}
For each $q\in(0,1)$ (including $q=1$ if $p>2$),  
\begin{align} \notag
    \int_0^{\K_{1,p}(q)} |\cn_p(s, q)|^p\,ds 
= \frac{1}{q^2}\E_{1,p}(q) + \Big( 1-\frac{1}{q^2}\Big)\K_{1,p}(q).
\end{align}
\end{lemma}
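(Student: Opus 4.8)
The plan is to establish the identity by direct integration, exploiting the substitution $\theta = \amcn(s,q)$ that converts the arclength integral into a standard $p$-elliptic integral in the angular variable, and then reducing the resulting integrand to a combination of the complete integrals $\Ecn(q)$ and $\Kcn(q)$ via a trigonometric identity.

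First I would change variables. Recall from Definition~\ref{def:cndn} that $\cn_p(s,q)=|\cos\amcn(s,q)|^{\frac{2}{p}-1}\cos\amcn(s,q)$, so that $|\cn_p(s,q)|^p = |\cos\amcn(s,q)|^{(\frac{2}{p}-1)p+p}\cdot\operatorname{sign}(\cos\amcn)^p$; since $p$ may be noninteger I would instead write $|\cn_p(s,q)|^p = |\cos\amcn(s,q)|^{2}$, which follows directly because $\big||\cos\theta|^{\frac2p-1}\cos\theta\big|^p = |\cos\theta|^{(\frac2p-1)p+p} = |\cos\theta|^{2}$. Setting $\theta=\amcn(s,q)$, the defining relation for $\amcn$ gives $d\theta/ds = |\cos\theta|^{\frac2p-1}/\sqrt{1-q^2\sin^2\theta}$, equivalently $ds = \sqrt{1-q^2\sin^2\theta}\,|\cos\theta|^{1-\frac2p}\,d\theta$. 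As $s$ runs over $[0,\Kcn(q)]$, $\theta$ runs over $[0,\pi/2]$. Therefore
\begin{align}\notag
\int_0^{\Kcn(q)}|\cn_p(s,q)|^p\,ds
= \int_0^{\pi/2} \cos^2\theta \cdot \sqrt{1-q^2\sin^2\theta}\,|\cos\theta|^{1-\frac2p}\,d\theta
= \int_0^{\pi/2} \sqrt{1-q^2\sin^2\theta}\,|\cos\theta|^{3-\frac2p}\,d\theta.
\end{align}

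Next I would reduce the extra factor $\cos^2\theta$ to the known integrands. Writing $\cos^2\theta = 1-\sin^2\theta$ and $\sin^2\theta = q^{-2}\big(1-(1-q^2\sin^2\theta)\big)$, I would split the integrand so that each piece matches either $\Ecn$ or $\Kcn$. Concretely, using $\cos^2\theta\,|\cos\theta|^{1-\frac2p} = |\cos\theta|^{1-\frac2p} - \sin^2\theta\,|\cos\theta|^{1-\frac2p}$ and then $q^2\sin^2\theta = 1-(1-q^2\sin^2\theta)$, the integral becomes a linear combination in which the term with $\sqrt{1-q^2\sin^2\theta}\,|\cos\theta|^{1-\frac2p}$ integrates to $\Ecn(q)$ (by \eqref{eq:betsu-E1p} and Definition~\ref{def:E_p}) and the term with $|\cos\theta|^{1-\frac2p}/\sqrt{1-q^2\sin^2\theta}$ integrates to $\Kcn(q)$ (by \eqref{eq:betsu-K1p}). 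Collecting the coefficients $1/q^2$ and $1-1/q^2$ should yield exactly the claimed right-hand side.

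The main obstacle I anticipate is purely bookkeeping: correctly managing the factor $|\cos\theta|^{1-\frac2p}$ throughout the algebraic manipulation of $\cos^2\theta$ and $\sin^2\theta$, so that each resulting integral is genuinely of the form appearing in \eqref{eq:betsu-K1p} or \eqref{eq:betsu-E1p} rather than some unrelated integral. There is also a minor technical point at $q=1$ when $p>2$, where $\Kcn(1)=\K_p(1)$ is finite and the upper endpoint of the $s$-integration is $\K_p(1)$; I would check that the substitution remains valid there, noting that the integrand stays integrable because $3-\frac2p>-1$, so no boundary issue arises. Once the reduction to $\Ecn$ and $\Kcn$ is set up correctly, the identity follows by matching coefficients.
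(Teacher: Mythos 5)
Your overall strategy is exactly the paper's: substitute $\theta=\amcn(s,q)$, use $|\cn_p(s,q)|^p=|\cos\amcn(s,q)|^2$, and reduce to $\Ecn(q)$ and $\Kcn(q)$ by writing $\cos^2\theta$ in terms of $1-q^2\sin^2\theta$. The reduction $|\cn_p|^p=|\cos\amcn|^2$ is correct. However, your Jacobian is inverted. Since $s=\Fcn(\theta,q)=\int_0^\theta |\cos\phi|^{1-\frac2p}(1-q^2\sin^2\phi)^{-1/2}\,d\phi$, you have $ds/d\theta=|\cos\theta|^{1-\frac2p}/\sqrt{1-q^2\sin^2\theta}$, hence
\begin{equation*}
ds=\frac{|\cos\theta|^{1-\frac2p}}{\sqrt{1-q^2\sin^2\theta}}\,d\theta,
\qquad
\frac{d\theta}{ds}=\sqrt{1-q^2\sin^2\theta}\,|\cos\theta|^{\frac2p-1},
\end{equation*}
whereas you place the factor $\sqrt{1-q^2\sin^2\theta}$ in the denominator of $d\theta/ds$ and consequently in the numerator of $ds$. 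This is not a harmless bookkeeping slip: your displayed intermediate integral $\int_0^{\pi/2}\sqrt{1-q^2\sin^2\theta}\,|\cos\theta|^{3-\frac2p}\,d\theta$ is the wrong quantity, and the decomposition you then describe cannot be carried out on it. Writing $\cos^2\theta=\bigl(1-\tfrac1{q^2}\bigr)+\tfrac1{q^2}(1-q^2\sin^2\theta)$ in your integrand produces a term with $(1-q^2\sin^2\theta)^{3/2}$, which is neither $\Ecn$ nor $\Kcn$; no term with $1/\sqrt{1-q^2\sin^2\theta}$ (i.e.\ a $\Kcn$ contribution) can appear, so the claimed right-hand side is unreachable from that starting point.

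With the corrected Jacobian the integral becomes $\int_0^{\pi/2}\cos^2\theta\,|\cos\theta|^{1-\frac2p}(1-q^2\sin^2\theta)^{-1/2}\,d\theta$, and the same algebraic split then yields $\tfrac1{q^2}\Ecn(q)+\bigl(1-\tfrac1{q^2}\bigr)\Kcn(q)$ exactly as in the paper's proof. Your remark about $q=1$ for $p>2$ is fine once the integrand is corrected (integrability at $\theta=\pi/2$ then requires $1-\tfrac2p>-1$ together with the cancellation from $\cos^2\theta$ against $(1-\sin^2\theta)^{-1/2}$, which holds). So the fix is local, but as written the central computation is incorrect.
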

\begin{proof}
Fix $q\in (0,1)$ ($q\in(0,1]$ if $p>2$) arbitrarily. 
By definition of $\cn_p$, we have
\begin{align*}
  \int_0^{\K_{1,p}(q)} |\cn_p(s, q)|^p\,ds 
  &= \int_0^{\K_{1,p}(q)} |\cos \am_{1,p}(s, q)|^2\,ds \\
  &= \int_0^{\frac{\pi}{2}} |\cos \xi|^2 \frac{|\cos\xi|^{1-\frac{2}{p}}}{\sqrt{1-q^2\sin^2 \xi}}\,d\xi, 
\end{align*}
where we used the change of variables $\xi=\am_{1,p}(s, q)$. 
Keeping the definitions of $\K_{1,p}$ and $\E_{1,p}$ in mind, we compute
\begin{align*}
    \int_0^{\frac{\pi}{2}} |\cos \xi|^2 \frac{|\cos\xi|^{1-\frac{2}{p}}}{\sqrt{1-q^2\sin^2 \xi}}\,d\xi 
    &= \frac{1}{q^2}\int_0^{\frac{\pi}{2}} q^2(1-\sin^2 \xi) \frac{|\cos\xi|^{1-\frac{2}{p}}}{\sqrt{1-q^2\sin^2 \xi}}\,d\xi \\
    &= \frac{1}{q^2}\int_0^{\frac{\pi}{2}} (1-q^2\sin^2 \xi) \frac{|\cos\xi|^{1-\frac{2}{p}}}{\sqrt{1-q^2\sin^2 \xi}}\,d\xi \\
    &\quad +\frac{1}{q^2}\int_0^{\frac{\pi}{2}} (q^2-1) \frac{|\cos\xi|^{1-\frac{2}{p}}}{\sqrt{1-q^2\sin^2 \xi}}\,d\xi\\
    &= \frac{1}{q^2}\E_{1,p}(q) + \Big( \frac{q^2-1}{q^2}\Big)\K_{1,p}(q).
\end{align*}
The proof is complete.
\end{proof}

Using this formula, we can compute the normalized $p$-bending energy of $(p,r,n)$-arcs and $(p,r,n)$-loops.

\begin{proposition}\label{prop:normalized-wave}
Let $p\in(1,\infty)$, $r\in[0,1)$, and $n\in \N$. 
Let $\gamma$ be a $(p,r,n)$-arc or a $(p,r,n)$-loop. 
Then the normalized $p$-bending energy of $\gamma$ is given by
\begin{align}\label{eq:normalized-wave}
    \overline{\mathcal{B}}_p[\gamma]=2^{2p}n^p q^p\K_{1,p}(q)^{p-1} \bigg(\frac{1}{q^2}\E_{1,p}(q) + \Big( 1-\frac{1}{q^2}\Big)\K_{1,p}(q) \bigg),
\end{align}
where if $\gamma$ is a $(p,r,n)$-arc, then $q\in(0,1)$ is a solution to \eqref{eq:wave-arc-r}, while if $\gamma$ is a $(p,r,n)$-loop, then $q\in(0,1)$ is a solution to \eqref{eq:wave-loop-r}.
\end{proposition}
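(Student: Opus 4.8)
The plan is to compute $\overline{\mathcal{B}}_p[\gamma] = \mathcal{L}[\gamma]^{p-1}\mathcal{B}_p[\gamma]$ directly from the explicit parameterization of the wavelike $p$-elastica, using the length formula and the curvature formula from Proposition~\ref{prop:MY2203-thm1.1} together with the integral computed in Lemma~\ref{lem:int-cn_p}. First I would record the two ingredients. By Definition~\ref{def:arcloop}, whether $\gamma$ is a $(p,r,n)$-arc or a $(p,r,n)$-loop, up to similarity and reparameterization it is given by $\gamma(s)=\gamma_w(s-\K_{1,p}(q),q)$ on $[0,2n\K_{1,p}(q)]$, so its length is $\mathcal{L}[\gamma]=2n\K_{1,p}(q)$; the only difference between the two cases is which equation, \eqref{eq:wave-arc-r} or \eqref{eq:wave-loop-r}, determines the modulus $q$. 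The signed curvature is $k_w(s)=2q\cn_p(s,q)$.

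Next I would compute the (unnormalized) $p$-bending energy. Since $\mathcal{B}_p[\gamma]=\int_0^{\mathcal{L}}|k|^p\,ds$ and the integrand $|k_w(s)|^p = (2q)^p|\cn_p(s,q)|^p$ is invariant under the shift by $\K_{1,p}(q)$ (by antiperiodicity of $\cn_p$, $|\cn_p|^p$ is $2\K_{1,p}(q)$-periodic and in fact $\K_{1,p}(q)$-periodic since $|\cn_p|$ is $\K_{1,p}(q)$-antiperiodic hence $|\cn_p|^p$ is $\K_{1,p}(q)$-periodic up to the even/odd structure), I can exploit periodicity to reduce the integral over $[0,2n\K_{1,p}(q)]$ to $2n$ copies of the integral over a single period. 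Concretely,
\begin{align*}
\mathcal{B}_p[\gamma] = (2q)^p\int_0^{2n\K_{1,p}(q)}|\cn_p(s-\K_{1,p}(q),q)|^p\,ds = (2q)^p\cdot 2n\int_0^{\K_{1,p}(q)}|\cn_p(s,q)|^p\,ds.
\end{align*}
Here I would justify the factor $2n$ by the symmetry/periodicity of $|\cn_p(\cdot,q)|^p$, which on each interval of length $\K_{1,p}(q)$ contributes the same amount $\int_0^{\K_{1,p}(q)}|\cn_p(s,q)|^p\,ds$. Substituting Lemma~\ref{lem:int-cn_p} then gives
\begin{align*}
\mathcal{B}_p[\gamma] = (2q)^p\cdot 2n\left(\frac{1}{q^2}\E_{1,p}(q) + \Big(1-\frac{1}{q^2}\Big)\K_{1,p}(q)\right).
\end{align*}

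Finally I would multiply by $\mathcal{L}[\gamma]^{p-1}=(2n\K_{1,p}(q))^{p-1}$ and collect the powers of $2$, $n$, $q$, and $\K_{1,p}(q)$ to obtain the claimed formula \eqref{eq:normalized-wave}: the prefactor $(2q)^p\cdot 2n\cdot(2n)^{p-1}\K_{1,p}(q)^{p-1}$ simplifies to $2^{2p}n^p q^p\K_{1,p}(q)^{p-1}$, leaving the bracketed term untouched. I expect no serious obstacle; the computation is essentially bookkeeping. The one point requiring genuine care is the periodicity reduction: I must confirm that $\int_0^{\K_{1,p}(q)}|\cn_p(s,q)|^p\,ds$ accounts correctly for all $2n$ subintervals, using that $\cn_p(\cdot,q)$ is even and $2\K_{1,p}(q)$-antiperiodic (Proposition~\ref{prop:property-cndn}(i)), so that $|\cn_p(\cdot,q)|^p$ is $\K_{1,p}(q)$-periodic; the shift by $-\K_{1,p}(q)$ in the parameterization is then immaterial to the value of the integral. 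Observe that the final expression is manifestly the same for arcs and loops, the distinction entering only through the defining equation for $q$, exactly as stated.
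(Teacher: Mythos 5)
Your proposal follows essentially the same route as the paper's proof: same parameterization and length $2n\K_{1,p}(q)$, same reduction of $\mathcal{B}_p$ to $2n$ copies of $\int_0^{\K_{1,p}(q)}|\cn_p(s,q)|^p\,ds$, then Lemma~\ref{lem:int-cn_p} and bookkeeping. One small correction to your justification: $|\cn_p(\cdot,q)|^p$ is \emph{not} $\K_{1,p}(q)$-periodic (e.g.\ $|\cn_p(0,q)|=1$ while $|\cn_p(\K_{1,p}(q),q)|=0$); the correct reason the $2n$ subintervals of length $\K_{1,p}(q)$ contribute equally is that evenness plus $2\K_{1,p}(q)$-antiperiodicity give the reflection identity $|\cn_p(2\K_{1,p}(q)-s,q)|=|\cn_p(s,q)|$, which together with $2\K_{1,p}(q)$-periodicity of $|\cn_p|$ yields the equality of the integrals over consecutive length-$\K_{1,p}(q)$ blocks --- exactly the facts from Proposition~\ref{prop:property-cndn}(i) that the paper invokes.
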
 
\begin{proof}
Since the proof is completely parallel, we only consider the case that $\gamma$ is a $(p,r,n)$-arc.
Since $\overline{\mathcal{B}}_p$ is invariant with respect to similar transformation and reparameterization, by Definition~\ref{def:arcloop}, we may assume that
\[
\gamma(s)= \gamma_w(s-\K_{1,p}(q),q), \quad \mathcal{L}[\gamma]=2n\K_{1,p}(q),
\]
where $q\in(0,1)$ is a solution of \eqref{eq:wave-arc-r}.
Then we have
\begin{align*}
    \mathcal{B}_p[\gamma]=\int_0^{2n\K_{1,p}(q)}\left|2q\cn_p(s-\K_{1,p}(q),q)\right|^p\,ds
    =2n\int_0^{\K_{1,p}(q)}2^p q^p\left|\cn_p(s,q)\right|^p\,ds,
\end{align*}
where we used the fact that $\cn_p$ is an even $2\K_{1,p}(q)$-antiperiodic function (cf.\ Proposition~\ref{prop:property-cndn}). 
Combining this with Lemma~\ref{lem:int-cn_p}, we obtain \eqref{eq:normalized-wave}.
\end{proof}

Next we turn to the flat-core case.

\begin{proposition}\label{prop:normalized-flat}
Let $p>2$, $r\in[0,1)$, and $n\in \N$ such that \eqref{eq:1117-3} holds. 
Let $\gamma$ be a $(p, r, n, \boldsymbol{\sigma}, \boldsymbol{L})$-flat-core. 
Then the normalized $p$-bending energy of $\gamma$ is given by
\begin{align}\label{eq:normalized-flat}
     \overline{\mathcal{B}}_p[\gamma]= 2^{2p}n^{p}\K_{1,p}(1)^{p-1}\E_{1,p}(1)\left(\frac{1-\frac{1}{p-1}}{1-r}\right)^{p-1}.
\end{align}
\end{proposition}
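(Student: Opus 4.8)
The plan is to compute $\overline{\mathcal{B}}_p[\gamma]=\mathcal{L}[\gamma]^{p-1}\mathcal{B}_p[\gamma]$ directly from the explicit flat-core parameterization. First I would observe that by scale-invariance of $\overline{\mathcal{B}}_p$ and the reparameterization invariance of both $\mathcal{L}$ and $\mathcal{B}_p$, it suffices to work with the representative $\gamma=\big(\bigoplus_{j=1}^n(\gamma_\ell^{L_j}\oplus\gamma_b^{\sigma_j})\big)\oplus\gamma_\ell^{L_{n+1}}$ from Definition~\ref{def:arcloop}, whose total length is $\hat{L}=L_\Sigma+2n\K_p(1)$ with $L_\Sigma=\sum_{j=1}^{n+1}L_j$, exactly as computed in Lemma~\ref{lem:flat-core-pinned}.

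The key simplification is that the linear pieces $\gamma_\ell^{L_j}$ contribute zero curvature, so $\mathcal{B}_p$ only sees the $n$ borderline pieces $\gamma_b^{\sigma_j}$, each with curvature $k_b^{\sigma_j}(s)=\pm2\sech_p s$ on $[-\K_p(1),\K_p(1)]$. Since $|k_b^{\sigma_j}|^p=2^p(\sech_p s)^p=2^p|\cn_p(s,1)|^p$ is independent of the sign $\sigma_j$, the bending energy is simply $n$ times the energy of one borderline piece:
\begin{align}\notag
\mathcal{B}_p[\gamma]=n\int_{-\K_p(1)}^{\K_p(1)}2^p|\cn_p(s,1)|^p\,ds=2n\cdot 2^p\int_0^{\K_{1,p}(1)}|\cn_p(s,1)|^p\,ds,
\end{align}
using that $\sech_p$ is even and $\K_p(1)=\K_{1,p}(1)$. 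Now I would apply Lemma~\ref{lem:int-cn_p} at $q=1$ (valid since $p>2$): the right-hand side there is $\frac{1}{q^2}\E_{1,p}(q)+(1-\frac{1}{q^2})\K_{1,p}(q)$, which at $q=1$ collapses to $\E_{1,p}(1)$. Hence $\mathcal{B}_p[\gamma]=2^{p+1}n\,\E_{1,p}(1)$.

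It remains to insert the length factor. From the flat-core relation \eqref{eq:sum-flatparts}, namely $L_\Sigma=2n\frac{r-\frac{1}{p-1}}{1-r}\K_p(1)$, I would compute
\begin{align}\notag
\hat{L}=L_\Sigma+2n\K_p(1)=2n\K_p(1)\left(\frac{r-\frac{1}{p-1}}{1-r}+1\right)=2n\K_p(1)\,\frac{1-\frac{1}{p-1}}{1-r},
\end{align}
so that $\hat{L}^{p-1}=(2n)^{p-1}\K_{1,p}(1)^{p-1}\big(\tfrac{1-\frac{1}{p-1}}{1-r}\big)^{p-1}$. Multiplying this by $\mathcal{B}_p[\gamma]=2^{p+1}n\,\E_{1,p}(1)$ and collecting the powers of $2$ and $n$ yields $2^{2p}n^p\K_{1,p}(1)^{p-1}\E_{1,p}(1)\big(\tfrac{1-\frac{1}{p-1}}{1-r}\big)^{p-1}$, which is exactly \eqref{eq:normalized-flat}. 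The only mild subtlety, and the one point I would be careful about, is justifying the $q=1$ limit in Lemma~\ref{lem:int-cn_p} and the identification $\cn_p(\cdot,1)=\sech_p$ on the interval where it is nonzero; both are covered by Definition~\ref{def:sech} together with the finiteness of $\K_p(1)$ for $p>2$ in \eqref{eq:K_p}, so no genuine obstacle arises—the computation is essentially bookkeeping of constants.
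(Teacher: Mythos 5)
Your proposal is correct and follows essentially the same route as the paper's proof: isolate the $n$ borderline pieces (the segments contribute nothing), reduce to $2^{p+1}n\int_0^{\K_p(1)}|\cn_p(s,1)|^p\,ds=2^{p+1}n\,\E_{1,p}(1)$ via the evenness of $\sech_p$ and Lemma~\ref{lem:int-cn_p} at $q=1$, and combine with the length $2n\frac{1-\frac{1}{p-1}}{1-r}\K_p(1)$ obtained from \eqref{eq:sum-flatparts}. The subtlety you flag (validity of Lemma~\ref{lem:int-cn_p} at $q=1$ for $p>2$ and the identification $\cn_p(\cdot,1)=\sech_p$ on $[0,\K_p(1)]$) is exactly what the paper invokes, so nothing is missing.
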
 
\begin{proof}
Let $\gamma$ be a $(p, r, n, \boldsymbol{\sigma}, \boldsymbol{L})$-flat-core, i.e., there are $\boldsymbol{L}=(L_1, \ldots, L_{n+1}) \in[0,\infty)^{n+1}$ and $\boldsymbol{\sigma}=(\sigma_1, \ldots, \sigma_n) \in\{+,-\}^n$ such that $\gamma$ is given by \eqref{eq:line-sand}. 
As discussed in \eqref{eq:hatL-typeIII'}, it follows that $\mathcal{L}[\gamma]=\sum_{j=1}^{n+1} L_j + 2n\K_p(1)$. 
Noting that $\boldsymbol{L}$ satisfies \eqref{eq:sum-flatparts}, we obtain 
\begin{align}\label{eq:length-flattype}
\mathcal{L}[\gamma]
=2n\frac{r-\frac{1}{p-1}}{1-r} \K_{p}(1)+ 2n\K_p(1)
=2n\frac{1-\frac{1}{p-1}}{1-r}\K_{p}(1).
\end{align}
In view of \eqref{eq:flat-type}, we have
\begin{align*}
    \mathcal{B}_p[\gamma]= \int_0^L |k_f(s)|^p\,ds 
    &= \sum_{i=1}^n \int_{s_i-\K_p(1)}^{s_i+\K_p(1)} 2^p(\sech_p (s-s_i))^p\,ds \\
    &= 2^p n \int_{-\K_p(1)}^{\K_p(1)} (\sech_p s)^p\,ds. 
\end{align*}
Moreover, since $\sech_p$ is even (cf.\ Proposition~\ref{prop:property-cndn}), we obtain
\[
\int_{-\K_p(1)}^{\K_p(1)} (\sech_p s)^p\,ds 
= 2\int_0^{\K_p(1)} (\sech_p s)^p\,ds 
= 2\int_0^{\K_p(1)} |\cn_p (s,1)|^p\,ds
= 2\E_{1,p}(1),
\]
where we also used the fact that $\cn_p(\cdot,1) =\sech_p$ in $[0,\K_p(1)]$ and Lemma~\ref{lem:int-cn_p}.
This together with \eqref{eq:length-flattype} yields \eqref{eq:normalized-flat}. 
The proof is complete.
\end{proof}

The following lemma plays an important role for the quantitative comparison of the normalized $p$-bending energy among pinned $p$-elasticae.

\begin{lemma} \label{lem:monotone-w.r.t-q}
Let $b:(0,1)\to\R$ be the function defined by
\begin{align}\label{eq:def-varphi}
b(q):= q^p \bigg(\frac{1}{q^2}\E_{1,p}(q) + \Big( 1-\frac{1}{q^2}\Big)\K_{1,p}(q) \bigg), \quad q\in(0,1),    
\end{align}
and let $b(1):=\lim_{q\uparrow1}b(q)=\E_{1,p}(1)$ if $p>2$.
Then, for $q\in(0,1)$,
\begin{align}\label{eq:bibun-varphi}
    b'(q) = (p-1)q^{p-1}\K_{1,p}(q)+(p-1)\big(1-\tfrac{2}{p}\big)q^{p-3} \big(\E_{1,p}(q)-\K_{1,p}(q) \big)>0.
\end{align}
In particular, $b$ is strictly increasing with respect to $q\in (0,1)$ (including $q=1$ if $p>2$).
\end{lemma}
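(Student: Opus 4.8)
The plan is to compute $b'(q)$ directly from the definition \eqref{eq:def-varphi} using the derivative formulae \eqref{eq:diff-ellipint} of Proposition~\ref{prop:EK-monotone}, and then to establish positivity by a single integral comparison valid uniformly in $p$. First I would rewrite $b$ in the more tractable form
\[
b(q)=q^{p-2}\Ecn(q)+q^{p-2}(q^2-1)\Kcn(q),
\]
and differentiate term by term. For the first term, $\frac{d}{dq}\big(q^{p-2}\Ecn(q)\big)$ produces $(p-1)q^{p-3}\Ecn(q)-q^{p-3}\Kcn(q)$ after inserting $\Ecn'(q)=(\Ecn(q)-\Kcn(q))/q$. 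For the second term $\frac{d}{dq}\big((q^p-q^{p-2})\Kcn(q)\big)$, the crucial simplification is that in the contribution $(q^p-q^{p-2})\Kcn'(q)$ the factor $q^2-1$ exactly cancels the denominator $q(1-q^2)$ in the formula for $\Kcn'$, leaving $-q^{p-3}\big[(2-\tfrac2p)\Ecn(q)-(2-\tfrac2p-q^2)\Kcn(q)\big]$; adding the elementary term $(pq^{p-1}-(p-2)q^{p-3})\Kcn(q)$ and collecting the $\Ecn$- and $\Kcn$-coefficients, using $(p-1)(1-\tfrac2p)=p-3+\tfrac2p$, yields exactly \eqref{eq:bibun-varphi}. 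This step is pure bookkeeping but must be carried out carefully, since several $q^{p-3}$ and $q^{p-1}$ terms combine.

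For the positivity I would factor out the positive quantity $(p-1)q^{p-3}$ from \eqref{eq:bibun-varphi}, reducing the claim $b'>0$ to
\[
q^2\Kcn(q)+\big(1-\tfrac2p\big)\big(\Ecn(q)-\Kcn(q)\big)>0.
\]
The key observation is the integral identity
\[
\Kcn(q)-\Ecn(q)=q^2\int_0^{\pi/2}\frac{\sin^2\theta\,|\cos\theta|^{1-\frac2p}}{\sqrt{1-q^2\sin^2\theta}}\,d\theta=:q^2 J(q),
\]
which follows from Definitions~\ref{def:K_p} and \ref{def:E_p} by writing the difference of the two integrands as $\frac{q^2\sin^2\theta}{\sqrt{1-q^2\sin^2\theta}}|\cos\theta|^{1-\frac2p}$. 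Substituting this, the displayed quantity becomes $q^2\big(\Kcn(q)-(1-\tfrac2p)J(q)\big)$, so it remains only to show $\Kcn(q)>(1-\tfrac2p)J(q)$. Since $0<\sin^2\theta<1$ on $(0,\tfrac\pi2)$ we have $0<J(q)<\Kcn(q)$, and because $1-\tfrac2p<1$ for every finite $p$ we conclude $(1-\tfrac2p)J(q)<J(q)<\Kcn(q)$, giving strict positivity.

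The main obstacle I anticipate is precisely the sign of the factor $1-\tfrac2p$ in the second term of \eqref{eq:bibun-varphi}: for $p>2$ it is positive while $\Ecn(q)-\Kcn(q)<0$, so that term is negative and a crude sign argument fails; one genuinely needs the first term to dominate. The integral identity above resolves this uniformly for all $p\in(1,\infty)$ by extracting the common factor $q^2$ and reducing everything to the elementary bound $J<\Kcn$. Once $b'>0$ on $(0,1)$ is established, strict monotonicity on $(0,1)$ follows from the $C^1$-regularity of $\Ecn$ and $\Kcn$ there; for $p>2$ the monotonicity up to $q=1$ then follows by continuity, using $b(1)=\lim_{q\uparrow1}b(q)=\Ecn(1)$, which holds because $(q^2-1)\Kcn(q)\to0$ as $\Kcn(1)<\infty$ in this regime.
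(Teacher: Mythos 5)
Your proposal is correct and follows essentially the same route as the paper: the formula \eqref{eq:bibun-varphi} is obtained by the same term-by-term differentiation using \eqref{eq:diff-ellipint} (with the same cancellation of $q(1-q^2)$ and the identity $(p-1)(1-\tfrac2p)=p-3+\tfrac2p$), and your positivity argument via $\K_{1,p}(q)-\E_{1,p}(q)=q^2J(q)$ with $J(q)<\K_{1,p}(q)$ is just a factored restatement of the paper's single-integral form $b'(q)=(p-1)q^{p-1}\int_0^{\pi/2}\bigl(1-(1-\tfrac2p)\sin^2\theta\bigr)\tfrac{|\cos\theta|^{1-2/p}}{\sqrt{1-q^2\sin^2\theta}}\,d\theta>0$. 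No gaps.
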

\begin{proof}
In view of \eqref{eq:diff-ellipint}, we see that for $q\in(0,1)$
\begin{align*}
    b'(q)
    &= (p-2)q^{p-3}\E_{1,p}(q) +q^{p-2}\frac{\E_{1,p}(q)-\K_{1,p}(q)}{q}\\
    &\quad +\Big(pq^{p-1}-(p-2)q^{p-3}\Big)\K_{1,p}(q) \\
    &\quad +(q^p-q^{p-2})\left(-\frac{2-\frac{2}{p}-q^2}{q(1-q^2)}\K_{1,p}(q) + \frac{2-\frac{2}{p}}{q(1-q^2)}\E_{1,p}(q) \right) \\ 
    &= (p-1)q^{p-1}\K_{1,p}(q)+(p-1)\big(1-\tfrac{2}{p}\big)q^{p-3} \big(\E_{1,p}(q)-\K_{1,p}(q) \big).
\end{align*}
Therefore we obtain the left equality in \eqref{eq:bibun-varphi}.
Next we show the positivity of $b'$.
It follows from Proposition~\ref{prop:EK-monotone} that
\[
\frac{\E_{1,p}(q)-\K_{1,p}(q)}{q}
=\frac{d}{dq}\E_{1,p}(q)
=-\int_0^{\frac{\pi}{2}}\frac{q\sin^2\theta |\cos\theta|^{1-\frac{2}{p}}}{\sqrt{1-q^2\sin^2\theta}}d\theta.
\]
This together with \eqref{eq:betsu-K1p} and \eqref{eq:bibun-varphi} yields 
\begin{align*}
     b'(q) &= (p-1)q^{p-1}\int_0^{\frac{\pi}{2}}\frac{ |\cos\theta|^{1-\frac{2}{p}}}{\sqrt{1-q^2\sin^2\theta}}d\theta \\
    &\quad -(p-1)\big(1-\tfrac{2}{p}\big)q^{p-2}\int_0^{\frac{\pi}{2}}\frac{q\sin^2\theta |\cos\theta|^{1-\frac{2}{p}}}{\sqrt{1-q^2\sin^2\theta}}d\theta\\
    &=(p-1)q^{p-1}\int_0^{\frac{\pi}{2}} 
    \left(1-\big(1-\tfrac{2}{p}\big)\sin^2\theta\right)\frac{|\cos\theta|^{1-\frac{2}{p}}}{\sqrt{1-q^2\sin^2\theta}}\,d\theta >0.
\end{align*}
This implies the desired monotonicity and the proof is complete.
\end{proof}

We are in a position to prove the uniqueness of minimizers in $\mathcal{A}_{P_0, P_1, L}$.

\begin{proof}[Proof of Theorem~\ref{thm:uniqueness}]
The existence of minimizers of $\mathcal{B}_p$ in $\mathcal{A}_{P_0, P_1, L}$ follows from Proposition~\ref{prop:direct-method}.
Fix any minimizer $\gamma \in \mathcal{A}_{P_0, P_1, L}$. 
Then $\gamma$ is a pinned $p$-elastica.
We divide the proof into three cases along the classification in Theorem~\ref{thm:classify-pinned}. 

First we consider the case $r=0$.
Then $\gamma$ is an $\frac{n}{2}$-fold figure-eight $p$-elastica, i.e., a $(p,0,n)$-arc for some $n\in \N$.
By Proposition~\ref{prop:normalized-wave}, the normalized $p$-bending energy of a $(p,0,n)$-arc is $n^p$ times that of a $(p,0,1)$-arc, and hence a half-fold figure-eight $p$-elastica (corresponding to $n=1$) is a unique minimizer. 

Next we consider the case $r\in(0,\frac{1}{p-1})$.
Then $\gamma$ is either a $(p,r,n)$-arc or a $(p,r,n)$-loop for some $n\in\N$.
Let $\bar{B}^{r,n}_{\rm arc}$ and $\bar{B}^{r,n}_{\rm loop}$ denote the corresponding values of the normalized $p$-bending energy.
Then, using a solution $q_1\in(0,q^*(p))$ of \eqref{eq:wave-arc-r} and the function $b$ defined by \eqref{eq:def-varphi}, we have 
\begin{align} \notag
    \bar{B}^{r,n}_{\rm arc}
=2^{2p}n^p \K_{1,p}(q_1)^{p-1} b(q_1).
\end{align}
Similarly, using a solution $q_2\in(q^*(p),1)$ of \eqref{eq:wave-loop-r}, we obtain
\[
\bar{B}^{r,n}_{\rm loop}
=2^{2p}n^p\K_{1,p}(q_2)^{p-1} b(q_2).
\]
Therefore, we see that $\bar{B}^{r,1}_{\rm arc}< \bar{B}^{r,n}_{\rm arc}$ and $ \bar{B}^{r,1}_{\rm loop} <\bar{B}^{r,n}_{\rm loop}$ for any $n \geq2$.
Furthermore, since $0<q_1 < q^*(p) <q_2<1$, we infer from Proposition~\ref{prop:EK-monotone} and  Lemma~\ref{lem:monotone-w.r.t-q} that 
\[
\bar{B}^{r,1}_{\rm arc} =2^{2p} \K_{1,p}(q_1)^{p-1} b(q_1) < 2^{2p} \K_{1,p}(q_2)^{p-1} b(q_2) = \bar{B}^{r,1}_{\rm loop}.
\]
Consequently a $(p,r,1)$-arc is a unique minimizer.

Finally we turn to the case $r\geq\frac{1}{p-1}$.
In this case, $\gamma$ is either a $(p,r,1)$-arc or a $(p,r,n,\boldsymbol{\sigma},\boldsymbol{L})$-flat-core.
Let $\bar{B}^{r,n}_{\rm flat}$ denote the value of the normalized $p$-bending energy of a  $(p,r,n,\boldsymbol{\sigma},\boldsymbol{L})$-flat-core (which is independent of $\boldsymbol{\sigma}, \boldsymbol{L}$, cf.\ Proposition~\ref{prop:normalized-flat}).
Then we infer from \eqref{eq:normalized-flat} that 
\[
\bar{B}^{r,n}_{\rm flat}
= 2^{2p}n^{p}\K_{1,p}(1)^{p-1}b(1)\left(\frac{1-\frac{1}{p-1}}{1-r}\right)^{p-1}.
\]
Since $\frac{1-\frac{1}{p-1}}{1-r}\geq1$ by \eqref{eq:1117-3}, using Proposition~\ref{prop:EK-monotone} and Lemma~\ref{lem:monotone-w.r.t-q}, we obtain 
\begin{align*}
\bar{B}^{r,1}_{\rm arc}
 &= 2^{2p} b(q_1)\K_{1,p}(q_1)^{p-1}
<2^{2p} b(1)\K_{1,p}(1)^{p-1} \\
&\leq2^{2p}b(1)\K_{1,p}(1)^{p-1}\left(\frac{1-\frac{1}{p-1}}{1-r}\right)^{p-1}
=\bar{B}^{r,1}_{\rm flat}
<\bar{B}^{r,n}_{\rm flat}
\end{align*}
for any $n\geq2$.
As a result a $(p,r,1)$-arc is a unique minimizer.
\end{proof}

\subsection{Properties of a half-fold figure-eight $p$-elastica}\label{subsect:angle-half-8}

In this subsection, we discuss some properties of a half-fold figure-eight $p$-elastica, 
and in particular, we prove Theorem~\ref{thm:phi*-decrease}.

To begin with, we collect the basic properties of a half-fold figure-eight $p$-elastica.

\begin{proposition}[Basic properties of a half-fold figure-eight $p$-elastica]\label{prop:half-fold}
Let $\gamma$ be a half-fold figure-eight $p$-elastica.
Then up to similarlity and reparameterization, $\gamma$ is given by, for $s\in [0,2\K_{1,p}(q^*(p))]$, 
\begin{align}\label{eq:half-fold}
\begin{split}
    \gamma(s)
    &=
       \begin{pmatrix}
       2 \E_{1,p}(\am_{1,p}(s-\K_{1,p}(q^*),q^*),q^* )- (s-\K_{1,p}(q^*)) \\
       -q^*\frac{p}{p-1}|\cn_p(s-\K_{1,p}(q^*),q^*)|^{p-2}\cn_p(s-\K_{1,p}(q^*),q^*)
       \end{pmatrix},
\end{split}
\end{align}
where $q^*=q^*(p)$.
In addition, the following properties hold.
\begin{itemize}
    \item[(i)] The tangential angle is $\theta(s)=2\arcsin{(q^*(p)\sn_p(s-\K_{1,p}(q^*(p)),q^*(p)))}$. 
    \item[(ii)] The curve $\gamma$ defined by \eqref{eq:half-fold} takes the origin if and only if $s=0,2\K_{1,p}(q^*(p))$.
    \item[(iii)] Let $\phi^*:(1,\infty)\to(0,\pi)$ be 
    \begin{align}\label{eq:phi*}
      \phi^*(p)
      =\pi-2\arcsin(q^*(p)). 
    \end{align}
    Then $\gamma'(2\K_{1,p}(q^*(p)))=R_{-2\phi^*(p)}\gamma'(0)$, where $R_\theta$ stands for the counterclockwise rotation matrix through angle $\theta\in \R$.
\end{itemize}
\end{proposition}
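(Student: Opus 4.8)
The plan is to read off everything directly from the explicit parameterization together with the classification already in hand. By Definition~\ref{def:arcloop}, a half-fold figure-eight $p$-elastica is a $(p,0,1)$-arc, so up to similarity and reparameterization it is $\gamma(s)=\gamma_w(s-\K_{1,p}(q^*),q^*)$ on $[0,2\K_{1,p}(q^*)]$, where $q^*=q^*(p)$ solves $Q_p(q)=0$, i.e.\ \eqref{eq:wave-arc-r} with $r=0$. Substituting this shift into formula \eqref{eq:EP2} for $\gamma_w$ gives \eqref{eq:half-fold} verbatim, and the tangential-angle formula $\theta_w(s)=2\arcsin(q^*\sn_p(s,q^*))$ from Proposition~\ref{prop:MY2203-thm1.1}, evaluated at the shifted argument, yields (i) immediately.

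For part (ii), I would use the explicit components. The second coordinate $Y(s)$ vanishes exactly when $\cn_p(s-\K_{1,p}(q^*),q^*)=0$, which by \eqref{eq:cn-zero} happens precisely at $s-\K_{1,p}(q^*)\in\{(2m-1)\K_{1,p}(q^*):m\in\Z\}$; within $[0,2\K_{1,p}(q^*)]$ these are $s=0$ and $s=2\K_{1,p}(q^*)$ (from $m=0$ and $m=1$) together with the interior point $s=\K_{1,p}(q^*)$ (from $m=1$ shifted), but at that interior point $\cn_p(0,q^*)=1\neq 0$, so in fact $Y$ vanishes only at the two endpoints and the midpoint. I must therefore also track the first coordinate $X(s)$: at both endpoints $X$ equals the same value because, by the computation in the proof of Lemma~\ref{lem:wavelike-pinned} with $r=0$, we have $X(2\K_{1,p}(q^*))-X(0)=2(2\E_{1,p}(q^*)-\K_{1,p}(q^*))=0$ by the defining relation $Q_p(q^*)=0$. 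A short check that $X$ is strictly monotone away from the endpoints (so $X$ does not return to the origin value at the midpoint, where $Y\neq0$ anyway) then pins the self-intersection to exactly the two endpoint parameters, giving (ii).

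For part (iii), the tangential angle in (i) gives the direction of $\gamma'$ directly. At $s=0$ we have $s-\K_{1,p}(q^*)=-\K_{1,p}(q^*)$ and at $s=2\K_{1,p}(q^*)$ we have $s-\K_{1,p}(q^*)=\K_{1,p}(q^*)$. Using that $\sn_p(\cdot,q^*)=\sin\am_{1,p}(\cdot,q^*)$ together with $\am_{1,p}(\pm\K_{1,p}(q^*),q^*)=\pm\tfrac{\pi}{2}$, one finds $\theta(0)=2\arcsin(-q^*)=-2\arcsin q^*$ and $\theta(2\K_{1,p}(q^*))=2\arcsin q^*$, so the turning angle between the two endpoint tangents is $\theta(2\K_{1,p}(q^*))-\theta(0)=4\arcsin q^*$. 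Converting via \eqref{eq:phi*}, $2\phi^*=2\pi-4\arcsin q^*$, so the tangent at the far endpoint is obtained from the tangent at the initial endpoint by a clockwise rotation through $2\phi^*$, i.e.\ $\gamma'(2\K_{1,p}(q^*))=R_{-2\phi^*(p)}\gamma'(0)$ (the $2\pi$ being irrelevant for a rotation). The main obstacle, and the only step that is more than bookkeeping, is (ii): one must be careful that the endpoints coincide in \emph{both} coordinates---which is exactly where the figure-eight condition $Q_p(q^*)=0$ enters---and that no spurious self-intersection occurs at the midpoint, which requires verifying monotonicity of $X$ on each half.
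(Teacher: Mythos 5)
Your overall route coincides with the paper's: the formula \eqref{eq:half-fold} and (i) are read off from Definition~\ref{def:arcloop} and Proposition~\ref{prop:MY2203-thm1.1}, and (iii) is exactly the paper's tangential-angle computation ($\theta(0)=-2\arcsin q^*$, $\theta(2\K_{1,p}(q^*))=2\arcsin q^*$, combined with \eqref{eq:phi*}). The paper dispatches (ii) with one sentence, so the only substantive comparison concerns your treatment of (ii), and there your plan contains a step that would fail as stated: $X$ is \emph{not} strictly monotone on each half of $[0,2\K_{1,p}(q^*)]$. Indeed $X'(s)=\cos\theta(s)=1-2(q^*)^2\sn_p^2(s-\K_{1,p}(q^*),q^*)$, and since $q^*>1/\sqrt{2}$ (cf.\ the proof of Proposition~\ref{prop:phi*-limit}(i)) this is negative at the endpoints and equal to $1$ at the midpoint, so $X'$ changes sign on each half-interval.

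Fortunately that step is also unnecessary: statement (ii) concerns the curve passing through the \emph{origin}, not self-intersections, and the second coordinate alone settles the ``only if'' direction, since $Y(s)=0$ forces $\cn_p(s-\K_{1,p}(q^*),q^*)=0$, hence $s\in\{0,2\K_{1,p}(q^*)\}$ by \eqref{eq:cn-zero}. (Relatedly, your sentence concluding that $Y$ vanishes ``at the two endpoints and the midpoint'' contradicts your own correct observation that $\cn_p(0,q^*)=1\neq0$; $Y$ vanishes only at the endpoints.) What your plan does leave slightly open is the ``if'' direction: showing $X(2\K_{1,p}(q^*))=X(0)$ is not enough, you must check that this common value is $0$. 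This is one line: $\am_{1,p}(-\K_{1,p}(q^*),q^*)=-\pi/2$ and oddness of $x\mapsto\E_{1,p}(x,q^*)$ give $X(0)=\K_{1,p}(q^*)-2\E_{1,p}(q^*)=0$ by the defining relation $Q_p(q^*)=0$. With these two repairs your argument for (ii) is complete and correct.
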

\begin{proof}
By Proposition~\ref{prop:MY2203-thm1.1} and by definition of a half-fold figure-eight $p$-elastica, up to similarity and reparameterization, we have $\gamma(s)=\gamma_w(s-\K_{1,p}(q^*(p)),q^*(p))$ implying \eqref{eq:half-fold} and property (i).
Property (ii) follows from \eqref{eq:half-fold}.
We finally prove property (iii).
By property (i) we have $\theta(0)=-2\arcsin(q^*(p))$ and $\theta(2\K_{1,p}(q^*(p)))=2\arcsin(q^*(p))$.
By definition of $\phi^*$, it follows that
$\theta(2\K_{1,p}(q^*(p)))+2\phi^*(p)=2\pi-2\arcsin(q^*(p))=2\pi+\theta(0)$.
This together with $\gamma'(s)=(\cos{\theta(s)}, \sin{\theta(s)})$ yields $R_{2\phi^*(p)} \gamma'(2\K_{1,p}(q^*(p)))= \gamma'(0)$ and hence property (iii).
\end{proof}

From the previous proposition we see that $\phi^*(p)$ characterizes the crossing angle of a figure-eight $p$-elastica, cf.\ Figure~\ref{fig:phi^*}.
Notice that the parameterization in \eqref{eq:half-fold} rotated through angle $\pi$ represents the left curve in Figure~\ref{fig:phi^*}.

Hereafter we investigate properties of $\phi^*(p)$ through the analysis of $q^*(p)$.
Noting that $q^*(p)$ is a unique solution of $Q_p(q)=0$, cf.\ \eqref{eq:1116-4}, we introduce 
\begin{align} \label{eq:def-calQ_p}
    \widetilde{Q}_p(q):= \K_{1,p}(q) Q_p(q) = 2\E_{1,p}(q)- \K_{1,p}(q), \quad q\in[0,1).
\end{align}
Since $\K_{1,p}(q)>0$, the roots of $\widetilde{Q}_p$ and $Q_p$ coincide.
Both $\widetilde{Q}_p$ and $Q_p$ have own advantages; in general the former is easy to compute at least for a fixed $p$, but we will later discover that the latter has a remarkable monotone structure as $p$ varies, cf.\ Figure \ref{fig:Qp}.

\begin{lemma}\label{lem:calQ_p}
Let $\widetilde{Q}_p:[0,1)\to \R$ be the function defined by \eqref{eq:def-calQ_p}.
Then $\widetilde{Q}_p$ is strictly decreasing and strictly concave. 
In addition, if $p>2$, then 
\begin{align} \label{eq:calQ(1)}
    \widetilde{Q}_p(1):= \lim_{q\uparrow 1}\widetilde{Q}_p(q)
    =-\frac{\K_{1,p}(1)}{p-1} \in (-\infty,0).
\end{align}
\end{lemma}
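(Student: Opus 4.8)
The plan is to treat the three claims separately. Strict monotonicity is immediate: since $\widetilde{Q}_p = 2\E_{1,p} - \K_{1,p}$ and Proposition~\ref{prop:EK-monotone} asserts that $\E_{1,p}$ is strictly decreasing while $\K_{1,p}$ is strictly increasing on $[0,1)$, the combination $2\E_{1,p}-\K_{1,p}$ is strictly decreasing. For concavity and for the limiting value I would pass to a single integral representation and to Wallis-type integrals, respectively.

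For concavity, I would first merge the two integrals. Using the definitions (cf.\ \eqref{eq:betsu-E1p} and \eqref{eq:betsu-K1p}),
\[
\widetilde{Q}_p(q) = \int_0^{\pi/2} \frac{1 - 2q^2\sin^2\theta}{\sqrt{1 - q^2\sin^2\theta}}\,|\cos\theta|^{1-\frac{2}{p}}\,d\theta.
\]
Since $1-\frac{2}{p}>-1$ the weight $|\cos\theta|^{1-\frac{2}{p}}$ is integrable, and on any compact subinterval of $(0,1)$ the integrand and its $q$-derivatives are uniformly bounded; hence differentiation under the integral sign is justified, and the $q$-independent weight factors out. Writing $a:=\sin^2\theta$ and letting $f(q):=(1-2q^2a)(1-q^2a)^{-1/2}$ denote the $q$-dependent factor, one computes $\partial_q f = qa(2q^2a-3)(1-q^2a)^{-3/2}$, which is $\le 0$ (strictly for $a>0$) because $2q^2a<2<3$; and after the $a^2$- and $a^3$-terms cancel,
\[
\partial_q^2 f = \frac{-3\sin^2\theta}{(1-q^2\sin^2\theta)^{5/2}}.
\]
This is strictly negative for $\theta\neq0$, so integrating against the positive weight yields $\widetilde{Q}_p''(q)<0$ on $(0,1)$, i.e.\ strict concavity (and $\partial_q f<0$ re-proves monotonicity).

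For the limiting value \eqref{eq:calQ(1)} when $p>2$, I would evaluate the complete integrals at $q=1$ directly: $\E_{1,p}(1)=\int_0^{\pi/2}\cos^{2-\frac{2}{p}}\theta\,d\theta$ and $\K_{1,p}(1)=\int_0^{\pi/2}\cos^{-\frac{2}{p}}\theta\,d\theta$ (cf.\ \eqref{eq:K_p}), the latter finite precisely because $p>2$. With $I(s):=\int_0^{\pi/2}\cos^s\theta\,d\theta$, integration by parts gives the recursion $(s+2)I(s+2)=(s+1)I(s)$, whose boundary term vanishes exactly when $s+1>0$. Taking $s=-\frac{2}{p}$ — for which $s+1=1-\frac{2}{p}>0$ is equivalent to the hypothesis $p>2$ — we obtain
\[
\E_{1,p}(1)=\frac{s+1}{s+2}\,I(s)=\frac{p-2}{2(p-1)}\,\K_{1,p}(1),
\]
whence $\widetilde{Q}_p(1)=2\E_{1,p}(1)-\K_{1,p}(1)=-\K_{1,p}(1)/(p-1)\in(-\infty,0)$. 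That $\lim_{q\uparrow1}\widetilde{Q}_p(q)$ equals this value is routine (monotone convergence for $\K_{1,p}$ and dominated convergence for $\E_{1,p}$, both finite for $p>2$).

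The main obstacle is the concavity step: the second-derivative calculation is the only genuinely nontrivial one, and it succeeds because the $a^2$- and $a^3$-terms cancel exactly, collapsing $\partial_q^2 f$ to a manifestly negative expression. Everything else is routine given Proposition~\ref{prop:EK-monotone} and the elementary recursion for $I(s)$.
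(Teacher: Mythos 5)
Your proof is correct, and the core of it (monotonicity and concavity) is essentially identical to the paper's argument: the paper also reduces everything to the single integral representation, computes $\frac{d}{dq}\widetilde{Q}_p(q)=\int_0^1 qz^2(-3+2q^2z^2)(1-q^2z^2)^{-3/2}(1-z^2)^{-1/p}\,dz$, and finds the same cancellation collapsing the second derivative to $-3\int_0^1 z^2(1-q^2z^2)^{-5/2}(1-z^2)^{-1/p}\,dz<0$; whether one differentiates the combined integrand or differentiates $\K_{1,p}$ and $\E_{1,p}$ separately and then combines is immaterial. The only genuine divergence is in establishing \eqref{eq:calQ(1)}: the paper simply writes $\widetilde{Q}_p=\K_{1,p}\cdot Q_p$ and invokes the already-known limit \eqref{eq:1117-1} of $Q_p$ as $q\uparrow1$ (from Lemma~\ref{lem:nabe-lem2}, imported from the earlier classification paper) together with the finiteness of $\K_{1,p}(1)$ from \eqref{eq:K_p}, whereas you re-derive the value from scratch via the Wallis-type recursion $(s+2)I(s+2)=(s+1)I(s)$ applied to $s=-\tfrac{2}{p}$. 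Your route is self-contained and makes transparent exactly where $p>2$ enters (vanishing of the boundary term), at the cost of redoing a computation the paper already has on the shelf; the paper's route is shorter but opaque about where the constant $-\tfrac{1}{p-1}$ comes from. Both are sound, and your interchange of limit and integral (monotone convergence for $\K_{1,p}$, dominated convergence for $\E_{1,p}$) is correctly justified.
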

\begin{proof}
By Proposition~\ref{prop:EK-monotone}, $2\E_{1,p}(q)$ and $-\K_{1,p}(q) $ are strictly decreasing, and hence so is $\widetilde{Q}_p$.
By the change of variables, we can rewrite \eqref{eq:EK-bibun-q} as
\begin{align} \label{eq:EK-bibun-q-2}
\begin{split}
\frac{d}{dq}\K_{1,p}(q)&= \int_0^1\frac{qz^2}{(1-q^2z^2)^{\frac{3}{2}}}(1-z^2)^{-\frac{1}{p}}dz, 
\\
\frac{d}{dq}\E_{1,p}(q)&=-\int_0^1\frac{qz^2}{(1-q^2z^2)^{\frac{1}{2}}}(1-z^2)^{-\frac{1}{p}}dz.
\end{split}
\end{align}
In particular we have 
\begin{align} \label{eq:bibun-calQ}
\frac{d}{dq}\widetilde{Q}_p(q)&=2\frac{d}{dq}\E_{1,p}(q)-\frac{d}{dq}\K_{1,p}(q)=
\int_0^1\frac{qz^2(-3+2q^2z^2)}{(1-q^2z^2)^{\frac{3}{2}}}(1-z^2)^{-\frac{1}{p}}dz.
\end{align}
Differentiating once more, we obtain for $q\in(0,1)$ 
\begin{align*}
\frac{d^2}{dq^2}\widetilde{Q}_p(q)
&= -3\int_0^1\frac{z^2}{(1-q^2z^2)^{\frac{5}{2}}}(1-z^2)^{-\frac{1}{p}}dz<0.
\end{align*}
This implies the concavity of $\widetilde{Q}_p$.
It remains to show \eqref{eq:calQ(1)} for $p>2$.
Combining \eqref{eq:1117-1} with the fact that $\K_{1,p}(1)<\infty$ (cf.\ \eqref{eq:K_p}), we obtain \eqref{eq:calQ(1)}.
\end{proof}

In what follows we prove that $\phi^*(p)$ is continuously decreasing from $\pi/2$ to $0$ as $p$ varies from $1$ to $\infty$ (cf.\ Figure~\ref{fig:plot-phi}).
First we prepare some basic properties of $\phi^*(p)$. 

\begin{center}
    \begin{figure}[htbp]
      \includegraphics[scale=0.3]{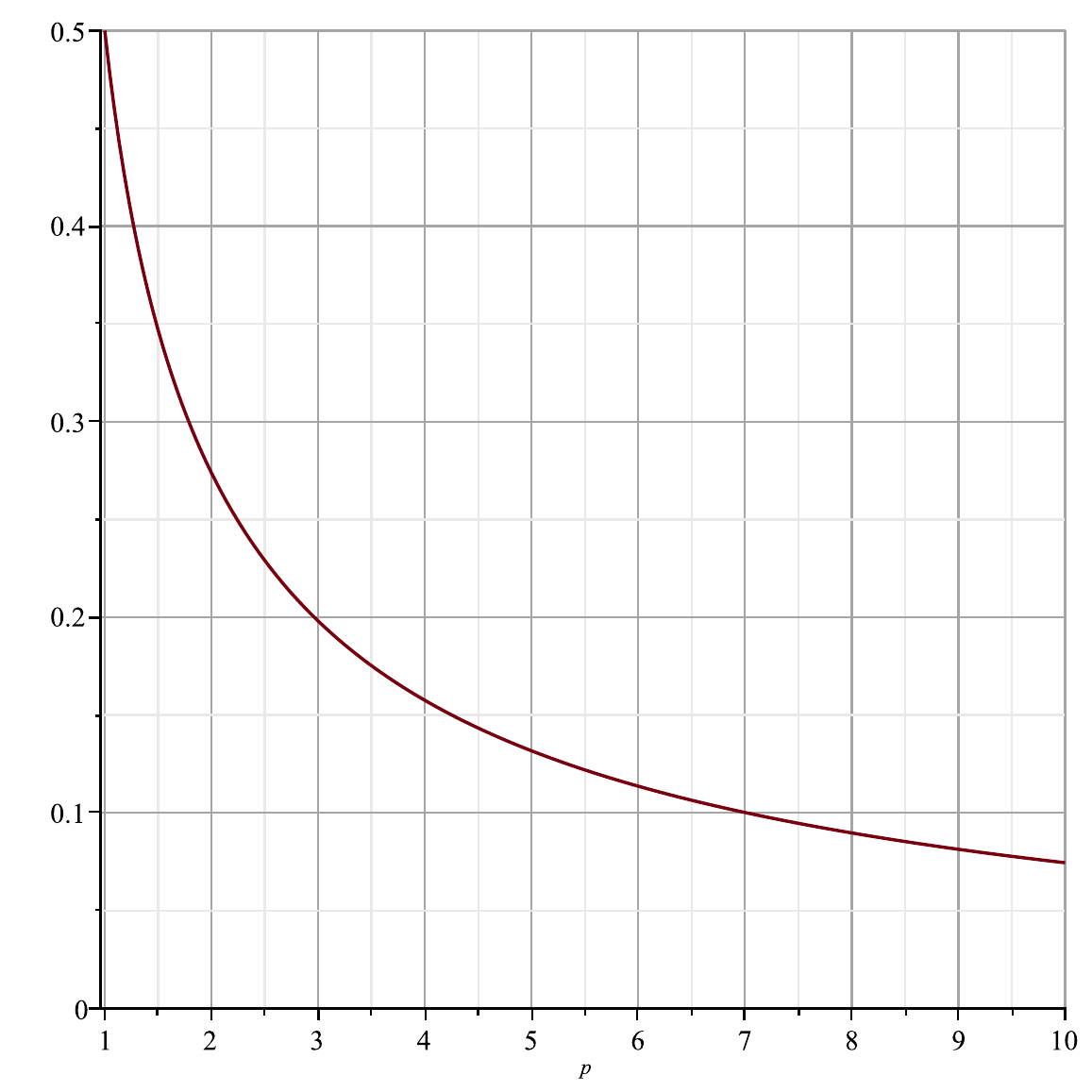}
  \caption{The graph of $p\mapsto \phi^*(p)/\pi$.}
  \label{fig:plot-phi}
  \end{figure}
\end{center}

\begin{proposition}\label{prop:phi*-limit}
Let $\phi^*:(1,\infty)\to \R$ be given by \eqref{eq:phi*}.
Then $\phi^*(p)$ satisfies the following properties. 
\begin{enumerate}
    \item[(i)] $0 < \phi^*(p) < \pi/2$  \ for any \  $p\in(1,\infty)$.
    \item[(ii)] $\phi^*(p)\to \pi/2$ \ as \ $p\downarrow1$.
    \item[(iii)] $\phi^*(p)\to 0$ \ as \ $p\to \infty$.
    \item[(iv)] $\phi^*$ is continuously differentiable on $(1,\infty)$. 
\end{enumerate}
\end{proposition}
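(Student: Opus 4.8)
The plan is to reduce every claim about $\phi^*(p)=\pi-2\arcsin(q^*(p))$ to a corresponding claim about the figure-eight modulus $q^*(p)$, which is the unique root of $Q_p(q)=0$, or equivalently of $\widetilde{Q}_p(q)=2\E_{1,p}(q)-\K_{1,p}(q)=0$. Since $\arcsin$ is a continuous, strictly increasing diffeomorphism from $(0,1)$ onto $(0,\pi/2)$, assertions (i)--(iv) for $\phi^*$ are equivalent to: (i') $q^*(p)\in(0,1)$; (ii') $q^*(p)\to 0$ as $p\downarrow 1$; (iii') $q^*(p)\to 1$ as $p\to\infty$; and (iv') $q^*$ is $C^1$ on $(1,\infty)$. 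I would state this reduction at the outset and then prove the four statements about $q^*$.

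Assertion (i') is immediate from Definition~\ref{def:q*}, since $q^*(p)$ is defined as the unique solution in $(0,1)$ of $Q_p(q)=0$ guaranteed by Lemma~\ref{lem:nabe-lem2}. For the limits (ii') and (iii'), I would exploit the explicit values of $Q_p$ at the endpoints together with monotonicity in $q$. For (iii'), recall from \eqref{eq:1117-1} that $\lim_{q\uparrow 1}Q_p(q)=-1/(p-1)$ when $p>2$; since $\widetilde{Q}_p$ is strictly decreasing with $\widetilde{Q}_p(0)>0$ and $\widetilde{Q}_p(1)=-\K_{1,p}(1)/(p-1)$ by Lemma~\ref{lem:calQ_p}, the root $q^*(p)$ sits between $0$ and $1$, and as $p\to\infty$ the negative endpoint value $\widetilde{Q}_p(1)\to 0^-$ forces the root to be pushed toward $1$. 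Quantitatively I would use the concavity of $\widetilde{Q}_p$ from Lemma~\ref{lem:calQ_p} to sandwich $q^*(p)$: concavity gives a lower bound on $\widetilde{Q}_p$ by the chord joining $(0,\widetilde{Q}_p(0))$ and $(1,\widetilde{Q}_p(1))$, whose root tends to $1$ as $\widetilde{Q}_p(1)\to 0$. For (ii'), as $p\downarrow 1$ the complete integrals $\E_{1,p}$ and $\K_{1,p}$ degenerate (note $\K_{1,p}(q)$ involves $|\cos\theta|^{1-2/p}$ with exponent $\to -1$), so I would examine $\widetilde{Q}_p(q)=2\E_{1,p}(q)-\K_{1,p}(q)$ near a fixed $q\in(0,1)$ and show it becomes negative there for $p$ close to $1$, forcing $q^*(p)\to 0$; using the representations \eqref{eq:betsu-K1p}--\eqref{eq:betsu-E1p} in the $z$-variable makes the $p$-dependence of the singular weight $(1-z^2)^{-1/p}$ transparent.

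For the differentiability (iv'), the clean route is the implicit function theorem applied to the equation $\widetilde{Q}_p(q)=0$ viewed as $G(p,q):=2\E_{1,p}(q)-\K_{1,p}(q)=0$. The two ingredients are: first, joint $C^1$-regularity of $G$ in $(p,q)\in(1,\infty)\times(0,1)$, which follows from differentiating the integral representations \eqref{eq:betsu-K1p}--\eqref{eq:betsu-E1p} under the integral sign (the integrands are smooth in both parameters on the relevant region and dominated locally, with the $q$-derivatives given explicitly in \eqref{eq:EK-bibun-q-2}); and second, nondegeneracy $\partial_q G(p,q^*(p))\neq 0$. The latter is exactly $\frac{d}{dq}\widetilde{Q}_p(q^*(p))$, which is strictly negative by the strict monotonicity in Lemma~\ref{lem:calQ_p} (see \eqref{eq:bibun-calQ}). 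Hence the implicit function theorem yields that $p\mapsto q^*(p)$ is $C^1$, and composing with the smooth map $q\mapsto \pi-2\arcsin q$ gives $\phi^*\in C^1(1,\infty)$.

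The main obstacle I expect is the $p$-dependence rather than the $q$-dependence: the endpoint behavior as $p\downarrow 1$ and $p\to\infty$ requires care because the weight $(1-z^2)^{-1/p}$ and the exponent $1-2/p$ change the integrability/degeneration of $\K_{1,p}$ and $\E_{1,p}$, and one must justify the relevant limits of these singular integrals (e.g.\ that $\K_{1,p}(q)\to\infty$ suitably as $p\downarrow 1$ while $\E_{1,p}$ stays controlled). I would handle this by passing to the $z$-integral representations, splitting off the singularity near $z=1$, and applying dominated convergence on the regular part together with an explicit estimate of the singular part; the joint continuity needed for the implicit function theorem step is comparatively routine once these representations are in hand. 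I note that this proposition establishes only continuity and the endpoint limits; the \emph{strict} monotonicity of $\phi^*$ asserted in Theorem~\ref{thm:phi*-decrease} is the genuinely delicate point and, as the authors indicate, is deferred to a separate indirect argument.
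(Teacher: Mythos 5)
There is a genuine error at the very first step of your reduction. By \eqref{eq:phi*} we have $\phi^*(p)=\pi-2\arcsin(q^*(p))$, not $2\arcsin(q^*(p))$, so the correct dictionary is: (i) $\Leftrightarrow$ $q^*(p)\in(1/\sqrt2,1)$; (ii) $\Leftrightarrow$ $q^*(p)\to 1/\sqrt2$ as $p\downarrow1$; (iii) $\Leftrightarrow$ $q^*(p)\to1$ as $p\to\infty$. Your (i') and (ii') are therefore not equivalent to the stated assertions: $q^*\in(0,1)$ only yields $\phi^*\in(0,\pi)$, and $q^*\to0$ would give $\phi^*\to\pi$, not $\pi/2$. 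Worse, the limit you propose to prove in (ii') is false: for every $q\le 1/\sqrt2$ and every $p$ one has
\begin{align*}
\widetilde{Q}_p(q)=2\E_{1,p}(q)-\K_{1,p}(q)=\int_0^{\pi/2}\frac{1-2q^2\sin^2\theta}{\sqrt{1-q^2\sin^2\theta}}\,|\cos\theta|^{1-\frac{2}{p}}\,d\theta>0,
\end{align*}
since the numerator is $\ge\cos^2\theta\ge0$ there; hence $q^*(p)>1/\sqrt2$ for all $p\in(1,\infty)$ and $q^*(p)$ cannot tend to $0$. Your strategy of making $\widetilde{Q}_p$ negative at a fixed $q$ for $p$ near $1$ can only work for $q>1/\sqrt2$ and so cannot push the root below $1/\sqrt2$. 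The paper instead proves exactly the two facts your reduction misses: $q^*(p)>1/\sqrt2$ (giving (i)) via $Q_p(1/\sqrt2)>0=Q_p(q^*)$ and monotonicity of $Q_p$, and $0<q^*(p)-1/\sqrt2\le C(p-1)/p$ (giving (ii)) via the concavity bound $q^*-1/\sqrt2<\widetilde{Q}_p(1/\sqrt2)/(-\widetilde{Q}_p'(1/\sqrt2))$ together with the explicit lower bound $-\widetilde{Q}_p'(1/\sqrt2)\gtrsim p/(p-1)$, which is where the degeneration as $p\downarrow1$ actually enters.

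Your treatments of (iii) and (iv) are essentially the paper's: concavity of $\widetilde{Q}_p$ plus $\widetilde{Q}_p(1)=-\K_{1,p}(1)/(p-1)\to0^-$ forces $q^*(p)\to1$ (your chord bound $q^*\ge\widetilde{Q}_p(0)/(\widetilde{Q}_p(0)-\widetilde{Q}_p(1))$ is a valid variant of the paper's tangent-line estimate, using $\widetilde{Q}_p(0)=\K_{1,p}(0)\to1$), and the implicit function theorem applied to $F(p,q)=\widetilde{Q}_p(q)$ with $\partial_qF(p,q^*(p))<0$ gives $q^*\in C^1$, hence $\phi^*\in C^1$. These parts would go through once the reduction for (i) and (ii) is corrected.
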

\begin{proof}
First we check property (i). 
To this end, by \eqref{eq:phi*} it suffices to show that $1/\sqrt{2} < q^*(p) < 1$ for any $p\in(1,\infty)$.
From the fact that 
\[
 Q_p(\tfrac{1}{\sqrt{2}}) = \K_{1,p}(\tfrac{1}{\sqrt{2}})^{-1}\int_0^{\frac{\pi}{2}}\frac{1-\sin^2\theta}{\sqrt{1-\frac{1}{2}\sin^2\theta}}|\cos\theta|^{1-\frac{2}{p}}\,d\theta >0 = Q_p(q^*(p))
\]
and monotonicity of $Q_p$ (cf.\ Lemma~\ref{lem:nabe-lem2}), it follows that $q^*(p) > 1/\sqrt{2}$. 
By definition of $q^*(p)$, it also follows that $q^*(p) < 1$.

Next we show property (ii). 
In view of \eqref{eq:phi*}, it suffices to show that $q^*(p) \to 1/\sqrt{2}$ as $p\downarrow 1$. 
By concavity of $\widetilde{Q}_p$ (cf.\ Lemma~\ref{lem:calQ_p}), it turns out that 
\begin{align} \label{eq:concave-calQ-1}
    \frac{\widetilde{Q}_p(q^*(p))- \widetilde{Q}_p(\tfrac{1}{\sqrt{2}})}{q^*(p)-\tfrac{1}{\sqrt{2}}} < \widetilde{Q}_p'(\tfrac{1}{\sqrt{2}}).
\end{align}
By definition we have
\begin{align*}
    \widetilde{Q}_p(q)= \int_0^1\frac{1-2q^2z^2}{\sqrt{1-q^2z^2}}(1-z^2)^{-\frac{1}{p}}\,dz,
    \quad q\in[0,1), 
\end{align*} 
from which it follows that
\begin{align} \label{eq:calQ-pi/4}
    0<\widetilde{Q}_p(\tfrac{1}{\sqrt{2}}) &= \int_0^1\frac{1-z^2}{\sqrt{1-\frac{1}{2}z^2}}(1-z^2)^{-\frac{1}{p}}\,dz 
    \leq \int_0^1\frac{1}{\sqrt{1-\frac{1}{2}z^2}}\,dz =:C_1. 
\end{align}
In addition, using \eqref{eq:bibun-calQ}, we obtain 
\begin{align} \label{eq:calQ'-1/sqrt2}
\begin{split}
    -\,\widetilde{Q}_p'(\tfrac{1}{\sqrt{2}}) 
    &=\int_0^1\frac{z^2(3-z^2)}{\sqrt{2}(1-\frac{1}{2}z^2)^{\frac{3}{2}}}(1-z^2)^{-\frac{1}{p}}\,dz 
     \geq \frac{2}{\sqrt{2}} \int_0^1z^2(1-z)^{-\frac{1}{p}}\,dz \\
    & \geq\sqrt{2} \int_{\frac{1}{2}}^1\frac{1}{4}(1-z)^{-\frac{1}{p}}\,dz
    \geq \frac{1}{4\sqrt{2}}\frac{p}{p-1}.
\end{split}
\end{align}
Combining this with \eqref{eq:concave-calQ-1} and \eqref{eq:calQ-pi/4}, we have 
\[
0<q^*(p)-\tfrac{1}{\sqrt{2}} <  \frac{\widetilde{Q}_p(\tfrac{1}{\sqrt{2}})}{-\widetilde{Q}_p'(\tfrac{1}{\sqrt{2}})} \leq 4\sqrt{2} C_1 \frac{p-1}{p}, 
\]
which yields $q^*(p) \to 1/\sqrt{2}$ as $p\downarrow 1$.

We turn to property (iii). 
To this end, we show that $q^*(p) \to 1$ as $p\to \infty$. 
By Lemma~\ref{lem:calQ_p}, if $p>2$, then $\widetilde{Q}_p(1)$ is well defined, and we infer from concavity of $\widetilde{Q}_p$ that 
\begin{align} \label{eq:concave-calQ-2}
    \frac{\widetilde{Q}_p(1)-\widetilde{Q}_p(q^*(p))}{1-q^*(p)} < \widetilde{Q}_p'(q^*(p)).
\end{align}
Moreover, concavity of $\widetilde{Q}_p$ and $q^*(p)>1/\sqrt{2}$ also imply that 
\begin{align}\label{eq:calQ-bibun-q-*}
    \widetilde{Q}_p'(q^*(p)) < \widetilde{Q}_p'(\tfrac{1}{\sqrt{2}}) <0.
\end{align}
This together with \eqref{eq:calQ(1)} and \eqref{eq:concave-calQ-2} gives 
\[
0<1-q^*(p) < \frac{\widetilde{Q}_p(1) }{\widetilde{Q}_p'(q^*(p)) }
\leq \frac{\K_{1,p}(1)}{p-1}\frac{1}{-\widetilde{Q}_p'(\tfrac{1}{\sqrt{2}}) }\leq 4 \sqrt{2} \K_{1,p}(1)\frac{1}{p},
\]
where we also used \eqref{eq:calQ'-1/sqrt2}.
The right-hand side tends to be zero since
\[
\K_{1,p}(1) = \int_0^1(1-z^2)^{-\frac{1}{2}-\frac{1}{p}}\,dz = O(1), \quad\text{as}\quad p\to\infty.
\]
Consequently, we obtain $q^*(p) \to 1$ as $p\to \infty$.

It remains to prove property (iv).
Let $F$ be a function defined by $F:(1,\infty)\times(0,1) \to \R;(p,q) \mapsto \widetilde{Q}_p(q)$. 
Then, $F$ is continuously differentiable with respect to $p$ and $q$.
Indeed, it follows that
\begin{align*}
    \frac{\partial F}{\partial p}(p, q) &= \int_0^1\frac{1-2q^2z^2}{\sqrt{1-q^2z^2}}\frac{1}{p^2}(1-z^2)^{-\frac{1}{p}}\log(1-z^2)\,dz, \\
    \frac{\partial F}{\partial q}(p, q) &= 
    \int_0^1\frac{qz^2(-3+2q^2z^2)}{(1-q^2z^2)^{\frac{3}{2}}}(1-z^2)^{-\frac{1}{p}}\,dz.
\end{align*}
From definition of $q^*(p)$ and \eqref{eq:calQ-bibun-q-*}, we infer that  
\[ F(p, q^*(p))=\widetilde{Q}_p(q^*(p))=0, \quad \frac{\partial F}{\partial q}(p, q^*(p))=\widetilde{Q}_p'(q^*(p))<0. \] 
Hence, by the implicit function theorem, for each $p\in(1,\infty)$ there exists a neighborhood $U$ of $(p,q^*(p))$ and a function $f:U\to\R$ such that $F(p, f(p))=0$.
Moreover, Lemma~\ref{lem:nabe-lem2} implies uniqueness of solutions $q$ to $F(p,q)=\widetilde{Q}_p(q)=0$ for each $p\in(1,\infty)$, and hence we see that $q^*(p)=f(p)$ in $U$. 
Furthermore, by differentiability of $F$, we notice that $f \in C^1(U)$.
Consequently the map $p\mapsto q^*(p)$ is continuously differentiable with respect to $p\in(1,\infty)$.
The proof is complete.
\end{proof}

In order to obtain monotonicity of $\phi^*$, we here prove the key fact that $Q_p$ has a remarkable monotone structure with respect to $p$ as in the left of Figure~\ref{fig:Qp}, in contrast to $\widetilde{Q}_p$ as in the right of Figure~\ref{fig:Qp}.

\begin{center}
    \begin{figure}[htbp]
      \includegraphics[scale=0.3]{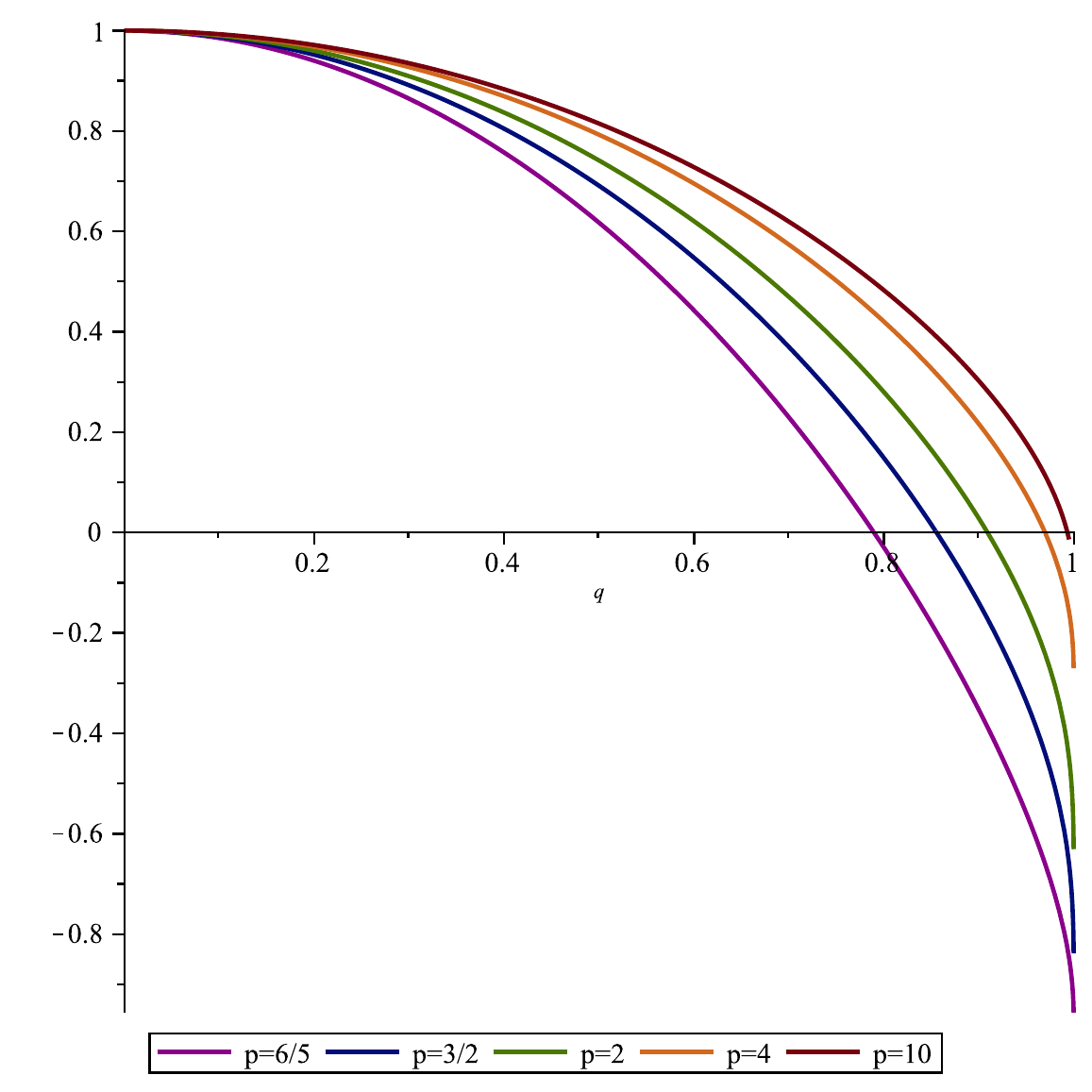}
      \includegraphics[scale=0.3]{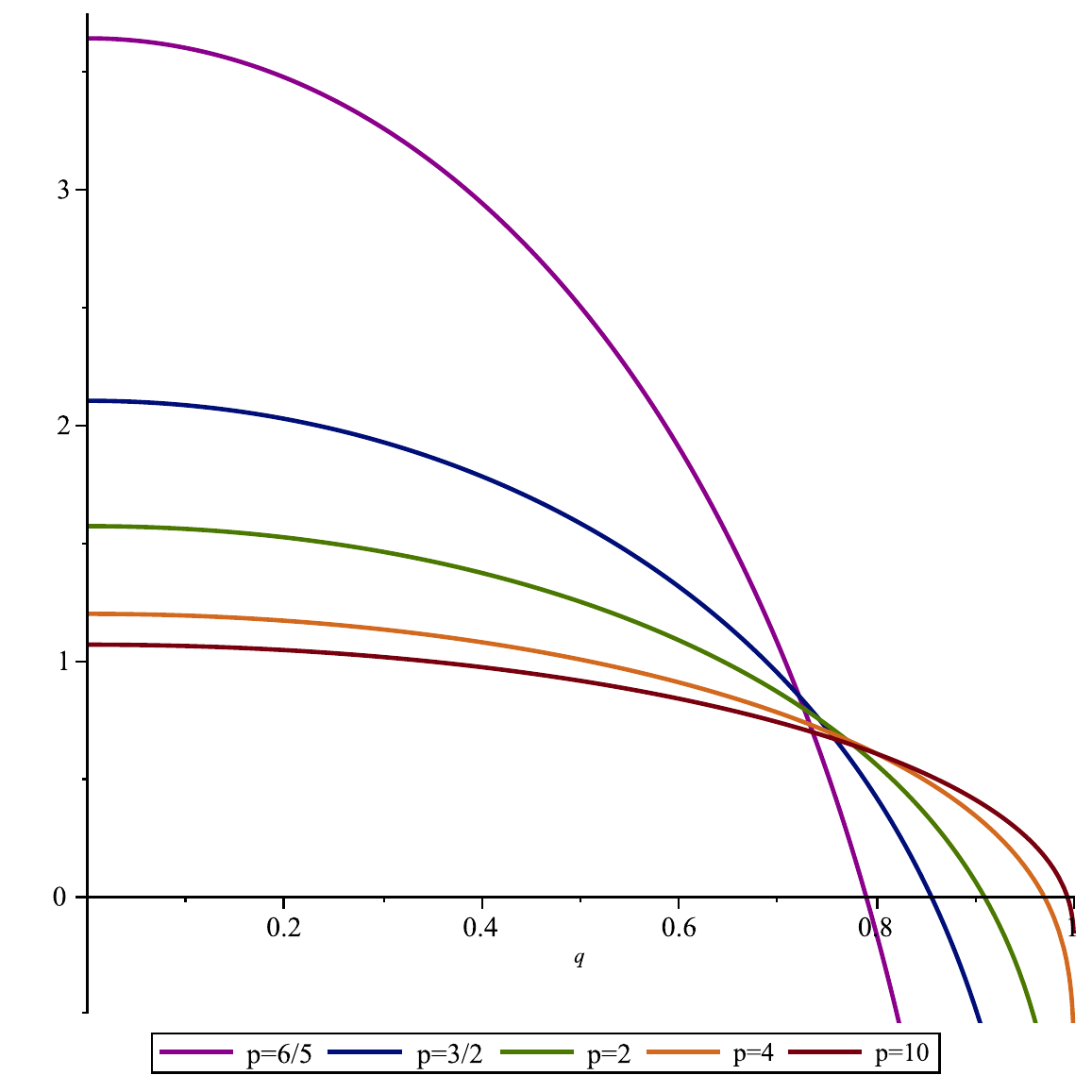}
  \caption{Graphs of $Q_p$ (left) and graphs of $\widetilde{Q}_p$ (right).}
  \label{fig:Qp}
  \end{figure}
\end{center}

\begin{proposition}\label{prop:Qp-monotone}
If $1< p_1 < p_2 < \infty$, then
\begin{align}\label{eq:layer}
    Q_{p_1}(q) < Q_{p_2}(q) \quad \text{for any} \ \ q\in(0,1).
\end{align}
\end{proposition}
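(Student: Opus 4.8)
The plan is to reduce the claim to the monotonicity in $p$ of a single weighted average of $z^2$. Using the representations \eqref{eq:betsu-K1p} and \eqref{eq:betsu-E1p} (exactly as in the proof of Proposition~\ref{prop:phi*-limit}), a one-line computation gives, for fixed $q\in(0,1)$,
\[
2\E_{1,p}(q)-\K_{1,p}(q)=\int_0^1(1-2q^2z^2)\,\rho_p(z)\,dz,\qquad \K_{1,p}(q)=\int_0^1\rho_p(z)\,dz,
\]
where $\rho_p(z):=(1-z^2)^{-1/p}(1-q^2z^2)^{-1/2}>0$. Writing $\nu_p$ for the probability measure on $(0,1)$ with density proportional to $\rho_p$, and recalling \eqref{eq:1116-4}, this yields the clean reformulation
\[
Q_p(q)=\frac{\int_0^1(1-2q^2z^2)\rho_p\,dz}{\int_0^1\rho_p\,dz}=1-2q^2\,m(p),\qquad m(p):=\int_0^1 z^2\,d\nu_p .
\]
Since $q\in(0,1)$ is fixed and $q^2>0$, the desired inequality \eqref{eq:layer} is \emph{equivalent} to the strict monotonicity statement $m(p_2)<m(p_1)$ whenever $p_1<p_2$, i.e.\ that $m$ is strictly decreasing.

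To prove this I would differentiate $m$ in $p$. Since $\partial_p\rho_p(z)=\rho_p(z)\,\ell(z)/p^2$ with $\ell(z):=\log(1-z^2)$, differentiating the quotient $m(p)=\int z^2\rho_p\big/\int\rho_p$ under the integral sign (justified by dominated convergence: as $1/p<1$ both $(1-z^2)^{-1/p}$ and $(1-z^2)^{-1/p}|\ell|$ are integrable near $z=1$, while $(1-q^2z^2)^{-1/2}$ is bounded for fixed $q<1$) gives
\[
m'(p)=\frac{1}{p^2}\left(\int_0^1 z^2\ell\,d\nu_p-\int_0^1 z^2\,d\nu_p\int_0^1\ell\,d\nu_p\right)=\frac{1}{p^2}\,\mathrm{Cov}_{\nu_p}(z^2,\ell).
\]
On $(0,1)$ the function $z\mapsto z^2$ is strictly increasing and $z\mapsto\ell(z)=\log(1-z^2)$ is strictly decreasing, so this covariance is strictly negative; hence $m'(p)<0$, $m$ is strictly decreasing, and therefore $Q_p(q)=1-2q^2m(p)$ is strictly increasing in $p$, which is precisely \eqref{eq:layer}.

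The \emph{only} genuinely delicate point is the sign of the covariance; everything else is bookkeeping. I would establish it in the standard symmetrized form
\[
\mathrm{Cov}_{\nu_p}(z^2,\ell)=\frac12\int_0^1\!\!\int_0^1 (z^2-w^2)\big(\ell(z)-\ell(w)\big)\,d\nu_p(z)\,d\nu_p(w),
\]
whose integrand is pointwise nonpositive (opposite monotonicity of $z^2$ and $\ell$) and strictly negative off the diagonal; since $\nu_p$ is absolutely continuous, the double integral is strictly negative. As an alternative, derivative-free route one can argue by stochastic ordering: for $p_1<p_2$ the likelihood ratio $\rho_{p_1}/\rho_{p_2}=(1-z^2)^{1/p_2-1/p_1}$ is increasing in $z$ (the exponent is negative and $1-z^2$ is decreasing), so $\nu_{p_1}$ dominates $\nu_{p_2}$ in the usual stochastic order; integrating the increasing function $z^2$ then gives $m(p_1)>m(p_2)$ directly, again yielding \eqref{eq:layer}.
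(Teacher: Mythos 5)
Your proof is correct, but it takes a genuinely different route from the paper's. You rewrite $Q_p(q)=1-2q^2\,m(p,q)$ where $m(p,q)=\int_0^1 z^2\,d\nu_{p,q}$ and $\nu_{p,q}$ has density proportional to $\rho_p(z)=(1-z^2)^{-1/p}(1-q^2z^2)^{-1/2}$ (note the measure depends on $q$ as well, but since $q$ is fixed this is harmless, and the $q$-dependent factor cancels in the likelihood ratio $\rho_{p_1}/\rho_{p_2}=(1-z^2)^{1/p_2-1/p_1}$, which is indeed strictly increasing). The monotone-likelihood-ratio / correlation-inequality argument then gives $m(p_1,q)>m(p_2,q)$ directly, and both your covariance version and your stochastic-ordering version are instances of the same Chebyshev-type inequality; the strictness claims are justified since both functions are strictly monotone and $\nu_{p,q}$ is absolutely continuous. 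The paper instead studies $g(q):=Q_{p_1}(q)-Q_{p_2}(q)$ as a function of $q$: it shows $g<0$ near $q=0$ by computing $Q_p''(0)=4p/(2-3p)$ via Beta and Gamma functions (consistent with your formula, since this equals $-4m(p,0)$ and $p/(3p-2)$ is decreasing), then derives a first-order identity for $g'$ in terms of $g$ using the derivative formulae \eqref{eq:diff-ellipint} and rules out a first zero $q_0\in(0,1)$ by showing $g'(q_0)<0$ there, a contradiction. Your argument is shorter, avoids the elliptic-integral derivative formulae and the contradiction step entirely, and isolates the real mechanism (positive correlation of monotone functions) behind the ``layered'' structure of the family $\{Q_p\}_p$; the paper's argument has the side benefit of producing the explicit value of $Q_p''(0)$ and the ODE-type relation for $g'$, which are reused nowhere else but make the $q\downarrow 0$ behavior transparent.
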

\begin{proof}
Fix $1< p_1 < p_2 < \infty$ arbitrarily and define 
\[
g(q):= Q_{p_1}(q) - Q_{p_2}(q), \quad q\in[0,1).
\]
First we show that there is $\delta>0$ such that 
\begin{align}\label{eq:g0-negative}
    g(q)<0 \quad \text{for all} \quad q\in(0,\delta].
\end{align}
Note that \eqref{eq:EK-bibun-q-2} yields $\K'_{1,p}(0)=\E'_{1,p}(0)=0$, where $'=\frac{d}{dq}$.
Combining this with
\[
Q''_{p}(q)=2\frac{\E_{1,p}''(q)}{\K_{1,p}(q)} - 4\frac{\E_{1,p}'(q)\K_{1,p}'(q)}{\K_{1,p}(q)^2} +4\frac{\E_{1,p}(q)\K_{1,p}'(q)^2}{\K_{1,p}(q)^3} -2\frac{\E_{1,p}(q)\K_{1,p}''(q)}{\K_{1,p}(q)^2},
\]
we obtain 
\[
Q''_{p}(0)=\frac{2}{\K_{1,p}(0)} \Big(\E_{1,p}''(0)-\K_{1,p}''(0)\Big),
\]
where we also used $\K_{1,p}(0)=\E_{1,p}(0)$.
Moreover, it follows from \eqref{eq:EK-bibun-q-2} that 
\[
\E_{1,p}''(0)=-\int_0^1 z^2(1-z^2)^{-\frac{1}{p}}\,dz, \quad
\K_{1,p}''(0)=\int_0^1 z^2(1-z^2)^{-\frac{1}{p}}\,dz.
\]
Therefore, using the Beta function $\mathrm{B}(x,y)=\int_0^1 t^{x-1}(1-t)^{y-1}\,dt$, we see that
\[
Q''_{p}(0)
=-4\dfrac{\int_0^1 z^2(1-z^2)^{-\frac{1}{p}}\,dz}{\int_0^1(1-z^2)^{-\frac{1}{p}} \,dz}
=-4\dfrac{\int_0^1 t(1-t)^{-\frac{1}{p}}\,\frac{dt}{2\sqrt{t}}}{\int_0^1(1-t)^{-\frac{1}{p}} \,\frac{dt}{2\sqrt{t}}}
=-4\frac{\mathrm{B}\big(\frac{3}{2},1-\frac{1}{p}\big)}{\mathrm{B}\big(\frac{1}{2},1-\frac{1}{p}\big)}.
\]
By the well-known relations $\mathrm{B}(x,y)=\frac{\Gamma(x)\Gamma(y)}{\Gamma(x+y)}$ and $\Gamma(x+1)=x\Gamma(x)$, where $\Gamma$ denotes the Gamma function,
we obtain
\[
Q''_{p}(0)
=-4\,\frac{\Gamma\big(\frac{3}{2}\big)\Gamma\big(\frac{3}{2}-\frac{1}{p}\big)}{\Gamma\big(\frac{5}{2}-\frac{1}{p}\big)\Gamma\big(\frac{1}{2}\big)}
=\frac{4p}{2-3p}.
\]
This implies that $Q''_{p}(0)$ is strictly increasing with respect to $p\in(1,\infty)$, so we have $g''(0)=Q''_{p_1}(0)-Q''_{p_2}(0)<0$.
This together with $g(0)=g'(0)=0$ yields \eqref{eq:g0-negative}.

Next we calculate the derivative of $g$. 
Using \eqref{eq:diff-ellipint}, we obtain
\begin{align*}
   Q'_{p}(q)
    &=2\frac{\E_{1,p}'(q)}{\K_{1,p}(q)} -2\frac{\E_{1,p}(q)\K_{1,p}'(q)}{\K_{1,p}(q)^2}\\
    &=-\frac{2}{q} + 2\left(\frac{1-2q^2}{q(1-q^2)}+\frac{2\hat{p}}{q(1-q^2)}\right)\frac{\E_{1,p}(q)}{\K_{1,p}(q)} -\frac{4\hat{p}}{q(1-q^2)}\frac{\E_{1,p}(q)^2}{\K_{1,p}(q)^2},
\end{align*}
where we put $\hat{p}:=1-\frac{1}{p}$.
Therefore, 
\begin{align} \label{eq:g-bibun}
\begin{split}
        g'(q)
    &=Q'_{p_1}(q)-Q'_{p_2}(q) \\
    &=2\left(\frac{1-2q^2}{q(1-q^2)}+\frac{2\hat{p}_1}{q(1-q^2)}\right)\frac{\E_{1,p_1}(q)}{\K_{1,p_1}(q)} -\frac{4\hat{p_1}}{q(1-q^2)}\frac{\E_{1,p_1}(q)^2}{\K_{1,p_1}(q)^2}\\
    &\qquad-2\left(\frac{1-2q^2}{q(1-q^2)}+\frac{2\hat{p}_2}{q(1-q^2)}\right)\frac{\E_{1,p_2}(q)}{\K_{1,p_2}(q)} +\frac{4\hat{p_2}}{q(1-q^2)}\frac{\E_{1,p_2}(q)^2}{\K_{1,p_2}(q)^2}\\
     &=\frac{1-2q^2}{q(1-q^2)} g(q)
     +\frac{2\hat{p}_2}{q(1-q^2)} g(q)
     -\frac{2\hat{p}_2}{q(1-q^2)} g(q)\left(\frac{\E_{1,p_1}(q)}{\K_{1,p_1}(q)}+\frac{\E_{1,p_2}(q)}{\K_{1,p_2}(q)}\right) \\
     &\qquad+ \frac{4}{q(1-q^2)}(\hat{p}_1-\hat{p}_2)\frac{\E_{1,p_1}(q)}{\K_{1,p_1}(q)}\left(1-\frac{\E_{1,p_1}(q)}{\K_{1,p_1}(q)} \right).
\end{split}
\end{align}

Now we are ready to prove \eqref{eq:layer}, i.e., 
\begin{align}\label{eq:layer-g}
    g(q) <0 \quad \text{for any}\quad q\in(0,1).
\end{align}
With the help of \eqref{eq:g0-negative}, in order to obtain \eqref{eq:layer-g}, it suffices to show that $g(q)\neq0$ for any $q\in(0,1)$. 
We prove this by contradiction. 
Suppose that there is $q_0\in(0,1)$ such that $g(q_0)=0$. 
In view of \eqref{eq:g0-negative} we may assume that $q_0=\inf\Set{q\in(0,1) | g(q)=0}>0$.
Combining $g(q_0)=0$ with \eqref{eq:g-bibun}, we have 
\begin{align}\label{eq:g'-negative}
    g'(q_0)= \frac{4}{q_0(1-q_0^2)}(\hat{p}_1-\hat{p}_2)\frac{\E_{1,p_1}(q_0)}{\K_{1,p_1}(q_0)}\left(1-\frac{\E_{1,p_1}(q_0)}{\K_{1,p_1}(q_0)} \right).
\end{align}
Note that Proposition~\ref{prop:EK-monotone} and $\K_{1,p}(0)=\E_{1,p}(0)$ yield $0<\E_{1,p}(q_0)<\K_{1,p}(q_0)$. 
Since $\hat{p}_1<\hat{p}_2$, we infer from \eqref{eq:g'-negative} that $g'(q_0)<0$. 
However, this together with the fact that $g<0$ in $(0,q_0)$ contradicts $g(q_0)=0$. 
Thus we obtain \eqref{eq:layer-g}. 
\end{proof}

Theorem~\ref{thm:phi*-decrease} now follows by Propositions~\ref{prop:phi*-limit} and \ref{prop:Qp-monotone}.

\begin{proof}[Proof of Theorem~\ref{thm:phi*-decrease}]
Recall from Proposition~\ref{prop:phi*-limit} that $p\mapsto\phi^*(p)$ is continuous and satisfies $\phi^*(p)\to \pi/2$ as $p\downarrow1$, and $\phi^*(p)\to 0$ as $p\to \infty$.
Then, in view of \eqref{eq:phi*}, it suffices to prove that
$q^*(p_1) < q^*(p_2)$ for any $1<p_1<p_2<\infty$.
We infer from \eqref{eq:layer} that
\[
Q_{p_2}(q^*(p_2)) = 0 = Q_{p_1}(q^*(p_1)) < Q_{p_2}(q^*(p_1)), 
\]
which in combination with monotonicity of $Q_{p_2}$ (cf.\ Lemma~\ref{lem:nabe-lem2}) yields the desired monotonicity of $q^*(p)$.
\end{proof}

\section{Li--Yau type multiplicity inequality} \label{sect:Li-Yau}

This section is devoted to discussing the Li--Yau type multiplicity inequality of the form \eqref{eq:LiYau-Bp}. 
In Subsection~\ref{subsect:LYineq} we first prove Theorem~\ref{thm:LiYau-Bp}. 
The key fact to prove Theorem~\ref{thm:LiYau-Bp} is that a half-fold figure-eight $p$-elastica is a unique minimizer of $\mathcal{B}_p$ in $\mathcal{A}_{P_0,P_0, L}$.
From the proof we observe that whether the equality in \eqref{eq:LiYau-Bp} is attained is equivalent to whether there exists a closed $m$-leafed $p$-elastica, i.e., a $C^1$-concatenation of $m$ half-fold figure-eight $p$-elasticae.
Then we discuss existence of closed $m$-leafed $p$-elasticae in Subsection~\ref{subsect:leaf-existence}, and observe new phenomena related to optimality in Subsection~\ref{subsect:optimal}.
Here the angle monotonicity in Theorem~\ref{thm:phi*-decrease} plays a key role.

\subsection{Multiplicity inequality}\label{subsect:LYineq}
Theorems~\ref{thm:classify-pinned} and \ref{thm:uniqueness} imply
\begin{corollary}\label{cor:pinned-Case1}
Let $\varpi^*_p>0$ be a constant given by
\begin{align}\label{eq:def-varpi_p}
    \varpi^*_p = 2^{3p-1}(q^*(p))^{p-2}\big(2(q^*(p))^2-1\big) \E_{1,p}(q^*(p))^p, 
\end{align}
and $\gamma\in W^{2,p}_{\rm imm}(0,1;\R^2)$ satisfy $\gamma(0)=\gamma(1)$.
Then
\begin{align*}
    \overline{\mathcal{B}}_p[\gamma] \geq \varpi^*_p,
\end{align*}
where equality is attained if and only if $\gamma$ is a half-fold figure-eight $p$-elastica.
\end{corollary}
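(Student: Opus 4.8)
The plan is to deduce the corollary directly from the $r=0$ case of Theorem~\ref{thm:uniqueness}, combined with the explicit energy formula of Proposition~\ref{prop:normalized-wave} and the defining relation of $q^*$. First I would observe that the hypothesis $\gamma(0)=\gamma(1)=:P_0$ is exactly the pinned condition with coincident endpoints: setting $L:=\mathcal{L}[\gamma]$ (which is positive since $\gamma$ is an immersion) we have $\gamma\in\mathcal{A}_{P_0,P_0,L}$, and the associated ratio is $r=|P_0-P_0|/L=0$. By Theorem~\ref{thm:uniqueness} the unique global minimizer of $\mathcal{B}_p$ over $\mathcal{A}_{P_0,P_0,L}$ is a $(p,0,1)$-arc, which by Definition~\ref{def:arcloop} is precisely a half-fold figure-eight $p$-elastica; denote by $\gamma_8$ such a minimizer (of length $L$). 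In particular $\mathcal{B}_p[\gamma]\geq\mathcal{B}_p[\gamma_8]$.

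Second, since $\gamma$ and $\gamma_8$ share the same length $L$, multiplying this inequality by $L^{p-1}$ and invoking the definition \eqref{eq:def-normalizedBp} gives $\overline{\mathcal{B}}_p[\gamma]\geq\overline{\mathcal{B}}_p[\gamma_8]$. It then remains to identify $\overline{\mathcal{B}}_p[\gamma_8]$ with the constant $\varpi^*_p$ of \eqref{eq:def-varpi_p}. For this I would apply Proposition~\ref{prop:normalized-wave} with $n=1$ and $q=q^*(p)$, where $q^*(p)$ solves \eqref{eq:wave-arc-r} with $r=0$, i.e.\ $Q_p(q)=0$ (cf.\ Definition~\ref{def:q*}). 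The equation $Q_p(q^*)=0$ is equivalent to $\K_{1,p}(q^*)=2\E_{1,p}(q^*)$, and substituting this into \eqref{eq:normalized-wave} collapses the bracketed factor $\frac{1}{q^2}\E_{1,p}(q^*)+\bigl(1-\frac{1}{q^2}\bigr)\K_{1,p}(q^*)$ into $\frac{2q^2-1}{q^2}\E_{1,p}(q^*)$; collecting the remaining powers of $2$, of $q^*$, and of $\E_{1,p}(q^*)$ then yields exactly the closed form \eqref{eq:def-varpi_p}. Positivity $\varpi^*_p>0$ follows at once, since the only sign-sensitive factor is $2(q^*(p))^2-1$, which is positive by the bound $q^*(p)>1/\sqrt{2}$ established in Proposition~\ref{prop:phi*-limit}(i).

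Finally, for the rigidity statement I would argue that $\overline{\mathcal{B}}_p[\gamma]=\varpi^*_p$ forces $\mathcal{B}_p[\gamma]=\mathcal{B}_p[\gamma_8]$ (both curves having length $L$), so that $\gamma$ attains the minimum of $\mathcal{B}_p$ over $\mathcal{A}_{P_0,P_0,L}$; by the uniqueness in Theorem~\ref{thm:uniqueness} this holds if and only if $\gamma$ is a half-fold figure-eight $p$-elastica up to isometry and reparameterization. The converse direction is immediate from the scale invariance of $\overline{\mathcal{B}}_p$. I do not expect any serious obstacle here, as the statement is essentially a specialization of Theorem~\ref{thm:uniqueness}; the only nonroutine point is the algebraic reduction to the explicit form \eqref{eq:def-varpi_p}, where one must remember to eliminate $\K_{1,p}(q^*)$ via $\K_{1,p}(q^*)=2\E_{1,p}(q^*)$ rather than leaving the answer in terms of both complete $p$-elliptic integrals.
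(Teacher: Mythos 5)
Your proposal is correct and follows essentially the same route as the paper: reduce to the pinned problem $\mathcal{A}_{P_0,P_0,L}$ with $r=0$, invoke Theorem~\ref{thm:uniqueness} for the inequality and the rigidity, and evaluate $\overline{\mathcal{B}}_p$ of the half-fold figure-eight via Proposition~\ref{prop:normalized-wave} with $n=1$, $q=q^*(p)$, and $\K_{1,p}(q^*)=2\E_{1,p}(q^*)$. The algebraic reduction to \eqref{eq:def-varpi_p} and the positivity check via $q^*(p)>1/\sqrt{2}$ are both accurate.
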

\begin{proof}
Fix an arbitrary $\gamma\in W^{2,p}_{\rm imm}(0,1;\R^2)$ such that $\gamma(0)=\gamma(1)$ and let $\bar{\gamma}$ be a half-fold figure-eight $p$-elastica.
It follows from Theorem~\ref{thm:uniqueness} that  $\overline{\mathcal{B}}_p[\gamma]\geq \overline{\mathcal{B}}_p[\bar{\gamma}]$, and equality is attained if and only if $\gamma$ is also a half-fold figure-eight elastica.
Recalling the fact that $2\E_{1,p}(q^*(p))=\K_{1,p}(q^*(p))$, we infer from \eqref{eq:normalized-wave} with $q=q^*(p)$ and $n=1$ that
\begin{align*}
    \overline{\mathcal{B}}_p[\bar{\gamma}]
    =2^{3p-1}q^*(p)^{p-2}\big(2q^*(p)^2-1\big) \E_{1,p}(q^*(p))^p,
\end{align*}
which coincides with \eqref{eq:def-varpi_p}.
The proof is complete.
\end{proof}

In order to prove inequality \eqref{eq:LiYau-Bp}, we first prove its open-curve counterpart.

\begin{theorem}[Multiplicity inequality for open curves]\label{thm:LiYau-Bp-open}
Let $\gamma\in W^{2,p}_{\rm imm}(0,1;\R^2)$ be a curve with a point of multiplicity $m\geq2$.
Then
\begin{align}\label{eq:open-counterpart}
    \overline{\mathcal{B}}_p[\gamma] \geq \varpi^*_p(m-1)^p.
\end{align}
\end{theorem}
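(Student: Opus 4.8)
The plan is to split the given open curve at its multiplicity point into $m-1$ closed loops, bound each loop from below using Corollary~\ref{cor:pinned-Case1}, and then recombine the estimates through a single convexity inequality; this mirrors the strategy used for $p=2$.

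First I would fix a point $P\in\R^2$ of multiplicity $m$ and select parameters $0\le t_1<t_2<\cdots<t_m\le 1$ with $\gamma(t_i)=P$ for all $i$. For each $i\in\{1,\dots,m-1\}$ let $\gamma_i$ denote the restriction $\gamma|_{[t_i,t_{i+1}]}$, affinely reparameterized onto $[0,1]$. Since $\gamma\in W^{2,p}_{\rm imm}(0,1;\R^2)$, each $\gamma_i$ again lies in $W^{2,p}_{\rm imm}(0,1;\R^2)$ and satisfies $\gamma_i(0)=\gamma_i(1)=P$, and each has positive length $\ell_i:=\mathcal{L}[\gamma_i]>0$ because the $t_i$ are distinct and $\gamma$ is an immersion. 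As the $\gamma_i$ are subarcs with pairwise disjoint interiors contained in $[t_1,t_m]\subseteq[0,1]$ and the integrands are nonnegative, we obtain
\[
\mathcal{B}_p[\gamma] \ge \sum_{i=1}^{m-1}\mathcal{B}_p[\gamma_i], \qquad \mathcal{L}[\gamma] \ge \sum_{i=1}^{m-1}\ell_i.
\]

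Next I would apply Corollary~\ref{cor:pinned-Case1} to each loop $\gamma_i$: since $\gamma_i(0)=\gamma_i(1)$, the corollary gives $\overline{\mathcal{B}}_p[\gamma_i]=\ell_i^{p-1}\mathcal{B}_p[\gamma_i]\ge\varpi^*_p$, that is, $\mathcal{B}_p[\gamma_i]\ge\varpi^*_p\,\ell_i^{-(p-1)}$, where I use that $\mathcal{B}_p$ and $\mathcal{L}$ are invariant under reparameterization so the reparameterization onto $[0,1]$ is harmless. Substituting these bounds into the previous displays and using $p-1>0$ yields
\[
\overline{\mathcal{B}}_p[\gamma] = \mathcal{L}[\gamma]^{p-1}\mathcal{B}_p[\gamma] \ge \Big(\sum_{i=1}^{m-1}\ell_i\Big)^{p-1}\varpi^*_p\sum_{i=1}^{m-1}\ell_i^{-(p-1)}.
\]

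The main (and essentially the only nontrivial) step is then the elementary inequality
\[
\Big(\sum_{i=1}^{n}a_i\Big)^{p-1}\sum_{i=1}^{n}a_i^{-(p-1)} \ge n^p
\]
for positive reals $a_i$, with $n=m-1$. I would derive this from the convexity of the map $x\mapsto x^{-(p-1)}$ on $(0,\infty)$, which holds precisely because $p>1$: Jensen's inequality gives $\frac{1}{n}\sum_i a_i^{-(p-1)}\ge\big(\frac{1}{n}\sum_i a_i\big)^{-(p-1)}$, and rearranging produces exactly the claimed bound (with equality iff all $a_i$ coincide). Taking $a_i=\ell_i$ then delivers $\overline{\mathcal{B}}_p[\gamma]\ge\varpi^*_p(m-1)^p$, as desired. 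The only points needing care are checking that the reparameterized subarcs genuinely satisfy the hypotheses of Corollary~\ref{cor:pinned-Case1} (immersed, equal endpoints, positive length) and that the total length and energy dominate the sums over the loops; both follow directly from $\gamma$ being an immersion and from the reparameterization and scale invariance of the functionals.
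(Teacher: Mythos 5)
Your proof is correct and follows the same route as the paper: split the curve at the multiplicity point into $m-1$ loops, apply Corollary~\ref{cor:pinned-Case1} to each, and finish with the elementary inequality $\bigl(\sum_i a_i\bigr)^{p-1}\sum_i a_i^{-(p-1)}\geq n^p$. The one place you diverge is in proving that inequality: the paper splits into the cases $p>2$ and $1<p\leq2$ and combines H\"older's inequality with the HM--AM inequality in each case, whereas your single application of Jensen's inequality to the convex function $x\mapsto x^{-(p-1)}$ on $(0,\infty)$ handles all $p\in(1,\infty)$ at once and is arguably cleaner; it also immediately records the equality case (all $\ell_i$ equal), which the paper needs later for the rigidity statement in Theorem~\ref{thm:Open-optimality}.
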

\begin{proof}
By the assumption on multiplicity, there are $0\leq a_1 < \cdots < a_m \leq 1$ such that $\gamma(a_1) = \cdots = \gamma(a_m)$. 
If $a_1 \neq0$, then $ \overline{\mathcal{B}}_p[\gamma] >  \overline{\mathcal{B}}_p[\gamma|_{[a_1, 1]}]$ follows and it suffices to show that $\overline{\mathcal{B}}_p[\gamma|_{[a_1, 1]}]\geq \varpi^*_p(m-1)^p$.
Therefore we may assume that $a_1=0$. 
Similarly, we may assume that $a_m=1$. 
Set $\gamma_i:=\gamma|_{[a_{i}, a_{i+1}]}$ for $i=1, \ldots, m-1$.
Then we infer from Corollary~\ref{cor:pinned-Case1} that 
\begin{align}\label{eq:equal-if-half8}
\overline{\mathcal{B}}_p[\gamma_i] = \mathcal{L}[\gamma_i]^{p-1}\mathcal{B}_p[\gamma_i] \geq \varpi^*_p.
\end{align}
Note that $\mathcal{B}_p[\gamma]=\sum_{i=1}^{m-1}\mathcal{B}_p[\gamma_i]$ and $\mathcal{L}[\gamma]=\sum_{i=1}^{m-1}\mathcal{L}[\gamma_i]$.
It follows from \eqref{eq:equal-if-half8} that
\begin{align}\label{eq:sum-estimate}
    \overline{\mathcal{B}}_p[\gamma] 
    &=\left(\sum_{i=1}^{m-1}\mathcal{L}[\gamma_i]\right)^{p-1}\sum_{i=1}^{m-1}\mathcal{B}_p[\gamma_i]
    \geq \left(\sum_{i=1}^{m-1}\mathcal{L}[\gamma_i]\right)^{p-1}\sum_{i=1}^{m-1}\frac{\varpi^*_p}{\mathcal{L}[\gamma_i]^{p-1}}.
\end{align}
For the case $p>2$, it follows from H\"older's inequality that
\[
\left(\sum_{i=1}^{m-1}\frac{1}{\mathcal{L}[\gamma_i]}\right)^{p-1} \leq (m-1)^{p-2}\sum_{i=1}^{m-1}\left(\frac{1}{\mathcal{L}[\gamma_i]}\right)^{p-1}.
\]
This together with \eqref{eq:sum-estimate} yields
\begin{align}\label{eq:open-LiYau-p>2}
\begin{split}
    \overline{\mathcal{B}}_p[\gamma] 
    &\geq \left(\sum_{i=1}^{m-1}\mathcal{L}[\gamma_i]\right)^{p-1}\varpi^*_p(m-1)^{2-p}\left(\sum_{i=1}^{m-1}\frac{1}{\mathcal{L}[\gamma_i]}\right)^{p-1} \\
    & \geq\varpi^*_p(m-1)^{2-p} (m-1)^{2(p-1)} = \varpi^*_p(m-1)^p,
\end{split}
\end{align}
where we used the HM-AM inequality.
For the case $1<p\leq2$, it follows from H\"older's inequality that
\[
\left(\sum_{i=1}^{m-1}\mathcal{L}[\gamma_i]\right)^{p-1} \geq (m-1)^{p-2}\sum_{i=1}^{m-1}\mathcal{L}[\gamma_i]^{p-1}.
\]
Combining this with \eqref{eq:sum-estimate}, we see that
\begin{align}\label{eq:open-LiYau-p<2}
\begin{split}
    \overline{\mathcal{B}}_p[\gamma] 
    &\geq (m-1)^{p-2}\left(\sum_{i=1}^{m-1}\mathcal{L}[\gamma_i]^{p-1}\right)\varpi^*_p\left(\sum_{i=1}^{m-1}\frac{1}{\mathcal{L}[\gamma_i]^{p-1}}\right) \\
    & \geq\varpi^*_p(m-1)^{p-2} (m-1)^{2} = \varpi^*_p(m-1)^p,
\end{split}
\end{align}
where we used the HM-AM inequality.
The proof is complete.
\end{proof}

We later show that inequality \eqref{eq:LiYau-Bp} follows from a special consequence of Theorem~\ref{thm:LiYau-Bp-open}.
In order to discuss optimality, we introduce leafed $p$-elasticae as in \cite{Miura_LiYau}. 

\begin{definition}[Leafed $p$-elastica]\label{def:m-leaf}
Let $p\in(1,\infty)$ and $m\in\N$.
\begin{itemize}
    \item[(1)] We call $\gamma\in W^{2,p}_{\rm imm}(0,1;\R^2)$ \emph{$m$-leafed $p$-elastica} if there are $0=a_0<a_1<\cdots< a_m=1$ such that for each $i=1,\ldots,m$ the curve $\gamma_i:=\gamma|_{[a_{i-1},a_i]}$ is a half-fold figure-eight $p$-elastica, and also $\mathcal{L}[\gamma_{1}]=\cdots=\mathcal{L}[\gamma_{m}]$.
    \item[(2)] We call $\gamma\in W^{2,p}_{\rm imm}(\mathbf{T}^1;\R^2)$ \emph{closed $m$-leafed $p$-elastica} if there is $t_0\in\T^1$ such that the curve $\bar{\gamma}:[0,1]\to\R^2$ defined by $\bar{\gamma}(t):=\gamma(t+t_0)$ is an $m$-leafed $p$-elastica.
\end{itemize}
\end{definition}

We mention an obvious example in the following lemma.
The proof is safely omitted since it follows immediately by definition. 

\begin{lemma}[Closed $m$-leafed $p$-elasticae for even $m$]\label{lem:even-leafed}
For any $p\in(1,\infty)$ and any even $m\geq2$, an $\frac{m}{2}$-fold figure-eight $p$-elastica is a closed $m$-leafed $p$-elastica.
\end{lemma}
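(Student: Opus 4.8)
The plan is to exhibit the canonical decomposition of the $\frac{m}{2}$-fold figure-eight into $m$ congruent half-fold pieces directly from its explicit parameterization. Write $m=2N$ and recall from Remark~\ref{rem:n-fold_8} (via \cite{MYarXiv2203}) that the $\frac{m}{2}$-fold figure-eight $p$-elastica coincides with the $N$-fold figure-eight $p$-elastica, which is a genuinely closed curve; hence it may be regarded as an element of $W^{2,p}_{\rm imm}(\T^1;\R^2)$, so that the notion of \emph{closed} $m$-leafed $p$-elastica applies. By Definition~\ref{def:arcloop} it is, up to similarity and reparameterization, the $(p,0,m)$-arc
\[
\gamma(s)=\gamma_w(s-\K_{1,p}(q^*),q^*), \qquad s\in[0,2m\K_{1,p}(q^*)],
\]
with $q^*=q^*(p)$, whose total arclength is $2m\K_{1,p}(q^*)$.

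First I would introduce the partition points $s_i:=2i\K_{1,p}(q^*)$ for $i=0,\dots,m$ and set $\gamma_i:=\gamma|_{[s_{i-1},s_i]}$; after rescaling $[0,2m\K_{1,p}(q^*)]$ to $[0,1]$ these induce a partition $0=a_0<\cdots<a_m=1$ as required in Definition~\ref{def:m-leaf}. Since each $\gamma_i$ has arclength exactly $2\K_{1,p}(q^*)$, the equal-length condition $\mathcal{L}[\gamma_1]=\cdots=\mathcal{L}[\gamma_m]$ is automatic, and it only remains to identify each leaf $\gamma_i$ with a half-fold figure-eight. To this end I would recall from Proposition~\ref{prop:half-fold} that the half-fold figure-eight is, up to similarity and reparameterization, the fundamental piece $\gamma_w(\cdot,q^*)$ on $[-\K_{1,p}(q^*),\K_{1,p}(q^*)]$, whose signed curvature $2q^*\cn_p(\cdot,q^*)$ vanishes precisely at the two endpoints by \eqref{eq:cn-zero}. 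The arclength signed curvature of $\gamma_i$ is the restriction of $2q^*\cn_p(\,\cdot-\K_{1,p}(q^*),q^*)$ to $[s_{i-1},s_i]$; by the $2\K_{1,p}(q^*)$-antiperiodicity of $\cn_p$ (Proposition~\ref{prop:property-cndn}(i)), after the parameter shift $s\mapsto s-s_{i-1}$ this equals $(-1)^{i-1}$ times the curvature of the fundamental piece. Since a planar curve is determined by its signed curvature up to a rigid motion, with the sign flip absorbed by a reflection, each $\gamma_i$ agrees with the fundamental half-fold piece up to an isometry and reparameterization, hence is a half-fold figure-eight. Therefore $\gamma$ is an $m$-leafed $p$-elastica, and being closed it is a closed $m$-leafed $p$-elastica.

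As the statement already signals, there is essentially no obstacle here: the only point requiring genuine (but minimal) care is the congruence of the individual leaves, which follows at once from the antiperiodicity of $\cn_p$ combined with the fundamental theorem for plane curves, while the equal-length, partition, and closedness requirements are all built into the explicit formula for the $\frac{m}{2}$-fold figure-eight. This is why the authors may safely omit the proof.
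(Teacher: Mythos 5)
Your proof is correct and is exactly the verification the authors had in mind: the paper omits the proof as following ``immediately by definition,'' and your decomposition of the $(p,0,m)$-arc at the points $s_i=2i\K_{1,p}(q^*)$, together with the $2\K_{1,p}(q^*)$-antiperiodicity of $\cn_p$ and the fundamental theorem of plane curves (with the sign flip absorbed by a reflection, which is allowed since half-fold figure-eights are defined up to similarity), supplies precisely the missing details. The only external input is closedness of the curve, which you correctly source from Remark~\ref{rem:n-fold_8}.
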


It is also easy to see that leafed $p$-elasticae indeed attain equality in \eqref{eq:open-counterpart}.

\begin{proposition}[Energy of $m$-leafed $p$-elastica]\label{prop:leaf-energy}
Let $m\in\N$.
Then any $m$-leafed (resp.\ closed $m$-leafed) elastica has a point of multiplicity $m+1$ (resp.\ $m$), and its normalized $p$-bending energy is $\varpi^*_p m^p$.
\end{proposition}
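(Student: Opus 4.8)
The plan is to treat the two assertions — the multiplicity count and the energy value — separately, both reducing to elementary bookkeeping over the $m$ constituent half-fold figure-eight pieces. The one geometric input I would isolate first is that a half-fold figure-eight $p$-elastica is a \emph{closed} curve in the sense that its two endpoints coincide: by Proposition~\ref{prop:half-fold}(ii), the canonical parameterization \eqref{eq:half-fold} meets the origin exactly at $s=0$ and $s=2\K_{1,p}(q^*(p))$, and since ``the two endpoints coincide'' is preserved under similarity and reparameterization, every half-fold figure-eight $p$-elastica has its endpoints equal as points of $\R^2$.

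For the multiplicity statement I would unfold Definition~\ref{def:m-leaf}. Given an $m$-leafed $p$-elastica $\gamma$ with breakpoints $0=a_0<\cdots<a_m=1$, each restriction $\gamma_i:=\gamma|_{[a_{i-1},a_i]}$ is a half-fold figure-eight, so by the above $\gamma(a_{i-1})=\gamma(a_i)$ for every $i=1,\dots,m$. Chaining these equalities yields $\gamma(a_0)=\gamma(a_1)=\cdots=\gamma(a_m)$, a single point whose preimage contains the $m+1$ distinct parameters $a_0,\dots,a_m$, hence multiplicity $m+1$. For the closed case $\gamma\in W^{2,p}_{\rm imm}(\T^1;\R^2)$ one has $\bar\gamma(t)=\gamma(t+t_0)$ an $m$-leafed $p$-elastica on $[0,1]$, so the same chain applies, but now $a_0$ and $a_m$ are identified in $\T^1$; only $m$ of the $m+1$ preimages are then distinct points of $\T^1$, giving multiplicity $m$.

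For the energy I would exploit scale invariance of $\overline{\mathcal{B}}_p$ together with the equal-length requirement. Writing $\ell:=\mathcal{L}[\gamma_i]$, which is independent of $i$ by Definition~\ref{def:m-leaf}, Corollary~\ref{cor:pinned-Case1} gives $\overline{\mathcal{B}}_p[\gamma_i]=\ell^{p-1}\mathcal{B}_p[\gamma_i]=\varpi^*_p$, hence $\mathcal{B}_p[\gamma_i]=\varpi^*_p\ell^{1-p}$. Since $\mathcal{L}$ and $\mathcal{B}_p$ are additive over the concatenation, $\mathcal{L}[\gamma]=m\ell$ and $\mathcal{B}_p[\gamma]=m\varpi^*_p\ell^{1-p}$, and therefore
\[
\overline{\mathcal{B}}_p[\gamma]=\mathcal{L}[\gamma]^{p-1}\mathcal{B}_p[\gamma]=(m\ell)^{p-1}\cdot m\varpi^*_p\ell^{1-p}=\varpi^*_p m^p.
\]
The closed case is identical, since the shift by $t_0$ changes neither the total length nor the total energy.

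I do not expect a serious obstacle: the result is essentially a consequence of additivity of length and energy combined with the equal-length hypothesis, which is precisely what makes the length factors cancel and the normalized energy factor cleanly as $m^p\varpi^*_p$. The only point requiring mild care is the closed case, where one must correctly account for the identification of $a_0$ with $a_m$ in $\T^1$ to land on multiplicity $m$ rather than $m+1$; everything else is routine bookkeeping.
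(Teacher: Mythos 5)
Your proposal is correct and follows essentially the same route as the paper: additivity of $\mathcal{L}$ and $\mathcal{B}_p$ over the $m$ equal-length leaves, each with $\overline{\mathcal{B}}_p[\gamma_i]=\varpi^*_p$, so the length factors cancel to give $\varpi^*_p m^p$. The multiplicity count, which the paper leaves implicit, is handled correctly via the coincidence of the endpoints of each half-fold figure-eight (Proposition~\ref{prop:half-fold}(ii)) and the identification of $a_0$ with $a_m$ in the closed case.
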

\begin{proof}
Let $\gamma$ be an $m$-leafed $p$-elastica.
Let $L>0$ denote the length of each half-fold figure-eight elastica $\gamma_i$, cf.\ Definition \ref{def:m-leaf}.
Then $\mathcal{L}[\gamma]=mL$.
In addition, since $L^{p-1}\mathcal{B}_p[\gamma_i]=\overline{\mathcal{B}}_p[\gamma_i]=\varpi^*_p$, the additivity of $\mathcal{B}$ yields $\mathcal{B}_p[\gamma]=\sum_{i=1}^m\mathcal{B}_p[\gamma_i]=mL^{1-p}\varpi^*_p$.
Thus we get $\overline{\mathcal{B}}_p[\gamma]=
(mL)^{p-1}(mL^{1-p}\varpi^*_p)=\varpi^*_pm^p$.
\end{proof}

We now prove that for open curves, not only rigidity but also optimality always holds.

\begin{theorem}[Optimality and rigidity for open curves]\label{thm:Open-optimality}
Let $p\in(1,\infty)$ and $m\geq2$.
Then there exists $\gamma\in W^{2,p}_{\rm imm}(0,1; \R^2)$ with a point of multiplicity $m$ such that
\begin{align}\label{eq:optimal-open}
    \overline{\mathcal{B}}_p[\gamma]=\varpi^*_p(m-1)^p.
\end{align}
In addition, equality \eqref{eq:optimal-open} is attained if and only if $\gamma$ is an $(m-1)$-leafed $p$-elastica.
\end{theorem}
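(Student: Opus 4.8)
The plan is to prove the two assertions of Theorem~\ref{thm:Open-optimality} separately: first the \emph{existence} of an optimal configuration attaining equality in \eqref{eq:open-counterpart}, and second the \emph{rigidity} statement that equality forces $\gamma$ to be an $(m-1)$-leafed $p$-elastica. For existence, I would simply exhibit the natural candidate. By Lemma~\ref{lem:even-leafed} and more generally by the construction behind Definition~\ref{def:m-leaf}, one can build an $(m-1)$-leafed $p$-elastica: take $m-1$ congruent copies of a half-fold figure-eight $p$-elastica and concatenate them. Each half-fold figure-eight begins and ends at the same point (its two endpoints coincide, cf.\ Proposition~\ref{prop:half-fold}(ii)), so all $m$ junction points (including the two ends) are mapped to a common point, giving a point of multiplicity $m$. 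By Proposition~\ref{prop:leaf-energy}, this curve has normalized energy exactly $\varpi^*_p(m-1)^p$, establishing \eqref{eq:optimal-open}. The only mild care needed is to confirm that the concatenation can be arranged to be an \emph{immersed} $W^{2,p}$-curve, i.e.\ that the tangent directions match up at the junctions; since all leaves are congruent and the curvature vanishes at each endpoint by Lemma~\ref{lem:0th-bc}, one can rotate each successive leaf to achieve $C^1$-matching, and the construction in Definition~\ref{def:m-leaf}(1) is exactly this.

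For the rigidity direction, suppose $\gamma$ attains equality in \eqref{eq:open-counterpart}. I would retrace the proof of Theorem~\ref{thm:LiYau-Bp-open} and track when each inequality is an equality. As there, pick the $m$ preimages $a_1<\cdots<a_m$ of the multiple point and reduce to $a_1=0$, $a_m=1$ (equality forces no ``wasted'' arc, so these reductions must already hold with equality). Set $\gamma_i:=\gamma|_{[a_{i-1},a_i]}$ for the $m-1$ sub-arcs, each a closed loop. The chain of estimates uses three ingredients: the per-piece bound $\overline{\mathcal{B}}_p[\gamma_i]\geq\varpi^*_p$ from Corollary~\ref{cor:pinned-Case1}, a H\"older inequality, and the HM--AM inequality. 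Equality in \eqref{eq:open-counterpart} forces equality in all three simultaneously.

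The key step is to extract the right geometric conclusion from each equality case. First, equality in Corollary~\ref{cor:pinned-Case1} for every $i$ forces each $\gamma_i$ to be a half-fold figure-eight $p$-elastica. Second, equality in the HM--AM (equivalently the power-mean) inequality forces all the lengths $\mathcal{L}[\gamma_i]$ to be equal; this is exactly the condition $\mathcal{L}[\gamma_1]=\cdots=\mathcal{L}[\gamma_{m-1}]$ in Definition~\ref{def:m-leaf}(1). (The H\"older step becomes an equality automatically once the lengths are equal, so it imposes nothing new.) Together these two conditions say precisely that $\gamma$ is an $(m-1)$-leafed $p$-elastica, as claimed. I would present the argument for $p>2$ and for $1<p\le2$ in parallel, since the direction of H\"older's inequality flips between the two regimes (as in \eqref{eq:open-LiYau-p>2} and \eqref{eq:open-LiYau-p<2}), but the equality-case analysis is identical: equality in a H\"older or power-mean inequality with non-trivial exponent $p\neq1$ forces the entries to be proportional, hence here equal.

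The main obstacle is the rigidity part, and specifically being careful that equality in \eqref{eq:open-counterpart} really does propagate to equality in \emph{each} $\overline{\mathcal{B}}_p[\gamma_i]\geq\varpi^*_p$. This is where a small subtlety lies: the estimate \eqref{eq:sum-estimate} combines the per-piece lower bounds before applying the mean inequality, so one must verify that a deficit in any single $\gamma_i$ would produce a strict inequality in the final bound. This follows because all the intermediate inequalities are monotone in the quantities $\mathcal{B}_p[\gamma_i]$, so the composite bound is attained only if each $\mathcal{B}_p[\gamma_i]$ is minimal given its length, i.e.\ $\overline{\mathcal{B}}_p[\gamma_i]=\varpi^*_p$; then the rigidity in Corollary~\ref{cor:pinned-Case1} applies piecewise. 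Once this propagation is justified, the conclusion is immediate.
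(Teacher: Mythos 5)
Your proposal is correct and follows essentially the same route as the paper: existence via an $(m-1)$-leafed $p$-elastica (the paper takes the $\frac{m-1}{2}$-fold figure-eight as the explicit example) together with Proposition~\ref{prop:leaf-energy}, and rigidity by tracing equality through \eqref{eq:sum-estimate}, the H\"older step, and the HM--AM step in the proof of Theorem~\ref{thm:LiYau-Bp-open}, concluding equal lengths and then, via Corollary~\ref{cor:pinned-Case1}, that each sub-arc is a half-fold figure-eight $p$-elastica. The propagation issue you flag is handled exactly as you describe, since \eqref{eq:sum-estimate} is monotone in each $\mathcal{B}_p[\gamma_i]$ for fixed lengths.
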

\begin{proof}
By definition, an $\frac{m-1}{2}$-fold figure-eight $p$-elastica is an open curve with a point of multiplicity $m$, and attains equality.
From this fact the existence of an optimal curve follows.
In addition, any $(m-1)$-leafed $p$-elastica attains equality \eqref{eq:optimal-open} by Proposition~\ref{prop:leaf-energy}.

We now prove rigidity. 
Suppose that $\gamma$ attains \eqref{eq:optimal-open}. 
As in the proof of Theorem~\ref{thm:LiYau-Bp-open}, there are $0\leq a_1 < \cdots < a_m\leq1$ such that $\gamma(a_1)=\cdots=\gamma(a_m)$. 
Then $a_1$ and $a_m$ must be $0$ and $1$, respectively.
Set $\gamma_i:=\gamma|_{[a_i, a_{i+1}]}$ for $i=1, \ldots, m-1$.
Then equality holds for all the inequalities in the proof of Theorem~\ref{thm:LiYau-Bp-open}, i.e., \eqref{eq:sum-estimate}, \eqref{eq:open-LiYau-p>2}, and \eqref{eq:open-LiYau-p<2}.
Focusing on H\"older's inequality and the HM-AM inequality, we have $\mathcal{L}[\gamma_1]=\cdots=\mathcal{L}[\gamma_{m-1}]$.
Moreover, in view of \eqref{eq:sum-estimate}, we also have $\overline{\mathcal{B}}_p[\gamma_i]=\varpi^*_p$ for all $i$, and hence by Corollary~\ref{cor:pinned-Case1} each curve $\gamma_i$ needs to be a half-fold figure-eight $p$-elastica.
This means that $\gamma$ is an $(m-1)$-leafed $p$-elastica.
\end{proof}

We are in a position to prove Theorems~\ref{thm:LiYau-Bp} and \ref{thm:LiYau-optimal-even}.

\begin{proof}[Proof of Theorem~\ref{thm:LiYau-Bp}]
First, we prove inequality \eqref{eq:LiYau-Bp}.
Let $\gamma\in W^{2,p}_{\rm imm}(\mathbf{T}^1;\R^2)$ be a curve with a point of multiplicity $m\geq2$.
Then we can create an open curve $\bar{\gamma}$ with a point of multiplicity $m+1$ after cutting $\gamma$ at the original point of multiplicity and opening the domain $\mathbf{T}^1$ to $[0,1]$.
Applying Theorem~\ref{thm:LiYau-Bp-open} to the curve $\bar{\gamma}$, we obtain \eqref{eq:LiYau-Bp}. 

We turn to optimality and rigidity.
By Proposition~\ref{prop:leaf-energy}, any closed $m$-leafed $p$-elastica attains equality in \eqref{eq:LiYau-Bp}.
Conversely, suppose that $\gamma \in W^{2,p}_{\rm imm}(\mathbf{T}^1; \R^2)$ attains equality in \eqref{eq:LiYau-Bp}. 
As in the previous procedure, by creating an open curve $\bar{\gamma}$ with a point of multiplicity $m+1$, and applying Theorem~\ref{thm:Open-optimality} to $\bar{\gamma}$, we see that $\gamma$ must be a closed $m$-leafed $p$-elastica.
\end{proof}

\begin{proof}[Proof of Theorem~\ref{thm:LiYau-optimal-even}]
For any even integer $m\geq2$, let $\gamma$ be an $\frac{m}{2}$-fold figure-eight $p$-elastica.
Then $\gamma$ is also a closed $m$-leafed $p$-elastica as mentioned in Lemma~\ref{lem:even-leafed}, and this together with Theorem~\ref{thm:LiYau-Bp} completes the proof.
\end{proof}

\subsection{Closed $m$-leafed $p$-elasticae}\label{subsect:leaf-existence}

Here we seek for exponents $p$ admitting closed leafed $p$-elasticae with odd multiplicities.

We first prepare the following lemma in the same spirit of \cite[Lemma 3.7]{Miura_LiYau}, which states that 
whether there exists a closed $m$-leafed $p$-elastica can be characterized by whether the leaves can be joined up to first order.
By Proposition \ref{prop:half-fold} (iii), the angle $2\phi^*(p)$ plays a fundamental role.

\begin{lemma}[Characterization of closed $m$-leafed $p$-elasticae]\label{lem:character}
Let $m\geq2$ be an integer and $p\in(1,\infty)$. 
Let $\Omega^*(m,p)$ be the set of all $m$-tuples $(\omega_1, \ldots, \omega_m)$ of unit-vectors $\omega_1, \ldots, \omega_m \in \mathbf{S}^1 \subset \R^2$ such that $\langle \omega_i, \omega_{i-1} \rangle=\cos{2\phi^*(p)}$ holds for any $i=1, \ldots, m$, where we interpret $\omega_0=\omega_m$. 

If $\gamma:\T^1\to\R^2$ is a unit-speed closed $m$-leafed $p$-elastica, then $(\omega_1, \ldots, \omega_m)\in\Omega^*(m,p)$ holds for  $\omega_i:=\gamma'(\frac{i}{m}+t_0)$ and $i=1, \ldots, m$, where $t_0\in\mathbf{T}^1$ is a point of multiplicity $m$.

Conversely, for any element $(\omega_1, \ldots, \omega_m)\in\Omega^*(m,p)$, there exists a unique unit-speed closed $m$-leafed $p$-elastica $\gamma:\T^1\to\R^2$ such that $\gamma(0)=0$ and $\gamma'(\frac{i}{m})=\omega_i$ for $i=1, \ldots, m$.
\end{lemma}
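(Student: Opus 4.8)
The plan is to reduce everything to the explicit single-leaf geometry recorded in Proposition~\ref{prop:half-fold}, exploiting two facts: a half-fold figure-eight $p$-elastica passes through the origin precisely at its two endpoints (part (ii)), and its tangent vectors at those endpoints are related by the rotation $R_{-2\phi^*(p)}$ (part (iii)). The forward implication is then immediate. Writing $\gamma_i$ for the $i$-th leaf of the shifted curve $t\mapsto\gamma(t+t_0)$, each $\gamma_i$ is by Definition~\ref{def:m-leaf} a half-fold figure-eight $p$-elastica running from tangent $\omega_{i-1}=\gamma'(\tfrac{i-1}{m}+t_0)$ to tangent $\omega_i=\gamma'(\tfrac{i}{m}+t_0)$. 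Since a half-fold figure-eight is congruent to the standard one via an isometry whose linear part lies in $O(2)$, and since a reflection conjugates $R_\theta$ into $R_{-\theta}$, Proposition~\ref{prop:half-fold}(iii) gives $\omega_i=R_{\pm2\phi^*(p)}\omega_{i-1}$ with a sign depending on orientation. Taking inner products and using that cosine is even yields $\langle\omega_i,\omega_{i-1}\rangle=\cos 2\phi^*(p)$ for every $i$, while the cyclic condition $\omega_0=\omega_m$ holds because $t_0$ and $t_0+1$ denote the same point of $\T^1$; hence $(\omega_1,\dots,\omega_m)\in\Omega^*(m,p)$.

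For the converse I would build $\gamma$ leaf by leaf. Fix $(\omega_1,\dots,\omega_m)\in\Omega^*(m,p)$. For each $i$ the relation $\langle\omega_i,\omega_{i-1}\rangle=\cos 2\phi^*(p)$ together with $2\phi^*(p)\in(0,\pi)$ (Proposition~\ref{prop:phi*-limit}(i)) forces $\omega_i$ to equal exactly one of the two distinct unit vectors $R_{+2\phi^*(p)}\omega_{i-1}$ and $R_{-2\phi^*(p)}\omega_{i-1}$. I would then place a rescaled unit-speed half-fold figure-eight of length $1/m$ starting at the origin with initial tangent $\omega_{i-1}$: the standard orientation when $\omega_i=R_{-2\phi^*(p)}\omega_{i-1}$, and its reflection across the line through the origin in direction $\omega_{i-1}$ in the other case. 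Such a reflection fixes the origin and the vector $\omega_{i-1}$, keeps the curve a half-fold figure-eight, and conjugates the end tangent $R_{-2\phi^*(p)}\omega_{i-1}$ into $R_{+2\phi^*(p)}\omega_{i-1}$; so by Proposition~\ref{prop:half-fold}(ii) this leaf again ends at the origin and, by the forced choice of orientation, ends with tangent exactly $\omega_i$. The placement is uniquely determined by its base point, its initial tangent, and the forced orientation.

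Concatenating these $m$ leaves over the intervals $[\tfrac{i-1}{m},\tfrac{i}{m}]$ produces $\gamma\colon[0,1]\to\R^2$. Every node sits at the origin, so $\gamma(\tfrac{i}{m})=0$ for all $i$ and in particular $\gamma(0)=\gamma(1)=0$, giving a closed curve with a point of multiplicity $m$; the tangents match at each node, and the wrap-around junction closes because $\omega_m=\omega_0$, so $\gamma\in C^1(\T^1)$. Since each leaf has continuous curvature vanishing at its endpoints (Lemma~\ref{lem:0th-bc}), the curvature of $\gamma$ is continuous across the nodes and bounded, whence $\gamma\in W^{2,p}_{\rm imm}(\T^1;\R^2)$ is a closed $m$-leafed $p$-elastica in the sense of Definition~\ref{def:m-leaf} with $t_0=0$, satisfying $\gamma(0)=0$ and $\gamma'(\tfrac{i}{m})=\omega_i$. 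Uniqueness follows because any competitor with these constraints must have each leaf equal to the uniquely determined one constructed above.

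The main obstacle I anticipate is the orientation bookkeeping in the converse: one must verify carefully that the reflection genuinely reverses the turning to $+2\phi^*(p)$ while keeping both endpoints at the origin and preserving the half-fold figure-eight type, and then confirm that precisely one of the two admissible orientations is compatible with the prescribed $\omega_i$ (which is exactly where $2\phi^*(p)\in(0,\pi)$, hence $R_{+2\phi^*(p)}\omega_{i-1}\neq R_{-2\phi^*(p)}\omega_{i-1}$, is used). Positional closure, by contrast, is the pleasant feature that makes the construction work without any extra transcendental condition: because each leaf is itself a loop based at the origin, the curve closes up automatically once the tangents are matched.
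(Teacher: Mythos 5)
The paper does not actually prove this lemma---it declares the proof ``straightforward and safely omitted'' and only discusses the example of the $\frac{m}{2}$-fold figure-eight---so there is no written proof to compare against line by line. Your argument is a correct and complete version of the omitted proof, and it uses precisely the ingredients the paper sets up for this purpose: Proposition~\ref{prop:half-fold}(ii) (each leaf begins and ends at its node, so positional closure of the concatenation is automatic), Proposition~\ref{prop:half-fold}(iii) (the two end tangents of a leaf differ by $R_{\pm2\phi^*(p)}$, with the sign governed by the reflection class of the leaf), and Proposition~\ref{prop:phi*-limit}(i) (since $2\phi^*(p)\in(0,\pi)$, the vectors $R_{+2\phi^*(p)}\omega_{i-1}$ and $R_{-2\phi^*(p)}\omega_{i-1}$ are distinct, so the orientation of each leaf is forced by the prescribed $\omega_i$). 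The forward direction and the existence half of the converse are sound as written, including the $W^{2,p}$ gluing across nodes where the curvature vanishes. The one place I would ask for an extra line is the uniqueness claim: Definition~\ref{def:m-leaf} only requires \emph{some} shift $t_0$ to yield a leaf decomposition, so a competitor satisfying $\gamma(0)=0$ and $\gamma'(\tfrac{i}{m})=\omega_i$ should first be argued (or the statement read as requiring) to have its nodes exactly at the parameters $\tfrac{i}{m}$; once that normalization is fixed, your leaf-by-leaf rigidity argument---base point, length, initial tangent, and forced reflection class determine each leaf---does give uniqueness.
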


The proof of Lemma~\ref{lem:character} is straightforward and safely omitted.
Instead, we discuss a typical example: 
Let $\gamma$ be a closed $\frac{m}{2}$-fold figure-eight $p$-elastica.
Up to similarity and reparameterization we may assume that $\gamma$ is of unit-speed and $\gamma(0)=0$ is a point of multiplicity $m$.
Then setting $\omega_i:=\gamma'(\frac{i}{m})$ for $i=1,\ldots, m$, we see that $\omega_i=\gamma'(0)$ if $i$ is even, and (up to reflection) $\omega_i=R_{2\phi^*}\gamma'(0)$ if $i$ is odd.
Hence, clearly $(\omega_1, \ldots, \omega_m)\in\Omega^*(m,p)$.

Now we discuss the existence issue for special exponents.
For an odd integer $m\geq3$, and $n\in\N$ with $n< {m}/{2}$ so that $\frac{n}{m}\pi\in(0,\frac{\pi}{2})$, we define $p_{m,n}\in(1,\infty)$ by
\begin{align}\label{eq:def-p_mn}
    p_{m,n} := \left(\phi^*\right)^{-1}(\tfrac{n}{m}\pi).
\end{align}
Such an exponent $p_{m,n}$ uniquely exists by Theorem~\ref{thm:phi*-decrease}.
If $n=1$ we also write
\begin{equation}\label{eq:def-p_m}
    p_m:=p_{m,1}=\left(\phi^*\right)^{-1}(\tfrac{\pi}{m}).
\end{equation}
\if0
In view of \eqref{eq:phi*}, $p_{m,n}$ is also characterized as a solution $p_{m,n}$ to
\begin{align}\label{eq:odd-leaf-cond}
    Q_p\Big( \sin {\big(\tfrac{m-n}{2m}\pi\big)} \Big)=0.
\end{align}
\fi
For this $p_{m,n}$ we will create closed leafed elasticae with odd multiplicities.

To gain insight, we first discuss the special exponent $p_3=p_{3,1}$ for which we can construct closed leafed elasticae even for all odd multiplicities.

\begin{example}[Closed $m$-leafed $p_3$-elasticae]\label{exam:3-leafed}
Let $\gamma$ be a half-fold figure-eight $p_3$-elastica. 
Then, by $2\phi^*(p_3)=2\pi/3$, cf.\ \eqref{eq:def-p_m}, and by Proposition \ref{prop:half-fold} (iii), (up to reflection) the concatenation $\gamma\oplus R_{-2\pi/3}\gamma\oplus R_{-4\pi/3}\gamma$ is a closed $3$-leafed $p_3$-elastica, unique up to invariances.
For a general $m=3+2\ell$ with $\ell\in \N$, we can construct a closed $m$-leafed $p_3$-elastica by concatenating a closed $3$-leafed $p_3$-elastica and an $\ell$-fold figure-eight $p_3$-elastica.
\end{example}

This kind of construction extends to a general multiplicity:

\begin{proposition}[Closed $m$-leafed $p$-elasticae for odd $m$]\label{prop:odd-leafed}
Let $m$ and $n$ be integers such that $m\geq3$ is odd and $1\leq n< m/2$, and let $p_{m,n}$ be defined by \eqref{eq:def-p_mn}. 
Let $m'$ be an odd integer. 
If $m'\geq m$, then there exists a closed $m'$-leafed $p_{m,n}$-elastica.
In addition, in the case of $n=1$, there exists a closed $m'$-leafed $p_{m}$-elastica if and only if $m'\geq m$. 
\end{proposition}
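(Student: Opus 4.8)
The plan is to convert the geometric existence question into an elementary parity-and-divisibility condition on the tangent angles at the multiplicity point, using Lemma~\ref{lem:character}. By that lemma a closed $m'$-leafed $p$-elastica exists if and only if $\Omega^*(m',p)$ is nonempty, so it suffices to decide nonemptiness of $\Omega^*(m',p_{m,n})$. First I would describe $\Omega^*(m',p)$ explicitly via signed rotation angles. Since Proposition~\ref{prop:phi*-limit}(i) gives $\phi^*(p)\in(0,\pi/2)$, we have $2\phi^*(p)\in(0,\pi)$ and hence $\cos 2\phi^*(p)\in(-1,1)$; thus the constraint $\langle\omega_i,\omega_{i-1}\rangle=\cos 2\phi^*(p)$ means precisely that the signed angle from $\omega_{i-1}$ to $\omega_i$ equals $\epsilon_i\cdot 2\phi^*(p)$ for a well-defined sign $\epsilon_i\in\{+1,-1\}$. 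Writing $\omega_i=R_{\psi_i}\omega_0$, the cyclic closure $\omega_{m'}=\omega_0$ is then equivalent to $\bigl(\sum_{i=1}^{m'}\epsilon_i\bigr)\cdot 2\phi^*(p)\in 2\pi\Z$, and conversely any sign choice satisfying this produces an admissible tuple. Therefore $\Omega^*(m',p)\neq\emptyset$ if and only if there is an integer $S=\sum_{i=1}^{m'}\epsilon_i$ with $S\cdot 2\phi^*(p)\in 2\pi\Z$; note that $S$ ranges exactly over the integers of the same parity as $m'$ in $[-m',m']$, hence over the \emph{odd} integers in $[-m',m']$ because $m'$ is odd.

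Next I substitute $\phi^*(p_{m,n})=\tfrac{n}{m}\pi$ from \eqref{eq:def-p_mn}, so that the closure condition becomes $S\cdot\tfrac{2n}{m}\pi\in 2\pi\Z$, i.e.\ $m\mid nS$. For the first assertion, assume $m'\geq m$ and take $S=m$: it is odd, satisfies $|S|=m\leq m'$, and trivially $m\mid nm$. Concretely $S=m$ is realized by choosing $k=(m+m')/2$ of the signs equal to $+1$, which is an integer since $m$ and $m'$ are both odd, and lies in $\{0,\dots,m'\}$. Hence $\Omega^*(m',p_{m,n})\neq\emptyset$, and Lemma~\ref{lem:character} yields a closed $m'$-leafed $p_{m,n}$-elastica.

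For the refined statement with $n=1$ I then treat the converse. Here the condition is $m\mid S$. If a closed $m'$-leafed $p_m$-elastica exists, then some admissible $S$ is an odd integer with $|S|\leq m'$ and $m\mid S$; since $S$ is odd it is nonzero, so divisibility by $m$ forces $|S|\geq m$, whence $m'\geq m$. Combined with the first assertion this gives the equivalence. The only real subtlety throughout is the parity constraint: because $m'$ is odd, the angle-sum $S$ must be odd, which excludes the trivial closure $S=0$ corresponding to the even-multiplicity figure-eight of Lemma~\ref{lem:even-leafed}, and this is exactly what makes $m'\geq m$ necessary when $n=1$. I expect this parity bookkeeping, together with the clean identification $\Omega^*(m',p_{m,n})\neq\emptyset\iff m\mid nS$, to be the heart of the argument; the remaining steps are immediate.
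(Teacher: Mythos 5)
Your proposal is correct and follows essentially the same route as the paper: both reduce via Lemma~\ref{lem:character} to the combinatorics of signed rotations by $2\phi^*(p_{m,n})=\tfrac{2n}{m}\pi$, realize the sign-sum $S=m$ for existence, and use the parity of $m'$ (forcing $S$ odd, hence $S\neq 0$, hence $|S|\geq m$ when $m\mid S$) for the nonexistence direction at $n=1$. Your packaging of the whole question as the single divisibility criterion ``there is an odd $S$ with $|S|\leq m'$ and $m\mid nS$'' is a slightly cleaner formulation of the same argument the paper carries out by explicit construction and contradiction.
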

\begin{proof}
First, we show existence of a closed $m$-leafed $p_{m,n}$-elastica. 
For simplicity, hereafter we write $\phi^*:=\phi^*(p_{m,n})=\frac{n}{m}\pi$, cf.\ \eqref{eq:def-p_mn}. 
Fix any $\omega_1 \in \mathbf{S}^1$, and inductively define $\omega_i:=R_{2\phi^*}\omega_{i-1}$ for $i=2, \ldots, m$.
Then $\omega_m=R_{2(m-1)\phi^*}\omega_1=R_{2n\pi-2\phi^*}\omega_1=R_{-2\phi^*}\omega_1$ and hence the $m$-tuple $(\omega_1, \ldots, \omega_m)$ is an element of $\Omega^*(m,p_m)$.
Therefore, Lemma~\ref{lem:character} ensures existence of a closed $m$-leafed $p_m$-elastica (corresponding to the case of $m'=m$).

Next let $m'$ be an odd integer with $m'> m$. 
Fix any $\omega_1 \in \mathbf{S}^1$, and inductively define $\omega_i:=R_{2\phi^*}\omega_{i-1}$ for $i=2, \ldots, m$.
For $m<i\leq m'$, we define $\omega_i:=\omega_1$ if $i$ is even and $\omega_i:=R_{2\phi^*}\omega_1$ if $i$ is odd.
Then we see that $(\omega_1, \ldots, \omega_{m'}) \in \Omega^*(m',p_{m,n})$, and hence thanks to Lemma~\ref{lem:character} there exists a closed $m'$-leafed $p_{m,n}$-elastica.

In the rest of the proof, provided that $n=1$ so that $p_{m,1}=p_m$, we show nonexistence of closed $m'$-leafed $p_{m}$-elasticae if $m'<m$.
We argue by contradiction. 
Suppose that there exists a closed $m'$-leafed $p_m$-elastica for an odd integer $m'<m$.
Then, by Lemma~\ref{lem:character}, there is an $m'$-tuple $(\omega_1, \ldots, \omega_{m'}) \in \Omega^*(m',p_m)$ of  unit-vectors $\omega_1, \ldots, \omega_{m'}\in \mathbf{S}^1$.
Here we note that $\langle\omega, \omega'\rangle=\cos{2\phi^*}$ holds for $\omega, \omega'\in \mathbf{S}^1$ if and only if $\omega=R_{2\phi^*}\omega'$ or $\omega=R_{-2\phi^*}\omega'$.
From this fact there is an $m'$-tuple of rotation matrices $A_1, \cdots, A_{m'}$ such that $A_i$ is either $R_{2\phi^*}$ or $R_{-2\phi^*}$ and $A_1\cdots A_{m'}=I$, where $I$ denotes the identity matrices.
In particular, there is a sequence $\sigma_1, \ldots,\sigma_{m'} \in \{-1,1\}$ such that $\sum_{i=1}^{m'}\sigma_i2\phi^*\in2\pi\Z$.
However, we infer from \eqref{eq:def-p_m} and $m'<m$ that $|\sum_{i=1}^{m'}\sigma_i2\phi^*|\leq 2m'\phi^*<2m\phi^*=2\pi$, and also infer from  oddness of $m'$ that $|\sum_{i=1}^{m'}\sigma_i2\phi^*|>2\phi^*>0$.
This contradicts $\sum_{i=1}^{m'}\sigma_i2\phi^*\in2\pi\Z$, and the proof is complete.
\end{proof}

In general, for a given integer $m$, there may be multiple exponents $p$ that admit $m$-leafed $p$-elasticae.
For example, there exist $5$-leafed $p$-elasticae if $p$ is $p_3$ (cf.\ Remark~\ref{exam:3-leafed}), $p_5$, or $p_{5,2}$, see Figure~\ref{fig:5-leaf}. 
 \begin{figure}[htbp]
      \includegraphics[scale=0.2]{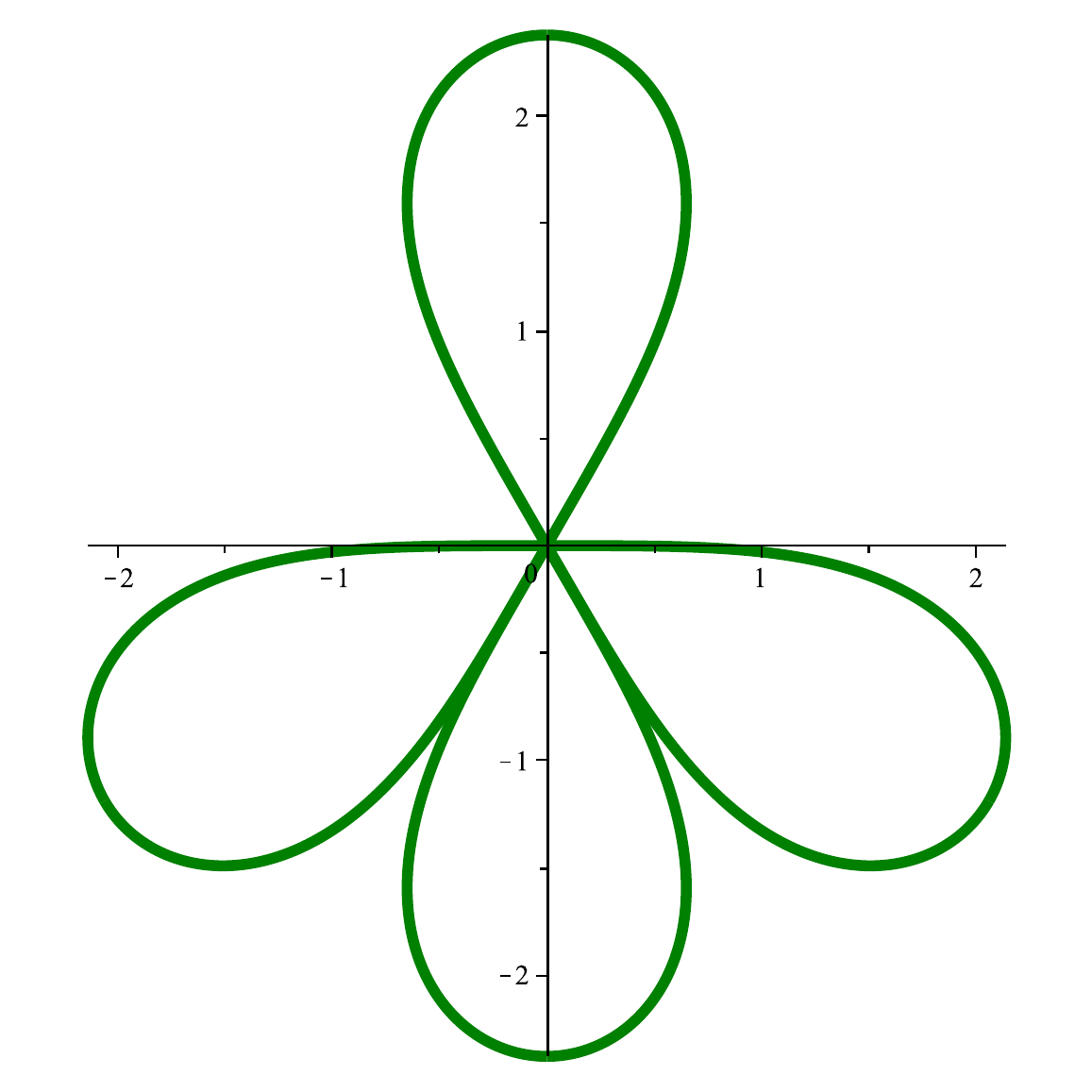}
    \hspace{5pt}
      \includegraphics[scale=0.2]{5-leafed.pdf}
      \hspace{5pt}
      \includegraphics[scale=0.2]{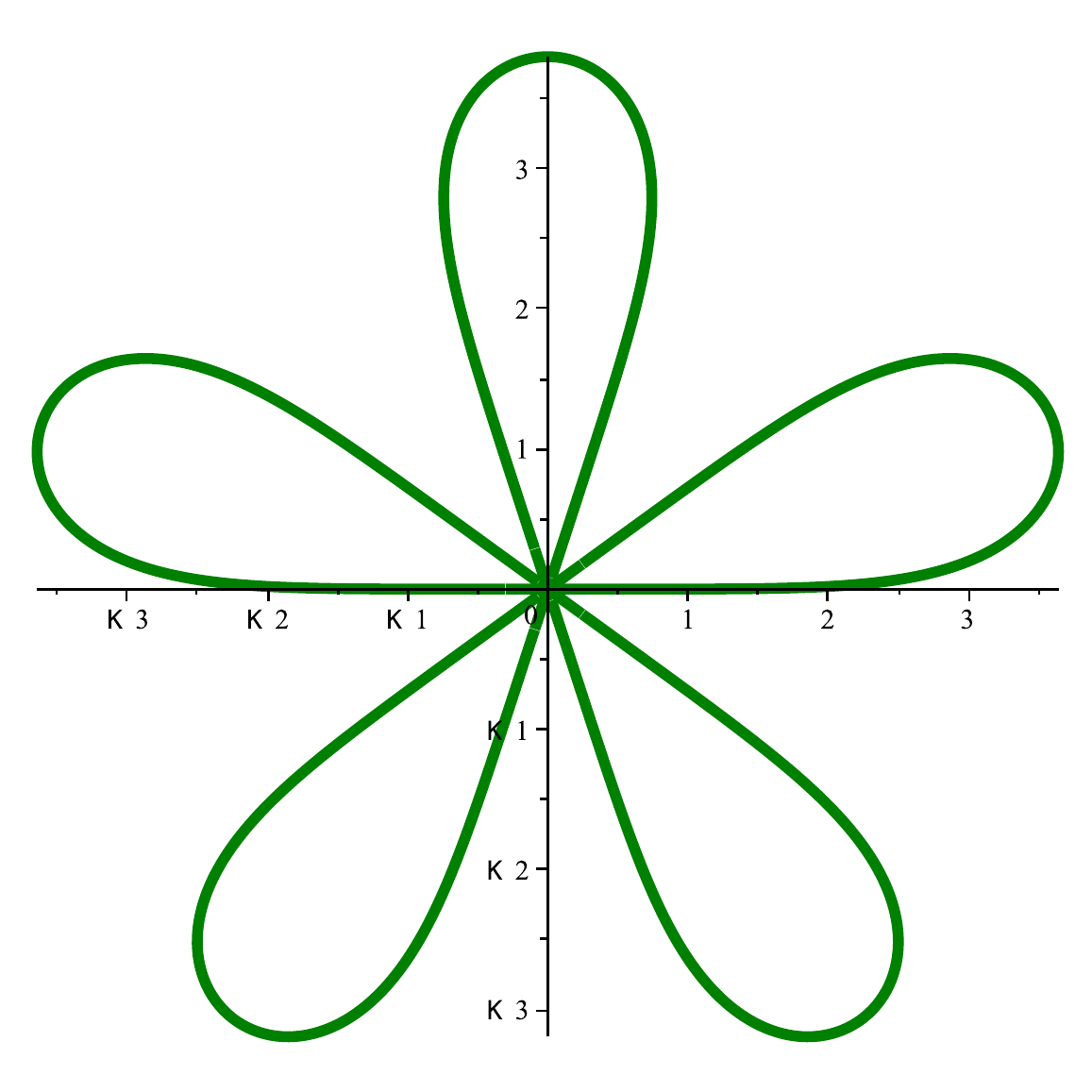}
  \caption{$5$-leafed figure-eight $p$-elasticae with $p=p_3$ (left), $p=p_5$ (middle), and $p=p_{5,2}$ (right).
  In the left curve the top leaf is covered twice.}
  \label{fig:5-leaf}
  \end{figure}
  
However, the exponent admitting a $3$-leafed $p$-elastica is uniquely determined:
\begin{proposition}\label{prop:3-leafed}
Let $p\in(1,\infty)$. 
There exists a closed $3$-leafed $p$-elastica if and only if $p=p_3$, where $p_3:=(\phi^*)^{-1}(\frac{\pi}{3})$. 
\end{proposition}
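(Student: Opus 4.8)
The plan is to reduce the statement, via the characterization in Lemma~\ref{lem:character}, to a purely arithmetic condition on the angle $2\phi^*(p)$, and then invoke the bijectivity of $\phi^*$ furnished by Theorem~\ref{thm:phi*-decrease}. Indeed, by Lemma~\ref{lem:character} a closed $3$-leafed $p$-elastica exists if and only if the set $\Omega^*(3,p)$ is nonempty, so everything comes down to deciding for which $p$ one can find three unit vectors $\omega_1,\omega_2,\omega_3\in\mathbf{S}^1$ with $\langle\omega_i,\omega_{i-1}\rangle=\cos 2\phi^*(p)$ on all three cyclic pairs (with $\omega_0=\omega_3$).

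For sufficiency I would exhibit such a triple when $p=p_3$. Since $\phi^*(p_3)=\tfrac{\pi}{3}$ by \eqref{eq:def-p3}, we have $2\phi^*(p_3)=\tfrac{2\pi}{3}$, so for any fixed $\omega_1\in\mathbf{S}^1$ the vectors $\omega_i:=R_{(i-1)2\pi/3}\omega_1$ satisfy $\langle\omega_i,\omega_{i-1}\rangle=\cos\tfrac{2\pi}{3}=\cos 2\phi^*(p_3)$ for every cyclic pair; hence $\Omega^*(3,p_3)\neq\emptyset$ and Lemma~\ref{lem:character} yields the curve. This is exactly the construction in Example~\ref{exam:3-leafed}.

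For necessity, suppose a closed $3$-leafed $p$-elastica exists, so there is $(\omega_1,\omega_2,\omega_3)\in\Omega^*(3,p)$. Using the elementary fact (already employed in the proof of Proposition~\ref{prop:odd-leafed}) that $\langle\omega,\omega'\rangle=\cos 2\phi^*$ holds for $\omega,\omega'\in\mathbf{S}^1$ if and only if $\omega=R_{\pm 2\phi^*}\omega'$, I would record signs $\sigma_1,\sigma_2,\sigma_3\in\{-1,1\}$ with $\omega_i=R_{\sigma_i 2\phi^*}\omega_{i-1}$ for $i=1,2,3$ cyclically. Composing the three rotations around the cycle forces $R_{(\sigma_1+\sigma_2+\sigma_3)2\phi^*}$ to fix the nonzero vector $\omega_3$, hence to be the identity, i.e. $(\sigma_1+\sigma_2+\sigma_3)\,2\phi^*\in 2\pi\Z$. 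Now $\phi^*(p)\in(0,\tfrac{\pi}{2})$ by Proposition~\ref{prop:phi*-limit}(i), so $2\phi^*\in(0,\pi)$, while $\sigma_1+\sigma_2+\sigma_3\in\{-3,-1,1,3\}$. The values $\pm1$ give $|{\cdot}|\,2\phi^*=2\phi^*\in(0,\pi)$, which is nonzero yet contains no multiple of $2\pi$, so they are excluded; the values $\pm3$ give $6\phi^*\in(0,3\pi)$, whose only positive multiple of $2\pi$ is $2\pi$ itself, forcing $6\phi^*=2\pi$, that is $\phi^*(p)=\tfrac{\pi}{3}$. Finally the strict monotonicity, hence injectivity, of $\phi^*$ from Theorem~\ref{thm:phi*-decrease} gives $p=(\phi^*)^{-1}(\tfrac{\pi}{3})=p_3$.

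The computations here are routine; the only step requiring a little care is the sign bookkeeping in the cyclic composition and the accompanying case analysis on $\sigma_1+\sigma_2+\sigma_3$, where one must use the constraint $2\phi^*\in(0,\pi)$ to rule out every possibility except $\phi^*=\tfrac{\pi}{3}$. This is the crux of the argument, but it is entirely elementary once the reduction to the arithmetic condition via Lemma~\ref{lem:character} is in place.
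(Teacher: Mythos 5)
Your proof is correct and follows essentially the same route as the paper: reduce to the nonemptiness of $\Omega^*(3,p)$ via Lemma~\ref{lem:character}, use $2\phi^*(p)\in(0,\pi)$ to force $\phi^*(p)=\pi/3$, and conclude by the injectivity of $\phi^*$ from Theorem~\ref{thm:phi*-decrease}. The only difference is cosmetic: the paper asserts the step ``three unit vectors pairwise at angle $2\phi^*\in(0,\pi)$ force $\phi^*=\pi/3$'' without detail (the rotation-composition bookkeeping appears there only in the proof of Proposition~\ref{prop:odd-leafed}), whereas you spell it out explicitly.
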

\begin{proof}
Existence of closed $3$-leafed $p_3$-elasticae follows from Proposition~\ref{prop:odd-leafed}.
Fix any $p\in(1,\infty)$ and assume that a closed $3$-leafed $p$-elastica exists. 
Then, by Lemma~\ref{lem:character}, there are $\omega_1, \omega_2, \omega_3 \in \mathbf{S}^1$ such that $\langle \omega_1, \omega_2 \rangle=\langle \omega_2, \omega_3 \rangle = \langle \omega_3, \omega_1 \rangle= \cos 2\phi^*(p)$. 
Since $0<2\phi^*(p)<\pi$ holds by Proposition~\ref{prop:phi*-limit}, $\phi^*(p)$ must be $\pi/3$. 
This together with Theorem~\ref{thm:phi*-decrease} implies that $p=p_3$.
The proof is complete.
\end{proof}

\subsection{Optimality for odd multiplicity}\label{subsect:optimal}

Now we translate the phenomena observed above into optimality of our Li--Yau type multiplicity inequality.

\begin{theorem}[Optimality for $p_{m,n}$]\label{thm:LiYau-optimal-odd}
Let $S\subset(1,\infty)$ be the set of all exponents $p_{m,n}\in(1,\infty)$ with integers $m$ and $n$ such that $m\geq3$ is odd and $0<n<m/2$, cf.\ \eqref{eq:def-p_mn}.
Then $S$ is dense in $(1,\infty)$. 
In addition, for any $p=p_{m,n}\in S$, and for any odd integer $m'\geq m$, there exists a curve $\gamma \in W^{2,p}_{\rm imm}(\mathbf{T}^1;\R^2)$ with a point of multiplicity $m'$ such that $\overline{\mathcal{B}}_{p}[\gamma] = \varpi^*_p (m')^{p}$.
\end{theorem}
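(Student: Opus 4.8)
The plan is to prove the two assertions separately. The optimality statement is an essentially immediate consequence of the structural results already in hand, whereas the density of $S$ reduces, via the homeomorphism $(\phi^*)^{-1}$, to an elementary approximation by fractions with odd denominators.

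For the optimality part, fix $p=p_{m,n}\in S$ and an odd integer $m'\geq m$. Since $m\geq3$ is odd and $1\leq n<m/2$, Proposition~\ref{prop:odd-leafed} provides a closed $m'$-leafed $p_{m,n}$-elastica $\gamma\in W^{2,p}_{\rm imm}(\mathbf{T}^1;\R^2)$. By Proposition~\ref{prop:leaf-energy} this $\gamma$ has a point of multiplicity $m'$ and satisfies $\overline{\mathcal{B}}_p[\gamma]=\varpi^*_p(m')^p$; equivalently, one may invoke the equality case of Theorem~\ref{thm:LiYau-Bp}. This settles the second assertion.

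For the density, recall from Theorem~\ref{thm:phi*-decrease} that $\phi^*\colon(1,\infty)\to(0,\pi/2)$ is a continuous, strictly decreasing bijection, so its inverse $(\phi^*)^{-1}\colon(0,\pi/2)\to(1,\infty)$ is again a homeomorphism. As $p_{m,n}=(\phi^*)^{-1}(\tfrac{n}{m}\pi)$ by \eqref{eq:def-p_mn}, and homeomorphisms carry dense sets to dense sets, it suffices to show that
\[
A:=\Set{ \tfrac{n}{m} | m\geq 3 \ \text{odd},\ 0<n<m/2 }
\]
is dense in $(0,\tfrac12)$, since then $\{r\pi:r\in A\}$ is dense in $(0,\pi/2)$ and its image under $(\phi^*)^{-1}$ is dense in $(1,\infty)$. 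To prove this, fix $x\in(0,\tfrac12)$ and $\ve>0$, choose an odd integer $m\geq3$ with $m>\max\{1/x,\,1/\ve\}$, and set $n:=\lfloor mx\rfloor$. Then $mx>1$ gives $n\geq1$, while $n\leq mx<m/2$ gives $n<m/2$, so $\tfrac{n}{m}\in A$; moreover $0\leq x-\tfrac{n}{m}<\tfrac{1}{m}<\ve$. This proves the density of $A$, hence of $S$.

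The argument contains no genuine obstacle, all of the analytic substance having been established earlier: the existence of odd-multiplicity closed leafed elasticae at the exponents $p_{m,n}$ in Proposition~\ref{prop:odd-leafed}, together with the monotonicity and limiting behaviour of $\phi^*$ in Theorem~\ref{thm:phi*-decrease}. The only point requiring minor care is that the approximating fractions be simultaneously chosen with odd denominator and subject to the strict constraint $0<n<m/2$, which is exactly guaranteed by taking $n=\lfloor mx\rfloor$ with $m$ odd.
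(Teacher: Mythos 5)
Your proposal is correct and follows essentially the same route as the paper: density of $S$ is obtained by pulling back the dense set of angles $\tfrac{n}{m}\pi$ through the homeomorphism $(\phi^*)^{-1}$ from Theorem~\ref{thm:phi*-decrease}, and optimality is reduced via Theorem~\ref{thm:LiYau-Bp} (or Proposition~\ref{prop:leaf-energy}) to the existence of closed $m'$-leafed $p_{m,n}$-elasticae supplied by Proposition~\ref{prop:odd-leafed}. The only difference is that you spell out the elementary approximation by fractions with odd denominators, which the paper leaves as a remark.
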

\begin{proof}
Note that the set $R\subset(0,\frac{\pi}{2})$ of all angles $\frac{n}{m}\pi\in(0, \tfrac{\pi}{2})$ with $m,n$ under consideration is dense in $(0, \frac{\pi}{2})$.
Since $S$ is the preimage of $R$ under the continuous and bijective map $\phi^*$ (cf.\ Theorem~\ref{thm:phi*-decrease}), the set $S$ is also dense in $(1,\infty)$.

For optimality, in view of Theorem~\ref{thm:LiYau-Bp}, it suffices to check existence of $m'$-leafed $p_{m,n}$-elasticae,
and this follows from Proposition~\ref{prop:odd-leafed}.
\end{proof}

In particular, if $p=p_m$, then we can obtain a complete classification of multiplicity which ensures optimality.

\begin{theorem}[Thresholding phenomenon]\label{thm:thresholding}
Let $m\geq3$ be an odd integer and $p_m\in(1,\infty)$ be defined by \eqref{eq:def-p_m}.
Let $m'\geq2$ be an integer. 
Then there exists a curve $\gamma\in W^{2,p_m}_{\rm imm}(\mathbf{T}^1;\R^2)$ with a point of multiplicity $m'$ such that $\overline{\mathcal{B}}_{p_m}[\gamma] = \varpi^*_{p_m} (m')^{p_m}$ if and only if $m'$ is even or $m'\geq m$.
\end{theorem}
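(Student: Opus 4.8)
The plan is to reduce the assertion to a purely geometric existence question about closed $m'$-leafed $p_m$-elasticae, which the earlier results already resolve, and then to split the analysis according to the parity of $m'$.

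First I would invoke the rigidity part of Theorem~\ref{thm:LiYau-Bp}: a closed curve $\gamma \in W^{2,p_m}_{\rm imm}(\mathbf{T}^1;\R^2)$ with a point of multiplicity $m'$ satisfies the equality $\overline{\mathcal{B}}_{p_m}[\gamma] = \varpi^*_{p_m}(m')^{p_m}$ if and only if $\gamma$ is a closed $m'$-leafed $p_m$-elastica. Consequently, the existence of an \emph{optimal} curve with a point of multiplicity $m'$ is equivalent to the existence of a closed $m'$-leafed $p_m$-elastica, and it remains only to characterize those $m'$ for which such a leafed elastica exists.

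For even $m'$, Lemma~\ref{lem:even-leafed} immediately supplies a closed $m'$-leafed $p_m$-elastica, namely an $\tfrac{m'}{2}$-fold figure-eight $p_m$-elastica; hence optimality always holds when $m'$ is even. For odd $m'$, I would apply Proposition~\ref{prop:odd-leafed} in the case $n=1$, for which $p_{m,1}=p_m$: it asserts that a closed $m'$-leafed $p_m$-elastica exists if and only if $m' \geq m$. Combining the two cases yields exactly the claimed dichotomy, namely that an optimal curve of multiplicity $m'$ exists precisely when $m'$ is even or $m' \geq m$.

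Since every required ingredient is already established, there is no genuine obstacle at this stage; the only substantial input is the \emph{nonexistence} of closed $m'$-leafed $p_m$-elasticae for odd $m' < m$, which is the delicate half of Proposition~\ref{prop:odd-leafed} and rests on the identity $2\phi^*(p_m)=\tfrac{2\pi}{m}$ together with the rotation-angle counting argument showing that a signed sum $\sum_{i=1}^{m'}\sigma_i 2\phi^*$ cannot lie in $2\pi\Z$ when $0 < m' < m$ and $m'$ is odd.
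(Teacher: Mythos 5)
Your proposal is correct and follows essentially the same route as the paper: reduce optimality to existence of a closed $m'$-leafed $p_m$-elastica via the equality characterization in Theorem~\ref{thm:LiYau-Bp}, then settle even $m'$ by Lemma~\ref{lem:even-leafed} and odd $m'$ by the $n=1$ case of Proposition~\ref{prop:odd-leafed}. The only cosmetic remark is that the reduction uses both directions of the equality characterization (rigidity plus the fact from Proposition~\ref{prop:leaf-energy} that a closed $m'$-leafed $p$-elastica indeed attains equality), not just the rigidity half, but since Theorem~\ref{thm:LiYau-Bp} states the full ``if and only if'' this is covered.
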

\begin{proof}
The existence of closed $m'$-leafed $p_m$-elasticae for even $m'$ and for odd $m'\geq m$ follows from Lemma~\ref{lem:even-leafed} and Proposition~\ref{prop:odd-leafed}, respectively, while Proposition~\ref{prop:odd-leafed} also implies that there is no closed $m'$-leafed $p_m$-elastica if $m'<m$ and $m'$ is odd.
This fact with Theorem~\ref{thm:LiYau-Bp} completes the proof.
\end{proof}

We close this section by completing the proof of Theorem~\ref{thm:LiYau-full-optimal}, i.e., showing that an exponent which can recover optimality for \emph{every} multiplicity is unique.

\begin{proof}[Proof of Theorem~\ref{thm:LiYau-full-optimal}]
If an integer $m\geq2$ is even (resp.\ odd), then by Lemma~\ref{lem:even-leafed} (resp.\ Proposition~\ref{prop:odd-leafed}) there exists a closed $m$-leafed $p_3$-elastica. 
Hence the assertion holds for $p=p_3$.
We turn to uniqueness.
Assume that an exponent $p\in(1,\infty)$ satisfies the assertion of the theorem.
Then, by the rigidity part of Theorem~\ref{thm:LiYau-Bp}, there exists a closed $m$-leafed $p$-elastica for any odd $m\geq3$, in particular for $m=3$.
Therefore, Proposition~\ref{prop:3-leafed} implies $p=p_3$. 
\end{proof}

\begin{proof}[Proof of Corollary~\ref{cor:inf-network-sym}]
This immediately follows from Theorem~\ref{thm:LiYau-full-optimal} with the choice of $\alpha=\frac{2}{3}\pi$, combined with Theorem~\ref{thm:phi*-decrease} and the fact that $\phi^*(p_3)=\frac{\pi}{3}$.
\end{proof}

\section{Existence of minimal $p$-elastic networks}\label{sect:network}

In this final section we prove Theorem~\ref{thm:inf-network-alpha}.
Note that, up to rescaling, our problem \eqref{eq:inf-network} is equivalent to minimizing $\mathcal{B}_p+\mathcal{L}$ (as is done in \cite{DNP20,Miura_LiYau}). 
Indeed, for $\Lambda>0$ and a curve $\gamma$, if we set $\gamma_{\Lambda}(t):=\Lambda\gamma(t)$, then $\mathcal{L}[\gamma_{\Lambda}]=\Lambda \mathcal{L}[\gamma]$ and $\mathcal{B}_p[\gamma_\Lambda]=\Lambda^{1-p}\mathcal{B}_p[\gamma]$.
Hence, with the choice of $\Lambda=\mathcal{B}_p[\gamma]^{\frac{1}{p}}\mathcal{L}[\gamma]^{-\frac{1}{p}}$, we find that
\begin{align}\label{eq:funct-equivalent}
    \mathcal{B}_p[\gamma_\Lambda]+\mathcal{L}[\gamma_\Lambda] = 2\left(\mathcal{L}[\gamma]^{p-1}\mathcal{B}_p[\gamma] \right)^{\frac{1}{p}} = 2 \overline{\mathcal{B}}_p[\gamma]^{\frac{1}{p}}.
\end{align}
Clearly, the same is true for $\gamma$ replaced by a network $\Gamma\in \Theta(p,\alpha)$, and the set $\Theta(p,\alpha)$ is closed under rescaling, so that the desired equivalency holds.

The main concern here is the lack of compactness of the set $\Theta(p,\alpha)$ (cf.\ \cite{DNP20, DPP21, Miura_LiYau} for the case $p=2$). 
More precisely, the length of one component-curve of a minimizing sequence may vanish; in other words, a family of networks composed by three curves may converge to ``two-component network'' (cf.\ Lemma~\ref{lem:dichotomy}).
In order to prevent such a phenomenon, in this paper, we construct a certain  $\Theta$-network whose energy is less than the minimal energy among the class of ``two-component network''.
The assumption \eqref{eq:angle-network} is needed to construct such a $\Theta$-network which consists of exactly three curves (cf.\ Lemma~\ref{lem:test-network}).

We will use Fenchel's theorem for piecewise $W^{2,1}$ closed curves obtained in \cite[Theorem A.1]{DNP20} or \cite[Lemma 5.2]{Miura_LiYau}.
\begin{lemma}[{\cite[Theorem A.1]{DNP20}, \cite[Lemma 5.2]{Miura_LiYau}}] \label{lem:M_LY-Lem5.2}
Let $\gamma_1, \ldots, \gamma_N \in W^{2,1}(0,1;\R^2)$ be immersed curves such that $\gamma_j(1)=\gamma_{j+1}(0)=:V_j$ for $j=1,\ldots, N$, where we interpret $\gamma_{N+1}=\gamma_1$. 
For all $j=1,\ldots, N$, let $\theta_j \in [0,\pi]$ denote the external angle at the vertex $V_j$, i.e., 
\[
\cos{\theta_j}=\left\langle \frac{\gamma_j'(1)}{|\gamma_j'(1)|}, \frac{\gamma_{j+1}'(0)}{|\gamma_{j+1}'(0)|} \right\rangle. 
\]
Then 
\[
\sum_{j=1}^N TC[\gamma_j]:=\sum_{j=1}^N \int_{\gamma_j}|k|\,ds \geq 2\pi-\sum_{j=1}^N\theta_j.
\]
\end{lemma}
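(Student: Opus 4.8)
The plan is to prove the inequality by the classical \emph{tangent indicatrix} argument for Fenchel-type theorems, reducing the statement to a length bound for closed curves on the unit circle $\mathbf{S}^1$. First I would reparametrize each immersed $\gamma_j$ by arclength on $[0,\ell_j]$, where $\ell_j=\mathcal{L}[\gamma_j]$; since $\gamma_j\in W^{2,1}$ is immersed, its unit tangent $T_j:=\gamma_j'/|\gamma_j'|$ lies in $W^{1,1}(0,\ell_j;\mathbf{S}^1)$ and satisfies $|T_j'|=|k|$ almost everywhere. Consequently $TC[\gamma_j]=\int_{\gamma_j}|k|\,ds$ equals the length of $T_j$ regarded as a curve in $\mathbf{S}^1$. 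This identification is the key to converting a statement about curvature into a statement about the length of an auxiliary curve on the circle.

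Next I would assemble these indicatrices into a single closed curve $\Gamma$ in $\mathbf{S}^1$: traverse $T_1$, then the shortest geodesic arc from $T_1(\ell_1)$ to $T_2(0)$, then $T_2$, and so on, finally closing up by the shortest geodesic arc from $T_N(\ell_N)$ back to $T_1(0)$. Because the geodesic distance on $\mathbf{S}^1$ between two unit vectors is exactly the angle in $[0,\pi]$ determined by their inner product, and $\cos\theta_j=\langle T_j(\ell_j),T_{j+1}(0)\rangle$ with $\theta_j\in[0,\pi]$, the $j$-th connecting arc has length precisely $\theta_j$. Hence the total length of $\Gamma$ is
\[
\Lambda:=\sum_{j=1}^N TC[\gamma_j]+\sum_{j=1}^N\theta_j,
\]
and it remains only to show $\Lambda\geq2\pi$.

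The geometric content enters in two steps. First I would show that the image of $\Gamma$ is not contained in any open half-circle. Arguing by contradiction, suppose there were $v\in\mathbf{S}^1$ with $\langle T_j(s),v\rangle>0$ for all $j$ and almost every $s$; writing the concatenated closed curve as $\gamma$ in arclength, the scalar function $g(s):=\langle\gamma(s),v\rangle$ is continuous across the vertices and has $g'(s)=\langle T_j(s),v\rangle>0$ on each piece, so $g$ is strictly increasing, contradicting $g(\mathrm{start})=g(\mathrm{end})$, which holds because the concatenation closes up ($\gamma_1(0)=\gamma_N(\ell_N)$). Second I would invoke the half-circle lemma: any closed rectifiable curve in $\mathbf{S}^1$ whose image is not contained in an open half-circle has length at least $2\pi$. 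Combined with the displayed length identity, this yields $\Lambda\geq2\pi$, i.e.\ the desired $\sum_j TC[\gamma_j]\geq2\pi-\sum_j\theta_j$.

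To keep the argument self-contained I would also prove the half-circle lemma by lifting $\Gamma$ through the universal cover $\R\to\mathbf{S}^1\cong\R/2\pi\Z$ to an absolutely continuous $\tilde\Gamma$: if the winding number $w$ is nonzero the length dominates $|\tilde\Gamma(\Lambda)-\tilde\Gamma(0)|=2\pi|w|\geq2\pi$, while if $w=0$ then $\tilde\Gamma$ is a loop in $\R$ whose oscillation $M-m$ (with $M=\max\tilde\Gamma$, $m=\min\tilde\Gamma$) must satisfy $M-m\geq\pi$, for otherwise the image would project into an open half-circle; a loop attaining both $M$ and $m$ has total variation at least $2(M-m)\geq2\pi$. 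I expect the main obstacle to be precisely this lemma, where the global topological content resides, together with the care needed to phrase everything through total variation and absolutely continuous lifts rather than smooth integrals, as dictated by the mere $W^{2,1}$ regularity of the pieces.
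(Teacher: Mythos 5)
Your proposal is correct. Note that the paper does not prove this lemma itself but quotes it from \cite{DNP20} and \cite{Miura_LiYau}; your argument --- identifying $TC[\gamma_j]$ with the length of the tangent indicatrix in $\mathbf{S}^1$, inserting geodesic arcs of length $\theta_j$ at the vertices, showing the resulting closed spherical curve cannot lie in an open half-circle (via monotonicity of $s\mapsto\langle\gamma(s),v\rangle$), and concluding length $\geq 2\pi$ --- is precisely the standard Fenchel-type argument used in those references. Your proof of the half-circle length bound by lifting to the universal cover $\R\to\R/2\pi\Z$ and estimating total variation by $2(M-m)$ is a correct, planar-specific shortcut; the cited general-codimension version replaces it with a hemisphere argument on $\mathbf{S}^{n-1}$, but for the present planar statement your more elementary route is fully adequate, and all the regularity points ($T_j\in W^{1,1}=AC$ since $|\gamma_j'|$ is bounded below, so $|T_j'|=|k|$ a.e.) are handled properly.
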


Using this lemma, we prove the following key dichotomy result as in \cite{DNP20, Miura_LiYau}.

\begin{lemma}\label{lem:dichotomy}
Let $\{\Gamma_j\} = \{ (\gamma_{1,j}, \gamma_{2,j}, \gamma_{3,j}) \} \subset \Theta(p,\alpha)$ be a sequence such that $\sup_{j}
\overline{\mathcal{B}}_p[\Gamma_j] < \infty$. 
Then, up to reparameterization, translation, dilation, and taking a subsequence (all without relabeling), either of the following two assertions holds: 
\begin{itemize}
\item[(i)] There exists a $\Theta$-network $\Gamma=(\gamma_1, \gamma_2, \gamma_3)\in \Theta(p,\alpha)$ such that for each $i=1,2,3$, the sequence $\{\gamma_{i, j}\}_j$ converges to $\gamma_i$ as $j\to\infty$ in the $W^{2,p}$-weak and $C^1$ topology. 
In particular, $\liminf_{j\to \infty} \overline{\mathcal{B}}_p[\Gamma_j] \geq \overline{\mathcal{B}}_p[\Gamma]$.
\item[(ii)]  Up to permutations of the index $i$, $\mathcal{L}[\gamma_{3,j}]\to 0$ as $j\to\infty$. 
In addition, there are $\gamma_1, \gamma_2 \in W^{2,p}_{\rm imm}(0,1; \R^2)$ such that $\gamma_1(0)=\gamma_1(1)=\gamma_2(0)=\gamma_2(1)$ and such that the sequence $\{\gamma_{1,j}\}_j$ (resp.\ $\{\gamma_{2,j}\}_j$) converges to $\gamma_1$ (resp.\ $\gamma_2$) as $j\to \infty$ in the $W^{2,p}$-weak and $C^1$ topology.
In particular, 
\begin{align}\label{eq:degenerate-lsc}
    \liminf_{j\to\infty}\left(\sum_{i=1}^2\mathcal{L}[\gamma_{i,j}]\right)^{p-1}\left(\sum_{i=1}^2\mathcal{B}_p[\gamma_{i,j}]\right) 
    \geq \left(\sum_{i=1}^2\mathcal{L}[\gamma_{i}]\right)^{p-1}\left(\sum_{i=1}^2\mathcal{B}_p[\gamma_{i}]\right)  .
\end{align}
\end{itemize}
\end{lemma}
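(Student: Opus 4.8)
The plan is to combine a scaling normalization with standard $W^{2,p}$-weak and $C^1$ compactness, using Fenchel's theorem (Lemma~\ref{lem:M_LY-Lem5.2}) to exclude pathological degenerations. Since $\overline{\mathcal{B}}_p$ is scale-invariant (cf.\ \eqref{eq:funct-equivalent}) and $\Theta(p,\alpha)$ is closed under dilation, I would first rescale each $\Gamma_j$ so that $\mathcal{L}[\Gamma_j]=1$, translate so that the common initial junction $\gamma_{i,j}(0)$ is the origin, and reparameterize each component by constant speed on $[0,1]$, giving $|\gamma_{i,j}'|\equiv\ell_{i,j}:=\mathcal{L}[\gamma_{i,j}]$ with $\sum_i\ell_{i,j}=1$. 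Passing to a subsequence, $\ell_{i,j}\to\ell_i\ge0$, and after relabeling I may assume $\ell_1\ge\ell_2\ge\ell_3$, so $\ell_1\ge1/3>0$. The dichotomy (i)/(ii) will then be exactly the alternative $\ell_3>0$ versus $\ell_3=0$.

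For the compactness step, $\sup_j\overline{\mathcal{B}}_p[\Gamma_j]<\infty$ with $\mathcal{L}[\Gamma_j]=1$ yields $\sum_i\mathcal{B}_p[\gamma_{i,j}]\le C$, and, exactly as in the computation \eqref{eq:normalized-B_p}, this bounds $\|\gamma_{i,j}''\|_{L^p}$ uniformly because the speeds $\ell_{i,j}\le1$ are bounded; together with $|\gamma_{i,j}'|\equiv\ell_{i,j}\le1$ and the fixed basepoint, each $\{\gamma_{i,j}\}_j$ is bounded in $W^{2,p}(0,1;\R^2)$. Up to a further subsequence each converges weakly in $W^{2,p}$ and strongly in $C^1$ to a limit $\gamma_i$. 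The $C^1$ convergence preserves endpoint positions and unit tangents, so the defining angle relations of $\Theta(p,\alpha)$ pass to the limit on every surviving component, and weak lower semicontinuity of $\gamma\mapsto\|\gamma''\|_{L^p}^p$ gives the energy inequalities. When $\ell_3>0$ all three limits are immersed and assemble into an element $\Gamma\in\Theta(p,\alpha)$, which is assertion~(i).

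The degenerate case $\ell_3=0$ is the heart of the matter, and here I expect the main obstacle. The elementary H\"older bound
\begin{align*}
TC[\gamma_{i,j}]=\int_{\gamma_{i,j}}|k|\,ds\le\mathcal{B}_p[\gamma_{i,j}]^{1/p}\,\ell_{i,j}^{1-1/p}
\end{align*}
shows that any component with $\ell_{i,j}\to0$ has vanishing total curvature in the limit (since $p>1$ and $\mathcal{B}_p[\gamma_{i,j}]\le C$). The crucial observation is that \emph{at most one} component may degenerate: applying Fenchel's theorem (Lemma~\ref{lem:M_LY-Lem5.2}) to the closed curve obtained by concatenating any two components $\gamma_{a,j}$ and $\gamma_{b,j}$ (the latter reversed), the two corner external angles can be computed directly from the prescribed junction angles and are seen to sum to strictly less than $2\pi$ for every choice of pair (using $\alpha\in(0,\pi)$), so Fenchel yields a uniform lower bound $TC[\gamma_{a,j}]+TC[\gamma_{b,j}]\ge c(\alpha)>0$; two simultaneously degenerating components would force this sum to $0$, a contradiction. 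Hence exactly $\gamma_{3,j}$ degenerates, and $|\gamma_{3,j}(0)-\gamma_{3,j}(1)|\le\ell_{3,j}\to0$ forces the two junction points to coalesce, so the surviving limits satisfy $\gamma_1(0)=\gamma_1(1)=\gamma_2(0)=\gamma_2(1)$. Applying the same $W^{2,p}$-weak/$C^1$ compactness to $\{\gamma_{1,j}\}_j$ and $\{\gamma_{2,j}\}_j$ produces immersed limits $\gamma_1,\gamma_2$, and weak lower semicontinuity together with $\mathcal{L}[\gamma_{i,j}]\to\mathcal{L}[\gamma_i]$ gives \eqref{eq:degenerate-lsc}, establishing assertion~(ii). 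The indispensable ingredient is precisely the Fenchel lower bound ruling out multiple degenerations.
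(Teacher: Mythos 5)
Your proof is correct and follows essentially the same route as the paper's: constant-speed reparameterization plus the bound $\|\gamma_{i,j}''\|_{L^p}^p=\mathcal{L}[\gamma_{i,j}]^{2p-1}\mathcal{B}_p[\gamma_{i,j}]$ for $W^{2,p}$-weak/$C^1$ compactness, and Fenchel's theorem (Lemma~\ref{lem:M_LY-Lem5.2}) combined with the H\"older estimate $TC[\gamma_{i,j}]\le\mathcal{B}_p[\gamma_{i,j}]^{1/p}\mathcal{L}[\gamma_{i,j}]^{(p-1)/p}$ to rule out two simultaneously degenerating components. The only differences are cosmetic: you normalize $\mathcal{L}[\Gamma_j]=1$ rather than rescaling as in \eqref{eq:funct-equivalent}, and you phrase the non-degeneracy as vanishing total curvature contradicting the Fenchel bound rather than as a lower bound on $\max_{l}\mathcal{L}[\gamma_{l,j}]$ for each pair.
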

\begin{proof}
Throughout the proof we may suppose that, after translation, $\gamma_{i,j}(0)=0$, 
and also, after rescaling as in \eqref{eq:funct-equivalent} (i.e., replacing $\Gamma_j$ with $\mathcal{B}_p[\Gamma_j]^{\frac{1}{p}} \mathcal{L}[\Gamma_j]^{-\frac{1}{p}}  \Gamma_j$),
\begin{align}\label{eq:Gamma-LBestimate}
\sup_j \left(\mathcal{B}_p[\Gamma_j] + \mathcal{L}[\Gamma_j] \right) = 2\sup_j \overline{\mathcal{B}}_p[\Gamma_j]^{\frac{1}{p}} <\infty. 
\end{align}
In addition, we may suppose that after reparameterization, each curve $\gamma_{i,j}$ is of constant speed.
Furthermore, all of these transformations do not change $\overline{\mathcal{B}}_p$.
In what follows we adopt ideas from \cite{DNP20, Miura_LiYau}.

We first consider the case where $\inf_j \min_{i=1,2,3}\mathcal{L}[\gamma_{i,j}]>0$. 
Then, as discussed in \eqref{eq:normalized-B_p}, the assumption of constant-speed implies that 
\[
\sum_{i=1,2,3} \| \gamma_{i,j}'' \|_{L^p}^p = \sum_{i=1,2,3} \mathcal{L}[\gamma_{i,j}]^{2p-1}\mathcal{B}_p[\gamma_{i,j}].
\]
This together with \eqref{eq:Gamma-LBestimate} yields the $L^p$-boundedness of $\{\gamma_{i,j}''\}_j$ for each $i=1,2,3$.
By the assumption of the constant-speed and $\gamma_{i,j}(0)=0$, the $W^{1,\infty}$-boundedness of $\{\gamma_{i,j}\}_j$ also follows.
Therefore, for each $i=1,2,3$, there exists $\gamma_i\in W^{2,p}(0,1;\R^2)$ such that, up to a subsequence, the sequence $\{\gamma_{i,j}\}_j$ converges to $\gamma_i$ in the $W^{2,p}$-weak and $C^1$ topology.
In particular, the $C^1$-convergence and the assumption that $\inf_j \min_{i=1,2,3}\mathcal{L}[\gamma_{i,j}]>0$ ensure that $\Gamma:=(\gamma_1, \gamma_2, \gamma_3)$ satisfies the angle condition to be a $\Theta$-network with angles $(\alpha, \alpha, 2\pi-2\alpha)$.
Since $\mathcal{B}_p[\gamma_{i,j}] =\mathcal{L}[\gamma_{i,j}]^{1-2p} \|\gamma_{i,j}''\|_{L^p}^p$ (cf.\ \eqref{eq:normalized-B_p}), since $\mathcal{L}[\gamma_{i,j}]$ converges to a positive value, and since the weak lower semicontinuity holds for $\|\gamma_{i,j}''\|_{L^p}^p$, we deduce that $\liminf_{j\to \infty} \overline{\mathcal{B}}_p[\Gamma_j] \geq \overline{\mathcal{B}}_p[\Gamma]$.

Next, we consider the case that, after permutations, $\inf_{j}\mathcal{L}[\gamma_{3,j}]=0$ holds but we still have $\inf_{j}\min_{i=1,2}\mathcal{L}[\gamma_{i,j}]>0$. 
In this case, the same argument as above ensures the desired convergence of $\gamma_{1,j}$ and $\gamma_{2,j}$ so that the assertion (ii) holds. 

We finally prove that only the above cases are possible to occur.
For each pair of $i, i'\in\{1,2,3\}$ with $i\neq i'$, the angle condition $(\alpha, \alpha, 2\pi-2\alpha)$ implies that the curves $\gamma_{i}$ and $\gamma_{i'}$ form a piecewise closed curve with exactly two jumps of angle $\theta_1=\theta_2=\pi-\alpha$ or $\theta_1=\theta_2=2\alpha-\pi$, and hence by Lemma~\ref{lem:M_LY-Lem5.2} we have $TC[\gamma_{i,j}]+TC[\gamma_{i',j}] \geq \min\{2\alpha, 4\pi-4\alpha \}=:c_\alpha>0$.
Noting that $TC[\gamma_{i,j}]\leq \mathcal{B}_p[\gamma_{i,j}]^{\frac{1}{p}}\mathcal{L}[\gamma_{i,j}]^{\frac{p-1}{p}}$
follows from H\"older's inequality, we obtain 
\[
\sum_{l=i, i'} \mathcal{B}_p[\gamma_{l,j}]^{\frac{1}{p}} 
\geq \sum_{l=i, i'} \mathcal{L}[\gamma_{l,j}]^{ -\frac{p-1}{p}}TC[\gamma_{l,j}] 
\geq \min_{l=i,i'}\mathcal{L}[\gamma_{l,j}]^{ -\frac{p-1}{p}} c_\alpha.
\]
Then energy-boundedness \eqref{eq:Gamma-LBestimate} implies that $\inf_{j}\max_{l=i,i'}\mathcal{L}[\gamma_{l,j}]>0$.
By the arbitrariness of the choice of $i$ and $i'$, up to taking a subsequence, there are at least two indices $i\in\{1,2,3\}$ such that $\inf_{j}\mathcal{L}[\gamma_{i,j}]>0$.
\end{proof}

Next, applying our new Li--Yau type inequality, we give a lower bound on $\overline{\mathcal{B}}_p$ when the assertion (ii) holds in Lemma~\ref{lem:dichotomy}.
\begin{lemma}\label{lem:nise-LiYau}
Let $\gamma_1, \gamma_2\in W^{2,p}_{\rm imm}(0,1;\R^2)$ satisfy $\gamma_1(0)=\gamma_1(1)=\gamma_2(0)=\gamma_2(1)$. 
Then
\begin{equation}\label{eq:0902-1}
   \left(\sum_{i=1}^2\mathcal{L}[\gamma_{i}]\right)^{p-1}\left(\sum_{i=1}^2\mathcal{B}_p[\gamma_{i}]\right) 
\geq 2^p \varpi_p^*, 
\end{equation}
where equality is attained if and only if the curves $\gamma_1, \gamma_2$ are half-fold figure-eight $p$-elasticae of same length.
\end{lemma}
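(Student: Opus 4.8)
The plan is to reduce this two-curve statement to a purely scalar inequality and then to reuse the Hölder--HM-AM argument already carried out in the proof of Theorem~\ref{thm:LiYau-Bp-open}. First I would observe that each $\gamma_i$ is a closed curve, since $\gamma_i(0)=\gamma_i(1)$, so Corollary~\ref{cor:pinned-Case1} applies and gives
\[
\mathcal{L}[\gamma_i]^{p-1}\mathcal{B}_p[\gamma_i]\geq \varpi_p^*, \qquad i=1,2,
\]
with equality precisely when $\gamma_i$ is a half-fold figure-eight $p$-elastica. Writing $L_i:=\mathcal{L}[\gamma_i]$ and using $\mathcal{B}_p[\gamma_i]\geq \varpi_p^* L_i^{1-p}$, the left-hand side of \eqref{eq:0902-1} is bounded below by
\[
\varpi_p^*\,(L_1+L_2)^{p-1}\Big(\tfrac{1}{L_1^{p-1}}+\tfrac{1}{L_2^{p-1}}\Big),
\]
so it suffices to show that this scalar expression is at least $2^p\varpi_p^*$ for all $L_1,L_2>0$.

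Next I would prove the remaining scalar inequality
\[
(L_1+L_2)^{p-1}\Big(\tfrac{1}{L_1^{p-1}}+\tfrac{1}{L_2^{p-1}}\Big)\geq 2^p,
\]
exactly as in Theorem~\ref{thm:LiYau-Bp-open} specialized to two summands, splitting into the two regimes of $p$. For $p>2$, Hölder's inequality in the form $\big(\tfrac{1}{L_1}+\tfrac{1}{L_2}\big)^{p-1}\leq 2^{p-2}\big(\tfrac{1}{L_1^{p-1}}+\tfrac{1}{L_2^{p-1}}\big)$ together with the HM-AM bound $(L_1+L_2)\big(\tfrac{1}{L_1}+\tfrac{1}{L_2}\big)\geq 4$ yields the factor $2^{2-p}\cdot 4^{p-1}=2^p$. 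For $1<p\leq 2$, I would instead use the concavity estimate $(L_1+L_2)^{p-1}\geq 2^{p-2}(L_1^{p-1}+L_2^{p-1})$ and then the AM-HM inequality $(L_1^{p-1}+L_2^{p-1})\big(\tfrac{1}{L_1^{p-1}}+\tfrac{1}{L_2^{p-1}}\big)\geq 4$, which again produces $2^{p-2}\cdot 4=2^p$. In both cases equality in the underlying Hölder/HM-AM steps forces $L_1=L_2$.

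For the characterization of equality I would track the two inequalities in the chain. Overall equality in \eqref{eq:0902-1} requires both $\mathcal{B}_p[\gamma_i]=\varpi_p^* L_i^{1-p}$ for $i=1,2$ and equality in the scalar step. Since the intermediate bound $\mathcal{B}_p[\gamma_1]+\mathcal{B}_p[\gamma_2]\geq \varpi_p^*(L_1^{1-p}+L_2^{1-p})$ is a sum of two pointwise inequalities, equality there forces equality in each, so by Corollary~\ref{cor:pinned-Case1} both $\gamma_1,\gamma_2$ are half-fold figure-eight $p$-elasticae; equality in the scalar step forces $L_1=L_2$. Conversely, two half-fold figure-eight $p$-elasticae of the same length saturate every inequality used, giving equality. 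The argument is essentially a two-summand specialization of the open-curve Li--Yau inequality, so there is no genuinely new obstacle; the only point demanding care is verifying that the equality conditions in the Hölder and HM-AM steps are compatible with, and indeed forced to coincide with, the rigidity statement of Corollary~\ref{cor:pinned-Case1}, namely that the equal-length condition and the figure-eight condition together are both necessary and sufficient.
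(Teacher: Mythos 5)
Your proof is correct and uses exactly the same ingredients as the paper's: Corollary~\ref{cor:pinned-Case1} applied to each loop, followed by the two-term H\"older/HM--AM computation already carried out in the proof of Theorem~\ref{thm:LiYau-Bp-open}. The paper packages this more briefly by applying a rigid motion to $\gamma_2$ so that the two loops concatenate into a single open $W^{2,p}$-immersed curve with a point of multiplicity $3$, and then citing Theorem~\ref{thm:LiYau-Bp-open} (and Theorem~\ref{thm:Open-optimality} for the rigidity) with $m=3$; this is only a repackaging, and your inlined two-summand argument, including the equality analysis forcing $L_1=L_2$ and each $\gamma_i$ to be a half-fold figure-eight, is mathematically identical.
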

\begin{proof}
Up to a rigid motion for $\gamma_2$, we may assume that $\gamma_1'(1)$ and $\gamma_2'(0)$ are parallel.
Then up to reparametrization we may regard the concatenation $\gamma$ of $\gamma_1$ and $\gamma_2$ as a single $W^{2,p}$ open curve with a point of multiplicity $3$, whose normalized $p$-bending energy is equal to the right-hand side of \eqref{eq:0902-1}.
The assertion follows by applying
Theorem~\ref{thm:LiYau-Bp-open} to $\gamma$.
\end{proof}

Now the main issue is reduced to showing that there exists a $\Theta$-network of less energy than the minimal energy $2^p\varpi_p^*$ of degenerate networks. 
More precisely, we prove

\begin{lemma}\label{lem:test-network}
Suppose \eqref{eq:angle-network} holds. 
Then there exists a $\Theta$-network $\Gamma\in \Theta(p,\alpha)$ such that $\overline{\mathcal{B}}_p[\Gamma] < 2^p\varpi_p^*$.
\end{lemma}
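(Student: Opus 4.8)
The plan is to exhibit an explicit competitor assembled from two congruent symmetric arcs and one straight segment, and then to reduce the energy bound to a new monotonicity statement for $p$-elliptic integrals.

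\emph{Construction.} Given $\alpha\in(0,\pi-\phi^*(p))$, I set $q:=\sin(\alpha/2)$. By \eqref{eq:phi*} we have $2\arcsin q^*(p)=\pi-\phi^*(p)$, so the hypothesis \eqref{eq:angle-network} is equivalent to $q\in(0,q^*(p))$; in particular $q$ lies in the arc regime. Let $\gamma_3$ be the $(p,Q_p(q),1)$-arc $\gamma_w(\,\cdot-\K_{1,p}(q),q)$ on $[0,2\K_{1,p}(q)]$ (cf.\ Definition~\ref{def:arcloop}), positioned so that its two endpoints lie on a horizontal line $\ell$ and the arc dips to one side; let $\gamma_1$ be its reflection across $\ell$, and let $\gamma_2$ be the straight segment joining the two common endpoints $V_0,V_1$. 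By the Case II formula in Proposition~\ref{prop:MY2203-thm1.1} the tangential angle of the arc is $2\arcsin\big(q\sn_p(\,\cdot-\K_{1,p}(q),q)\big)$, which runs monotonically from $-2\arcsin q$ to $+2\arcsin q$; hence at each vertex the arc makes angle $2\arcsin q=\alpha$ with the chord $\ell$. Therefore the three tangent directions realize the angles $(\alpha,\alpha,2\alpha)$, which meet the condition for $\Theta(p,\alpha)$ at both vertices since $\cos(2\pi-2\alpha)=\cos2\alpha$. Moreover the endpoint distance of the arc equals $2\widetilde{Q}_p(q)>0$, because $\widetilde{Q}_p$ is strictly decreasing with $\widetilde{Q}_p(q^*)=0$ (Lemma~\ref{lem:calQ_p} and Definition~\ref{def:q*}); thus $\gamma_2$ has positive length and $\Gamma=(\gamma_1,\gamma_2,\gamma_3)\in\Theta(p,\alpha)$ genuinely has three components. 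This positivity is exactly where $\alpha<\pi-\phi^*(p)$ enters.

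\emph{Energy.} Writing $\K=\K_{1,p}(q)$ and $\E=\E_{1,p}(q)$, the segment contributes no bending energy and has length $2(2\E-\K)$, so $\mathcal{L}[\Gamma]=2\cdot2\K+2(2\E-\K)=2(\K+2\E)$. By Lemma~\ref{lem:int-cn_p} each arc has $\mathcal{B}_p=2^{p+1}b(q)$ with $b$ as in \eqref{eq:def-varphi}, whence $\mathcal{B}_p[\Gamma]=2^{p+2}b(q)$ and
\[
\overline{\mathcal{B}}_p[\Gamma]=\mathcal{L}[\Gamma]^{p-1}\mathcal{B}_p[\Gamma]=2^{2p+1}\big(\K_{1,p}(q)+2\E_{1,p}(q)\big)^{p-1}b(q)=:2^{2p+1}\Phi(q).
\]
Since $2\E_{1,p}(q^*)=\K_{1,p}(q^*)$ by Definition~\ref{def:q*}, one checks $2^{2p+1}\Phi(q^*)=2^{3p}\K_{1,p}(q^*)^{p-1}b(q^*)=2^p\varpi^*_p$ using \eqref{eq:normalized-wave}, \eqref{eq:def-varpi_p}, and Corollary~\ref{cor:pinned-Case1}. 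Thus the claim reduces to the strict inequality $\Phi(q)<\Phi(q^*)$ for every $q\in(0,q^*)$.

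\emph{Main obstacle.} It remains to prove $\Phi(q)=(\K_{1,p}(q)+2\E_{1,p}(q))^{p-1}b(q)<\Phi(q^*)$ on $(0,q^*)$, which I would obtain by showing that $\Phi$ is strictly increasing there. Differentiating and invoking \eqref{eq:diff-ellipint} and \eqref{eq:bibun-varphi},
\[
\Phi'(q)=\big(\K_{1,p}+2\E_{1,p}\big)^{p-2}\Big[(p-1)\tfrac{d}{dq}\big(\K_{1,p}+2\E_{1,p}\big)\,b(q)+\big(\K_{1,p}+2\E_{1,p}\big)\,b'(q)\Big],
\]
so everything reduces to the positivity of the bracket. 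This is the genuinely hard step: although $b'>0$ (Lemma~\ref{lem:monotone-w.r.t-q}), the quantity $\frac{d}{dq}(\K_{1,p}+2\E_{1,p})$ is \emph{negative} for small $q$, since by \eqref{eq:EK-bibun-q-2} its integrand carries the factor $2q^2z^2-1$; hence the two terms compete and no single earlier monotonicity result closes the argument. I would therefore establish positivity of the bracket as a new monotonicity lemma for $p$-elliptic integrals: substitute the integral representations \eqref{eq:betsu-K1p}, \eqref{eq:betsu-E1p}, \eqref{eq:EK-bibun-q-2}, together with the integral form of $b'$ from the proof of Lemma~\ref{lem:monotone-w.r.t-q}, collect everything over the common weight $(1-z^2)^{-1/p}$, and reduce to an averaged positivity statement for an explicit algebraic integrand in $z$ and $q$ on $(0,q^*)$. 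Once $\Phi'>0$ is secured on $(0,q^*)$, strict monotonicity yields $\Phi(q)<\Phi(q^*)$, and consequently $\overline{\mathcal{B}}_p[\Gamma]=2^{2p+1}\Phi(q)<2^{2p+1}\Phi(q^*)=2^p\varpi^*_p$, which is the assertion.
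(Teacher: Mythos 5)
Your construction, energy computation, and reduction to the monotonicity of $\Phi(q)=\big(2\E_{1,p}(q)+\K_{1,p}(q)\big)^{p-1}b(q)$ on $(0,q^*)$ coincide exactly with the paper's proof (the competitor is the paper's $\Gamma^{p,q_\alpha}_{\rm wave}$, and your $\Phi$ is the quantity in \eqref{eq:Bp-test-network} up to the factor $2^{2p+1}$). You also correctly identify that the hypothesis \eqref{eq:angle-network} enters precisely as $q_\alpha<q^*(p)$, and that the derivative $\frac{d}{dq}(2\E_{1,p}+\K_{1,p})$ changes sign, so no earlier monotonicity result settles the matter by itself.

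However, the decisive step --- positivity of the bracket
\[
(p-1)\big(2\E'_{1,p}(q)+\K'_{1,p}(q)\big)b(q)+\big(2\E_{1,p}(q)+\K_{1,p}(q)\big)b'(q)
\]
--- is exactly what you label ``the genuinely hard step,'' and your proposal stops at a plan (``collect everything over the common weight $(1-z^2)^{-1/p}$ and reduce to an averaged positivity statement'') rather than a proof. An ``averaged positivity statement for an explicit algebraic integrand'' is precisely the assertion that needs proving, and it is not clear that the resulting integrand is pointwise positive, so this cannot be waved through. The paper closes the gap differently and more cleanly: it splits the bracket into a $\K$-part, $(p-1)\K'_{1,p}b+\K_{1,p}b'$, which is positive term by term since $\K'_{1,p}>0$ and $b'>0$ (Proposition~\ref{prop:EK-monotone} and Lemma~\ref{lem:monotone-w.r.t-q}), and an $\E$-part, $(p-1)\E'_{1,p}b+\E_{1,p}b'$, which it evaluates explicitly via \eqref{eq:diff-ellipint} and \eqref{eq:bibun-varphi} as
\[
(p-1)q^{p-3}\Big((2-\tfrac2p)\E_{1,p}^2+(2q^2+\tfrac2p-3)\E_{1,p}\K_{1,p}+(1-q^2)\K_{1,p}^2\Big),
\]
and then bounds from below using $(2-\tfrac2p)\E_{1,p}-(2-\tfrac2p-q^2)\K_{1,p}=q(1-q^2)\K'_{1,p}>0$ to arrive at $(p-1)q^{p-3}(1-q^2)(\K_{1,p}-\E_{1,p})\K_{1,p}>0$. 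Until you supply an argument of this kind (or verify your proposed averaged positivity), the proof is incomplete at its central analytic point.
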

\begin{proof}
Let $\gamma_{\rm half}^{p,q}$ be a half-period of a wavelike $p$-elastica given by $\gamma_{w}(s,q)$ in \eqref{eq:EP2} with $s\in[-\K_{1,p}(q), \K_{1,p}(q)]$.
Then it follows that
\begin{align}\label{eq:L-gamma_half}
    \mathcal{L}[\gamma_{\rm half}^{p,q}]=2\K_{1,p}(q).
\end{align}
In addition, since the curvature $k$ of $\gamma_{\rm half}^{p,q}$ is $k(s)=2q\cn_p(s,q)$, we infer from Lemma~\ref{lem:int-cn_p} that 
\begin{align}\label{eq:B_p-gamma_half}
\begin{split}
    \mathcal{B}_p[\gamma_{\rm half}^{p,q}]&=\int_{-\K_{1,p}(q)}^{\K_{1,p}(q)}2^pq^p|\cn_p(s,q)|^p\,ds\\
    &=2^{p+1}q^p\left( \tfrac{1}{q^2}\E_{1,p}(q)+\big(1-\tfrac{1}{q^2}\big) \K_{1,p}(q)\right)=2^{p+1}b(q),
\end{split}
\end{align}
where $b(q)$ is given by \eqref{eq:def-varphi}.

We define (up to reparameterization) a triplet of curves $\Gamma^{p,q}_{\rm wave}$ for $q\in(0,1)$ by 
\[
\Gamma^{p,q}_{\rm wave}:= \left(\gamma_{\rm half}^{p,q}, \gamma_{\rm seg}^{p,q},  R\gamma_{\rm half}^{p,q} \right), 
\]
where $R$ denotes the refection with respect to the $e^1$-axis and $\gamma_{\rm seg}^{p,q}(x):=\big((2\E_{1,p}(q)-\K_{1,p}(q))x,0\big){}^\top$ for $x\in(-1,1)$.
It is clear that $\mathcal{L}[\gamma_{\rm seg}^{p,q}]=2(2\E_{1,p}(q)-\K_{1,p}(q))$ and $\mathcal{B}_p[\gamma_{\rm seg}^{p,q}]=0$.
Combining this with \eqref{eq:L-gamma_half} and \eqref{eq:B_p-gamma_half}, we see that
\begin{align*}
    \mathcal{L}[\Gamma^{p,q}_{\rm wave}]
    &=2\mathcal{L}[\gamma_{\rm half}^{p,q}]+\mathcal{L}[\gamma_{\rm seg}^{p,q}]
    =2\big(2\E_{1,p}(q)+\K_{1,p}(q)\big), \\
    \mathcal{B}_p[\Gamma^{p,q}_{\rm wave}]
    &=2\mathcal{B}_p[\gamma_{\rm half}^{p,q}]+\mathcal{B}_p[\gamma_{\rm seg}^{p,q}]
    =2^{p+2}b(q),
\end{align*}
which yields
\begin{align}\label{eq:Bp-test-network}
   \overline{\mathcal{B}}_p[\Gamma^{p,q}_{\rm wave}]=2^{2p+1}\big(2\E_{1,p}(q)+\K_{1,p}(q)\big)^{p-1}b(q).
\end{align}
In particular, by this representation, and by definition of $\varpi_p^*$ and $q^*(p)$, we have 
\begin{align}\notag
    \lim_{q\to q^*(p)}\overline{\mathcal{B}}_p[\Gamma^{p,q}_{\rm wave}]
    = 2^p\varpi_p^*.
\end{align}

Let $q=q_\alpha \in(0,1)$ be a constant satisfying $2\arcsin{q}=\alpha$.
We now prove that $\Gamma^{p,q_\alpha}_{\rm wave}$ gives the desired $\Theta$-network with angles $(\alpha, \alpha, 2\pi-2\alpha)$. 

First we check that $q_\alpha < q^*(p)$. 
By \eqref{eq:phi*} and assumption \eqref{eq:angle-network}, we see that 
\[
2\arcsin{q_\alpha}=\alpha<\pi-\phi^*(p) = 2\arcsin{q^*(p)},
\]
which implies that $q_\alpha < q^*(p)$.

Next, we show that $\Gamma^{p,q_\alpha}_{\rm wave} \in \Theta(p,\alpha)$.
In view of Case II in Proposition~\ref{prop:MY2203-thm1.1}, the tangential angles of $\gamma_{\rm half}^{p,q}$ at two endpoints are $-2\arcsin{q}$ and $2\arcsin{q}$.
This together with $2\arcsin{q_\alpha}=\alpha$ ensures the angle condition. 

It remains to show that
\begin{align*}
    \overline{\mathcal{B}}_p\big[\Gamma^{p,q_\alpha}_{\rm wave}\big] < 2^p \varpi_p^*. 
\end{align*}
To this end, by the fact that $q_\alpha < q^*(p)$, it is sufficient to prove that the energy in \eqref{eq:Bp-test-network} is strictly increasing with respect to $q\in(0,1)$.
We calculate
\begin{align}\label{eq:B_p-network}
\begin{split}
   &\frac{d}{dq} \Big( \big(2\E_{1,p}(q) +\K_{1,p}(q)\big)^{p-1}b(q) \Big) \\
   &\quad =\big(2\E_{1,p}(q)+\K_{1,p}(q)\big)^{p-2} \\
   &\quad \qquad\times \Big( (p-1)\big(2\E'_{1,p}(q)+\K'_{1,p}(q)\big)b(q)+(2\E_{1,p}(q)+\K_{1,p}(q)\big)b'(q) \Big).
\end{split}
\end{align}
It follows from Proposition~\ref{prop:EK-monotone} and Lemma~\ref{lem:monotone-w.r.t-q} that
\begin{align}\label{eq:K-varphi}
     (p-1) \K_{1,p}'(q)b(q) + \K_{1,p}(q)b'(q) >0.
\end{align}
We now show that 
\begin{align}\label{eq:E-varphi}
    (p-1)\E'_{1,p}(q)b(q) + \E_{1,p}(q)b'(q) >0. 
\end{align}
In view of \eqref{eq:diff-ellipint} and \eqref{eq:bibun-varphi}, we see that
\begin{align*}
   &\E_{1,p}(q)b'(q) + (p-1)\E'_{1,p}(q)b(q) \\
    =\, & (p-1)\E_{1,p}(q)\Big(q^{p-1}\K_{1,p}(q) +(1-\tfrac{2}{p})q^{p-3}\big(\E_{1,p}(q)-\K_{1,p}(q)\big) \Big) \\
    &\quad \, + (p-1)\frac{\E_{1,p}(q)-\K_{1,p}(q)}{q}\Big(q^{p-2}\E_{1,p}(q)+(q^p -q^{p-2})\K_{1,p}(q) \Big) \\
    =\, & (p-1)q^{p-3}\Big( (2-\tfrac{2}{p})\E_{1,p}(q)^2 + (2q^2+\tfrac{2}{p}-3)\E_{1,p}(q)\K_{1,p}(q)+(1-q^2)\K_{1,p}(q)^2 \Big).
\end{align*}
Here, we infer from Proposition~\ref{prop:EK-monotone} that 
\[
(2-\tfrac{2}{p})\E_{1,p}(q) - (2-\tfrac{2}{p}-q^2)\K_{1,p}(q)
=q(1-q^2)\mathrm{K}_{1,p}'(q) >0.
\]
Therefore we obtain
\begin{align*}
    & \E_{1,p}(q)b'(q) + (p-1)\E'_{1,p}(q)b(q) \\
    >\,& (p-1)q^{p-3}\Big( \E_{1,p}(q)(2-\tfrac{2}{p}-q^2)\K_{1,p}(q) 
    \\ 
    &\quad\quad\quad\quad\quad\quad\quad
    +(2q^2+\tfrac{2}{p}-3)\E_{1,p}(q)\K_{1,p}(q)+(1-q^2)\K_{1,p}(q)^2 \Big)\\
    =\,& (p-1)q^{p-3}(1-q^2)\big(\K_{1,p}(q)-\E_{1,p}(q) \big)\K_{1,p}(q) >0,
\end{align*}
which ensures \eqref{eq:E-varphi}.
Combining \eqref{eq:B_p-network} with \eqref{eq:K-varphi} and \eqref{eq:E-varphi}, we see that $\overline{\mathcal{B}}_p[\Gamma^{p,q}_{\rm wave}]$ is strictly increasing in $q\in(0,1)$.
\end{proof} 

We are now ready to complete the proof of Theorem~\ref{thm:inf-network-alpha}. 

\begin{proof}[Proof of Theorem~\ref{thm:inf-network-alpha}]
Let $\{\Gamma_j\} = \{ (\gamma_{1,j}, \gamma_{2,j}, \gamma_{3,j}) \} \subset \Theta(p,\alpha)$ be a minimizing sequence such that $\lim_{j\to \infty}\overline{ \mathcal{B}}_p[\Gamma_j] = \inf_{\Gamma \in \Theta(p,\alpha)}\overline{\mathcal{B}}_p[\Gamma]$. 
Then, possibly after rescaling, translation, reparameterization, and
taking a subsequence, either assertion (i) or (ii) in Lemma~\ref{lem:dichotomy} holds.
However, assertion (ii) does not occur in view of \eqref{eq:degenerate-lsc}.
Indeed, since the curves $\gamma_1$ and $\gamma_2$ in (ii) satisfy the assumption of Lemma~\ref{lem:nise-LiYau}, we have $\lim_{j\to\infty}(\sum_{i=1}^2\mathcal{L}[\gamma_{i,j}])^{p-1}(\sum_{i=1}^2\mathcal{B}_p[\gamma_{i,j}])\geq 2^p \varpi^*_p$; 
on the other hand, Lemma~\ref{lem:test-network} implies that
\[
\lim_{j\to\infty}\overline{\mathcal{B}}_p[\Gamma_j] =  \inf_{\Gamma \in \Theta(p,\alpha)}\overline{\mathcal{B}}_p[\Gamma] < 2^p \varpi^*_p.
\]
Therefore we have assertion (i).
Then the limit network $\Gamma$ is a desired minimizer in view of the fact that $\overline{\mathcal{B}}_p[\Gamma] \leq \lim_{j\to\infty}\overline{\mathcal{B}}_p[\Gamma_j] =  \inf_{\Gamma \in \Theta(p,\alpha)}\overline{\mathcal{B}}_p[\Gamma]$.
\end{proof}


\appendix

\section{First variation and multiplier method}


\subsection{Preliminary estimates}

We collect elementary inequalities of power type.

\label{sect:A.1}
\begin{lemma}  \label{lem:0728-1}
Let $p\in(1,\infty)$ and $a, b \in \R^n$ $(n\in\N)$. 
Then
\begin{align}
&\big| |a|^{p} - |b|^{p} \big| \leq p |a-b| \Big( |a|^{p-1} + |b|^{p-1}  \Big),  \label{eq:0729-1} \\
&\big| |a|^{p-2}a - |b|^{p-2}b \big|  \leq 
\begin{cases}
C |a-b|^{p-1} & (1<p \leq 2), \\
C |a-b| \Big( |a|^{p-2} + |b|^{p-2}  \Big) \ \ &(p>2), 
\end{cases}
\label{eq:0729-2}
\end{align}
where $C>0$ is a constant depending only on $p$. 
\end{lemma}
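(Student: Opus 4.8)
The plan for \eqref{eq:0729-1} is to reduce to a one-variable estimate for the scalar function $g(r):=r^p$ on $[0,\infty)$. Since $p>1$, this $g$ is $C^1$ with $g'(r)=pr^{p-1}$, so the mean value theorem applied to the nonnegative reals $|a|$ and $|b|$ gives $\big||a|^p-|b|^p\big|=p\,\xi^{p-1}\big||a|-|b|\big|$ for some $\xi$ lying between $|a|$ and $|b|$. Because $r\mapsto r^{p-1}$ is nondecreasing we have $\xi^{p-1}\le\max\{|a|,|b|\}^{p-1}\le|a|^{p-1}+|b|^{p-1}$; combining this with the reverse triangle inequality $\big||a|-|b|\big|\le|a-b|$ yields \eqref{eq:0729-1}. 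I expect this part to be entirely routine.

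For \eqref{eq:0729-2} I would study the vector field $V(x):=|x|^{p-2}x$. When $p>2$ this map is $C^1$ on all of $\R^n$ (including the origin), with Jacobian $DV(x)=|x|^{p-2}I+(p-2)|x|^{p-4}\,x\otimes x$, whose operator norm is bounded by $(p-1)|x|^{p-2}$. Writing $V(a)-V(b)=\int_0^1 DV\big(b+t(a-b)\big)(a-b)\,dt$ and estimating the integrand, I would then bound $|b+t(a-b)|\le\max\{|a|,|b|\}$ by convexity of the connecting segment and use that $r\mapsto r^{p-2}$ is nondecreasing (as $p>2$) to get $|b+t(a-b)|^{p-2}\le|a|^{p-2}+|b|^{p-2}$. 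This gives the claimed bound with $C=p-1$.

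The delicate case is $1<p\le2$, where $V$ is singular at the origin and the Hölder exponent $p-1<1$ appears; this is the main obstacle. Here I would assume without loss of generality $|a|\ge|b|$, set $h:=a-b$, and split into two regimes. If $|h|\ge\tfrac12|b|$, then $|a|\le 3|h|$ and $|b|\le2|h|$, so the crude bound $|V(a)-V(b)|\le|a|^{p-1}+|b|^{p-1}\le(3^{p-1}+2^{p-1})|h|^{p-1}$ already suffices. If instead $|h|<\tfrac12|b|$, then the whole segment satisfies $|b+th|\ge|b|-|h|\ge\tfrac12|b|>0$, so $V$ is $C^1$ along it and the integral representation is legitimate; estimating $|DV(x)|\le(3-p)|x|^{p-2}$ as above, using $|b+th|^{p-2}\le(\tfrac12|b|)^{p-2}$ (valid since $p-2<0$), and finally inserting $|b|\ge2|h|$ converts the factor $|b|^{p-2}|h|$ into a constant times $|h|^{p-1}$. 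Taking $C$ to be the larger of the two constants completes the argument. The only subtlety to watch is ensuring that the connecting segment avoids the origin, which is precisely how the second regime is arranged.
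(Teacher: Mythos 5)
Your proposal is correct, and for the delicate case $1<p\le 2$ of \eqref{eq:0729-2} it takes a genuinely different route from the paper. For \eqref{eq:0729-1} the two arguments are essentially interchangeable: you apply the mean value theorem to $r\mapsto r^p$ together with the reverse triangle inequality, while the paper uses the convexity inequality $|b|^p\ge |a|^p+p(|a|^{p-2}a,\,b-a)$; both are one-line elementary estimates yielding the same constant $p$. For $p>2$ in \eqref{eq:0729-2} you and the paper do the same thing: the paper quotes Lindqvist's integral representation $|\,|a|^{p-2}a-|b|^{p-2}b\,|\le (p-1)|b-a|\int_0^1|a+t(b-a)|^{p-2}\,dt$ and then bounds the integrand by $\max\{|a|,|b|\}^{p-2}\le |a|^{p-2}+|b|^{p-2}$, which is exactly your Jacobian computation made explicit. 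The real divergence is in the singular range $1<p\le2$: the paper follows Cazenave--Fang--Zheng and splits $|a|^{p-2}a-|b|^{p-2}b$ into an angular part $|a|^{p-1}\bigl|\tfrac{a}{|a|}-\tfrac{b}{|b|}\bigr|$ and a radial part $\bigl||a|^{p-1}-|b|^{p-1}\bigr|$, handling the latter by $(p-1)$-H\"older continuity and the former by a case distinction on $|a-b|\lessgtr|a|$; you instead split on whether $|a-b|\ge\tfrac12|b|$ (where the crude bound $|a|^{p-1}+|b|^{p-1}$ already gives $|a-b|^{p-1}$ up to a constant) or $|a-b|<\tfrac12|b|$ (where the segment from $b$ to $a$ stays a distance $\tfrac12|b|$ from the origin, so the same integral representation as in the case $p>2$ applies and the factor $|b|^{p-2}|a-b|$ converts to $|a-b|^{p-1}$ since $p-2<0$). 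Your version has the advantage of treating both ranges of $p$ by one mechanism (the integral representation away from the singularity) at the cost of a slightly more ad hoc case split; the paper's version isolates the H\"older continuity of $r\mapsto r^{p-1}$ as the source of the exponent $p-1$. The constants differ but both are admissible, and all steps in your argument (including the legitimacy of the integral representation on a segment avoiding the origin, and the use of $|b|\ge 2|a-b|$ in the second regime) check out.
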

\begin{proof}
By the convexity of the function $\R^n \ni x \mapsto |x|^p $, it follows that
\[
 |b|^{p} \geq |a|^{p}+ p \big( |a|^{p-2}a,  b-a \big), 
\]
which implies that
\begin{align*}
\big| |b|^{p} - |a|^{p} \big| 
&\leq  p \max\Big\{ \big| ( |a|^{p-2}a,  b-a ) \big| , \big| ( |b|^{p-2}b,  a-b) \big|  \Big\} \\
&\leq p \Big(   |a|^{p-1} | b-a |  + |b|^{p-1} | b-a |  \Big).
\end{align*}
Therefore we obtain \eqref{eq:0729-1}.

Next we prove \eqref{eq:0729-2} for $1<p\leq 2$. 
The idea comes from \cite[Remark 2.3]{CFZ}.
We assume, without loss of generality, that $0<|b| \leq |a| $ and $a\ne b$.
We see that
\begin{align}\label{eq:CFZ-spirit}
\begin{split}
\big| |a|^{p-2}a - |b|^{p-2}b \big|
& \leq \left| |a|^{p-2}a - |a|^{p-1} \frac{b}{|b|}  +|a|^{p-1} \frac{b}{|b|}  - |b|^{p-2}b \right| \\
& \leq |a|^{p-1} \left|  \frac{a}{|a|} -  \frac{b}{|b|} \right|   + \left| |a|^{p-1} -|b|^{p-1} \right|.
\end{split}
\end{align}
Since $x\mapsto |x|^{p-1}$ is $(p-1)$-H\"older continuous, we have
\[
\left| |a|^{p-1} -|b|^{p-1} \right| \leq C\big| |a| -|b| \big|^{p-1} \leq C \left|  a -b  \right|^{p-1}.
\]
Next, 
\begin{align*}
 |a|^{p-1} \left|  \frac{a}{|a|} -  \frac{b}{|b|} \right| 
& = |a|^{p-2} \left|  a -\frac{ |a| }{|b|}b +\frac{ |b| }{|b|}b -\frac{ |b| }{|b|}b \right| \\
& \leq   |a|^{p-2} \bigg( |a-b| + \left|  \frac{|b|}{|b|}b -  \frac{|a|}{|b|}b \right|  \bigg) 
\leq  2  |a|^{p-2}|a-b|.
\end{align*}
In the case $|a-b| \leq |a|$, we infer from $1<p<2$ that $|a|^{p-2} \leq |a-b|^{p-2}$, and hence
 \[ |a|^{p-2}|a-b| \leq |a-b|^{p-1}. \] 
In the case $|a-b| \geq |a|$, since $|a-b| \leq |a|+|b| \leq 2|a|$ holds, we see that
 \[ |a|^{p-2}|a-b| \leq 2 |a-b|^{p-1}. \] 
Therefore, we infer from \eqref{eq:CFZ-spirit} that 
\begin{align*}
\big| |a|^{p-2}a - |b|^{p-2}b \big|  \leq C|a-b|^{p-1}. 
\end{align*}

We turn to \eqref{eq:0729-2} for the case $p>2$.
According to \cite[page 73]{Lind}, it follows that for $p>2$
\[
\big| |a|^{p-2}a - |b|^{p-2}b \big|  \leq (p-1)|b-a| \int_0^1 |a+t(b-a)|^{p-2} dt, 
\]
from which the desired estimate follows immediately.
\end{proof}

\begin{remark} \label{rem:0819-1}
Let $f, g, h \in L^p(0,1 ; \R^n)$.
Thanks to \eqref{eq:0729-1}, it follows from H\"older's inequality that
\begin{align*}
\int_0^1 \Big| |f(t)|^{p} -  |g(t)|^{p} \Big| \, dt  
\leq p \| f-g\|_{L^p(0,1)} \Big( \| f\|^{p-1}_{L^p(0,1)} +\| g \|^{p-1}_{L^p(0,1)}   \Big).
\end{align*}
In the case $1<p \leq 2$, it follows from H\"older's inequality and \eqref{eq:0729-2} that
\begin{align*}
\int_0^1 \Big| |f(t)|^{p-2}f(t) &-  |g(t)|^{p-2}g(t) \Big| |h(t)|\, dt  
\leq C  \| f-g\|^{p-1}_{L^p(0,1)}\| h\|_{L^p(0,1)}.
\end{align*}
In the case $p>2$, noting that $1/p + 1/p + (p-2)/p =1$, we observe from  H\"older's inequality and \eqref{eq:0729-2} that 
\begin{align*}
\int_0^1 \Big| |f(t)|^{p-2}f(t) &-  |g(t)|^{p-2}g(t) \Big| |h(t)|\, dt  \\
&\leq C  \| f-g\|_{L^p(0,1)}\| h\|_{L^p(0,1)}  \Big( \| f\|^{p-2}_{L^p(0,1)} +\| g \|^{p-2}_{L^p(0,1)}   \Big).
\end{align*}
\end{remark}

\subsection{First variation of the $p$-bending energy}

In this subsection we compute the Fr\'echet derivative $D\B_p$ of $\B_p$.
Note that the Fr\'echet differentiability is nontrivial due to the strong nonlinearity of $\mathcal{B}_p$.
First, let us mention a known formula for the G\^ateaux derivative $d\mathcal{B}_p$ of $\B_p$ (see \cite[Lemma A.1]{MYarXiv2203} for a rigorous derivation).
Let $I:=(0,1)$.
For an immersed curve $\gamma:I\to\R^n$ ($t\in I$), let $ds:=|\gamma'|\,dt$ be the line element in the sense of a weighted measure on $I$.
Let $\partial_s$ denote the arclength derivative along $\gamma$, i.e., $\partial_s\psi=\frac{1}{|\gamma'|}\psi'$.
In particular, the curvature vector is then represented by $$\kappa:=\partial_s^2\gamma=\frac{\gamma''}{|\gamma'|^2} - \frac{( \gamma', \gamma'' )\gamma'}{| \gamma' |^4}.$$

\begin{lemma}\label{lem:G-derivative-Bp}
Let $p\in(1,\infty)$ and $n\in\N$.
For an immersed curve $\gamma\in W_\mathrm{imm}^{2,p}(I;\R^n)$, let $\B_p$ be the $p$-bending energy defined by
$$\mathcal{B}_p[\gamma]:=\int_I|\kappa|^p ds.$$
Then the G\^ateaux derivative $d\mathcal{B}_p$ of $\B_p$ at $\gamma$ is given by, for $h \in W^{2,p}(I;\R^n)$,
\begin{align*}
\langle d\B_p[\gamma] ,  h \rangle 
    &= \int_I  \Big( (1-2p) |\kappa|^p (\partial_s\gamma, \partial_sh) +p|\kappa|^{p-2}(\kappa, \partial_s^2h)\Big) \, ds.
\end{align*}
\end{lemma}

In the following we prove that $\mathcal{B}_p[\gamma]$ is Fr\'echet differentiable by showing the sufficient condition that $d\mathcal{B}_p[\gamma]$ is continuous with respect to $\gamma$
(cf.\ \cite[Proposition 4.8]{Zeid1}).
For a Banach space $X$, let $ \mathscr{L}(X, \R):=\Set{ F: X \to \R | F \text{ is linear and continuous}}$ and $\| \cdot \|_{ \mathscr{L}(X, \R)}$ denote the operator norm 
\[
\| F  \|_{ \mathscr{L}(X, \R)}:= \sup_{\|h\|_X \leq 1} \big| \langle F, h \rangle \big|.
\]

\begin{lemma}\label{lem:F-derivative-Bp}
Let $p\in(1,\infty)$ and $n\in\N$.
Let $X:=W^{2,p}(I;\R^n)$.
Then the G\^ateaux derivative $d\mathcal{B}_p$ of $\mathcal{B}_p$ is continuous at every 
$\gamma\in W^{2,p}_{\rm imm}(I;\R^n)$ in $X$.
\end{lemma}
\begin{proof}
Fix an arbitrary $\{\gamma_j\}_{j\in\N} \subset X$ satisfying ${\gamma}_j \to \gamma$ in $X$.
Then ${\gamma}_j \to \gamma$ in $C^1([0,1])$ also follows, and hence there is $c>0$ independent of $j$ such that
 $|\gamma'_j | >c$ for sufficiently large $j\in \N$.
The proof is completed by showing that
\begin{align}\label{eq:Frechet-bibun-goal}
\big\|
d\B_p[\gamma] - d\B_p[\gamma_j] 
\big\|_{ \mathscr{L}(X, \R)}  \to 0 \quad \text{if} \quad \gamma_j \to \gamma \ \ \text{in}\ \ X.
\end{align}
In what follows, we use the same notation $C$ for positive constants depending only on $p$ and $\gamma$.
\if0
To this end, we prove that there is $\alpha>0$ such that for any $h\in W^{2,p}(I;\R^n)$ with $\|h\|_X \leq 1$ 
\begin{align} \label{eq:continuity-op}
\Big| \big\langle d\mathcal{B}_p[\gamma_j], h \big\rangle - \big\langle d\mathcal{B}_p[\gamma], h \big\rangle\Big|
= O\big(\| \gamma_j - \gamma \|_X^{\alpha} \big) \quad \text{as} \quad j\to\infty, 
\end{align}
where the right-hand side does not depend on $h$.
\fi

Fix an arbitrary $h \in X$ such that $\|h\|_X \leq 1$.
Let $\partial_{s_j}$ denote the arclength derivative along $\gamma_j$. 
Let $\kappa_j:=\partial_{s_j}^2\gamma_j
=\frac{\gamma_j''}{|\gamma_j'|^2} - \frac{( \gamma_j', \gamma_j'' )\gamma_j'}{| \gamma_j' |^4}.$
First we compute
\begin{align}\label{eq:gateau-estimate}
\begin{split}
\Big| \big\langle d\mathcal{B}_p[\gamma_j], h \big\rangle - \big\langle d\mathcal{B}_p[\gamma], h \big\rangle\Big|
&\leq  (2p-1) \int_I \Big| |\kappa|^p (\partial_s\gamma, h') - |\kappa_j|^p (\partial_{s_j}\gamma_j, h') \Big|\,dt \\
&\quad+ p \int_I \Big| |\kappa|^{p-2}(\kappa, |\gamma'|\partial_s^2h)- |\kappa_j|^{p-2}(\kappa_j, |\gamma_j'| \partial_{s_j}^2h) \Big| \,dt \\
&\ \ =: \mathbf{I} + \mathbf{J}.
\end{split}
\end{align}
We demonstrate the estimate of $\mathbf{J}$ in the right hand side.
It follows that 
\begin{align*}\notag\label{eq:int-J}
\mathbf{J} &\leq  p\int_0^1\big| |\kappa |^{p-2}\kappa - |\kappa_j |^{p-2}\kappa_j \big|  \big| |\gamma'|\partial_s^2h \big|  \, dt 
+ p\int_0^1 |\kappa_j |^{p-1} \big| |\gamma'|\partial_s^2 h - |\gamma_j'|\partial_{s_j}^2 h \big| \, dt \\
&\ \ =:\mathbf{J}_1 + \mathbf{J}_2.
\end{align*}
We apply Lemma~\ref{lem:0728-1} and H\"older's inequality, as in Remark~\ref{rem:0819-1}, to obtain 
\begin{align*}
\mathbf{J}_1 \leq  
\begin{cases}
C\|\kappa -\kappa_j \|^{p-1}_{L^p}  \big\| |\gamma'|\partial_s^2h \big\|_{L^p}  &(1<p\leq 2), \\
C\|\kappa -\kappa_j \|_{L^p}\Big( \|\kappa \|_{L^p}^{p-2} + \|\kappa_j \|_{L^p}^{p-2} \Big) \big\| |\gamma'|\partial_s^2h  \big\|_{L^p}  &(p>2).
\end{cases}
\end{align*}
Since $\|h\|_{X}\leq 1$ and $\gamma \in W^{2,p}_{\rm imm}(I;\R^n)$, we have $\| |\gamma'|\partial_s^2h \|_{L^p} \leq C$.
On the other hand, since $\partial_s^2 h = \frac{1}{|\gamma'|}(-\frac{(\gamma',\gamma'')}{|\gamma'|^3}h'+\frac{1}{|\gamma'|}h'' )$, H\"older's inequality combined with some $L^\infty$-estimates for first derivatives and $\|h\|_X\leq1$ implies that
\[
\mathbf{J}_2 \leq C \|\kappa_j\|_{L^p}^{p-1}\Big(\|\gamma''-\gamma_j''\|_{L^p} + \|\gamma'-\gamma_j'\|_{L^\infty} \Big).
\]
Similarly, we obtain 
\[
\mathbf{I} \leq C \Big( \|\kappa\|_{L^p}\|\gamma'-\gamma_j'\|_{L^\infty} + \|\kappa-\kappa_j\|_{L^p} \big(\|\kappa\|_{L^p}^{p-1} +\|\kappa_j\|_{L^p}^{p-1} \big) \Big).
\]
From the facts that $\gamma_j\to\gamma$ in $W^{2,p}$ and that $\|\kappa\|_{L^p},\|\kappa_j\|_{L^p}\leq C$ and $\|\kappa_j-\kappa\|_{L^p}\leq C\|\gamma''_j-\gamma''\|_{L^p}$, it follows that 
\[\mathbf{I}+\mathbf{J} \leq C\Big(\|\gamma'-\gamma_j'\|_{L^\infty} + \|\gamma''-\gamma_j''\|_{L^p} \Big).\] 
Combining this with \eqref{eq:gateau-estimate} and a well-known embedding $W^{1,\infty} \hookrightarrow W^{2,p}$, we obtain 
\begin{align}\notag 
\sup_{\|h\|_X \leq 1} \Big| \big\langle d\mathcal{B}_p[\gamma_j], h \big\rangle - \big\langle d\mathcal{B}_p[\gamma], h \big\rangle\Big|
\leq 
\begin{cases}
C\|\gamma -\gamma_j \|^{p-1}_X  &(1<p\leq 2), \\
C\|\gamma -\gamma_j \|_X &(p>2), 
\end{cases}
\end{align}
which implies \eqref{eq:Frechet-bibun-goal}. 
\end{proof}

By Lemma~\ref{lem:F-derivative-Bp}, the Fr\'echet derivative $D\mathcal{B}_p[\gamma]$ coincides with $d\mathcal{B}_p[\gamma]$.

\subsection{Lagrange multiplier theorem}
\label{sect:A.2}

Since our problem involves a non-local constraint, we use a Lagrange multiplier method (see e.g.\ \cite[Proposition 43.21]{Zeid3}).
In the following, for $P_0, P_1\in\R^n$ let 
\begin{align*}
    \mathcal{A}_{P_0, P_1}:=&\Set{ u \in W^{2,p}_{\rm imm}(0,1;\R^n) | u(0)=P_0, \ \  u(1)=P_1 }, \\
    X:=&\Set{ h \in W^{2,p}(0,1;\R^n) | h(0)=h(1)=0 }.
\end{align*}

\begin{proposition} \label{prop:Lmultiplier}
Let $u_0\in \mathcal{A}_{P_0,P_1}$.
Let $U(u_0)$ be an open neighborhood of $u_0$ and
$F, G:U(u_0)\to \R$ be Fr\'echet differentiable at $u_0$, and denote
\[
M:=\Set{u\in \mathcal{A}_{P_0,P_1} | G(u)=0}.
\]
Let $u_0 \in M$.
Then either of the following holds:
\begin{itemize}
\item[(a)] $\big\langle DG(u_0), h \big\rangle=0$ \  for all $h \in X$, 
\item[(b)] $u_0$ is a critical point of $F$ in $M$ if and only if there exists $\lambda\in \R$ such that
\begin{align}\label{eq:appendixLM}
\big\langle DF(u_0),  h \big\rangle + \lambda  \big\langle DG(u_0), h \big\rangle =0 \quad \text{for all} \ \ h\in X.
\end{align}
\end{itemize}
\end{proposition}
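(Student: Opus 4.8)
The plan is to treat this as the standard submersion/implicit-function argument for a single scalar constraint. Since $\mathcal{A}_{P_0,P_1}$ is an open subset (by openness of the immersion condition in $W^{2,p}\hookrightarrow C^1$) of the affine space $u_0+X$, the admissible directions at $u_0$ are exactly the elements of the Banach space $X$, and $DF(u_0),DG(u_0)$ are bounded linear functionals on $X$. If $DG(u_0)$ vanishes identically on $X$ we are in alternative (a) and there is nothing to prove; so assume $DG(u_0)\not\equiv 0$ and fix $h_0\in X$ with $\langle DG(u_0),h_0\rangle=1$. The content of (b) then reduces to the linear-algebra fact that a bounded functional which annihilates $\ker DG(u_0)$ is a scalar multiple of $DG(u_0)$; the geometric step is to show that criticality of $u_0$ forces $\langle DF(u_0),\cdot\rangle$ to vanish on $\ker DG(u_0)$.

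The backward implication in (b) is immediate: if $\varepsilon\mapsto u_\varepsilon\in M$ is any admissible perturbation of $u_0$, differentiating $G(u_\varepsilon)\equiv 0$ at $\varepsilon=0$ gives $\langle DG(u_0),h\rangle=0$ for its velocity $h=\frac{d}{d\varepsilon}u_\varepsilon|_{\varepsilon=0}$, so the multiplier identity \eqref{eq:appendixLM} yields $\frac{d}{d\varepsilon}F(u_\varepsilon)|_{\varepsilon=0}=\langle DF(u_0),h\rangle=0$, i.e.\ $u_0$ is critical.

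For the forward implication, given $\tilde h\in\ker DG(u_0)$ I would construct a genuine curve in $M$ with velocity $\tilde h$ and invoke criticality. Consider $\Phi(\varepsilon,\beta):=G(u_0+\varepsilon\tilde h+\beta h_0)$, defined for $(\varepsilon,\beta)$ near $(0,0)$; since $u_0+\varepsilon\tilde h+\beta h_0$ keeps the prescribed endpoints and, for small parameters, stays immersed (openness) and inside $U(u_0)$, this lands in $\mathcal{A}_{P_0,P_1}$. The implicit function theorem, applied using $\partial_\beta\Phi(0,0)=\langle DG(u_0),h_0\rangle=1\neq 0$, produces a $C^1$ branch $\beta(\varepsilon)$ with $\beta(0)=0$ solving $\Phi(\varepsilon,\beta(\varepsilon))\equiv0$, and moreover $\beta'(0)=-\partial_\varepsilon\Phi(0,0)=-\langle DG(u_0),\tilde h\rangle=0$. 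Hence $u_\varepsilon:=u_0+\varepsilon\tilde h+\beta(\varepsilon)h_0$ is an admissible perturbation in $M$ with $\frac{d}{d\varepsilon}u_\varepsilon|_{\varepsilon=0}=\tilde h$, so criticality of $u_0$ in $M$ gives $\langle DF(u_0),\tilde h\rangle=0$. Finally I set $\lambda:=-\langle DF(u_0),h_0\rangle$; for arbitrary $h\in X$ the vector $h-\langle DG(u_0),h\rangle h_0$ lies in $\ker DG(u_0)$, so $\langle DF(u_0),h\rangle=\langle DG(u_0),h\rangle\langle DF(u_0),h_0\rangle=-\lambda\langle DG(u_0),h\rangle$, which is exactly \eqref{eq:appendixLM}.

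The main obstacle is the implicit-function step: it requires $G$ to be not merely differentiable at $u_0$ but continuously (Fr\'echet) differentiable on a neighborhood, and one must also check that the perturbed curves remain immersed and in $U(u_0)$ for small parameters. In the applications both $F=\mathcal{B}_p$ and $G=\mathcal{L}-L$ enjoy this regularity --- continuity of the derivative near immersed curves is exactly the content of Lemma~\ref{lem:F-derivative} (and is elementary for $\mathcal{L}$) --- and the immersion condition is $C^1$-open, so the construction is legitimate; with only pointwise differentiability one cannot guarantee that $\ker DG(u_0)$ is realized as a tangent space to $M$, which is why the neighborhood regularity is essential.
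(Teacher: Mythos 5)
Your argument is correct, and it is worth noting that the paper does not prove this proposition at all: it is quoted from Zeidler \cite[Proposition 43.21]{Zeid3}, whose proof is essentially the same two-dimensional implicit-function-theorem reduction you carry out (fix $h_0$ with $\langle DG(u_0),h_0\rangle=1$, solve $G(u_0+\varepsilon\tilde h+\beta h_0)=0$ for $\beta=\beta(\varepsilon)$, conclude $DF(u_0)$ annihilates $\ker DG(u_0)\cap X$, then produce $\lambda$ by the one-codimensional linear algebra). So you have in effect reconstructed the cited result rather than found an alternative route. Your checks that the perturbations stay in $\mathcal{A}_{P_0,P_1}$ (endpoints preserved since $\tilde h,h_0\in X$; immersedness open in $C^1\hookleftarrow W^{2,p}$) and that the velocity of any admissible perturbation lies in $X$ are exactly the points one must verify to transplant Zeidler's abstract statement to this setting. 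The one substantive caveat is the one you already flag: the implicit-function step is not available under the stated hypothesis of Fr\'echet differentiability of $G$ at the single point $u_0$; one needs (as in Zeidler's formulation) $G$ to be differentiable near $u_0$ with derivative continuous at $u_0$. This is a gap in the \emph{statement} as transcribed rather than in your proof, and it is harmless here since the only $G$ used is $\mathcal{L}-L$, which is continuously differentiable on the open set of immersed curves. Your backward implication and the final identity $\langle DF(u_0),h\rangle+\lambda\langle DG(u_0),h\rangle=0$ with $\lambda=-\langle DF(u_0),h_0\rangle$ are both correct as written.
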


\begin{remark}\label{rem:linesegment-OK}
In the case of $F(\gamma)=\mathcal{B}_p[\gamma]$ and $G(\gamma)=\mathcal{L}[\gamma]-L$, when (a) is satisfied, i.e., $D\mathcal{L}[\gamma_0]=0$, then $\gamma_0$ is nothing but a line segment, which also satisfies $D\mathcal{B}_p[\gamma_0]=0$.
Therefore in our setting it suffices to consider (b) only.
\end{remark}
\begin{remark}[Clamped boundary condition]
Given $P_0, P_1, T_0, T_1 \in \R^n$ with $|T_0|=|T_1|=1$, let 
\begin{gather*}
\mathcal{A}_{P_0, P_1, T_0, T_1} := \Set{ \gamma \in \mathcal{A}_{P_0, P_1} |
\partial_s\gamma(0)=T_0,\ \partial_s\gamma(1)=T_1
}.
\end{gather*}
When we consider $\mathcal{A}_{P_0, P_1, T_0, T_1}$ instead of $\mathcal{A}_{P_0,P_1}$, 
then the assertion in Proposition~\ref{prop:Lmultiplier} holds by replacing $X$ in \eqref{eq:appendixLM} with 
\[
\Set{ h \in W^{2,p}(0,1;\R^n) | h(0)=h(1)=0, \ \ h'(0)=h'(1)=0 }.
\]
\end{remark}

Thanks to Lemmata~\ref{lem:G-derivative-Bp} and \ref{lem:F-derivative-Bp}, we can explicitly calculate \eqref{eq:appendixLM} for the case $F(\gamma)=\mathcal{B}_p[\gamma]$ and $G(\gamma)=\mathcal{L}[\gamma]-L$. 
Note that for any $\gamma\in W^{2,p}_\mathrm{imm}(0,1;\R^n)$ the arclength function $\sigma(t):=\int_0^t|\gamma'|$ is of class $W^{2,p}$, has strictly positive derivative, and maps $[0,1]$ to $[0,\mathcal{L}[\gamma]]$.
Hence the arclength reparameterization $\tilde{\gamma}:=\gamma\circ\sigma^{-1}$ is an element of $W^{2,p}_\mathrm{imm}(0,\mathcal{L}[\gamma];\R^n)$ with $|\tilde{\gamma}'|\equiv1$.
Applying the change of variables $s=\sigma(t)$ to Lemma~\ref{lem:G-derivative-Bp} combined with Lemma~\ref{lem:F-derivative-Bp}, and setting $\eta=h\circ\sigma^{-1}\in W^{2,p}(0,L;\R^n)$ for $h\in X$, we obtain the representation of \eqref{eq:appendixLM} in terms of the arclength parameterization:

\begin{proposition}\label{prop:arclengthLM-pin}
Let $\lambda\in \R$.
A curve $\gamma \in W^{2,p}_{\rm imm}(0,1;\R^n)$ satisfies
\begin{align}\notag
\big\langle D\mathcal{B}_p[\gamma]+\lambda D\mathcal{L}[\gamma], h \big\rangle=0 
\end{align}
for any $h\in W^{2,p}(0,1;\R^n)$ with $h(0)=h(1)=0$ 
if and only if the arclength parametrization $\tilde{\gamma}\in W^{2,p}(0,L;\R^n)$ of $\gamma$ satisfies 
\begin{align} \notag
\int_0^L\Big( (1-2p) |\tilde{\vc}''|^p (\tilde{\vc}', \eta') +p|\tilde{\vc}''|^{p-2}(\tilde{\vc}'', \eta'') +\lambda (\tilde{\vc}', \eta')\Big) ds = 0
\end{align}
for any $\eta \in W^{2,p}(0,L;\R^n)$ with $\eta(0)=\eta(L)=0$.
\end{proposition}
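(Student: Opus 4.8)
The plan is to start from the explicit first-variation formula and push it through the change of variables to arclength. First I would invoke Lemma~\ref{lem:F-derivative} to identify the Fr\'echet derivative $D\mathcal{B}_p[\gamma]$ with the G\^ateaux derivative $d\mathcal{B}_p[\gamma]$, whose expression is given in Lemma~\ref{lem:G-derivative}; together with the elementary formula $\langle D\mathcal{L}[\gamma], h\rangle = \int_0^1 |\gamma'|^{-1}(\gamma', h')\,dt$ this reduces the claim to a purely computational identity. The key observation is that the vector $m$ appearing in Lemma~\ref{lem:G-derivative} is precisely the (parameterization-invariant) curvature vector: it is orthogonal to $\gamma'$ and, in the arclength parameterization $\tilde{\gamma}$, satisfies $m = \tilde{\gamma}''$ and $|m| = |\tilde{\gamma}''|$.

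Next I would reparameterize by arclength, setting $s = s(t) := \int_0^t |\gamma'(r)|\,dr$ and $\tilde{\gamma}(s) := \gamma(t(s))$, and associate to each admissible $h$ the perturbation $\eta(s) := h(t(s))$. Since $\gamma\in W^{2,p}_{\rm imm}$, the map $t\mapsto s(t)$ is a $W^{2,p}$-diffeomorphism of $[0,1]$ onto $[0,L]$ bounded away from degeneracy, so composition with it is a linear isomorphism between $\{h\in W^{2,p}(0,1;\R^2): h(0)=h(1)=0\}$ and $\{\eta\in W^{2,p}(0,L;\R^2): \eta(0)=\eta(L)=0\}$; this is exactly the correspondence that turns the quantifier ``for all $h$'' into ``for all $\eta$''. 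Using the chain rule $\gamma' = s'\tilde{\gamma}'$, $\gamma'' = (s')^2\tilde{\gamma}'' + s''\tilde{\gamma}'$ (and the analogous formulas for $h',h''$ in terms of $\eta',\eta''$), together with the unit-speed relations $|\tilde{\gamma}'|\equiv1$, $(\tilde{\gamma}',\tilde{\gamma}'')=0$, and $(\gamma',\gamma'')=s's''$, one substitutes into the formula of Lemma~\ref{lem:G-derivative}. Because $(m,\gamma')=0$, the two terms carrying the factor $\gamma'$ inside the inner product drop out immediately, and after collecting the remaining pieces the curvature contribution becomes $p|\tilde{\gamma}''|^{p-2}\big[(\tilde{\gamma}'',\eta'') - 2|\tilde{\gamma}''|^2(\tilde{\gamma}',\eta')\big]$; crucially, the two terms proportional to $s''/(s')^2$ cancel against each other. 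Combined with the first summand $|\tilde{\gamma}''|^p(\tilde{\gamma}',\eta')$ and the length term $\lambda(\tilde{\gamma}',\eta')$, and using $|\gamma'|\,dt = ds$, this yields precisely the claimed arclength identity, with coefficient $(1-2p)$ in front of $|\tilde{\gamma}''|^p(\tilde{\gamma}',\eta')$.

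The conceptual content is entirely in these cancellations, so the main obstacle is the bookkeeping: verifying the mutual cancellation of the $s''$-terms and confirming that the $\gamma'$-directed contributions vanish against the normal vector $m$, so that no spurious tangential terms survive. Two routine points must also be checked: that the reparameterization preserves $W^{2,p}$-regularity in both directions (so the test-function correspondence is genuinely bijective and the boundary conditions $\eta(0)=\eta(L)=0$ match $h(0)=h(1)=0$), and that all integrands lie in $L^1$ so the change of variables is justified---both follow from $\gamma\in W^{2,p}_{\rm imm}$ and the $L^p$-bounds already used in the proof of Lemma~\ref{lem:F-derivative}. Since every step is reversible (both the diffeomorphism and the induced test-function map are invertible), the argument establishes the stated equivalence in both directions, in the same spirit as \cite[Theorem A.2]{MYarXiv2203}.
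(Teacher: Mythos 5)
Your proposal is correct and follows essentially the same route as the paper, which likewise obtains the arclength identity by combining Lemmas~\ref{lem:G-derivative} and \ref{lem:F-derivative} with the change of variables of \cite[Theorem A.2]{MYarXiv2203}; your computation (orthogonality $(m,\gamma')=0$ killing the tangential terms, $m=\tilde\gamma''$, cancellation of the $s''$-terms, and the resulting coefficient $1-2p$) is exactly the content of that reduction. The only difference is that you spell out the bookkeeping that the paper delegates to the cited reference.
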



\bibliographystyle{abbrv}
\bibliography{ref_Miura-Yoshizawa-ver5}

\end{document}